\newtheorem{formula}{}[section]
\newtheorem{proposition}[formula]{Proposition}
\newtheorem{corollary}[formula]{Corollary}
\newtheorem{lemma}[formula]{Lemma}
\newtheorem{theorem}[formula]{Theorem}
\newtheorem{conjecture}[formula]{Conjecture}
\newtheorem{question}{Question}
\newtheorem{problem}{Problem}
\theoremstyle{definition}
\newtheorem{definition}[formula]{Definition}
\newtheorem{construction}[formula]{Construction}
\newtheorem{example}[formula]{Example}
\newtheorem{agreement}[formula]{Agreement}
\theoremstyle{remark}
\newtheorem{remark}[formula]{Remark}
\begin{document}

\title[Manifolds defined by non-free actions]{Manifolds realized as orbit spaces \\
of non-free  $\mathbb Z_2^k$-actions on real moment-angle manifolds}
\author[N.Yu.~Erokhovets]{Nikolai~Erokhovets}
\address{Steklov Mathematical Institute of Russian Academy of Sciences, Moscow, Russia \&Department of Mechanics and Mathematics, Lomonosov Moscow State University}
\email{erochovetsn@hotmail.com}

\def\sgn{\mathrm{sgn}\,}
\def\bideg{\mathrm{bideg}\,}
\def\tdeg{\mathrm{tdeg}\,}
\def\sdeg{\mathrm{sdeg}\,}
\def\grad{\mathrm{grad}\,}
\def\ch{\mathrm{ch}\,}
\def\sh{\mathrm{sh}\,}
\def\th{\mathrm{th}\,}

\def\mod{\mathrm{mod}\,}
\def\In{\mathrm{In}\,}
\def\Im{\mathrm{Im}\,}
\def\Ker{\mathrm{Ker}\,}
\def\Hom{\mathrm{Hom}\,}
\def\Tor{\mathrm{Tor}\,}
\def\rk{\mathrm{rk}\,}
\def\codim{\mathrm{codim}\,}

\def\ko{{\mathbf k}}
\def\sk{\mathrm{sk}\,}
\def\RC{\mathrm{RC}\,}
\def\gr{\mathrm{gr}\,}

\def\R{{\mathbb R}}
\def\C{{\mathbb C}}
\def\Z{{\mathbb Z}}
\def\A{{\mathcal A}}
\def\B{{\mathcal B}}
\def\K{{\mathcal K}}
\def\M{{\mathcal M}}
\def\N{{\mathcal N}}
\def\E{{\mathcal E}}
\def\G{{\mathcal G}}
\def\D{{\mathcal D}}
\def\F{{\mathcal F}}
\def\L{{\mathcal L}}
\def\V{{\mathcal V}}
\def\H{{\mathcal H}}

\thanks{This work was supported by the Russian Science Foundation under grant no. 23-11-00143, https://rscf.ru/en/project/23-11-00143/ }



\subjclass[2010]{
57S12, 
57S17, 
57S25, 
52B05, 
52B10, 
52B70, 
57R18, 
57R91
}

\keywords{Non-free action of a finite group, convex polytope, real moment-angle manifold, hyperelliptic manifold,
rational homology sphere, Hamiltonian cycle}

\begin{abstract}
We consider (non-necessarily free) actions of subgroups $H\subset \mathbb Z_2^m$ on the~real
moment-angle manifold $\mathbb R\mathcal{Z}_P$ corresponding to a simple convex $n$ polytope $P$
with $m$ facets.  
The criterion when the orbit space $\mathbb R\mathcal{Z}_P/H$ 
is a~topological manifold (perhaps with a boundary) can be extracted from  
results by M.A.~Mikhailova and C.~Lange.
For any dimension $n$ we construct series of manifolds $\mathbb R\mathcal{Z}_P/H$ 
homeomorphic to $S^n$ and series of manifolds $M^n=\mathbb R\mathcal{Z}_P/H$ 
admitting a hyperelliptic involution $\tau\in\mathbb Z_2^m/H$, that is~an~involution $\tau$ 
such that $M^n/\langle\tau\rangle$ is~homeomorphic to~$S^n$. 
For~any~simple~$3$-polytope $P$ we classify all subgroups $H\subset\mathbb Z_2^m$ such that $\mathbb R\mathcal{Z}_P/H$
is homeomorphic to $S^3$. For~any~simple~$3$-polytope $P$ 
and any~subgroup $H\subset\mathbb Z_2^m$ 
we classify all hyperelliptic involutions $\tau\in\mathbb Z_2^m/H$ acting on~$\mathbb R\mathcal{Z}_P/H$.  
As a~corollary we obtain that a~$3$-dimensional small cover has $3$ hyperelliptic involutions in $\mathbb Z_2^3$
if and only if it is a~rational homology $3$-sphere and  if and only if it correspond to a triple of
Hamiltonian cycles such that each edge of the polytope belongs to exactly two of them.
\end{abstract}
\maketitle
\tableofcontents
\setcounter{section}{0}

\section*{Introduction}
Toric topology (see \cite{BP15,DJ91}) assigns to each $n$-dimensional simple convex polytope $P$ with $m$ facets  
$F_1,\dots,F_m$ an~$n$-dimensional {\it real moment-angle manifold} $\mathbb{R} \mathcal{Z}_P$ with 
an action of~a~finite group $\mathbb Z_2^m$ and an $(n+m)$-dimensional {\it moment-angle manifold} $\mathcal{Z}_P$
with an action of~a~compact torus $\mathbb T^m$ such that 
$\mathbb{R} \mathcal{Z}_P/\mathbb Z_2^m=\mathcal{Z}_P/\mathbb T^m=P$ and the equivariant topology of these spaces
depends only on combinatorics of $P$. This construction allows one to build large families of~manifolds for 
which deep mathematical results can be proved in a more efficient and explicit form. For example, the problem 
of classification of $3$-dimensional manifolds and  $6$-dimensional  simply-connected manifold by their algebraic topology
invariants can be explicitly solved for~the~large families of small covers and quasitoric manifolds over $3$-dimensional 
right-angled hyperbolic polytopes \cite{BEMPP17}. The Thurston's problem of existence of~a~geometric decomposition 
of~any~orientable $3$-manifold was finally solved by G.~Perelman. For all $3$-dimensional manifolds obtained as orbit
spaces of free actions of subgroups in~$\mathbb Z_2^m$ on~$\mathbb{R} \mathcal{Z}_P$ this decomposition 
can be~described explicitly and constructively \cite{E22M}.

In this paper we consider the specification of the following general question to the case of~real moment-angle manifolds
and subgroups $H\subset\mathbb Z_2^m$:  
\begin{question}\label{Q1}
When is the orbit space $M/G$ of~a~smooth~action of~a~finite group $G$ on~a~smooth manifold $M$ a~topological manifold 
(perhaps with a boundary)?
\end{question} 
For manifolds $\mathbb{R} \mathcal{Z}_P/H$ we consider the following questions.
\begin{question}\label{Q2}
When is  $\mathbb{R} \mathcal{Z}_P/H$ homeomorphic to $S^n$?
\end{question}
\begin{question}\label{Q3}
To classify all {\it hyperelliptic involutions} in the group $\mathbb Z_2^m/H$ acting on the~manifold $\mathbb{R} \mathcal{Z}_P/H$,
that is involutions with the orbit space homeomorphic to $S^n$.
\end{question}
\begin{question}\label{Q4}
When is $\mathbb{R} \mathcal{Z}_P/H$ a manifold with the same rational homology as $S^n$?
\end{question}

The exhaustive answer to {\bf Question \ref{Q1}} was obtained in the works by~M.A.~Mikhailova and C.~Lange \cite{M85,LM16,L19}.
For~a~finite~abelian group $G$ the space $M/G$ is a topological manifold if and only if {\it for any point $x\in M$ the subgroup in 
$O(n)$ corresponding to the action of~the~stabilizer $G_x$ on~the~tangent space $T_xM$ with the invariant
scalar product is generated by~reflexions and rotations, where 
the presence of a reflexion indicates the presence of~a~boundary in~the~manifold}.
In our particular case in Theorem \ref{th:ZHM} we give an effective explicit answer in~terms of~the~polytope and the matrix
defining a subgroup $H\subset \mathbb Z_2^m$ 
and its short proof not based on results by Mikhailova and Lange. Namely, a~subgroup $H$ of
rank $m-r$ is defined by {\it a vector-coloring}
of rank $r$, that is a~mapping $\Lambda\colon\{F_1,\dots,F_m\}\to\mathbb Z_2^r$ such that 
$\langle\Lambda_1,\dots,\Lambda_r\rangle=\mathbb Z_2^r$. Usually in toric topology one considers
freely acting subgroups. This is equivalent to the fact that the~coloring is {\it linearly independent}, 
that is~the~vectors $\Lambda_{i_1}$, $\dots$, $\Lambda_{i_k}$ are linearly independent if 
$F_{i_1}\cap\dots\cap F_{i_k}\ne \varnothing$. In this case the~orbit space $N(P,\Lambda)=\mathbb R\mathcal{Z}_P/H$ 
is automatically a~(smooth) manifold.

In the general case $N(P,\Lambda)$ is a pseudomanifold, possibly with a boundary, where the boundary
is glued of facets $F_i$ with $\Lambda_i=\boldsymbol{0}$. We prove that {\it $N(P,\Lambda)$
is a topological manifold if and only if for any collection of facets $F_{i_1}\cap\dots\cap F_{i_k}\ne \varnothing$ such that
$F_{i_1}\cap\dots\cap F_{i_k}\ne \varnothing$ different nonzero vectors among $\Lambda_{i_1}$, $\dots$, $\Lambda_{i_k}$ 
are linearly independent}.

We prove (Corollary \ref{cororc}) that the pseudomanifold $N(P,\Lambda)$ is closed and orientable if and only if
all the vectors $\Lambda_1$, $\dots$, $\Lambda_m$ in $\mathbb Z_2^r$ lie in an~affine hyperplane 
$\boldsymbol{c}\boldsymbol{x}=1$ not containing $\boldsymbol{0}$ 
(this generalizes  the sufficient condition of orientability of small covers over right-angled $3$-polytopes \cite[Lemma 2]{V87}, 
the criterion of orientability of small covers of any dimension  \cite[Theorem 1.7]{NN05}  and 
manifolds defined by linearly independent colorings of right-angled polytopes \cite[Lemma 2.4]{KMT15}). 
We call such colorings {\it affine colorings} of rank $r-1$ and denote them $\lambda$.
In some coordinate system $\Lambda_i=(1,\lambda_i)$.

A {\it coloring} $c\colon\{F_1,\dots, F_m\}\to\{1,\dots,r\}$ defines a complex $\mathcal{C}(P,c)$ with facets $G_j$
the~connected components of unions $\bigcup_{c(F_i)=const}F_i$ corresponding to the same color 
and faces the connected components of~intersections of~facets $G_j$. 
The complexes $\mathcal{C}(P,c_P)$ and $\mathcal{C}(Q,c_Q)$ are {\it equivalent} 
($\mathcal{C}(P,c_P)\simeq \mathcal{C}(Q,c_Q)$) if there is a homeomorphism
$P\to Q$ mapping bijectively facets of the first complex to facets of the second. 
In Corollary \ref{cor:dnc} we prove that any two colorings of~the~simplex 
$\Delta^n$ in $r$ colors produce equivalent complexes. We denote this equivalence class 
$\mathcal{C}(n,r)$. It turns out that any affine coloring $\lambda$ of rank $r$ of a polytope $P$ with 
$\mathcal{C}(P,\lambda)\simeq \mathcal{C}(n,r+1)$ produces a~sphere $N(P,\lambda)\simeq S^n$ 
(see Construction \ref{con:spn}).
Our main result concerning {\bf Question \ref{Q2}} is that in dimension $n=3$ this construction  exhausts all $3$-spheres
among $N(P,\lambda)$ (Theorem \ref{th:NP3sp}). The $1$-skeleton $\mathcal{C}^1(3,1)$ is empty,
$\mathcal{C}^1(3,2)$ is a~circle without vertices, $\mathcal{C}^1(3,3)$ is a theta-graph -- a graph with two vertices
connected by three multiple edges, and  $\mathcal{C}^1(3,4)$ is the complete graph $K_4$. Thus, for a $3$-polytope $P$
subgroups in $\mathbb Z_2^m$ producing spheres $\mathbb{R}\mathcal{Z}_P/H$ bijectively correspond 
to the empty set, simple cycles, theta-subgraphs and $K_4$-subgraphs in the $1$-skeleton of $P$. 

{\bf Question \ref{Q3}} is motivated by papers \cite{M90, VM99M, VM99S2} by 
A.D.~Mednykh and A.Yu.~Vesnin who constructed examples of hyperelliptic $3$-manifolds
with geometric structures modelled on five of eight 
Thurston's geometries: $\mathbb R^3$, $\mathbb H^3$, $\mathbb S^3$, $\mathbb H^2\times\mathbb R$, 
and $\mathbb S^2\times\mathbb R$. Each example was built using a right-angled $3$-polytope $P$ equipped with 
a~Hamiltonian cycle, a~Hamiltonian theta-subgraph, or a~Hamiltonian $K_4$-subgraph, where a~subgraph 
is~Hamiltonian if it contains all vertices of $P$.  
We call an involution $\tau\in \mathbb Z_2^m/H$ acting on~the manifold 
$N(P,\lambda)$ defined by an~affine coloring of rank $r$
{\it special} if the complex $\mathcal{C}(P,\lambda_\tau)$ corresponding
to~the~orbit space $N(P,\lambda)/\langle\tau\rangle$ is equivalent to~$\mathcal{C}(n,r)$.
By~Construction \ref{con:spn} any special involution is hyperelliptic. We introduce Construction \ref{con:shm} 
producing any special hyperelliptic manifold from a~coloring $c\colon \{F_1,\dots,F_m\}\to \{1,\dots, r\}$
such that $\mathcal{C}(P,c)\simeq \mathcal{C}(n,r)$ and a $0/1$-coloring $\chi\colon \{F_1,\dots, F_m\}\to\{0,1\}$.
In Theorem \ref{th:shmclass} we classify all special hyperelliptic involutions $\tau\in\mathbb Z_2^m/H$.
For $n=3$ Theorem \ref{th:NP3sp} implies that any hyperelliptic 
involution in $\mathbb Z_2^m/H$ is special. 
Our main result concerning {\bf Question \ref{Q3}} is~the~classification
of all hyperelliptic involutions in $\mathbb Z_2^m/H$ for $n=3$. In particular,
any Hamiltonian empty set ($r=1$), cycle ($r=2$), theta-subgraph $(r=3)$ or $K_4$-subgraph  ($r=4$) 
$\Gamma$ in $\mathcal{C}^1(P,c)$ induces 
an affine coloring $\lambda_{\Gamma}$ of~rank $r$ by~the~following rule. The 
facets of $P$ lying in the same facet $G_i$ of $\Gamma$ can be colored in two colors
in~such a~way that adjacent facets have different colors. 
Assign to one color the point $\boldsymbol{a}_i$ and to~the~other color $\boldsymbol{b}_i$, where
the points $\boldsymbol{a}_1$, $\boldsymbol{a}_2$, $\dots$, $\boldsymbol{a}_r$, $\boldsymbol{b}_1$ are affinely 
independent and the vector $\tau=\boldsymbol{a}_i+\boldsymbol{b}_i$ does not depend on $i$. We 
obtain an affine coloring $\lambda_\Gamma$ and the hyperelliptic involution $\tau$ on~$N(P,\lambda_\Gamma)$
{\it induced} by $\Gamma$. In Theorem \ref{th:hypHam} we prove that {\it for $n=3$ 
hyperelliptic involutions in $\mathbb Z_2^m/H(\lambda)$
bijectively correspond to~Hamiltonian subgraphs of the above type inducing $\lambda$}. 
Also in Theorem \ref{th:hyperell} for $n=3$ we classify all pairs $(P,\lambda)$ admitting more than one hyperelliptic involution.
In particular, $3$-dimensional small covers $N(P,\Lambda)$ with three hyperelliptic involutions correspond 
to~triples of~Hamiltonian cycles on~a~simple $3$-polytope $P$ such that any edge of~$P$ belongs
to exactly two cycles. 

To study {\bf Question \ref{Q4}} we use the description of the cohomology $H^*(N(P,\Lambda),\mathbb Q)$
obtained  by A.~Suciu and A.~Trevisan \cite{ST12,T12}, and S.~Choi and H.~Park \cite{CP17}. On~the~base
of~this description in~Proposition \ref{pr:rhs} we describe all $3$-dimensional rational homology $3$-spheres among 
manifolds $N(P,\lambda)$. Namely 
for $n=3$ {\it the~manifold $N(P,\lambda)$ corresponding to an affine coloring of rank $r$ is~a~rational
homology sphere if and only if for any affine hyperplane $\pi$ in $\mathbb Z_2^r$ passing through a fixed 
point $\boldsymbol{p}\in\mathbb Z_2^r$ the~union $\bigcup_{\lambda_i\in \pi}F_i$ is a disk}. 
In particular, a $3$-dimensional small cover is~a~rational homology $3$-sphere if and only if the group $\mathbb Z_2^3$
canonically acting on it contains three hyperelliptic involutions. 
In~Example~\ref{ex:RHSGeom} we build rational homology $3$-spheres $N(P,\lambda)$ 
with geometric structures 
modelled on $\mathbb S^3$, $\mathbb S^2\times\mathbb R$, $\mathbb R^3$, 
$\mathbb H^2\times \mathbb R$, and $\mathbb H^3$.
Proposition \ref{pr:rhs} is a refinement of a description of rational homology $3$-spheres over right-angled polytopes
in $\mathbb S^3$, $\mathbb R^3$ and $\mathbb H^3$ used in \cite[Corollary 7.9]{FKR23} 
to build an infinite family of arithmetic hyperbolic rational homology $3$-spheres 
that are totally geodesic boundaries of compact hyperbolic $4$-manifolds, and in \cite[Proposition 3.1]{FKS21} 
to detect the  Hantzsche-Wendt manifold among  manifolds defined by linearly independent colorings of the $3$-cube.
(It is equivalent to the connectivity of the full subcomplex $K_{\omega}$ of the boundary $K=\partial P^*$ 
of the dual polytope $P^*$ for each subset $\omega=\{i\colon \lambda_i\in \pi\}$ corresponding to an affine hyperplane $\pi$.)

The paper is organized as follows.

In Section \ref{Sec:RMAM} we give main definitions and basic facts about real moment-angle manifolds
$\mathbb{R}\mathcal{Z}_P$ and their factor spaces $N(P,\Lambda)$. In particular, in Proposition \ref{orprop} and Corollary 
\ref{cororc} we give the criterion when the pseudomanifold $N(P,\Lambda)$ is closed and orientable. 

In Section \ref{sec:CPc} we describe complexes $\mathcal{C}(P,c)$ corresponding to colorings of facets of $P$
and their properties. In particular, in Proposition \ref{prop:dnr} and Corollary \ref{cor:dnc} we prove 
that all colorings of~facets of~the~simplex $\Delta^n$ in $r$ colors produce equivalent complexes.

In Sections \ref{sec:wecL} and \ref{sec:acol} we describe the weakly equivariant classification
of spaces $N(P,\Lambda)$ defined by vector-colorings and $N(P,\lambda)$ defined by affine colorings.

In Section \ref{sec:NPman} we give the~criterion when $N(P,\Lambda)$ is a~topological manifold
(Theorem \ref{th:ZHM}) and give a~Construction \ref{con:spn} of~spheres  $N(P,\Lambda)$. In particular,
in Example \ref{ex:spnG} for any face $G\subset P$ of~codimension $k$ we build a subgroup $H_G\subset\mathbb Z_2^m$
of codimension $k+1$ such that $\mathbb{R}\mathcal{Z}_P/H_G\simeq S^n$. For a vertex of the product 
$\Delta^{n_1}\times\dots\times\Delta^{n_k}$ this gives an action of $\mathbb Z_2^{k-1}$ on $S^{n_1}\times\dots\times S^{n_k}$
with the orbit space $S^{n_1+\dots+n_k}$ build by Dmitry Gugnin in \cite{G19}.

In Section \ref{sec:mantorus} we give a sufficient condition for the space $\mathcal{Z}_P/H$
to be a closed topological manifold (Proposition \ref{prop:ta}). This condition is similar to Theorem \ref{th:ZHM}
and can be also extracted from the general theory developed in \cite{S09, AGo24}. 
Namely, {\it if a subgroup $H\subset T^m$ is defined by an integer vector-coloring 
$\Lambda\colon \{F_1,\dots,F_m\}\to \mathbb Z^r\setminus\{\boldsymbol{0}\}$ 
such that $\langle\Lambda_1,\dots\Lambda_m\rangle=\mathbb Z^r$ and 
for any vertex  $v=F_{i_1}\cap\dots\cap F_{i_n}$ all the different vectors among $\{\Lambda_{i_1},\dots,\Lambda_{i_n}\}$ 
form a~part of~a~basis in~$\mathbb Z^r$, then $\mathcal{Z}_P/H$ is a~closed topological $(n+r)$-manifold}.
In Proposition \ref{prop:spn+r} we give a~sufficient condition for $\mathcal{Z}_P/H$ to be homeomorphic to a sphere.
As an application in~Example \ref{ex:AGU} we build an action of $\mathbb T^{k-1}$ on $S^{n_1+1}\times\dots\times S^{n_k+1}$
with the orbit space $S^{n_1+\dots+n_k+1}$ constructed in~\cite{AGu23}.

In Section \ref{sec:bospx} we describe combinatorial properties of boolean simplicial prisms
important for~a~construction of hyperelliptic manifolds.

In Section \ref{sec:shm} we give Construction \ref{con:shm} of special hyperelliptic manifolds $N(P,\lambda)$
with a hyperelliptic involution $\tau\in \mathbb Z_2^m/H(\lambda)$ such that $\mathcal{C}(P,\lambda_\tau)\simeq \mathcal{C}(n,r)$. 
In~Theorem \ref{th:shmclass} for these manifolds we classify all special hyperelliptic involutions  
$\tau\in \mathbb Z_2^m/H(\lambda)$.

In Section \ref{sec:CPc3} we give basic facts from the graph theory and theory of $3$-polytopes
and in~Theorem \ref{th:CPc3} we prove that {\it complexes $\mathcal{C}(P,c)$ corresponding to $3$-polytopes $P$ are 
exactly subdivisions of~the~$2$-sphere arising from disjoint unions (perhaps empty) of simple curves and 
connected $3$-valent graphs without bridges}.  

In Section \ref{Sec:3sph} we prove that {\it for an~affine coloring $\lambda$ of rank $r$ of a~simple $3$-polytope $P$ 
the space $N(P,\lambda)$ is  homeomorphic to $S^3$ if and only if  $\mathcal{C}(P,\lambda)$ is~equivalent 
to~$\mathcal{C}(3,r+1)$} (Theorem \ref{th:NP3sp}).

In Section \ref{sec:3hm} for~an~affine coloring $\lambda$  of~a~simple $3$-polytope $P$ 
we classify all hyperelliptic involutions in $\mathbb Z_2^m/H$ acting on $N(P,\lambda)$ (Theorems
\ref{th:hypHam} and \ref{th:hyperell}).

In Section \ref{Sec:RHS} we give a criterion when the space $N(P,\lambda)$
is a~rational homology $3$-sphere (Proposition \ref{pr:rhs}) and consider examples of such spaces.

In Section \ref{sec:3Ham} we gather known information on simple $3$-polytopes 
admitting three consistent Hamiltonian cycles and build examples of such polytopes
and also of polytopes that do not have such a property.

\section{Real moment-angle manifolds and their factor  spaces}\label{Sec:RMAM}
For an introduction to the polytope theory we recommend the books \cite{Z95} and \cite{Gb03}.  
In~this paper by a {\it polytope} we call an $n$-dimensional combinatorial convex polytope. 
Sometimes we implicitly use  its geometric realization in $\mathbb R^n$ and sometimes we use it explicitly. 
In the~latter case we call the polytope {\it geometric}.
A polytope is {\it simple}, if any its vertex is contained in exactly $n=\dim P$ facets.
Let $\{F_1,\dots,F_m\}$ be the set of all the facets, and 
$\mathbb Z_2=\Z/2\Z$.
\begin{definition} 
For each geometric simple $n$-polytope $P$ one can associate an $n$-dimensional
{\it real moment-angle manifold}:
$$
\mathbb R\mathcal{Z}_P=P\times \mathbb Z_2^m/\sim, 
\text{ where }(p,a)\sim(q,b)\text{ if and only if }p=q \text{ and }a-b\in\langle \boldsymbol{e}_i\colon p\in F_i\rangle,
$$ 
and $\boldsymbol{e}_1,\dots, \boldsymbol{e}_m$ is the standard basis in $\mathbb Z_2^m$.
\end{definition}
There is a natural action of $\mathbb Z_2^m$
on $\mathbb R\mathcal{Z}_P$ induced from the action on the second factor. 
We~have $\mathbb R\mathcal{Z}_P/\mathbb Z_2^m=P$. The space $\mathcal{Z}_P$ was introduced in \cite{DJ91}. 
It can be showed that it has a structure of a~smooth manifold such that the action of $\mathbb Z_2^m$ is smooth (see \cite{BP15}).

It is convenient to imagine  $\mathbb R\mathcal{Z}_P$ as a space glued from copies of the polytope $P$
 along facets. If we fix an orientation on $P\times 0$, then define on the polytope $P\times a$ the same orientation, 
 if $a$ has an even number of unit coordinates, and the opposite orientation, in the other case.  
 A polytope $P\times a$ is glued to the polytope $P\times (a+e_i)$ along the facet $F_i$. At each vertex 
 the polytopes are arranged as~coordinate orthants in $\mathbb R^n$, at~each edge -- as~the~orthants at~a~coordinate axis,
 and at~face of~dimension $i$ -- as~the~orthants at~an~$i$-dimensional coordinate subspace. Therefore,
 $\mathbb R\mathcal{Z}_P$ has a natural structure of an oriented piecewise linear manifold.  
The actions of basis vectors $e_i$ can be viewed as
reflections in facets of the polytope. In particular, it changes the orientation. The following fact
is straightforward from the definition.
\begin{lemma}
The element $x=(x_1,\dots, x_m)\in \mathbb Z_2^m$ preserves the orientation of $\mathbb R\mathcal{Z}_P$ 
if and only if it has an even number of nonzero coordinates. In other words, if $x_1+\dots+x_m=0$.
\end{lemma}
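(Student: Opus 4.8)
The plan is to reduce the statement to the single fact, already recorded in the discussion preceding the lemma, that each basis vector $\boldsymbol e_i$ acts on $\mathbb R\mathcal Z_P$ as a reflection and hence reverses the orientation.

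First I would write $x=\boldsymbol e_{i_1}+\dots+\boldsymbol e_{i_k}$, where $i_1<\dots<i_k$ are exactly the indices with $x_{i_j}=1$, so that $k$ is the number of nonzero coordinates of $x$. Since the coordinate translations on the second factor of $P\times\mathbb Z_2^m$ commute, the homeomorphism of $\mathbb R\mathcal Z_P$ induced by $x$ is the composition $r_{i_1}\circ\dots\circ r_{i_k}$ of the reflections $r_{i_j}$ induced by $\boldsymbol e_{i_j}$. A composition of $k$ orientation-reversing homeomorphisms preserves the orientation precisely when $k$ is even, i.e.\ when $x_1+\dots+x_m=0$ in $\mathbb Z_2$; this is the assertion of the lemma.

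For completeness I would also spell out the elementary verification that each $r_i$ reverses orientation, since it is this fact that carries the argument. The map $r_i$ interchanges the copies $P\times a$ and $P\times(a+\boldsymbol e_i)$ for every $a$, and in the piecewise linear structure of $\mathbb R\mathcal Z_P$ a neighbourhood of an interior point of the glued facet $F_i$ is identified with $\mathbb R^n$ so that the hyperplane of $F_i$ is a coordinate hyperplane and the two copies are the two closed half-spaces; in this local model $r_i$ is the reflection across that hyperplane, which is orientation-reversing. As $\mathbb R\mathcal Z_P$ is a connected oriented manifold, a self-homeomorphism reversing the orientation at one point reverses it everywhere, so $r_i$ is globally orientation-reversing.

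Alternatively one can argue directly from the orientation convention on the copies: the map induced by $x$ sends $P\times a$ to $P\times(a+x)$ via the identity on the $P$-factor, the chosen orientation of $P\times a$ differs from the reference one by the sign $(-1)^{s(a)}$ with $s(a)=a_1+\dots+a_m$, and since $s(a+x)\equiv s(a)+s(x)\pmod 2$ (the support of a sum in $\mathbb Z_2^m$ being the symmetric difference of the supports) the identity $P\to P$ is orientation-preserving between these two copies exactly when $s(x)$ is even, uniformly in $a$. Either way the only point requiring any care is the additivity of the parity $s$ under addition in $\mathbb Z_2^m$, so I do not expect a genuine obstacle here.
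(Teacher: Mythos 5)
Your proposal is correct and follows essentially the same route as the paper, which states the lemma as immediate from the preceding orientation convention (the copy $P\times a$ carries the sign $(-1)^{a_1+\dots+a_m}$) and from the observation that each basis vector acts as an orientation-reversing reflection; your second, direct argument is exactly the paper's implicit one, and your first is the reflection-composition version of the same remark. No gaps.
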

\begin{definition}
We will denote by $H_0$ the subgroup of $\mathbb Z_2^m$ consisting of all the orientation preserving elements. 
\end{definition}
We consider manifolds obtained as orbit spaces of (not necessarily free) actions of subgroups $H\subset\mathbb Z_2^m$ on 
$\mathbb R\mathcal{Z}_P$. Each subgroup of $\mathbb Z_2^m$ is isomorphic to $\mathbb Z_2^{m-r}$ for some $r$ and may 
be described as a kernel $H(\Lambda)={\rm Ker}\,\Lambda$ 
of a an epimorphism $\Lambda\colon\mathbb Z_2^m\to\mathbb Z_2^r$. 
Such a mapping is uniquely defined by the images $\Lambda_i\in\mathbb Z_2^r$ of all the vectors 
$e_i\in\mathbb Z_2^m$ corresponding to facets $F_i$, $i=1$,\dots, $m$. 
It can be shown  (see \cite{DJ91, BP15}) that the action of the subgroup $H(\Lambda)\subset \mathbb Z_2^m$ 
on $\mathbb R\mathcal{Z}_P$ is free
if and only if 
$$
(*)\quad\text{for any face $F_{i_1}\cap\dots\cap F_{i_k}\ne\varnothing$ of $P$ the vectors $\Lambda_{i_1}$, $\dots$,
$\Lambda_{i_k}$ are linearly independent.} 
$$
Since any face of $P$ contains a vertex, it is sufficient to check this condition only for vertices.
\begin{definition}
We call a mapping $\Lambda\colon \{F_1,\dots,F_m\}\to \mathbb Z_2^r$  such that the images $\Lambda_j$
of the facets $F_j$ span $\mathbb Z_2^r$  {\it a (general) vector-coloring} of rank $r$. If, additionally,
the condition (*) holds we call such a vector-coloring {\it linearly independent}.
\end{definition}
\begin{remark}
In \cite{E22M} by definition any vector-coloring is assumed to be linearly  independent. 
\end{remark}
\begin{remark}
Sometimes we call by a vector-coloring of rank $r$ a mapping $\Lambda\colon \{F_1,\dots,F_m\}\to \mathbb Z_2^s$
such that $\dim\,\langle\Lambda_1,\dots,\Lambda_s\rangle=r$. 
\end{remark}
Denote by $N(P,\Lambda)$ the orbit space $\mathbb R\mathcal{Z}_P/H(\Lambda)$ of the action of the subgroup $H(\Lambda)$
corresponding to a vector-coloring $\Lambda$ of rank $r$.  If we identify $\mathbb Z_2^m/{\rm Ker}\,\Lambda$ with $\mathbb Z_2^r$ via the mapping $\Lambda$, then 
$$
N(P,\Lambda)=P\times \mathbb Z_2^r/\sim, \text{ where }(p,a)\sim(q,b)\text{ if and only if }p=q \text{ and }a-b\in\langle \Lambda_i\colon p\in F_i\rangle.
$$ 
In particular, the space $N(P,\Lambda)$ is glued from $2^r$ copies of $P$. It has
a canonical action of $\mathbb Z_2^r$ such that the orbit space is $P$.
\begin{definition}
We call $N(P,\Lambda)$ {\it a space defined by a vector-coloring} $\Lambda$.
\end{definition}
\begin{example}
For $r=m$ and the~mapping $E(F_i)=\boldsymbol{e}_i$,
where $\boldsymbol{e}_1$, $\dots$, $\boldsymbol{e}_m$ is the~standard basis in~$\mathbb Z_2^m$, 
the space $N(P,E)$ is $\mathbb R\mathcal{Z}_P$. 

For $r=n$ a linearly independent vector-coloring is called a {\it characteristic mapping}, and 
the space $N(P,\Lambda)$ is called a {\it small cover} over the polytope $P$.

For $r=1$  and the constant mapping $\Lambda_i=1$ the subgroup $H(\Lambda)$ is the subgroup
$H_0$ consisting  of all the elements preserving the orientation of $\mathbb R\mathcal{Z}_P$. The space  $N(P,\Lambda)$
is glued of two copies of $P$ along the common boundary. It is homeomorphic to $S^n$.
\end{example}

\begin{proposition}\label{propH12}
For vector-colorings $\Lambda_1$ and $\Lambda_2$ of ranks $r_1$ and $r_2$ of a polytope $P$ we have $H(\Lambda_1)\subset H(\Lambda_2)$ if and only if there is an epimorphism $\Pi\colon\mathbb Z_2^{r_1}\to \mathbb Z_2^{r_2}$ 
such that $\Pi \circ \Lambda_1=\Lambda_2$. In this case $N(P,\Lambda_2)=N(P,\Lambda_1)/{\rm Ker\,}\Pi$,
where ${\rm Ker\,}\Pi\simeq H(\Lambda_2)/H(\Lambda_1)$. In particular, if the action of ${\rm Ker\,}\Pi$ 
is free, then there is a covering  $N(P,\Lambda_1)\to N(P,\Lambda_2)$ with the fiber $H(\Lambda_2)/H(\Lambda_1)$.
\end{proposition}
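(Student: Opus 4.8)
The plan is to handle the algebraic equivalence, then the identification of $\Ker\Pi$, then the topological consequence. For the equivalence: if an epimorphism $\Pi\colon\mathbb Z_2^{r_1}\to\mathbb Z_2^{r_2}$ with $\Pi\circ\Lambda_1=\Lambda_2$ exists, then any $x\in H(\Lambda_1)=\Ker\Lambda_1$ has $\Lambda_2(x)=\Pi(\Lambda_1(x))=\Pi(\boldsymbol 0)=\boldsymbol 0$, so $x\in\Ker\Lambda_2=H(\Lambda_2)$, giving $H(\Lambda_1)\subset H(\Lambda_2)$. Conversely, assume $H(\Lambda_1)\subset H(\Lambda_2)$. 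Since $\Lambda_1$ is an epimorphism with kernel $H(\Lambda_1)$ it induces an isomorphism $\bar\Lambda_1\colon\mathbb Z_2^m/H(\Lambda_1)\xrightarrow{\ \sim\ }\mathbb Z_2^{r_1}$, and since $\Lambda_2$ vanishes on $H(\Lambda_1)$ it descends to a homomorphism $\bar\Lambda_2\colon\mathbb Z_2^m/H(\Lambda_1)\to\mathbb Z_2^{r_2}$; then $\Pi=\bar\Lambda_2\circ\bar\Lambda_1^{-1}$ satisfies $\Pi\circ\Lambda_1=\Lambda_2$ and is onto because $\Lambda_2$ is. On facet vectors this says $\Pi(\Lambda_{1,i})=\Lambda_{2,i}$, which pins $\Pi$ down uniquely since the $\Lambda_{1,i}$ span $\mathbb Z_2^{r_1}$.

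For the kernel, restrict $\Lambda_1$ to $H(\Lambda_2)$: the kernel of this restriction is $H(\Lambda_1)\cap H(\Lambda_2)=H(\Lambda_1)$ and its image is $\{\Lambda_1(x)\colon \Lambda_2(x)=\boldsymbol 0\}$, which equals $\Ker\Pi$ because $\Lambda_2=\Pi\circ\Lambda_1$ and $\Lambda_1$ is onto. Hence $\Ker\Pi\cong H(\Lambda_2)/H(\Lambda_1)$.

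For the topological statement I would invoke the standard fact that iterated quotients by a finite group and a subgroup coincide: with $\mathbb Z_2^m$ acting on $\mathbb R\mathcal{Z}_P$ and $H(\Lambda_1)\subset H(\Lambda_2)$,
$$N(P,\Lambda_2)=\mathbb R\mathcal{Z}_P/H(\Lambda_2)=\bigl(\mathbb R\mathcal{Z}_P/H(\Lambda_1)\bigr)\big/\bigl(H(\Lambda_2)/H(\Lambda_1)\bigr)=N(P,\Lambda_1)\big/\bigl(H(\Lambda_2)/H(\Lambda_1)\bigr),$$
where $H(\Lambda_2)/H(\Lambda_1)$ acts on $N(P,\Lambda_1)$ as a subgroup of the canonical action of $\mathbb Z_2^{r_1}\cong\mathbb Z_2^m/H(\Lambda_1)$; identifying it with $\Ker\Pi$ via the previous paragraph yields $N(P,\Lambda_2)=N(P,\Lambda_1)/\Ker\Pi$. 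Concretely, in the model $N(P,\Lambda_1)=P\times\mathbb Z_2^{r_1}/\sim$ the subgroup $\Ker\Pi$ acts by translation in the second factor, and passing to the quotient gives $P\times(\mathbb Z_2^{r_1}/\Ker\Pi)/\sim$, which via $\Pi$ is precisely the model $P\times\mathbb Z_2^{r_2}/\sim$ of $N(P,\Lambda_2)$, since $\Pi$ carries each $\langle\Lambda_{1,i}\colon p\in F_i\rangle$ onto $\langle\Lambda_{2,i}\colon p\in F_i\rangle$. Finally, if $\Ker\Pi$ acts freely on $N(P,\Lambda_1)$ then, the group being finite and the space Hausdorff so the action is properly discontinuous, the orbit projection $N(P,\Lambda_1)\to N(P,\Lambda_2)$ is a covering with fiber $\Ker\Pi\cong H(\Lambda_2)/H(\Lambda_1)$. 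The only step needing genuine care is this last matching of the two ``glued copies of $P$'' models; the rest is the isomorphism theorems together with elementary quotient theory.
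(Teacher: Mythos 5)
Your proof is correct and complete. For the main equivalence the paper argues dually: it observes that $H(\Lambda_1)\subset H(\Lambda_2)$ holds if and only if every row of the matrix $\Lambda_2$ lies in the row space of $\Lambda_1$, which is the same as the existence of a surjection $\Pi$ with $\Pi(\Lambda_{1,i})=\Lambda_{2,i}$; you instead work with the quotient $\mathbb Z_2^m/H(\Lambda_1)$ and the first isomorphism theorem, letting $\Lambda_2$ descend and composing with $\bar\Lambda_1^{-1}$. These are two dialects of the same linear algebra (row space versus annihilator of the kernel), so the mathematical content coincides, though your version is coordinate-free and makes the uniqueness of $\Pi$ explicit. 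Where you genuinely add value is in the second half: the paper's proof stops after the equivalence and leaves the identification $\Ker\Pi\simeq H(\Lambda_2)/H(\Lambda_1)$, the matching of the two glued-copies-of-$P$ models of $N(P,\Lambda_2)$, and the covering-space conclusion entirely to the reader, whereas you verify each of these, including the key point that $\Pi$ carries $\langle\Lambda_{1,i}\colon p\in F_i\rangle$ onto $\langle\Lambda_{2,i}\colon p\in F_i\rangle$ so that the equivalence relations match after quotienting by $\Ker\Pi$. Everything you wrote checks out.
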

\begin{remark}
For $r_1=r_2+1$ in \cite[Section 7.2]{FKR23} the vector-coloring $\Lambda_1$ is called 
an~{\it extension}~of~$\Lambda_2$.
\end{remark}
\begin{proof}
We have $H(\Lambda_1)\subset H(\Lambda_2)$ if and only if each row of the matrix $\Lambda_2$ with columns $\Lambda_{2,i}$
is a linear combination of rows of $\Lambda_1$. This is equivalent to the existence of a surjection 
$\Pi\colon\mathbb Z_2^{r_1}\to \mathbb Z_2^{r_2}$ such that 
$\Pi(\Lambda_{1,i})=\Lambda_{2,i}$ for all $i=1$, $\dots$, $m$.
\end{proof}

\begin{corollary}
We have $H(\Lambda_1)=H(\Lambda_2)$ if and only if there is an~isomorphism $\Pi\colon \mathbb Z_2^{r_1}\to\mathbb Z_2^{r_2}$ such that $\Lambda_2=\Pi\circ\Lambda_1$.
\end{corollary}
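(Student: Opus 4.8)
The plan is to deduce this Corollary directly from Proposition \ref{propH12} by applying it in both directions. First I would note that the statement concerns the equality of the two subgroups $H(\Lambda_1)$ and $H(\Lambda_2)$, which is the conjunction of the two inclusions $H(\Lambda_1)\subset H(\Lambda_2)$ and $H(\Lambda_2)\subset H(\Lambda_1)$. By Proposition \ref{propH12} the first inclusion is equivalent to the existence of an epimorphism $\Pi\colon\mathbb Z_2^{r_1}\to\mathbb Z_2^{r_2}$ with $\Pi\circ\Lambda_1=\Lambda_2$, and the second inclusion is equivalent to the existence of an epimorphism $\Psi\colon\mathbb Z_2^{r_2}\to\mathbb Z_2^{r_1}$ with $\Psi\circ\Lambda_2=\Lambda_1$. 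So for the ``only if'' direction I would take both $\Pi$ and $\Psi$ as given and argue that $\Pi$ is forced to be an isomorphism.

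The key step is the observation that once both surjections exist, we must have $r_1=r_2$ and each of $\Pi$, $\Psi$ is a bijection. Indeed, $\Psi\circ\Pi\colon\mathbb Z_2^{r_1}\to\mathbb Z_2^{r_1}$ is a surjective endomorphism of a finite set, hence a bijection; since $\Psi$ is surjective and $\Psi\circ\Pi$ is injective, $\Pi$ is injective, and being also surjective it is an isomorphism (in particular $r_1=r_2$). Alternatively one can simply compare cardinalities: a surjection $\mathbb Z_2^{r_1}\to\mathbb Z_2^{r_2}$ forces $r_1\ge r_2$, and the reverse surjection forces $r_2\ge r_1$, so $r_1=r_2$ and any surjection between groups of equal finite order is an isomorphism. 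Either way, $\Pi$ is the desired isomorphism with $\Lambda_2=\Pi\circ\Lambda_1$.

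For the ``if'' direction the argument is immediate: if $\Pi$ is an isomorphism with $\Lambda_2=\Pi\circ\Lambda_1$, then $\Pi$ is in particular an epimorphism, so Proposition \ref{propH12} gives $H(\Lambda_1)\subset H(\Lambda_2)$; and $\Pi^{-1}$ is an epimorphism with $\Pi^{-1}\circ\Lambda_2=\Lambda_1$, so the same Proposition gives $H(\Lambda_2)\subset H(\Lambda_1)$, whence $H(\Lambda_1)=H(\Lambda_2)$.

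I do not expect any real obstacle here: this is a short formal consequence of the already-proved Proposition, and the only mildly substantive point is the elementary fact that mutually inverse-ish surjections of finite abelian groups are isomorphisms, which one can phrase either via the finiteness/injectivity argument or by a cardinality count. The write-up should be only a few lines.
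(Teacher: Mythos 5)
Your argument is correct and is exactly the intended deduction: the paper states this corollary without proof as an immediate consequence of Proposition \ref{propH12}, and your two-inclusion argument plus the observation that mutually surjective homomorphisms between finite groups force $r_1=r_2$ and make $\Pi$ an isomorphism is the standard way to fill in the details. No gaps.
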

\begin{corollary}\label{corh'}
Let $\Lambda$ be a vector-coloring of rank $r$ of a simple polytope $P$. Then there is a~bijection between 
the~subgroups $H'\subset \mathbb Z_2^r$ and the~subgroups in $\mathbb Z_2^m$ containing $H(\Lambda)$  given by 
the~correspondance $H'={\rm Ker\,}\Pi\to {\rm Ker\,}\Pi\circ\Lambda$ (or by the isomorphism
$\mathbb Z_2^r\simeq \mathbb Z_2^m/{\rm Ker}\,\Lambda$). 
Moreover, $N(P,\Lambda)/H'\simeq N(P,\Pi\circ\Lambda)$.
\end{corollary}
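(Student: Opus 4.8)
The plan is to derive both assertions from the lattice isomorphism (correspondence) theorem for the epimorphism $\Lambda\colon\mathbb Z_2^m\to\mathbb Z_2^r$, together with Proposition \ref{propH12}; simplicity of $P$ is used only to have $\mathbb R\mathcal Z_P$, hence $N(P,\Lambda)$, defined.

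First I would record the correspondence. Since $\Lambda$ is onto with kernel $H(\Lambda)$, the map $H'\mapsto\Lambda^{-1}(H')$ is an inclusion-preserving bijection from the subgroups of $\mathbb Z_2^r$ onto the subgroups of $\mathbb Z_2^m$ containing $H(\Lambda)$, with inverse $\tilde H\mapsto\Lambda(\tilde H)$. Writing an arbitrary $H'\subset\mathbb Z_2^r$ as $\Ker\Pi$ for an epimorphism $\Pi\colon\mathbb Z_2^r\to\mathbb Z_2^{r_2}$ (for instance the quotient map by $H'$), one has $\Lambda^{-1}(\Ker\Pi)=\Ker(\Pi\circ\Lambda)=H(\Pi\circ\Lambda)$, and $\Pi\circ\Lambda$ is again a vector-coloring since $\Pi$ is surjective, so its images span $\mathbb Z_2^{r_2}$. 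This rewrites the bijection in the asserted form $H'=\Ker\Pi\mapsto H(\Pi\circ\Lambda)$. That the value $H(\Pi\circ\Lambda)$ depends only on $H'$ and not on the chosen $\Pi$, and that every subgroup of $\mathbb Z_2^m$ lying above $H(\Lambda)$ is attained, can be read off from the correspondence directly, or alternatively from Proposition \ref{propH12}: given $\tilde H\supset H(\Lambda)$ write $\tilde H=\Ker\Lambda_2=H(\Lambda_2)$, apply Proposition \ref{propH12} to obtain an epimorphism $\Pi$ with $\Pi\circ\Lambda=\Lambda_2$, whence $\tilde H=H(\Pi\circ\Lambda)$ and $\Ker\Pi=\Lambda(\tilde H)$.

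The last statement $N(P,\Lambda)/H'\simeq N(P,\Pi\circ\Lambda)$ is then Proposition \ref{propH12} applied verbatim to $\Lambda_1=\Lambda$, $\Lambda_2=\Pi\circ\Lambda$: it gives $N(P,\Pi\circ\Lambda)=N(P,\Lambda)/\Ker\Pi$ with $\Ker\Pi\cong H(\Pi\circ\Lambda)/H(\Lambda)$, where $\Ker\Pi=H'$ acts on $N(P,\Lambda)$ through the canonical identification $\mathbb Z_2^r\cong\mathbb Z_2^m/H(\Lambda)$. Since the whole argument is the correspondence theorem plus a previously proved proposition, there is no genuine obstacle; the only point deserving a line of care is checking that the bijection of subgroups is compatible with passing to orbit spaces, i.e. that the two nested algebraic quotients $\mathbb Z_2^m\to\mathbb Z_2^r\to\mathbb Z_2^r/H'$ match the geometric quotients $\mathbb R\mathcal Z_P\to N(P,\Lambda)\to N(P,\Pi\circ\Lambda)$.
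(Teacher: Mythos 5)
Your argument is correct and is essentially the paper's: the statement is presented there as an immediate consequence of Proposition \ref{propH12}, with the bijection being exactly the lattice correspondence for the epimorphism $\Lambda\colon\mathbb Z_2^m\to\mathbb Z_2^r$ (so that $\Lambda^{-1}(\Ker\Pi)=\Ker(\Pi\circ\Lambda)$) and the identification $N(P,\Lambda)/H'\simeq N(P,\Pi\circ\Lambda)$ being Proposition \ref{propH12} applied to $\Lambda_1=\Lambda$, $\Lambda_2=\Pi\circ\Lambda$. Your extra remark about compatibility of the nested algebraic and geometric quotients is a reasonable point of care but poses no difficulty.
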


\begin{corollary}\label{l120}
We have $H(\Lambda_1)\subset H(\Lambda_2)$ if and only if there is a change of coordinates in $\mathbb R^{r_1}$
such that $\mathbb R^{r_2}$ corresponds to the first $r_2$ coordinates, and $\Lambda_{1,i}=(\Lambda_{2,i},\beta_i)$
for each $i=1,\dots, m$ and  some $\beta_i\in\mathbb R^{r_1-r_2}$.
\end{corollary}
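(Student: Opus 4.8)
The plan is to obtain this directly from Proposition \ref{propH12} by a routine change of basis; all vector spaces involved are over $\mathbb Z_2$. By that proposition, $H(\Lambda_1)\subset H(\Lambda_2)$ holds if and only if there is an epimorphism $\Pi\colon\mathbb Z_2^{r_1}\to\mathbb Z_2^{r_2}$ with $\Pi\circ\Lambda_1=\Lambda_2$; in particular the existence of such a $\Pi$ forces $r_1\ge r_2$. So it suffices to show that the existence of such a $\Pi$ is equivalent to the columns $\Lambda_{1,i}$ being expressible, after a change of coordinates, in the asserted form.

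For the forward implication, given $\Pi$ I would split the short exact sequence $0\to\Ker\Pi\to\mathbb Z_2^{r_1}\xrightarrow{\Pi}\mathbb Z_2^{r_2}\to0$: choose a basis of $\mathbb Z_2^{r_2}$, lift it to vectors $\widetilde{\boldsymbol f}_1,\dots,\widetilde{\boldsymbol f}_{r_2}\in\mathbb Z_2^{r_1}$ using surjectivity of $\Pi$, and append a basis $\boldsymbol g_1,\dots,\boldsymbol g_{r_1-r_2}$ of $\Ker\Pi$. These $r_1$ vectors form a basis of $\mathbb Z_2^{r_1}$: applying $\Pi$ to any linear relation among them kills the $\boldsymbol g$-terms and leaves a relation among the chosen basis of $\mathbb Z_2^{r_2}$, so the coefficients of the $\widetilde{\boldsymbol f}_j$ vanish, and then the $\boldsymbol g$-coefficients vanish by independence of the $\boldsymbol g$'s. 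In the resulting coordinate system $\Pi$ is precisely projection onto the first $r_2$ coordinates, so writing $\Lambda_{1,i}=(\alpha_i,\beta_i)$ with $\alpha_i\in\mathbb Z_2^{r_2}$ and $\beta_i\in\mathbb Z_2^{r_1-r_2}$ gives $\alpha_i=\Pi(\Lambda_{1,i})=\Lambda_{2,i}$ for all $i$, which is exactly the claimed normal form.

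For the converse, suppose that after a change of coordinates in $\mathbb Z_2^{r_1}$ we have $\Lambda_{1,i}=(\Lambda_{2,i},\beta_i)$ for every $i$. Then the coordinate projection $\Pi\colon\mathbb Z_2^{r_1}\to\mathbb Z_2^{r_2}$ onto the first $r_2$ coordinates is an epimorphism satisfying $\Pi\circ\Lambda_1=\Lambda_2$, and Proposition \ref{propH12} gives $H(\Lambda_1)\subset H(\Lambda_2)$.

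There is no real obstacle here: the only step requiring a word of care is the claim that the lifts $\widetilde{\boldsymbol f}_j$ together with a basis of $\Ker\Pi$ form a basis of $\mathbb Z_2^{r_1}$, which is the standard splitting of a short exact sequence of vector spaces and holds verbatim over any field, in agreement with the formulation of the statement.
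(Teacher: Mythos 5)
Your proposal is correct and follows essentially the same route as the paper: both reduce to Proposition \ref{propH12} and then choose a basis of $\mathbb Z_2^{r_1}$ by lifting a basis of $\mathbb Z_2^{r_2}$ through $\Pi$ and appending a basis of $\Ker\Pi$, so that $\Pi$ becomes the coordinate projection. You merely spell out the splitting argument and the (easy) converse in more detail than the paper does.
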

\begin{proof}
Indeed, we can choose a basis $e_1$, $\dots$, $e_{r_1}$ in $\mathbb Z_2^{r_1}$ such that
$\Pi(e_1)$, $\dots$, $\Pi(e_{r_2})$ is the standard basis in $\mathbb R^{r_2}$, and $e_{r_2+1}$, $\dots$, $e_{r_1}$ 
is a basis in ${\rm Ker}\, \Pi$. We have $\mathbb Z_2^{r_1}=\langle e_1,\dots,e_{r_2}\rangle\oplus 
\langle e_{r_2+1},\dots,e_{r_1}\rangle$, and in this basis $\Pi(a,b)=a$.
\end{proof}

The space $N(P,\Lambda)$ is a {\it pseudomanifold}, perhaps with a boundary. 
It is glued from $2^r$ copies of $P$, any facet of each copy belongs to at 
most two copies of $P$, and for any two copies $P\times a$ and $P\times b$ there is a sequence of polytopes $P\times a_i$, 
$i=0$, $\dots$, $l$, such that $a_{i_0}=a$, $a_l=b$, and $P \times a_i\cap P\times a_{i+1}$ 
contains a facet of both polytopes. 
After several barycentric subdivisions this condition translates to a standard definition of a~pseudomanifold
as a~simplicial complex. 
In~particular, the~notion of an~orientation of the space $N(P,\Lambda)$ is well-defined. 
The boundary of~$N(P,\Lambda)$ is glued of copies of facets $F_i$
of $P$ with $\Lambda_i=0$. 
The following result is a generalization of \cite[Lemma 2]{V87}, which gives the sufficient condition for 
orientability of $3$-dimensional small covers,  \cite[Theorem 1.7]{NN05}, which gives the criterion of 
orientability of small covers in any dimension,  and \cite[Lemma 2.4]{KMT15}, which gives the criterion of 
orientability of manifolds defined by linearly independent colorings of right-angled polytopes in any dimension 
(see also \cite[Proposition 1.12]{E22M}).

\begin{proposition}\label{orprop}
Let the vectors $\Lambda_{j_1}, \dots, \Lambda_{j_r}$ form a basis in $\mathbb Z_2^r$. 
Then the  pseudomanifold $N(P,\Lambda)$
is orientable if and only if any nonzero $\Lambda_i$ is a sum of an odd number of these vectors.
Moreover, if $N(P,\Lambda)$ is orientable, then the action of 
an~element $\boldsymbol{x}\in\mathbb Z_2^r$ preserves its orientation 
if and only if $\boldsymbol{x}$ is a sum of an even number of the vectors $\Lambda_{j_1}, \dots, \Lambda_{j_r}$.
\end{proposition}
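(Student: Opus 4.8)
The plan is to realize orientability of $N(P,\Lambda)$ as a consistent choice of signs on the $2^r$ copies of $P$, one sign $\varepsilon(\boldsymbol{a})\in\{\pm1\}$ for each $\boldsymbol{a}\in\mathbb Z_2^r$, such that whenever two copies $P\times\boldsymbol{a}$ and $P\times\boldsymbol{b}$ are glued along a facet $F_i$ (that is, $\boldsymbol{a}+\boldsymbol{b}=\Lambda_i\ne\boldsymbol{0}$), the orientations agree across that facet. Since reflecting in a facet reverses orientation, the gluing compatibility along $F_i$ reads $\varepsilon(\boldsymbol{a}+\Lambda_i)=-\varepsilon(\boldsymbol{a})$ for every $\boldsymbol{a}$ and every $i$ with $\Lambda_i\ne\boldsymbol{0}$. (For $\Lambda_i=\boldsymbol{0}$ the facet $F_i$ lies on the boundary and imposes no condition.) Thus $N(P,\Lambda)$ is orientable if and only if there exists a function $\varepsilon\colon\mathbb Z_2^r\to\{\pm1\}$ with $\varepsilon(\boldsymbol{a}+\Lambda_i)=-\varepsilon(\boldsymbol{a})$ for all nonzero $\Lambda_i$; one should first check carefully, via the barycentric-subdivision description already indicated in the text, that this signed-copies condition is genuinely equivalent to orientability of the pseudomanifold (no lower-dimensional obstruction intervenes precisely because at each face the copies sit like coordinate orthants, so local orientations are determined by the top-dimensional ones).

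Next I would solve this sign equation. Write $\varepsilon(\boldsymbol{a})=(-1)^{\phi(\boldsymbol{a})}$ for a function $\phi\colon\mathbb Z_2^r\to\mathbb Z_2$; the condition becomes $\phi(\boldsymbol{a}+\Lambda_i)=\phi(\boldsymbol{a})+1$ for all nonzero $\Lambda_i$. Fixing the basis $\Lambda_{j_1},\dots,\Lambda_{j_r}$ and normalizing $\phi(\boldsymbol{0})=0$, applying the relation along the basis vectors forces $\phi(\Lambda_{j_{k_1}}+\dots+\Lambda_{j_{k_s}})=s\bmod 2$ for any subset; i.e.\ $\phi$ is the unique $\mathbb Z_2$-linear functional sending each $\Lambda_{j_k}\mapsto1$, namely $\phi=\boldsymbol{c}$ where $\boldsymbol{c}$ is dual to the basis (the all-ones covector in these coordinates). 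Such a $\phi$ exists, and it satisfies $\phi(\boldsymbol{a}+\Lambda_i)=\phi(\boldsymbol{a})+\phi(\Lambda_i)$; so the required relation $\phi(\boldsymbol{a}+\Lambda_i)=\phi(\boldsymbol{a})+1$ holds for every $\boldsymbol{a}$ if and only if $\phi(\Lambda_i)=1$ for every $i$ with $\Lambda_i\ne\boldsymbol{0}$. Unwinding: $\phi(\Lambda_i)=1$ means that when $\Lambda_i$ is expanded in the basis $\{\Lambda_{j_k}\}$, an odd number of basis vectors appear — which is exactly the stated criterion. Conversely if some nonzero $\Lambda_i$ is a sum of an even number of the $\Lambda_{j_k}$, then any candidate $\phi$ (which is forced to be $\boldsymbol{c}$) fails the relation, so no consistent sign assignment exists and $N(P,\Lambda)$ is non-orientable.

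For the last assertion: assuming $N(P,\Lambda)$ orientable, fix the orientation given by $\varepsilon(\boldsymbol{a})=(-1)^{\boldsymbol{c}(\boldsymbol{a})}$ above. The canonical action of $\boldsymbol{x}\in\mathbb Z_2^r$ sends $P\times\boldsymbol{a}$ to $P\times(\boldsymbol{a}+\boldsymbol{x})$ by the identity map on $P$; this map carries the chosen orientation on the source copy to $\varepsilon(\boldsymbol{a})/\varepsilon(\boldsymbol{a}+\boldsymbol{x})=(-1)^{\boldsymbol{c}(\boldsymbol{x})}$ times the chosen orientation on the target copy, and this ratio is independent of $\boldsymbol{a}$, so $\boldsymbol{x}$ acts orientation-preservingly iff $\boldsymbol{c}(\boldsymbol{x})=0$, i.e.\ iff $\boldsymbol{x}$ is a sum of an even number of the basis vectors $\Lambda_{j_1},\dots,\Lambda_{j_r}$. (When $N(P,\Lambda)=\mathbb R\mathcal Z_P$ this recovers the earlier lemma: $\boldsymbol{c}$ is then the all-ones covector on $\mathbb Z_2^m$ and $\boldsymbol{c}(\boldsymbol{x})=x_1+\dots+x_m$.)

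The main obstacle is the first step — rigorously justifying that orientability of the pseudomanifold $N(P,\Lambda)$ is equivalent to the existence of the consistent top-dimensional sign assignment, rather than something that could be obstructed by the singular (lower-dimensional) strata. I expect this to follow from the orthant-model for neighborhoods of faces already described before the proposition, which shows that a coherent orientation of the complement of the codimension-$\ge2$ strata extends across them; but this is the point that needs care, and everything after it is the short linear-algebra computation over $\mathbb Z_2$ sketched above.
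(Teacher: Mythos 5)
Your argument is correct and is essentially the paper's own proof: both reduce orientability to a consistent choice of orientations (signs) on the $2^r$ copies of $P$, note that propagating along the basis facets $F_{j_1},\dots,F_{j_r}$ forces the sign of $P\times\boldsymbol{b}$ to be $(-1)^{\boldsymbol{c}(\boldsymbol{b})}$ for the all-ones covector $\boldsymbol{c}$ in the chosen basis, and then read off both the odd-sum criterion and the orientation behaviour of the translation by $\boldsymbol{x}$. The only difference is presentational: you phrase the computation via an explicit sign function $\varepsilon=(-1)^{\phi}$ (thereby also anticipating Corollary \ref{cororc}), and you flag the reduction from pseudomanifold orientability to top-dimensional sign consistency, which the paper treats as immediate from the orthant-model local structure.
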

\begin{proof}
For $N(P,\Lambda)=P\times \mathbb Z_2^r/\sim$ to be orientable it is necessary and sufficient that for any facet $F_i$ of an oriented polytope $P$ such that $\Lambda_i\ne 0$
the polytope $P\times (\boldsymbol{a}+\Lambda_i)$, which is glued to $P\times\boldsymbol{a}$ along this facet, 
has an opposite orientation. Starting from $P\times a$ and using only facets $F_{j_1}$, $\dots$, $F_{j_r}$ 
we can come from $P\times \boldsymbol{a}$ to any 
$P\times \boldsymbol{b}$, $\boldsymbol{b}\in \mathbb Z_2^r$, 
which defines uniquely the orientation of any polytope $P\times \boldsymbol{b}$. 
For these orientations to be consistent it is necessary and sufficient that for any facet $F_i$ with $\Lambda_i\ne 0$ the polytope
$P\times (\boldsymbol{a}+\Lambda_i)$ is achieved in an odd number of steps, which is equivalent to the fact that 
$\Lambda_i$ is a sum of an odd number of vectors $\Lambda_{j_l}$. 
The element  $\boldsymbol{x}\in\mathbb Z_2^r$  moves the polytope $P\times \boldsymbol{a}$ to 
$P\times (\boldsymbol{a}+\boldsymbol{x})$, so 
it preserves the orientation if and only if $\boldsymbol{x}$ is a sum of an even number of the vectors 
$\Lambda_{j_1}, \dots, \Lambda_{j_r}$.
\end{proof}
This condition can be reformulated in a more invariant form.
\begin{corollary}\label{cororc}
The  pseudomanifold $N(P,\Lambda)$ is orientable if and only if there is a linear function  
$\boldsymbol{c}\in (\mathbb Z_2^r)^*$ such that $\boldsymbol{c}\Lambda_i=1$ for all $i$ with $\Lambda_i\ne 0$. Moreover, if 
$N(P,\Lambda)$ is orientable, then the~action of an element $\boldsymbol{x}\in\mathbb Z_2^r$ preserves its orientation 
if and only if $\boldsymbol{c}\boldsymbol{x}=0$. 
\end{corollary}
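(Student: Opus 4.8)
The plan is to deduce this directly from Proposition \ref{orprop} by taking $\boldsymbol{c}$ to be the linear functional dual to the basis $\Lambda_{j_1},\dots,\Lambda_{j_r}$ used there. First I would fix, exactly as in Proposition \ref{orprop}, a subcollection $\Lambda_{j_1},\dots,\Lambda_{j_r}$ of the vectors $\Lambda_i$ forming a basis of $\mathbb Z_2^r$; such a subcollection exists because the $\Lambda_i$ span $\mathbb Z_2^r$ by the definition of a vector-coloring of rank $r$. Let $\boldsymbol{c}\in(\mathbb Z_2^r)^*$ be the unique linear function with $\boldsymbol{c}\Lambda_{j_l}=1$ for all $l=1,\dots,r$. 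Writing an arbitrary $\boldsymbol{x}\in\mathbb Z_2^r$ uniquely as $\boldsymbol{x}=\sum_{l\in S}\Lambda_{j_l}$, we have $\boldsymbol{c}\boldsymbol{x}=|S|\bmod 2$; thus $\boldsymbol{c}\boldsymbol{x}=0$ exactly when $\boldsymbol{x}$ is a sum of an even number of the $\Lambda_{j_l}$, and $\boldsymbol{c}\Lambda_i=1$ exactly when $\Lambda_i$ is a sum of an odd number of them.

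Both assertions of the corollary then fall out of this translation. If $N(P,\Lambda)$ is orientable, Proposition \ref{orprop} gives that every nonzero $\Lambda_i$ is a sum of an odd number of the $\Lambda_{j_l}$, hence $\boldsymbol{c}\Lambda_i=1$ for all such $i$, so the required linear function exists. Conversely, suppose $\boldsymbol{c}'\in(\mathbb Z_2^r)^*$ satisfies $\boldsymbol{c}'\Lambda_i=1$ for every $i$ with $\Lambda_i\ne\boldsymbol 0$; applying this to the $r$ basis vectors among the $\Lambda_i$ gives $\boldsymbol{c}'\Lambda_{j_l}=1$ for all $l$, so $\boldsymbol{c}'=\boldsymbol{c}$, and then for any nonzero $\Lambda_i=\sum_{l\in S}\Lambda_{j_l}$ we get $1=\boldsymbol{c}\Lambda_i=|S|\bmod 2$, i.e. $|S|$ is odd, so $N(P,\Lambda)$ is orientable by Proposition \ref{orprop}. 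This argument simultaneously shows that $\boldsymbol{c}$ is unique (it is pinned down on the spanning set of nonzero $\Lambda_i$), so the final statement is well-posed: by Proposition \ref{orprop} an element $\boldsymbol{x}\in\mathbb Z_2^r$ preserves the orientation of $N(P,\Lambda)$ iff it is a sum of an even number of the $\Lambda_{j_l}$, which by the computation above is precisely the condition $\boldsymbol{c}\boldsymbol{x}=0$.

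There is essentially no obstacle here; the only point needing a moment's care is that the functional $\boldsymbol{c}$ does not depend on the choice of basis $\Lambda_{j_1},\dots,\Lambda_{j_r}$, and this is exactly the uniqueness observation above. Everything else is bookkeeping converting "a sum of an odd (resp. even) number of basis vectors" into the value of the dual functional $\boldsymbol{c}$.
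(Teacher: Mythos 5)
Your proposal is correct and follows essentially the same route as the paper: both arguments fix a basis $\Lambda_{j_1},\dots,\Lambda_{j_r}$ among the colors, identify the candidate $\boldsymbol{c}$ with the coordinate-sum functional in that basis, and translate ``sum of an odd (resp.\ even) number of basis vectors'' in Proposition \ref{orprop} into the condition $\boldsymbol{c}\Lambda_i=1$ (resp.\ $\boldsymbol{c}\boldsymbol{x}=0$). Your added remark that $\boldsymbol{c}$ is uniquely determined on the spanning set of nonzero $\Lambda_i$ is a harmless extra that the paper leaves implicit.
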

\begin{proof}
Indeed, if there is such a function $\boldsymbol{c}\in (\mathbb Z_2^r)^*$, then for a basis $\Lambda_{j_1}, \dots, \Lambda_{j_r}$
$\boldsymbol{c}\Lambda_{j_s}=1$ for all $s$, hence if $\Lambda_j=u_1\Lambda_{j_1}+\dots+u_r\Lambda_{j_r}$,
then $\boldsymbol{c}\Lambda_{j}=u_1+\dots+u_r=1$, and the number of nonzero elements $u_s$ is odd. On the other hand,
if any vector  $\Lambda_j$ is a sum of an odd number of basis vectors, then
the sum of all the coordinates is the desired linear function.
\end{proof}
\begin{remark}\label{remor}
We can consider the function $\boldsymbol{c}\in (\mathbb Z_2^r)^*$ from Corollary \ref{cororc} 
as the~first coordinate in $\mathbb Z_2^r$. Then $\Lambda_i=(1,\lambda_i)$ if $\Lambda_i\ne 0$. 
More on this correspondence see in~Section \ref{sec:acol}.
\end{remark}

\begin{corollary}\label{Hor}
The  pseudomanifold $N(P,\Lambda)$ is closed and orientable if and only $H(\Lambda)\subset H_0$, that is 
$H(\Lambda)$ consists of orientation preserving involutions.  Moreover, if $N(P,\Lambda)$ is closed and orientable, 
then the subgroup of the orientation-preserving involutions $H_0'\subset \mathbb Z_2^r$ corresponds to the subgroup 
$H_0/{\rm Ker}\,\Lambda$ under the isomorphism $\mathbb Z_2^r\simeq \mathbb Z_2^m/{\rm Ker}\,\Lambda$. 
\end{corollary}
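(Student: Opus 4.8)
The plan is to deduce the statement from Corollary \ref{cororc} together with Lemma (the orientation-preserving elements of $\mathbb R\mathcal Z_P$ are exactly those with an even number of nonzero coordinates) and the description $N(P,\Lambda)=\mathbb R\mathcal Z_P/H(\Lambda)$. The key observation is that ``closed'' and ``orientable'' for $N(P,\Lambda)$ should be handled separately and then combined.

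First I would dispose of closedness. The boundary of $N(P,\Lambda)$ is glued from copies of the facets $F_i$ with $\Lambda_i=\boldsymbol 0$; hence $N(P,\Lambda)$ is closed if and only if $\Lambda_i\ne\boldsymbol 0$ for all $i$. On the other hand $H(\Lambda)={\rm Ker}\,\Lambda\subset H_0={\rm Ker}(\boldsymbol x\mapsto x_1+\dots+x_m)$ holds iff every row of the matrix with columns $\Lambda_1,\dots,\Lambda_m$, together with the all-ones row $\boldsymbol c_0=(1,\dots,1)$, still has the all-ones row in its span of rows; equivalently $\boldsymbol c_0$ is a linear combination of the rows of $\Lambda$, i.e.\ there is $\boldsymbol c\in(\mathbb Z_2^r)^*$ with $\boldsymbol c\Lambda_i=1$ for \emph{every} $i$. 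In particular this forces $\Lambda_i\ne\boldsymbol 0$ for all $i$, so $H(\Lambda)\subset H_0$ already implies $N(P,\Lambda)$ is closed.

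Next I would combine this with orientability. If $H(\Lambda)\subset H_0$, pick $\boldsymbol c$ as above; since $\boldsymbol c\Lambda_i=1$ for all $i$ with $\Lambda_i\ne\boldsymbol 0$, Corollary \ref{cororc} gives that $N(P,\Lambda)$ is orientable, and we have just seen it is closed. Conversely, suppose $N(P,\Lambda)$ is closed and orientable. Closedness gives $\Lambda_i\ne\boldsymbol 0$ for all $i$, and orientability gives (Corollary \ref{cororc}) a linear function $\boldsymbol c$ with $\boldsymbol c\Lambda_i=1$ for all $i$ with $\Lambda_i\ne\boldsymbol 0$, which is now \emph{all} $i$. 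Thus $\boldsymbol c$ is a row-combination of the matrix $\Lambda$ equal to $\boldsymbol c_0=(1,\dots,1)$, whence ${\rm Ker}\,\Lambda\subset{\rm Ker}\,\boldsymbol c_0=H_0$; that is, $H(\Lambda)\subset H_0$ and $H(\Lambda)$ consists of orientation-preserving involutions.

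Finally, for the ``moreover'' part: under the isomorphism $\mathbb Z_2^r\simeq\mathbb Z_2^m/{\rm Ker}\,\Lambda$ induced by $\Lambda$, an element $\boldsymbol x\in\mathbb Z_2^r$ corresponds to the coset of any $\boldsymbol y\in\mathbb Z_2^m$ with $\Lambda(\boldsymbol y)=\boldsymbol x$. By Corollary \ref{cororc}, $\boldsymbol x$ preserves the orientation of $N(P,\Lambda)$ iff $\boldsymbol c\boldsymbol x=0$; since $\boldsymbol c\Lambda=\boldsymbol c_0$ as row vectors, $\boldsymbol c\boldsymbol x=\boldsymbol c\Lambda(\boldsymbol y)=y_1+\dots+y_m$, so this says precisely $\boldsymbol y\in H_0$. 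Hence the subgroup $H_0'\subset\mathbb Z_2^r$ of orientation-preserving elements of $N(P,\Lambda)$ is the image of $H_0$, i.e.\ corresponds to $H_0/{\rm Ker}\,\Lambda$. I do not expect a serious obstacle here; the only point requiring a little care is the equivalence, at the level of matrices, between ``$H(\Lambda)\subset H_0$'', ``$\boldsymbol c_0$ lies in the row span of $\Lambda$'', and the existence of the function $\boldsymbol c$ of Corollary \ref{cororc} — essentially the same linear-algebra bookkeeping already used in the proof of Proposition \ref{propH12}.
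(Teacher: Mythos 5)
Your proof is correct and follows essentially the same route as the paper: the paper simply observes that $H_0=H(\Lambda_0)$ for the constant rank-one coloring $\Lambda_0(F_i)=1$ and cites Proposition \ref{propH12} and Corollary \ref{corh'} for exactly the linear-algebra equivalence (``$H(\Lambda)\subset H_0$ iff $\boldsymbol c_0$ lies in the row span of $\Lambda$ iff there is $\boldsymbol c$ with $\boldsymbol c\Lambda_i=1$ for all $i$'') that you spell out by hand before invoking Corollary \ref{cororc}. No gaps.
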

\begin{proof}
The subgroup $H_0$ corresponds to the mapping $\Lambda_0(F_i)=1$ for all $i$. Thus, this is the direct corollary
of Proposition \ref{propH12} and Corollary \ref{corh'}.
\end{proof}
\begin{corollary}\label{H'or}
The  pseudomanifold $N(P,\Lambda)/H'$,  where $H'\subset\mathbb Z_2^r$, is closed and orientable if and only 
$N(P,\Lambda)$ is closed and orientable and $H'\subset H_0'$, that is $H'$ consists of orientation-preserving involutions.
\end{corollary}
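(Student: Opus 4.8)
The plan is to reduce the statement to Corollary~\ref{Hor} by identifying $N(P,\Lambda)/H'$ with a space defined by a vector-coloring. Write $H'=\Ker\Pi$ for an epimorphism $\Pi\colon\mathbb Z_2^r\to\mathbb Z_2^s$. By Corollary~\ref{corh'} we then have $N(P,\Lambda)/H'\simeq N(P,\Pi\circ\Lambda)$, and the subgroup $\widetilde{H'}:=\Lambda^{-1}(H')=\Ker(\Pi\circ\Lambda)=H(\Pi\circ\Lambda)$ of $\mathbb Z_2^m$ contains $H(\Lambda)$ and satisfies $\widetilde{H'}/H(\Lambda)\simeq H'$ under the isomorphism $\mathbb Z_2^r\simeq\mathbb Z_2^m/H(\Lambda)$ induced by $\Lambda$.

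First I would apply Corollary~\ref{Hor} to the vector-coloring $\Pi\circ\Lambda$: the (pseudo)manifold $N(P,\Lambda)/H'$ is closed and orientable if and only if $H(\Pi\circ\Lambda)=\widetilde{H'}\subset H_0$. Since $H(\Lambda)\subset\widetilde{H'}$, this inclusion forces $H(\Lambda)\subset H_0$, which by Corollary~\ref{Hor} applied to $\Lambda$ itself is exactly the assertion that $N(P,\Lambda)$ is closed and orientable. Hence $N(P,\Lambda)/H'$ is closed and orientable if and only if $N(P,\Lambda)$ is closed and orientable \emph{and} $\widetilde{H'}\subset H_0$.

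It remains to rewrite the condition $\widetilde{H'}\subset H_0$ as $H'\subset H_0'$, working under the assumption that $N(P,\Lambda)$ is already closed and orientable, so that $H(\Lambda)\subset H_0$. In that case the second part of Corollary~\ref{Hor} says that the subgroup $H_0'\subset\mathbb Z_2^r$ of orientation-preserving involutions acting on $N(P,\Lambda)$ corresponds to $H_0$, i.e. $\Lambda^{-1}(H_0')=H_0$; so $\widetilde{H'}=\Lambda^{-1}(H')\subset H_0=\Lambda^{-1}(H_0')$ is equivalent to $H'\subset H_0'$ because $\Lambda$ is surjective. Alternatively one may run the last step through Corollary~\ref{cororc}: if $\boldsymbol c\in(\mathbb Z_2^r)^*$ is the orientation function with $\boldsymbol c\Lambda_i=1$ whenever $\Lambda_i\ne\boldsymbol 0$, then $H_0'=\Ker\boldsymbol c$, and $\widetilde{H'}\subset H_0$ holds iff $\boldsymbol c$ vanishes on $H'$; this also shows $N(P,\Pi\circ\Lambda)$ has empty boundary, since $\Pi(\Lambda_i)=\boldsymbol 0$ would mean $\Lambda_i\in H'\subset\Ker\boldsymbol c$, contradicting $\boldsymbol c\Lambda_i=1$. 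I do not expect a genuine obstacle here: the corollary is a formal consequence of Corollaries~\ref{corh'} and~\ref{Hor}, and the only point requiring care is to keep the two invocations of Corollary~\ref{Hor} — one for $\Lambda$, one for $\Pi\circ\Lambda$ — and the subgroup correspondence straight.
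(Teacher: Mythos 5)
Your proposal is correct and follows essentially the same route as the paper: write $H'=\Ker\Pi$, identify $N(P,\Lambda)/H'$ with $N(P,\Pi\circ\Lambda)$, apply Corollary~\ref{Hor} to the coloring $\Pi\circ\Lambda$ to get the condition $\Ker(\Pi\circ\Lambda)\subset H_0$, and unpack this as $\Ker\Lambda\subset H_0$ together with $H'\subset H_0'$. The paper's own proof is just a terser version of exactly this argument.
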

\begin{proof}
Let $H'={\rm Ker}\,\Pi$ for a surjection $\pi\colon \mathbb Z_2^r\to\mathbb Z_2^k$. Then 
$N(P,\Lambda)/H'=N(P,\Pi\circ\Lambda)$ is closed and orientable if and only if ${\rm Ker}\,\Pi\circ\Lambda\subset H_0$.
This holds if and only if ${\rm Ker}\,\Lambda\subset H_0$ and $H'\subset H_0'$.
\end{proof}

\begin{remark}
Corollaries \ref{Hor} and \ref{H'or} can be explained in another way. The pseudomanifold 
$N(P,\Lambda)=\mathbb{R}\mathcal{Z}_P/H(\Lambda)$ of dimension $n$ 
is closed and orientable if and only if $H_n(N(P,\Lambda),\mathbb Q)=\mathbb Q$. There is the following result  connected with 
the~notion of a {\it transfer}.
\begin{theorem}\label{th:transfer}(See \cite[Theorem 2.4]{B72})
Let $G$ be a finite group acting on a simplicial complex $K$ by simplicial homeomorphisms. Then 
for any field $\mathbb F$ of characteristic $0$ or prime to $|G|$ the~mapping 
$\pi_*\colon H_*(|K|,\mathbb F)\to H_*(|K|/G,\mathbb F)$ induces the isomorphism
$$
H_*(|K|,\mathbb F)^G\simeq H_*(|K|/G,\mathbb F),
$$ 
where the subgroup $H_*(|K|,\mathbb F)^G\subset H_*(|K|,\mathbb F)$ 
consists of homology classes invariant under the~action of any  $g_*$, $g\in G$.
\end{theorem}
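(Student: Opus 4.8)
The plan is to prove this on the level of simplicial chains, via the transfer homomorphism and Maschke's theorem. \emph{First I would reduce to an admissible action}: replace $K$ by its barycentric subdivision $\mathrm{sd}\,K$, which changes neither $|K|$ nor the $G$-action up to equivariant homeomorphism. If $g\in G$ maps a simplex $\tau$ of $\mathrm{sd}\,K$ to itself, then $g$ permutes the chain $\sigma_0\subsetneq\cdots\subsetneq\sigma_k$ of simplices of $K$ whose barycenters are the vertices of $\tau$; as $g$ preserves dimension and these dimensions are distinct, $g$ fixes every $\sigma_i$, hence every barycenter, hence $\tau$ pointwise. The same argument shows no simplex of $\mathrm{sd}\,K$ has two vertices in one $G$-orbit. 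Hence $|K|/G$ carries a canonical simplicial structure whose simplices are the $G$-orbits of simplices of $\mathrm{sd}\,K$, the projection $\pi\colon|K|\to|K|/G$ is simplicial, and it induces a surjective chain map $\pi_{\#}\colon C_*(\mathrm{sd}\,K;\mathbb F)\to C_*(|K|/G;\mathbb F)$.

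\emph{Next I would set up the transfer.} For each orbit $\bar\sigma$ choose an oriented representative simplex $\sigma$ and put $t(\bar\sigma)=\sum_{g\in G}g\cdot\sigma$. Admissibility makes this independent of the representative and of its orientation, it lands in the invariant chains $C_*(\mathrm{sd}\,K;\mathbb F)^{G}$, and it commutes with the ($G$-equivariant) boundary $\partial$ because $\pi$ is simplicial; thus $t\colon C_*(|K|/G;\mathbb F)\to C_*(\mathrm{sd}\,K;\mathbb F)^{G}$ is a chain map. A direct count gives $\pi_{\#}\circ t=|G|\cdot\mathrm{id}$ and $t\circ(\pi_{\#}|_{C_*^{G}})=\sum_{g\in G}g=|G|\cdot\mathrm{id}$. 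Since $\mathrm{char}\,\mathbb F$ is $0$ or prime to $|G|$, the integer $|G|$ is invertible in $\mathbb F$, so $\pi_{\#}|_{C_*^{G}}$ and $\tfrac{1}{|G|}t$ are mutually inverse chain isomorphisms; passing to homology, $\pi_{\#}$ induces an isomorphism $H_*\!\big(C_*(\mathrm{sd}\,K;\mathbb F)^{G}\big)\xrightarrow{\ \sim\ }H_*(|K|/G;\mathbb F)$, which by construction is the restriction of $\pi_*$.

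\emph{Then I would identify the invariant homology.} Because $|G|$ is invertible in $\mathbb F$, the averaging element $e=\tfrac{1}{|G|}\sum_{g\in G}g$ is an idempotent of $\mathbb F[G]$ (Maschke), so every $\mathbb F[G]$-module $M$ splits naturally as $M=M^{G}\oplus(1-e)M$. Applied degreewise this splits the chain complex $C_*(\mathrm{sd}\,K;\mathbb F)$, whence $H_*\!\big(C_*(\mathrm{sd}\,K;\mathbb F)^{G}\big)=e_*H_*(|K|;\mathbb F)=H_*(|K|;\mathbb F)^{G}$, the last equality since on homology $e_*$ is the averaging idempotent and hence projects onto the invariants. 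Combining with the previous step gives the asserted isomorphism $\pi_*\colon H_*(|K|;\mathbb F)^{G}\xrightarrow{\ \sim\ }H_*(|K|/G;\mathbb F)$.

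\emph{The hard part} is the first step: arranging that the action is admissible, that $|K|/G$ is a genuine simplicial complex, and that $\pi_{\#}$ and $t$ are honest chain maps there — this amounts to controlling how $G$ permutes and orients simplices, which is precisely what admissibility handles. Once that bookkeeping is in place, the transfer/averaging argument in the remaining steps is classical. Alternatively, one could run the entire argument on the singular chain complex of $|K|$ with its $\mathbb F[G]$-module structure and the singular transfer, avoiding subdivisions altogether; the simplicial version is chosen only because it fits the setting of this paper.
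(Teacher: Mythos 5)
The paper offers no proof of this statement at all: it is quoted directly from Bredon \cite[Theorem 2.4]{B72}, and Bredon's own argument is exactly the chain-level transfer you describe, so your proposal is a faithful reconstruction of the standard proof rather than a different route. The argument is correct, with one technical wrinkle in your first step. A single barycentric subdivision does guarantee that a simplex mapped to itself is fixed pointwise and that no simplex has two vertices in one $G$-orbit, but it does \emph{not} in general make $|K|/G$ a simplicial complex whose simplices are the orbits of simplices: take $K$ the boundary of a triangle and $G=\mathbb Z_3$ acting by rotation; on $\mathrm{sd}\,K$ (a hexagon) the six edges fall into two orbits, both spanning the same pair of vertex orbits, so the quotient is a circle built from a double edge --- a $\Delta$-complex but not a simplicial complex. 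The fix is either to pass to the second barycentric subdivision $\mathrm{sd}^2K$, on which the action is regular in Bredon's sense and the quotient is honestly simplicial, or to observe (as your own argument already supports, since $\pi$ is injective on each closed simplex of $\mathrm{sd}\,K$) that the orbits of simplices give $|K|/G$ a $\Delta$-complex structure whose cellular chain complex still computes $H_*(|K|/G,\mathbb F)$. With that adjustment the transfer identities $\pi_{\#}\circ t=|G|\cdot\mathrm{id}$ and $t\circ\pi_{\#}=\sum_{g\in G}g$, and the Maschke averaging that identifies $H_*(C_*^{G})$ with $H_*(|K|,\mathbb F)^{G}$, go through verbatim.
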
  
The action of $\mathbb Z_2^m$ on $\mathbb R\mathcal{Z}_P$ as well as $\mathbb Z_2^r$ on $N(P,\Lambda)$
is simplicial with respect to the structure of a simplicial complex arising from the barycentric subdivision of $P$,
hence for $H_n(N(P,\Lambda)/H',\mathbb Q)$ to be isomorphic to $\mathbb Q$ it is necessary and sufficient that 
$H_n(N(P,\Lambda),\mathbb Q)\simeq \mathbb Q$ (that is, $N(P,\Lambda)$ is closed and orientable)
and $H_n(N(P,\Lambda),\mathbb Q)^G=H_n(N(P,\Lambda),\mathbb Q)$ (that is, any element of $G$ preserves the orientation). 
\end{remark}

\section{A complex $\mathcal{C}(P,c)$ defined by a coloring $c$}\label{sec:CPc}
\begin{construction} 
Let us call a surjective mapping $c$ of the set of facets $\{F_1,\dots, F_m\}$ of a~polytope $P$
to a finite set consisting of $l$ elements a {\it coloring} of the polytope $P$ in $l$ colors. For 
convenience we identify the set with $[l]=\{1,\dots,l\}$, but 
in what follows it will be often a subset of $\mathbb Z_2^r$. 
For any coloring $c$ define a complex $\mathcal{C}(P,c)\subset \partial P$ as follows. Its ``facets'' are connected 
components of unions of all the facets of $P$ of the same color, 
``$k$-faces'' are connected components of intersections of $(n-k)$ different facets. By definition each $k$-face
is a union of $k$-faces of $P$. Choose a linear order of all the facets $G_1$, $\dots$, $G_M$.

By an {\it equivalence} of two complexes $\mathcal{C}(P,c)$ and $\mathcal{C}(Q,c')$ we mean 
a homeomorphism $P\to Q$ sending facets of $\mathcal{C}(P,c)$ to facets of $\mathcal{C}(Q,c')$. 
If there is such an equivalence, we call $\mathcal{C}(P,c)$ and $\mathcal{C}(Q,c')$ {\it equivalent}.
\end{construction}

Denote $\mathbb R^k_{\geqslant}=\{(y_1,\dots,y_k)\in \mathbb R^k\colon y_i\geqslant 0\text{ for all } i\}$.
For a subset $\omega\subset [m]$ denote $P_{\omega}=\bigcup_{i\in\omega}F_i$.
\begin{lemma}\label{lem:Mc}
Let a point $p\in\partial P$ belong to exactly $l\geqslant 0$ facets $G_{i_1}$, $\dots$, $G_{i_l}$ of $\mathcal{C}(P,c)$. 
Then there is a piecewise linear homeomorphism  $\varphi$ of a neighbourhood $U\subset P$ of $p$ to a neighbourhood 
$V\subset \mathbb R^l_{\geqslant}\times\mathbb R^{n-l}$ such that $\varphi(G_{j_s}\cap U)=V\cap\{y_s=0\}$, $s=1,\dots,l$.  
\end{lemma}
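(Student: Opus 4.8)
The plan is to reduce the statement to the standard local structure of a simple polytope near a point of its boundary, and then observe that passing from facets of $P$ to facets of $\mathcal{C}(P,c)$ (which are unions of facets of $P$ of the same color) only \emph{forgets} some of the coordinate hyperplanes, so it can only decrease the number of local ``walls'' through $p$, and does so in a controlled way. First I would recall that since $P$ is a simple $n$-polytope, if $p$ lies in exactly $k$ facets $F_{i_1},\dots,F_{i_k}$ of $P$ (equivalently, $p$ lies in the relative interior of the $(n-k)$-face $F_{i_1}\cap\dots\cap F_{i_k}$), then there is a piecewise linear homeomorphism of a neighbourhood $U'\subset P$ of $p$ onto a neighbourhood of the origin in $\mathbb R^k_{\geqslant}\times\mathbb R^{n-k}$ carrying $F_{i_s}\cap U'$ to $\{y_s=0\}$ for $s=1,\dots,k$. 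This is the local model of a simple polytope and may be taken as known from \cite{Z95,Gb03}.

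Next I would analyze how the $k$ facets $F_{i_1},\dots,F_{i_k}$ of $P$ through $p$ are distributed among the facets $G_{j_1},\dots,G_{j_l}$ of $\mathcal{C}(P,c)$ through $p$. By construction, each $G_j$ is a connected component of the union of all facets of $P$ of a fixed color, and a $k$-face of $\mathcal{C}(P,c)$ through $p$ is a connected component of an intersection of $n-k$ of the $G_j$'s; in particular, near $p$ the facet $G_{j_s}$ is a union of some of the local walls $\{y_a=0\}$. The key combinatorial point is: the facets $F_{i_1},\dots,F_{i_k}$ through $p$ that receive the \emph{same} color must, locally near $p$, be merged into a single facet $G_{j_s}$, whereas facets of different colors give different $G$'s. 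So if among the colors $c(F_{i_1}),\dots,c(F_{i_k})$ there are exactly $l$ distinct values, the point $p$ lies in exactly $l$ facets of $\mathcal{C}(P,c)$. Grouping the index set $\{1,\dots,k\}$ into the color classes $A_1\sqcup\dots\sqcup A_l$, the facet $G_{j_s}$ coincides near $p$ with $\bigcup_{a\in A_s}\{y_a=0\}$.

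Finally I would produce the desired homeomorphism by a coordinate change collapsing each block of coordinates $\{y_a : a\in A_s\}$ to a single nonnegative coordinate. Concretely, on the local model $\mathbb R^k_{\geqslant}\times\mathbb R^{n-k}$ define new coordinates $z_s=\min_{a\in A_s} y_a \geqslant 0$ for $s=1,\dots,l$, together with suitable complementary coordinates (for instance, using differences $y_a - z_s$ within a block and the original $y$'s on the $\mathbb R^{n-k}$ factor, after a PL flattening). One checks that $(z_1,\dots,z_l,\text{rest})$ is a piecewise linear homeomorphism from a (possibly smaller) neighbourhood of the origin in $\mathbb R^k_{\geqslant}\times\mathbb R^{n-k}$ onto a neighbourhood of the origin in $\mathbb R^l_{\geqslant}\times\mathbb R^{n-l}$, and that $\{z_s=0\}$ is exactly the image of $\bigcup_{a\in A_s}\{y_a=0\}$, i.e.\ of $G_{j_s}\cap U$. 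Composing with the first homeomorphism $U'\to \mathbb R^k_{\geqslant}\times\mathbb R^{n-k}$ gives the required $\varphi$. The main obstacle is the bookkeeping in this last step: one must verify that the ``block-minimum'' map together with the complementary coordinates is genuinely a PL homeomorphism near the origin (it is the standard PL identification of a product of orthants of various dimensions with a single orthant, iterated over blocks), and that it is compatible with the identification of $G_{j_s}$ near $p$ with the union of the corresponding walls — this compatibility is exactly the combinatorial statement that same-colored facets of $P$ meeting at $p$ form one local sheet of $\mathcal{C}(P,c)$, which follows from the definition of the connected components in $\mathcal{C}(P,c)$.
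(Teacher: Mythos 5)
Your proposal is correct and follows essentially the same route as the paper: reduce to the local orthant model $\mathbb R^k_{\geqslant}\times\mathbb R^{n-k}$ of the simple polytope at $p$, group the walls through $p$ by color, and collapse each block $\mathbb R^{p_s}_{\geqslant}$ to $\mathbb R_{\geqslant}\times\mathbb R^{p_s-1}$ (your block-minimum coordinate $z_s=\min_{a\in A_s}y_a$ is exactly the coordinate along the direction $\boldsymbol{e}_1+\dots+\boldsymbol{e}_{p_s}$ in the paper's cone decomposition). The only difference is that the paper carries out the ``PL flattening'' of $\partial\mathbb R^{p}_{\geqslant}$ onto $\mathbb R^{p-1}$ explicitly via a subdivision of $\mathbb R^{p}_{\geqslant}$ into simplicial cones, whereas you cite it as the standard identification; this is a presentational, not a mathematical, gap.
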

\begin{proof}

Take the face $G(p)=\bigcap_{F_i\ni p}F_i=F_{j_1}\cap\dots\cap F_{j_k}$. 
Since the distance from $p$  to any facet $F_j$, $p\notin F_j$, is positive, 
there is a neighbourhood $U(p)\subset \mathbb R^n$ such that $U(p)\cap P=U(p)\cap S(p)$, where 
$$
S(p)=\{x\in \mathbb R^n\colon \boldsymbol{a}_{j_1}\boldsymbol{x}+b_{j_1}\geqslant 0,\dots, 
\boldsymbol{a}_{j_k}\boldsymbol{x}+b_{j_k}\geqslant 0\},
$$ 
and $\boldsymbol{a}_j\boldsymbol{x}+b_j\geqslant 0$ is the halfspace defined by a facet $F_j$. 

For any vertex $v\in G(p)$ there is an affine change of coordinates $y_j= \boldsymbol{a}_j\boldsymbol{x}+b_j$:
$F_j\ni v$.  In~the~new coordinates  
$$
S(p)=\{y_{j_1}\geqslant 0\}\times\dots\times \mathbb \{y_{j_k}\geqslant 0\}\times \mathbb R^{n-k}=\mathbb R^k_{\geqslant}
\times\mathbb R^{n-k},
$$
where for the point $p$ we have $y_{j_1}=\dots=y_{j_k}=0$ and $y_j>0$ for all the other $j$.

Let $G_{i_s}=P_{\omega_{i_s}}$. We have a decomposition 
$\{j_1,\dots,j_k\}=\omega_{i_1}(p)\sqcup\dots\sqcup\omega_{i_l}(p)$, where 
$\omega_{i_s}(p)=\omega_{i_s}\cap \{j_1,\dots,j_k\}$. Set $p_{i_s}=|\omega_{i_s}(p)|$.
Then 
$$
S(p)=\mathbb R^{p_{i_1}}_{\geqslant}\times\dots\mathbb R^{p_{i_l}}_{\geqslant}\times\mathbb R^{n-k}.
$$

Each $\mathbb R^p_{\geqslant}$ is piecewise linearly homeomorphic to $\mathbb R^{p-1}\times\mathbb R_{\geqslant}$.
Namely
$$
\mathbb R^p_{\geqslant}={\rm cone}\,(\boldsymbol{e}_1,\dots,\boldsymbol{e}_p)=
\bigcup\limits_{j=1}^p{\rm cone}\, (\boldsymbol{e}_1,\dots,\boldsymbol{e}_ {j-1},\boldsymbol{e}_1+\dots+\boldsymbol{e}_p,\boldsymbol{e}_ {j+1}, \dots\boldsymbol{e}_p).
$$
Then the mapping 
$$
\boldsymbol{e}_1\to\boldsymbol{e}_1,\dots,\boldsymbol{e}_{p-1}\to\boldsymbol{e}_{p-1}, 
\boldsymbol{e}_p\to -\boldsymbol{e}_1-\dots-\boldsymbol{e}_{p-1}, \boldsymbol{e}_1+\dots+\boldsymbol{e}_p\to\boldsymbol{e}_p
$$ 
defines a linear homeomorphism of each cone to its image and a piecewise linear homeomorphism
$\mathbb R^p_{\geqslant}\simeq \mathbb R^{p-1}\times\mathbb R_{\geqslant}$. It maps  
$\partial \mathbb R^p_{\geqslant}=\mathbb R^p_{\geqslant}\cap\bigcup_{i=1}^p\{y_i=0\}$ to $\mathbb R^{p-1}$.
Then we have a homeomorphism 
$$
S(p)=\mathbb R^{p_{i_1}}_{\geqslant}\times\dots\mathbb R^{p_{i_l}}_{\geqslant}\times\mathbb R^{n-k}\simeq
(\mathbb R^{p_1-1}\times\mathbb R_{\geqslant})\times\dots\times (\mathbb R^{p_l-1}\times\mathbb R_{\geqslant})\times\mathbb R^{n-k}\simeq \mathbb R^l_\geqslant\times \mathbb R^{n-l}, 
$$
which sends each set $G_{i_s}\cap S(p)$ to the corresponding hyperplane $\{y_s=0\}$.
\end{proof}
\begin{corollary}
Any set $P_{\omega}$, $\omega\ne\varnothing, [m]$, is a topological $n$-manifold with a boundary. 
\end{corollary}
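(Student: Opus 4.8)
The plan is to reduce the statement to the local normal form supplied by Lemma~\ref{lem:Mc}. Since $\varnothing\ne\omega\ne[m]$, I would first pick a coloring $c$ of $P$ that gives colour $1$ to exactly the facets $F_i$ with $i\in\omega$ and pairwise distinct further colours to the remaining (nonempty) collection of facets; by construction the facets of $\mathcal{C}(P,c)$ of colour $1$ are precisely the connected components of $\bigcup_{i\in\omega}F_i=P_\omega$. Each $F_i$ is connected, so it lies in a single component; hence the components are finite unions of facets, each closed in $P_\omega$ with closed complement, therefore also open. Thus $P_\omega$ is the topological disjoint union of its components, and it suffices to show that one facet $G$ of $\mathcal{C}(P,c)$ is a topological manifold with boundary, necessarily of dimension $n-1$ since $G$ is a union of facets of $P$.

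Now fix $p\in G$. Listing $G$ first among the facets of $\mathcal{C}(P,c)$ through $p$, Lemma~\ref{lem:Mc} gives an integer $l\geqslant 1$ and a PL homeomorphism $\varphi$ of a neighbourhood $U\subset P$ of $p$ onto an open set $V\subset\mathbb R^l_{\geqslant}\times\mathbb R^{n-l}$ with $\varphi(G\cap U)=V\cap\{y_1=0\}$. Then $G\cap U$ is a neighbourhood of $p$ in $G$, and $\varphi$ restricts to a homeomorphism of it onto the open subset $V\cap\{y_1=0\}$ of $\{y_1=0\}\cap(\mathbb R^l_{\geqslant}\times\mathbb R^{n-l})=\{0\}\times\mathbb R^{l-1}_{\geqslant}\times\mathbb R^{n-l}$; moreover $\varphi(p)$ has its first $l$ coordinates equal to $0$, so under this identification it sits at the origin, the vertex of the orthant $\mathbb R^{l-1}_{\geqslant}$. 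Hence $p$ has a neighbourhood in $G$ homeomorphic to an open subset of $\mathbb R^{l-1}_{\geqslant}\times\mathbb R^{n-l}$ containing the origin.

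The one point needing care -- the Lemma by itself only hands us a model with corners -- is that $\mathbb R^{l-1}_{\geqslant}\times\mathbb R^{n-l}$ is in fact PL homeomorphic to a half-space. For $l=1$ it is simply $\mathbb R^{n-1}$, so $p$ is an interior point of $G$. For $l\geqslant 2$ I would apply the PL homeomorphism $\mathbb R^{q}_{\geqslant}\cong\mathbb R^{q-1}\times\mathbb R_{\geqslant}$ built inside the proof of Lemma~\ref{lem:Mc} (it carries $\partial\mathbb R^{q}_{\geqslant}$ onto $\mathbb R^{q-1}$), with $q=l-1$, to obtain $\mathbb R^{l-1}_{\geqslant}\times\mathbb R^{n-l}\cong\mathbb R^{n-2}\times\mathbb R_{\geqslant}$ with the origin going to a boundary point; hence $p$ is a boundary point of $G$. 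In either case $p$ has a neighbourhood in $G$ homeomorphic to an open subset of $\mathbb R^{n-1}$ or of $\mathbb R^{n-2}\times\mathbb R_{\geqslant}$; being a Hausdorff, second-countable subspace of $P\subset\mathbb R^n$, $G$ is therefore a topological $(n-1)$-manifold with boundary, and so is the disjoint union $P_\omega$. I do not expect a genuine obstacle: Lemma~\ref{lem:Mc} already does the essential work -- collapsing the possibly several facets $F_i\subset G$ through $p$ into a single coordinate hyperplane -- and all that remains is the bookkeeping reduction to connected components and the single ``corner straightening'' above.
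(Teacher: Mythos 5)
Your argument is correct and is essentially the paper's: the official one-line proof is to take the two-colouring $c(F_i)=1$ for $i\in\omega$, $c(F_i)=2$ otherwise, and invoke Lemma~\ref{lem:Mc}, which then gives $l\leqslant 2$ at every point of $P_\omega$ (a point lies in at most one facet of $\mathcal{C}(P,c)$ of each colour), so the slice $\{y_1=0\}$ is already an open subset of $\mathbb{R}^{n-1}$ or of a half-space and your corner-straightening step is not needed --- your finer colouring works too, at the cost of that extra step. You are also right that the dimension is $n-1$ rather than $n$: since $P_\omega\subset\partial P$, the ``$n$-manifold'' in the statement is evidently a misprint.
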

\begin{proof}
To prove this it is sufficient to consider a coloring $c(F_i)=\begin{cases} 1,&i\in \omega\\
2,&i\notin \omega\end{cases}$.
\end{proof} 
\begin{corollary}
Each $k$-face of  $\mathcal{C}(P,c)$ is a topological $k$-manifold, perhaps with a boundary.
\end{corollary}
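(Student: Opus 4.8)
The plan is to localise the statement at an arbitrary point of a $k$-face and to read off the local model directly from Lemma \ref{lem:Mc}. Fix a $k$-face $E$ of $\mathcal C(P,c)$; by definition $E$ is a connected component of an intersection $G_{i_1}\cap\dots\cap G_{i_{n-k}}$ of $n-k$ distinct facets of the complex. I would pick an arbitrary point $p\in E$ and let $G_{i_1},\dots,G_{i_{n-k}},G_{j_1},\dots,G_{j_t}$ (with $t\geqslant 0$) be the complete list of facets of $\mathcal C(P,c)$ that contain $p$; thus $p$ lies in exactly $l=n-k+t$ facets, and $t\leqslant k$ because $l\leqslant n$.

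Applying Lemma \ref{lem:Mc} at $p$ gives a piecewise linear homeomorphism $\varphi$ from a neighbourhood $U\subset P$ of $p$ onto a neighbourhood $V\subset\mathbb R^{l}_{\geqslant}\times\mathbb R^{n-l}$ sending the trace on $U$ of the $s$-th facet through $p$ to the coordinate hyperplane $\{y_s=0\}$. After reordering the coordinates I may assume that $G_{i_1},\dots,G_{i_{n-k}}$ correspond to $y_1,\dots,y_{n-k}$ and $G_{j_1},\dots,G_{j_t}$ to $y_{n-k+1},\dots,y_l$. Shrinking $U$ so that $V$ is convex, the set $U\cap(G_{i_1}\cap\dots\cap G_{i_{n-k}})=\varphi^{-1}\!\big(V\cap\{y_1=\dots=y_{n-k}=0\}\big)$ is carried onto $V\cap\big(\{\boldsymbol{0}\}\times\mathbb R^{t}_{\geqslant}\times\mathbb R^{k-t}\big)$ — here $n-l=k-t$ — which is a convex, hence connected, set containing $p$ and therefore equals $E\cap U$. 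So a neighbourhood of $p$ in $E$ is piecewise linearly homeomorphic to an open subset of $\mathbb R^{t}_{\geqslant}\times\mathbb R^{k-t}$.

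It remains to recognise $\mathbb R^{t}_{\geqslant}\times\mathbb R^{k-t}$: using the piecewise linear homeomorphism $\mathbb R^{p}_{\geqslant}\cong\mathbb R^{p-1}\times\mathbb R_{\geqslant}$ from the proof of Lemma \ref{lem:Mc} one identifies it with $\mathbb R^{k}$ when $t=0$ and with $\mathbb R^{k-1}\times\mathbb R_{\geqslant}\cong\mathbb R^{k}_{\geqslant}$ when $t\geqslant 1$. Hence every point of $E$ has a neighbourhood in $E$ homeomorphic to an open subset of $\mathbb R^{k}$ or of $\mathbb R^{k}_{\geqslant}$, which is exactly the claim; along the way one sees that the boundary of $E$ is precisely the set of points of $E$ lying on some facet of $\mathcal C(P,c)$ other than $G_{i_1},\dots,G_{i_{n-k}}$.

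I do not expect a genuine obstacle here: the only delicate point is the step identifying the full local intersection $U\cap(G_{i_1}\cap\dots\cap G_{i_{n-k}})$ with the single component $E$, which forces one to shrink $U$ until $V$ is convex, together with keeping track of the index $t=l-(n-k)$ that distinguishes interior points of $E$ (where $t=0$) from boundary points (where $t\geqslant 1$).
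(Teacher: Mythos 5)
Your proof is correct and is exactly the argument the paper intends: the paper states only that ``the proof is similar'' to the preceding corollary, and the intended content is precisely your localisation at a point of the $k$-face via Lemma \ref{lem:Mc}, reading off the local model $\mathbb R^{t}_{\geqslant}\times\mathbb R^{k-t}\cong\mathbb R^{k}$ or $\mathbb R^{k}_{\geqslant}$. Your care about shrinking $U$ so that the local intersection is connected and hence coincides with the given component $E$ is a detail the paper leaves implicit, and you handle it correctly.
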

The proof is similar.
\begin{remark}
It follows from Lemma \ref{lem:Mc} that the polytope $P$ with the complex $\mathcal{C}(P,c)$ on its boundary 
has the structure of a manifold with facets in the sense of \cite[Definition 7.1.2]{BP15}.  
\end{remark}

\begin{proposition}\label{prop:dnr}
Let $c$ be a coloring of a simplex $\Delta^n$ in $r$ colors. Then 
there is a homeomorphism of $\Delta^n$ to the set 
$$
S^n_{r,\geqslant}=\{(x_1,\dots,x_{n+1})\in\mathbb R^{n+1}\colon x_1\geqslant0,\dots, x_r\geqslant 0,x_1^2+\dots+x_{n+1}^2=1\}\subset S^n
$$ 
such that each facet $G_i$ of $\mathcal{C}(\Delta^n,c)$ is mapped to $S^n_{r,\geqslant}\cap \{x_i=0\}$, $i=1,\dots,r$.
\end{proposition}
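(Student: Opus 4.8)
The plan is to identify $\Delta^n$ with a spherical simplex and then apply, one color class at a time, the piecewise linear model $\mathbb R^p_{\geqslant}\simeq\mathbb R^{p-1}\times\mathbb R_{\geqslant}$ already built in the proof of Lemma \ref{lem:Mc}, assembled into a homeomorphism of orthants that is radial (carries rays to rays) so that it descends to the spheres.

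First I take the geometric realization $\Delta^n=\{t\in\mathbb R^{n+1}\colon t_j\geqslant0,\ t_1+\dots+t_{n+1}=1\}$ with $F_j=\{t_j=0\}$. The radial map $t\mapsto t/\|t\|$ is a homeomorphism $\Delta^n\to S^n_{n+1,\geqslant}$, with inverse $x\mapsto x/(x_1+\dots+x_{n+1})$, sending $F_j$ onto $S^n_{n+1,\geqslant}\cap\{x_j=0\}$. Let $I_1,\dots,I_r$ be the color classes of $c$; since any union of facets of a simplex is (path) connected, the facet $G_i$ of $\mathcal C(\Delta^n,c)$ equals $\bigcup_{j\in I_i}F_j$. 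Put $n_i=|I_i|\geqslant1$, so $n_1+\dots+n_r=n+1$, and reindex the coordinates of $\mathbb R^{n+1}$ (equivalently, the facets $F_j$) so that $I_1,\dots,I_r$ are the consecutive blocks of sizes $n_1,\dots,n_r$. Then $\mathbb R^{n+1}_{\geqslant}=\mathbb R^{n_1}_{\geqslant}\times\dots\times\mathbb R^{n_r}_{\geqslant}$, and under the radial identification $G_i=S^n\cap\bigl(\bigcup_{j\in I_i}\{x_j=0\}\cap\mathbb R^{n+1}_{\geqslant}\bigr)$, i.e. $S^n$ intersected with the part of $\partial\mathbb R^{n+1}_{\geqslant}$ coming from the $i$-th block.

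Next I assemble the homeomorphism of orthants. Taking the product of the maps $\mathbb R^{n_i}_{\geqslant}\simeq\mathbb R^{n_i-1}\times\mathbb R_{\geqslant}$ from the proof of Lemma \ref{lem:Mc} and collecting the $r$ resulting $\mathbb R_{\geqslant}$-coordinates up front gives a piecewise linear homeomorphism
$$g\colon\mathbb R^{n+1}_{\geqslant}\ \xrightarrow{\ \sim\ }\ \mathbb R^r_{\geqslant}\times\mathbb R^{n+1-r},\qquad g(\boldsymbol 0)=\boldsymbol 0,$$
which is linear on each cone of a subdivision of $\mathbb R^{n+1}_{\geqslant}$ into cones with apex $\boldsymbol 0$; hence $g$ carries every ray from the origin linearly onto a ray from the origin. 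Moreover, since the $i$-th factor map sends $\partial\mathbb R^{n_i}_{\geqslant}$ onto $\mathbb R^{n_i-1}\times\{0\}$, the map $g$ carries $\bigcup_{j\in I_i}\bigl(\{x_j=0\}\cap\mathbb R^{n+1}_{\geqslant}\bigr)$ onto $(\mathbb R^r_{\geqslant}\times\mathbb R^{n+1-r})\cap\{x_i=0\}$ for each $i=1,\dots,r$. Because $g$ fixes the origin and preserves rays, and every ray from $\boldsymbol 0$ in either orthant meets the unit sphere in exactly one point, $g$ descends to a homeomorphism
$$\bar g\colon S^n_{n+1,\geqslant}=S^n\cap\mathbb R^{n+1}_{\geqslant}\ \xrightarrow{\ \sim\ }\ S^n\cap\bigl(\mathbb R^r_{\geqslant}\times\mathbb R^{n+1-r}\bigr)=S^n_{r,\geqslant},\qquad \bar g(x)=\frac{g(x)}{\|g(x)\|},$$
with continuous inverse $y\mapsto g^{-1}(y)/\|g^{-1}(y)\|$. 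Since $g$ carries unions of rays to unions of rays, $\bar g(A\cap S^n)=g(A)\cap S^n$ for every ray-union $A$; applying this to $A=\bigcup_{j\in I_i}\bigl(\{x_j=0\}\cap\mathbb R^{n+1}_{\geqslant}\bigr)$ shows $\bar g$ sends the radial image of $G_i$ onto $S^n_{r,\geqslant}\cap\{x_i=0\}$. Composing the radial homeomorphism $\Delta^n\to S^n_{n+1,\geqslant}$ with $\bar g$ produces the homeomorphism required by the statement.

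The only step that needs real care is the descent from $g$ to $\bar g$: one must check that $g$ is genuinely ray-preserving — which is immediate once $g$ is written as a product of the cone-linear maps of Lemma \ref{lem:Mc}, as a linear map restricted to a cone through $\boldsymbol 0$ sends a ray onto a ray — and that this forces $\bar g$ to be a well-defined homeomorphism of the spherical cross-sections carrying each boundary stratum $G_i$ to $\{x_i=0\}$. Everything else reduces to the elementary radial identification of a simplex with a spherical simplex and the blockwise reuse of $\mathbb R^p_{\geqslant}\simeq\mathbb R^{p-1}\times\mathbb R_{\geqslant}$.
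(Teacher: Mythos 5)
Your proposal is correct and follows essentially the same route as the paper's proof: identify $\Delta^n$ radially with $S^n_{n+1,\geqslant}$, group the coordinates into color blocks, and apply the blockwise piecewise linear homeomorphism $\mathbb R^{p}_{\geqslant}\simeq\mathbb R_{\geqslant}\times\mathbb R^{p-1}$ from the proof of Lemma \ref{lem:Mc}, using that it preserves rays from the origin to descend to the spherical cross-sections. The paper phrases the descent as passing to the quotient $(\mathbb R^{n+1}_{\geqslant}\setminus\{0\})/(\boldsymbol{x}\sim t\boldsymbol{x})$ rather than normalizing by $\|g(x)\|$, but this is the same argument.
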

\begin{proof}
We can use the same argument as in the proof of Lemma \ref{lem:Mc}.
First let us realize $\Delta^n$ as a regular simplex in $\mathbb R^{n+1}$:
\begin{gather*}
\Delta^n\simeq \{(x_1,\dots,x_{n+1})\in\mathbb R^{n+1}\colon x_1\geqslant 0,\dots, x_{n+1}\geqslant 0,x_1+\dots+x_{n+1}=1\}\\
\simeq(\mathbb R^{n+1}_\geqslant\setminus\{0\})/(\boldsymbol{x}\sim t\boldsymbol{x}, t>0)\simeq S^{n}_{n+1,\geqslant}.
\end{gather*} 
Without loss of generality we can assume that 
$$
c(F_i)=\begin{cases}
1,&1\leqslant i\leqslant p_1,\\
2,&p_1+1\leqslant i\leqslant p_1+p_2,\\
\dots\\
r,&p_1+\dots+p_{r-1}+1\leqslant i\leqslant n+1
\end{cases}
$$
As in the proof of Lemma \ref{lem:Mc} we have a piecewise linear homeomorphism 
$$
\mathbb R^{n+1}_{\geqslant}\simeq \mathbb R^{p_1}_{\geqslant}\times\dots\times\mathbb R^{p_r}_{\geqslant}\to
(\mathbb R_{\geqslant}\times\mathbb R^{p_1-1})\times \dots\times (\mathbb R_{\geqslant}\times\mathbb R^{p_r-1})\simeq
\mathbb R^r_{\geqslant}\times\mathbb R^{n+1-r}
$$
which sends rays $t\boldsymbol{x}$, $t>0$, to rays $t\boldsymbol{y}$, and each set 
$\mathbb R^{n+1}_{\geqslant}\cap \{x_i=0\}$ to $(\mathbb R^r_{\geqslant}\times\mathbb R^{n+1-r})\cap \{x_{c(i)}=0\}$.
Then 
$$
\Delta^n\simeq (\mathbb R^{n+1}_\geqslant\setminus\{0\})/(\boldsymbol{x}\sim t\boldsymbol{x}, t>0)\simeq
(\mathbb R^r_{\geqslant}\times\mathbb R^{n+1-r})/(\boldsymbol{x}\sim t\boldsymbol{x}, t>0)\simeq S^n_{r,\geqslant},
$$
and each facet $F_i$ of $\Delta^n$ is mapped to $S^n_{r,\geqslant}\cap\{x_{c(i)}=0\}$.
\end{proof}
\begin{corollary}\label{cor:dnc}
The complexes $\mathcal{C}(\Delta^n,c)$ and $\mathcal{C}(\Delta^n,c')$ are equivalent if and only if the colorings 
$c$ and $c'$ have equal numbers of colors.
\end{corollary}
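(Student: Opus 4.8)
The plan is to derive both implications directly from Proposition \ref{prop:dnr}, together with one elementary observation about the combinatorics of the simplex. Namely, for $n\geqslant 2$ any two facets of $\Delta^n$ intersect in a face of codimension $2$, so the union of all facets of $\Delta^n$ that carry a fixed color is connected and therefore constitutes a single facet of $\mathcal{C}(\Delta^n,c)$. Since a coloring is surjective by definition, it follows that $\mathcal{C}(\Delta^n,c)$ has exactly $r$ facets whenever $c$ uses $r$ colors.

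For the ``only if'' direction I would simply observe that an equivalence $\mathcal{C}(\Delta^n,c)\simeq \mathcal{C}(\Delta^n,c')$ is, by definition, a homeomorphism $\Delta^n\to\Delta^n$ taking the facets of the first complex bijectively onto the facets of the second; hence the number of facets is an invariant of the equivalence class. Together with the count above, this forces $c$ and $c'$ to have the same number of colors.

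For the ``if'' direction, assume $c$ and $c'$ both use $r$ colors. After reindexing the colors, Proposition \ref{prop:dnr} furnishes homeomorphisms $f,f'\colon\Delta^n\to S^n_{r,\geqslant}$ such that $f$ carries the $i$-th facet of $\mathcal{C}(\Delta^n,c)$ onto $S^n_{r,\geqslant}\cap\{x_i=0\}$ and $f'$ carries the $i$-th facet of $\mathcal{C}(\Delta^n,c')$ onto the same set, for $i=1,\dots,r$. Then the composite $(f')^{-1}\circ f\colon\Delta^n\to\Delta^n$ is a homeomorphism sending every facet of $\mathcal{C}(\Delta^n,c)$ onto a facet of $\mathcal{C}(\Delta^n,c')$, which is exactly an equivalence of the two complexes.

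I do not expect any genuine obstacle: the statement is essentially a bookkeeping corollary of Proposition \ref{prop:dnr}. The only points worth a second of care are that the notion of ``equivalence'' really does preserve the facet count (immediate from the definition) and that the reindexing of colors performed inside Proposition \ref{prop:dnr} causes no loss of generality, since an equivalence is not required to respect any labelling of the facets.
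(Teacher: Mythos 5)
Your proof is correct and follows exactly the route the paper intends: the corollary is stated without proof precisely because the ``if'' direction is the composition of the two homeomorphisms to the standard model $S^n_{r,\geqslant}$ from Proposition \ref{prop:dnr}, and the ``only if'' direction is the invariance of the facet count under equivalence together with the observation that $\mathcal{C}(\Delta^n,c)$ has one facet per color. Your extra care about the connectedness of each color class and the harmlessness of reindexing is exactly the right bookkeeping.
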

\begin{definition}
We will denote $\mathcal{C}(n,r)$ the equivalence class of complexes $\mathcal{C}(\Delta^n,c)$ corresponding to $r$ colors.
\end{definition}
\begin{example}\label{Ex:CG}
For any face $G=F_{i_1}\cap\dots\cap F_{i_k}$  of $P$  of codimension $k\geqslant 1$ consider the coloring 
$$
c_G(F_j)=
\begin{cases}
s,&\text{ if }j=i_s, \\
k+1,&\text{ otherwise}.
\end{cases}
$$
\end{example}
\begin{proposition}
The complex $\mathcal{C}(P,c_G)$ is equivalent to $\mathcal{C}(n,k+1)$.
\end{proposition}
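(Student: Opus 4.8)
The plan is to reduce everything to the simplex by radial projection from a vertex of $G$, in the spirit of the proof of Proposition \ref{prop:dnr}. First I would pick a vertex $v$ of the face $G=F_{i_1}\cap\dots\cap F_{i_k}$. Since $P$ is simple, $v$ lies in exactly $n$ facets of $P$; $k$ of them are $F_{i_1},\dots,F_{i_k}$, and I would name the remaining $n-k$ of them $F_{j_1},\dots,F_{j_{n-k}}$. Realizing $P$ in $\mathbb R^n$ with $v$ at the origin and with the affine hulls of the $n$ facets through $v$ as the coordinate hyperplanes, the tangent cone of $P$ at $v$ becomes $\mathbb R^n_{\geqslant}$, and radial projection from $v$ identifies $P$ with $\{tu\colon u\in S^{n-1}_{n,\geqslant},\ 0\leqslant t\leqslant\rho(u)\}$, where $S^{n-1}_{n,\geqslant}=\{x\in\mathbb R^n\colon x_1,\dots,x_n\geqslant0,\ |x|=1\}$ and $\rho$ is the radial function of $P$ at $v$. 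Writing $P$ as an intersection of halfspaces $\boldsymbol{a}_l\boldsymbol{x}+b_l\geqslant 0$ with $b_l=0$ precisely for the facets through $v$, one checks that $\rho$ is continuous and strictly positive on $S^{n-1}_{n,\geqslant}$, so $P$ is the cone over $S^{n-1}_{n,\geqslant}$, which by Proposition \ref{prop:dnr} is homeomorphic to $\Delta^n$. Let $\Phi\colon P\to\Delta^n$ be the resulting homeomorphism, sending the cone point $v$ to a vertex of $\Delta^n$.

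Next I would track the facets under $\Phi$. Each facet through $v$ has the form $\{tu\colon u\in S^{n-1}_{n,\geqslant}\cap\{x_s=0\},\ 0\leqslant t\leqslant\rho(u)\}$, so $\Phi$ carries it onto one of the $n$ facets of $\Delta^n$ through the chosen vertex, while it carries $\bigcup_{l\colon v\notin F_l}F_l$ — which equals the ``outer shell'' $\{\rho(u)u\colon u\in S^{n-1}_{n,\geqslant}\}$ — onto the opposite facet $E_0$ of $\Delta^n$. Hence $\Phi$ transports the coloring $c_G$ of Example \ref{Ex:CG} to the coloring $c'$ of $\Delta^n$ that gives the image of $F_{i_s}$ the color $s$ for $s=1,\dots,k$ and the color $k+1$ to the remaining $n+1-k$ facets of $\Delta^n$, namely $E_0$ together with the images of $F_{j_1},\dots,F_{j_{n-k}}$. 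Since $E_0$ meets every other facet of $\Delta^n$, the color-$(k+1)$ class of $c'$ is connected, and pulling this back along $\Phi$ shows that $G_{k+1}=\bigcup_{l\notin\{i_1,\dots,i_k\}}F_l$ is connected as well. Thus $\mathcal C(P,c_G)$ and $\mathcal C(\Delta^n,c')$ both have exactly $k+1$ facets, matched by $\Phi$, so $\Phi$ is an equivalence $\mathcal C(P,c_G)\simeq\mathcal C(\Delta^n,c')$; and since $c'$ is a coloring of $\Delta^n$ in $k+1$ colors, Corollary \ref{cor:dnc} gives $\mathcal C(\Delta^n,c')\simeq\mathcal C(n,k+1)$, completing the argument.

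The only real work is in the radial-projection step, and the point that deserves care — what I expect to be the main obstacle — is the identification $\bigcup_{l\colon v\notin F_l}F_l=\{\rho(u)u\colon u\in S^{n-1}_{n,\geqslant}\}$: one must verify that a ray issuing from $v$ into $P$ can leave $P$ only through a facet not containing $v$ (or through the common boundary of the star and antistar of $v$), never through the relative interior of one of the $n$ facets through $v$. This is where convexity of $P$ and the fact that those $n$ facets span the tangent cone $\mathbb R^n_{\geqslant}$ are used: if a point $x\in P$ lies on a facet $F_l$ with $v\notin F_l$, then the ray from $v$ through $x$ crosses the affine hull of $F_l$ transversally at $x$ (that hyperplane misses the origin), so $\rho(x/|x|)=|x|$; conversely if $x=\rho(u)u$ with $u$ in the relative interior of $S^{n-1}_{n,\geqslant}$ then no facet through $v$ contains $x$. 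The same bookkeeping with the halfspaces $\boldsymbol{a}_l\boldsymbol{x}+b_l\geqslant 0$ yields the continuity and positivity of $\rho$. Everything else — the cone identification with $\Delta^n$, the regrouping of colors, and the appeal to Corollary \ref{cor:dnc} — is routine and parallels the proof of Proposition \ref{prop:dnr}.
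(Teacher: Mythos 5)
Your argument is correct, but it projects from a different center than the paper does, and this changes the shape of the proof. The paper takes a central projection from a point $p\in\mathrm{relint}\,G$ --- the relative interior of the codimension-$k$ face itself --- which carries $P$ homeomorphically onto the normal form $B^n_{k,\geqslant}=\{x\in\mathbb R^n\colon x_1\geqslant 0,\dots,x_k\geqslant 0,\ x_1^2+\dots+x_n^2\leqslant 1\}$, with each $F_{i_s}$ going to $B^n_{k,\geqslant}\cap\{x_s=0\}$ and \emph{all} remaining facets going to the spherical piece. This exhibits $\mathcal{C}(P,c_G)$ as depending only on $(n,k)$ in a single step, so the identification with $\mathcal{C}(n,k+1)$ follows by specializing to $P=\Delta^n$ and $G=\Delta^{n-k}$. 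You instead project radially from a vertex $v$ of $G$, which gives the coarser normal form $P\simeq\Delta^n$ in which only the $n$ facets through $v$ are separated and the whole antistar collapses to the opposite facet; you then must regroup the colors on $\Delta^n$, check that the color-$(k+1)$ class remains connected, and invoke Corollary \ref{cor:dnc}. Both routes work: yours leans on the simplex normal form already established in Proposition \ref{prop:dnr} and Corollary \ref{cor:dnc}, at the price of the extra bookkeeping with the radial function $\rho$ and the star/antistar decomposition (which you carry out correctly, including the key point that a ray from $v$ exits $P$ only through a facet not containing $v$), while the paper's choice of center makes the facets of $\mathcal{C}(P,c_G)$ visible immediately and yields as a byproduct the stronger statement that $\mathcal{C}(P,c_G)\simeq\mathcal{C}(Q,c_{G'})$ whenever $\dim G=\dim G'$.
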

\begin{proof}
A central projection from a point $p\in{\rm relint}\,G$ induces a homeomorphism 
between $P$ and the set 
$$
B^n_{k,\geqslant}=\{(x_1,\dots,x_n)\in\mathbb R^n\colon x_1\geqslant 0,\dots, x_k\geqslant 0,x_1^2+\dots+x_n^2\leqslant 1\}
$$
such that each facet $F_{i_s}$ is mapped to the set $B^n_{k,\geqslant}\cap\{x_s=0\}$, $s=1,\dots, k$, and all the other facets are mapped 
to $B^n_{k,\geqslant}\cap\{x_1^2+\dots+x_n^2=1\}$. Hence, the complexes $\mathcal{C}(P,c_G)$ and $\mathcal{C}(Q,c_{G'})$ are equivalent, if  $P$ and $Q$ are simple $n$-polytopes and $\dim G=\dim G'$. In particular, $\mathcal{C}(P,c_G)$ is equivalent to
$\mathcal{C}(\Delta^n,c_{\Delta^{n-k}})=\mathcal{C}(n,k+1)$. 
\end{proof}
\begin{corollary} There is a homeomorphism of complexes
\begin{equation}
S^n_{r+1,\geqslant} \simeq B^n_{r,\geqslant},
\end{equation}
where one of the facets $\{x_i=0\}$ of $S^n_{r+1,\geqslant}$ is mapped to the facet $\{x_1^2+\dots+x_n^2=1\}$ of $B^n_{r,\geqslant}$.
\end{corollary}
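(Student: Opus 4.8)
The plan is to deduce this directly from the two propositions just proved, specialized to the simplex. By Proposition \ref{prop:dnr} applied to a coloring $c$ of $\Delta^n$ in $r+1$ colors, there is a homeomorphism of $\Delta^n$ onto $S^n_{r+1,\geqslant}$ carrying the facets $G_1,\dots,G_{r+1}$ of $\mathcal{C}(\Delta^n,c)$ to the sets $S^n_{r+1,\geqslant}\cap\{x_i=0\}$, $i=1,\dots,r+1$. On the other hand, pick a face $G$ of $\Delta^n$ of codimension $r$, say $G=F_{1}\cap\dots\cap F_{r}$; then the coloring $c_G$ of Example \ref{Ex:CG} uses exactly $r+1$ colors, and the proof of the preceding proposition produces a homeomorphism of $\Delta^n$ onto $B^n_{r,\geqslant}$ carrying the facets $F_{1},\dots,F_{r}$ of $\mathcal{C}(\Delta^n,c_G)$ to $B^n_{r,\geqslant}\cap\{x_s=0\}$, $s=1,\dots,r$, and the remaining facet (the connected component of $F_{r+1}\cup\dots\cup F_{n+1}$) to $B^n_{r,\geqslant}\cap\{x_1^2+\dots+x_n^2=1\}$.

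First I would invoke Corollary \ref{cor:dnc}: since $c$ and $c_G$ are both colorings of $\Delta^n$ in $r+1$ colors, the complexes $\mathcal{C}(\Delta^n,c)$ and $\mathcal{C}(\Delta^n,c_G)$ are equivalent, i.e.\ there is a homeomorphism $h\colon\Delta^n\to\Delta^n$ sending the facets of one to the facets of the other. Composing the homeomorphism $\Delta^n\xrightarrow{\sim}S^n_{r+1,\geqslant}$ with $h^{-1}$ and then with $\Delta^n\xrightarrow{\sim}B^n_{r,\geqslant}$ yields a homeomorphism $S^n_{r+1,\geqslant}\simeq B^n_{r,\geqslant}$. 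The only point requiring care is bookkeeping of which facet goes where: one has to choose the equivalence $h$ so that the distinguished facet $G_{r+1}$ of $\mathcal{C}(\Delta^n,c)$ corresponds to the color class of $c_G$ carried to the spherical part $\{x_1^2+\dots+x_n^2=1\}$ of $B^n_{r,\geqslant}$, and the facets $G_1,\dots,G_r$ correspond to the $r$ classes carried to $\{x_s=0\}$. Since $\mathcal{C}(n,r+1)$ has exactly $r+1$ facets and the equivalence from Corollary \ref{cor:dnc} can be taken to realize any bijection of color classes (the colorings are only defined up to relabeling colors), this matching is automatic. Hence under the composed homeomorphism the facet $\{x_{r+1}=0\}$ of $S^n_{r+1,\geqslant}$ is carried onto the facet $\{x_1^2+\dots+x_n^2=1\}$ of $B^n_{r,\geqslant}$, as claimed.

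The only genuine content beyond reference-chasing is verifying that the equivalence $h$ can indeed be chosen to induce the prescribed bijection on facets; I expect this to be the main (very mild) obstacle. It follows from the proof of Corollary \ref{cor:dnc} together with Proposition \ref{prop:dnr}: the homeomorphism constructed there depends only on the numbers $p_1,\dots,p_{r+1}$ of facets of each color, and permuting the colors permutes the corresponding coordinates $x_1,\dots,x_{r+1}$, so one can realize any permutation of the facets of $\mathcal{C}(n,r+1)$ by a homeomorphism of $\Delta^n$. Alternatively one can simply note that both $S^n_{r+1,\geqslant}$ and $B^n_{r,\geqslant}$ are, via Proposition \ref{prop:dnr} and the construction $c_G$, explicit models of $\mathcal{C}(n,r+1)$ with its facets labeled, and a direct homeomorphism can be written down coordinate-wise using the piecewise-linear identification $\mathbb R^p_{\geqslant}\simeq\mathbb R^{p-1}\times\mathbb R_{\geqslant}$ from the proof of Lemma \ref{lem:Mc} applied to the last block of coordinates only; this makes the statement a formal consequence without appealing to the equivalence class at all.
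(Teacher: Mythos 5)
Your argument is correct and is exactly the (implicit) argument the paper intends: both $S^n_{r+1,\geqslant}$ and $B^n_{r,\geqslant}$ are realized as models of $\mathcal{C}(n,r+1)$ by Proposition \ref{prop:dnr} and by the central-projection proof of the preceding proposition applied to $c_G$ for a codimension-$r$ face $G$ of $\Delta^n$, and composing the two identifications (with the colors labelled consistently, e.g.\ taking $c=c_G$ in both) gives the homeomorphism with the stated facet correspondence. Your bookkeeping remark about realizing the required bijection of facets by relabelling colors is the only point needing care, and you handle it correctly.
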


\section{A weakly equivariant classification of spaces $N(P,\Lambda)$}\label{sec:wecL}
\begin{definition}
Two spaces $X$ and $Y$ with actions of $\mathbb Z_2^r$ are called  {\it weakly equivariantly homeomorphic}
if there is a homeomorphism $\varphi\colon X\to Y$ and 
an automorphism $\psi\colon \mathbb Z_2^r\to \mathbb Z_2^r$ 
such that $\varphi(\boldsymbol{a}\cdot \boldsymbol{x})=\psi(\boldsymbol{a})\cdot\varphi(\boldsymbol{x})$ for any 
$\boldsymbol{x}\in X$ and $\boldsymbol{a}\in\mathbb Z_2^r$.
\end{definition}

\begin{definition} 
Let $\Lambda_P$ and $\Lambda_Q$ be vector-colorings of rank $r$ of simple $n$-polytopes $P$ and $Q$.
We call the pairs $(P,\Lambda_P)$ and $(Q,\Lambda_Q)$ {\it equivalent}, 
if there is an equivalence $\sigma$ between $\mathcal{C}(P,\Lambda_P)$ and $\mathcal{C}(Q,\Lambda_Q)$ 
and a linear isomorphism $A\colon \mathbb Z_2^r \to \mathbb Z_2^r$ 
such that  $\Lambda_Q(\sigma(G_i))=A\Lambda_P(G_i)$ for all $i=1, \dots, M$.
\end{definition}
The following result generalizes the corresponding fact for linearly independent vector-colorings 
(see \cite[Proposition 1.8]{DJ91} and \cite[Proposition 7.3.8]{BP15}).
\begin{proposition}
The spaces $N(P,\Lambda_P)$ and $N(Q,\Lambda_Q)$ are weakly equivariantly homeomorphic if and only if the pairs
$(P,\Lambda_P)$ and $(Q,\Lambda_Q)$ are equivalent.
\end{proposition}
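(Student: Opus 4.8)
The plan is to prove both implications by tracking how a weakly equivariant homeomorphism interacts with the orbit map onto the polytope. Recall that $N(P,\Lambda)=P\times\mathbb Z_2^r/\!\sim$ carries a canonical $\mathbb Z_2^r$-action with orbit space $P$, and the stabilizer of a point lying over $p\in P$ is exactly the subgroup $\langle\Lambda_i\colon p\in F_i\rangle$. Two points over $p$ and $q$ lie in the same $\mathbb Z_2^r$-orbit iff $p=q$; so the orbit space is $P$ and the decomposition of $P$ into relatively open faces is recovered intrinsically as the locus where the stabilizer subgroup is locally constant. The facets of the complex $\mathcal C(P,\Lambda)$ are precisely the images under $P\hookrightarrow N(P,\Lambda)\to P$ of the connected components of the fixed-point sets of the rank-one subgroups $\langle\Lambda_i\rangle$ that are not contained in lower strata; equivalently a facet $G_i=P_{\omega_i}$ of $\mathcal C(P,\Lambda)$ consists of the points whose stabilizer contains $\Lambda_i$ but not some larger subgroup. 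The value $\Lambda_P(G_i)$ is then read off as the nonzero element $\Lambda_i$ generating the corresponding rank-one stabilizer.

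For the ``if'' direction I would start from an equivalence $\sigma\colon\mathcal C(P,\Lambda_P)\to\mathcal C(Q,\Lambda_Q)$, i.e.\ a homeomorphism $P\to Q$ sending facets $G_i$ to facets $G_i'$ with $\Lambda_Q(\sigma(G_i))=A\Lambda_P(G_i)$, together with the linear isomorphism $A\colon\mathbb Z_2^r\to\mathbb Z_2^r$. Define $\varphi\colon N(P,\Lambda_P)\to N(Q,\Lambda_Q)$ on representatives by $\varphi[(p,\boldsymbol a)]=[(\sigma(p),A\boldsymbol a)]$. This is well defined because the identification subgroup over $p$ is $\langle\Lambda_{P,i}\colon p\in F_i\rangle$ and over $\sigma(p)$ it is $\langle\Lambda_{Q,j}\colon\sigma(p)\in F_j'\rangle$; the point of the complex $\mathcal C$ is that for a point $p$ in the relative interior of a face of $\mathcal C(P,\Lambda_P)$ contained exactly in $G_{i_1},\dots,G_{i_l}$, the subgroup $\langle\Lambda_{P,i}\colon p\in F_i\rangle$ equals $\langle\Lambda_P(G_{i_1}),\dots,\Lambda_P(G_{i_l})\rangle$ (distinct nonzero values of $\Lambda$ on the facets of $P$ through $p$ are exactly the values on the facets of $\mathcal C$ through $p$), so $A$ carries the first subgroup isomorphically onto the second. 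Continuity and bijectivity are immediate from those of $\sigma$ and $A$ on each of the $2^r$ copies of $P$, and the intertwining relation $\varphi(\boldsymbol a\cdot\boldsymbol x)=A(\boldsymbol a)\cdot\varphi(\boldsymbol x)$ holds by construction, so $\varphi$ is a weakly equivariant homeomorphism with $\psi=A$.

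For the ``only if'' direction, suppose $\varphi\colon N(P,\Lambda_P)\to N(Q,\Lambda_Q)$ is a homeomorphism with $\varphi(\boldsymbol a\cdot\boldsymbol x)=\psi(\boldsymbol a)\cdot\varphi(\boldsymbol x)$ for an automorphism $\psi$ of $\mathbb Z_2^r$. Then $\varphi$ descends to a homeomorphism $\bar\varphi\colon P\to Q$ of orbit spaces, and it carries the stabilizer of $\boldsymbol x$ to the stabilizer of $\varphi(\boldsymbol x)$ via $\psi$; hence over corresponding points the rank-one stabilizer subgroups correspond, so $\bar\varphi$ maps each stratum of $\mathcal C(P,\Lambda_P)$ to a stratum of $\mathcal C(Q,\Lambda_Q)$ of the same codimension, in particular facets to facets, giving an equivalence $\sigma=\bar\varphi$ of the two complexes. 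Setting $A=\psi$, the correspondence of rank-one stabilizers reads $\langle\Lambda_Q(\sigma(G_i))\rangle=\psi\langle\Lambda_P(G_i)\rangle$, i.e.\ $\Lambda_Q(\sigma(G_i))=A\,\Lambda_P(G_i)$ (the only nonzero element), so $(P,\Lambda_P)$ and $(Q,\Lambda_Q)$ are equivalent.

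The main obstacle is the bookkeeping in the ``only if'' direction showing that $\bar\varphi$ really sends the facet decomposition of $\mathcal C(P,\Lambda_P)$ to that of $\mathcal C(Q,\Lambda_Q)$ rather than merely respecting the face structure of $P$ itself: one must argue that the combinatorial data of $\mathcal C(P,\Lambda)$ — the partition of facets of $P$ by connected components of equicolored unions, with the colors now being the vectors $\Lambda_i$ — is detected purely by the $\mathbb Z_2^r$-action on $N(P,\Lambda)$. This is exactly where Lemma \ref{lem:Mc} and the local model $\mathbb R^l_{\geqslant}\times\mathbb R^{n-l}$ enter: the number $l$ of facets of $\mathcal C$ through a point, together with the stabilizer group, is a topological invariant of the action near a point, so the stratification by $\mathcal C(P,\Lambda)$ is intrinsic. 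Once this is in place, the rest is the routine verification above that the identification relations match up under $(\sigma,A)$.
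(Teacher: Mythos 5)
Your proposal is correct and follows essentially the same route as the paper: the ``if'' direction builds the map $(p,\boldsymbol a)\mapsto(\sigma(p),A\boldsymbol a)$ on $P\times\mathbb Z_2^r$ and checks compatibility with the identifications, and the ``only if'' direction descends to the orbit spaces and recovers the facets of $\mathcal{C}(P,\Lambda_P)$ as connected components of the loci with prescribed rank-one stabilizer, matched under $\psi=A$. The paper carries out the facet-matching by the same stabilizer-plus-path-connectedness argument you sketch, so no further comment is needed.
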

\begin{proof}
Let the pairs $(P,\Lambda_P)$ and $(Q,\Lambda_Q)$ be equivalent. We will denote by
$G_i$ the facets of $\mathcal{C}(P,\Lambda_P)$, by $G_j'$ the facets of $\mathcal{C}(Q,\Lambda_Q)$, by 
$j=\sigma(i)$ the index such that $\sigma(G_i)=G_j'$. 
Also denote $\Lambda_i=\Lambda_P(G_i)$ and  $\Lambda_j'=\Lambda_Q(G_j')$.

Define a homeomorphism $P\times \mathbb Z_2^r\to Q\times \mathbb Z_2^r$ as 
$(\boldsymbol{p},\boldsymbol{a})\to (\sigma(\boldsymbol{p}),A\boldsymbol{a})$.

If $\boldsymbol{a}_1-\boldsymbol{a}_2=\sum\limits_{i\colon\boldsymbol{p}\in F_i} \Lambda_ix_i$,
then 
\begin{multline*}
A\boldsymbol{a}_1-A\boldsymbol{a}_2=\sum\limits_{i\colon\boldsymbol{p}\in F_i} (A\Lambda_i)x_i=
\sum\limits_{i\colon\boldsymbol{p}\in G_i} (A\Lambda_i)\sum\limits_{k\colon\boldsymbol{p}\in F_k\subset G_i}x_k=
\sum\limits_{i\colon\boldsymbol{p}\in G_i} (A\Lambda_i)\widetilde{x}_i=\\
\sum\limits_{i\colon\boldsymbol{p}\in G_i} (\Lambda_{\sigma(i)}')\widetilde{x}_i=
\sum\limits_{j\colon \sigma(\boldsymbol{p})\in G_j'} \Lambda_j'\widetilde{x}_{\sigma^{-1}(j)}
=\sum\limits_{j\colon\sigma(\boldsymbol{p})\in F_k'} \Lambda_k' x_k'\text{ for some }x_k'\in \mathbb Z_2.
\end{multline*}
Thus, the mapping preserves the equivalence classes, and we obtain the homeomorphism 
$\varphi\colon N(P,\Lambda_P)\to N(Q,\Lambda_Q)$. Moreover, 
\begin{gather*}
\varphi\left(\boldsymbol{a}\cdot \left[\boldsymbol{p},\boldsymbol{b}\right]\right)= 
\varphi\left[\boldsymbol{p},\boldsymbol{a}+\boldsymbol{b}\right]=
\left[\sigma(\boldsymbol{p}),A\left(\boldsymbol{a}+\boldsymbol{b}\right)\right]=\\
\left[\sigma(\boldsymbol{p}),A\boldsymbol{a}+A\boldsymbol{b}\right]=
(A\boldsymbol{a})\cdot\left[\sigma(\boldsymbol{p}),A\boldsymbol{b}\right]=
(A\boldsymbol{a})\cdot\varphi\left[\boldsymbol{p},\boldsymbol{b}\right]
\end{gather*}
Thus, $\varphi$ is a weakly equivariant homeomorphism.

Now assume that there is a weakly equivariant homeomorphism 
$\varphi\colon N(P,\Lambda_P)\to N(Q,\Lambda_Q)$. Then there is $A\in Gl_r(\mathbb Z_2)$ such that 
$\varphi(\boldsymbol{a}\cdot [\boldsymbol{p},\boldsymbol{b}])=(A\boldsymbol{a})\cdot \varphi[\boldsymbol{p},\boldsymbol{b}]$
for all $\boldsymbol{p}\in P$ and $\boldsymbol{a},\boldsymbol{b}\in\mathbb Z_2^r$.
Since $\varphi$ is weakly equivariant, it induces a homeomorphism of orbit spaces
$\widehat{\varphi}\colon P\to Q$, where $\widehat{\varphi}(\partial P)=\partial Q$. 
Moreover, the points in $N(P,\Lambda_P)$ with a stabilizer $H\subset \mathbb Z_2^r$
are mapped by $\varphi$ to the points in $N(Q,\Lambda_Q)$ with the stabilizer $A(H)$. 
For a facet $G_i$ of $\mathcal{C}(P,\Lambda_P)$ define its {\it relative interior} ${\rm relint}\,G_i$ 
to be the~interior of $G_i$ as a~subset of~$\partial P$. 
Then the points over ${\rm relint}\,G_i$ have the stabilizer $\langle \Lambda_i\rangle$
and are mapped to the points over relative interiors of the~facets $G_{j_1}'$, $\dots$, $G_{j_l}'$ 
of $\mathcal{C}(Q,\Lambda_Q)$ with the stabilizer $\langle A\Lambda_i\rangle$. 
Since ${\rm relint}\,G_i$ is path-connected and each ${\rm relint}\,G_{j_s}'$  is a connected component of 
$\bigcup_s{\rm relint}\,G_{j_s}'$ 
(because $G_{j_s}'\cap G_{j_t}'=\varnothing$ for $s\ne t$), 
we have $\widehat{\varphi}({\rm relint}\,G_i)={\rm relint}\,G_{j_s}'$ for a single facet $G_{j_s}'$. 
Also $\widehat{\varphi}(\partial G_i)=\partial G_{j_s}'$, since $\widehat{\varphi}$ is continuous.
Thus, $\widehat{\varphi}$ is an equivalence between $\mathcal{C}(P,\Lambda_P)$ and $\mathcal{C}(Q,\Lambda_Q)$
such that $A\Lambda_P(G_i)=\Lambda_Q(\sigma(G_i))$. The proof is finished.      
\end{proof}

\section{A weakly equivariant classification of spaces defined by affine colorings}\label{sec:acol}
Remark \ref{remor} leads to the following definition.
\begin{definition}\label{def:acol}
We call a mapping $\lambda\colon \{F_1,\dots,F_m\}\to \mathbb Z_2^r$  such that the images $\lambda_j$
of the facets $F_j$ affinely span $\mathbb Z_2^r$  {\it an affine coloring} of rank $r$. If, additionally,
$$
(**)\quad\text{for any face $F_{i_1}\cap\dots\cap F_{i_k}$ of $P$ the points $\lambda_{i_1}$, $\dots$,
$\lambda_{i_k}$ are affinely independent} 
$$
we call $\lambda$ {\it an affinely independent coloring}.
\end{definition}
\begin{definition}
Let $\lambda$ be an affine coloring of a simple $n$-polytope $P$. Define $\Lambda_i=(1,\lambda_i)\in\mathbb Z_2^{r+1}$.
We call the space $N(P,\lambda)=N(P,\Lambda)$ a {\it space defined by an affine coloring $\lambda$}.  Set $H(\lambda)=H(\Lambda)$.
\end{definition}
By definition $N(P,\lambda)$ is a closed orientable pseudomanifold and any closed orientable pseudomanifold $N(P,\Lambda)$
has this form. There is a canonical action of $\mathbb Z_2^{r+1}$ on $N(P,\lambda)$, and the~subgroup
of orientation-preserving involutions is 
$$
H_0'=\mathbb Z_2^r=\{(x_0,\dots,x_r)\in\mathbb Z_2^{r+1}\colon x_0=0\}.
$$
This subgroup can be considered as a vector space associated to the affine space $\mathbb Z_2^r$
generated by the points $\lambda_1$, $\dots$, $\lambda_m$.

The following results follow from Proposition \ref{propH12} and Corollary \ref{H'or}.
\begin{corollary}
We have $H(\lambda_1)\subset H(\lambda_2)$ if and only if there is 
an~affine surjection $\widehat{\Pi}\colon \mathbb Z^{r_1}\to\mathbb Z^{r_2}$ such that $\lambda_2=\widehat{\Pi}\circ\lambda_1$.
In this case $N(P,\lambda_2)=N(P,\lambda_1)/H'$, where $H'\simeq H(\lambda_1)/ H(\lambda_2)$.
\end{corollary}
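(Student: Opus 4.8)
The plan is to read the corollary off from Proposition \ref{propH12}, applied to the vector-colorings $\Lambda_1=(1,\lambda_1)$ and $\Lambda_2=(1,\lambda_2)$ of ranks $r_1+1$ and $r_2+1$ (recall $\Lambda_i$ is a vector-coloring of rank $r_i+1$ and $H(\lambda_i)=H(\Lambda_i)$ by definition), via the standard homogenization dictionary. Concretely, I would fix for $i=1,2$ the first-coordinate functional $c_i\in(\mathbb Z_2^{r_i+1})^*$ of Corollary \ref{cororc} and Remark \ref{remor}, and identify the affine hyperplane $\{c_i=1\}\subset\mathbb Z_2^{r_i+1}$ with the affine space $\mathbb Z_2^{r_i}$ via $(1,x)\leftrightarrow x$; under this identification every $\Lambda_{i,j}=(1,\lambda_{i,j})$ lies in $\{c_i=1\}$ and is sent to $\lambda_{i,j}$.

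For the forward implication, assume $H(\lambda_1)=H(\Lambda_1)\subset H(\Lambda_2)=H(\lambda_2)$. By Proposition \ref{propH12} there is an epimorphism $\Pi\colon\mathbb Z_2^{r_1+1}\to\mathbb Z_2^{r_2+1}$ with $\Pi\circ\Lambda_1=\Lambda_2$. The one point that needs an argument is that $\Pi$ is compatible with the distinguished hyperplanes: since $c_2(\Pi(\Lambda_{1,j}))=c_2(\Lambda_{2,j})=1=c_1(\Lambda_{1,j})$ for all $j$ and the $\Lambda_{1,j}$ span $\mathbb Z_2^{r_1+1}$, we get $c_2\circ\Pi=c_1$; hence $\Pi$ carries $\{c_1=1\}$ into $\{c_2=1\}$, and onto it, because any preimage $x$ of a point $y$ with $c_2(y)=1$ satisfies $c_1(x)=c_2(y)=1$. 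Under the identifications above the restriction $\Pi|_{\{c_1=1\}}$ becomes an affine surjection $\widehat\Pi\colon\mathbb Z_2^{r_1}\to\mathbb Z_2^{r_2}$ with $\widehat\Pi(\lambda_{1,j})=\lambda_{2,j}$, i.e. $\lambda_2=\widehat\Pi\circ\lambda_1$. For the converse, given such a $\widehat\Pi$, write $\widehat\Pi(x)=Bx+d$; then $B$ is onto because $\widehat\Pi$ is, so the linear map $\Pi(x_0,x)=(x_0,\,x_0d+Bx)$ is an epimorphism $\mathbb Z_2^{r_1+1}\to\mathbb Z_2^{r_2+1}$ with $\Pi(\Lambda_{1,j})=(1,\widehat\Pi(\lambda_{1,j}))=\Lambda_{2,j}$, and Proposition \ref{propH12} gives $H(\lambda_1)\subset H(\lambda_2)$.

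Finally, in either direction Proposition \ref{propH12} also gives $N(P,\lambda_2)=N(P,\Lambda_2)=N(P,\Lambda_1)/\Ker\Pi=N(P,\lambda_1)/H'$ with $H'=\Ker\Pi$; here $\Ker\Pi=\{0\}\times\Ker B$ is the translation subgroup of $\widehat\Pi$, and it is isomorphic to $H(\Lambda_2)/H(\Lambda_1)=H(\lambda_2)/H(\lambda_1)$ by Proposition \ref{propH12}. Moreover $c_1(v)=c_2(\Pi(v))=0$ for $v\in\Ker\Pi$, so $H'\subseteq\{x_0=0\}=H_0'$, and Corollary \ref{H'or} confirms that $N(P,\lambda_1)/H'$ is then a closed orientable pseudomanifold, consistently with its equality with $N(P,\lambda_2)$. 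The only real work is the bookkeeping between the $(r_i)$- and $(r_i+1)$-levels: checking that $\Pi\mapsto\widehat\Pi$ (restrict to $\{c_1=1\}$) and $\widehat\Pi\mapsto\Pi$ (homogenize) are mutually inverse, surjectivity-preserving, and compatible with $\Lambda_i=(1,\lambda_i)$; once that is in place, the corollary is a direct transcription of Proposition \ref{propH12} and Corollary \ref{H'or}.
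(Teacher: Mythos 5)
Your proposal is correct and follows exactly the route the paper intends: the paper gives no explicit proof, stating only that the corollary follows from Proposition \ref{propH12} and Corollary \ref{H'or}, and your argument is precisely the homogenization bookkeeping ($\Lambda_i=(1,\lambda_i)$, compatibility of $\Pi$ with the hyperplanes $\{c_i=1\}$, restriction/homogenization being mutually inverse) needed to make that deduction explicit. Note that your conclusion $H'\simeq H(\lambda_2)/H(\lambda_1)$ is the correct orientation of the quotient, silently fixing what appears to be a typo ($H(\lambda_1)/H(\lambda_2)$) in the paper's statement.
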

\begin{corollary}\label{cor:affH'}
For a~subgroup $H'\subset \mathbb Z_2^{r+1}$ the~space $N(P,\lambda)/H'$ is a~closed orientable
pseudomanifold if and only if $H'\subset\mathbb Z_2^r=H_0'$. In this case $N(P,\lambda)/H'=N(P,\widehat{\Pi}\circ\lambda)$, 
where $\widehat{\Pi}\colon\mathbb Z_2^r\to\mathbb Z_2^r/H'$ is an~affine surjection.
\end{corollary}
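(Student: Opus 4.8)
The plan is to deduce both parts directly from Corollary~\ref{H'or} together with Corollary~\ref{corh'}, applied to the vector-coloring $\Lambda$ of rank $r+1$ given by $\Lambda_i=(1,\lambda_i)$. First I would record the two facts that make this work: $N(P,\lambda)=N(P,\Lambda)$ is closed, since no $\Lambda_i$ equals $\boldsymbol 0$ and hence no facet of $P$ contributes to a boundary; and it is orientable by Corollary~\ref{cororc} with $\boldsymbol c=(1,0,\dots,0)$, because $\boldsymbol c\Lambda_i=1$ for every $i$. The same corollary identifies the subgroup of orientation-preserving involutions as $H_0'=\{\boldsymbol x\in\mathbb Z_2^{r+1}\colon x_0=0\}=\mathbb Z_2^r$.

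For the first assertion, Corollary~\ref{H'or} applied to $\Lambda$ says that $N(P,\Lambda)/H'$ is closed and orientable if and only if $N(P,\Lambda)$ is (which we just checked) and $H'\subset H_0'$; since $H_0'=\mathbb Z_2^r$ this is precisely the stated condition. That $N(P,\Lambda)/H'$ is in any case a pseudomanifold (possibly with boundary) follows from Corollary~\ref{corh'}: writing $H'=\Ker\Pi$, we have $N(P,\Lambda)/H'\simeq N(P,\Pi\circ\Lambda)$, which is again of the form $N(P,\text{vector-coloring})$.

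For the second assertion, suppose $H'\subset H_0'=\{x_0=0\}$ and write $H'=\Ker\Pi$. The point is that, since $H'$ lies in the kernel of the linear functional $x_0$, the surjection $\Pi$ may be chosen to preserve the first coordinate: in the splitting $\mathbb Z_2^{r+1}=\mathbb Z_2\boldsymbol{e}_0\oplus\mathbb Z_2^r$ with the second summand equal to $H_0'$, the map $\Pi(x_0\boldsymbol{e}_0+v)=(x_0,\pi(v))$, where $\pi\colon\mathbb Z_2^r\to\mathbb Z_2^r/H'$ is the linear quotient, has kernel exactly $H'$. Then $\Pi\circ\Lambda_i=(1,\pi(\lambda_i))$, so $\Pi\circ\Lambda$ is again of affine type with associated affine coloring $\widehat\Pi\circ\lambda$, where $\widehat\Pi\colon\mathbb Z_2^r\to\mathbb Z_2^r/H'$ is $\pi$ viewed as an affine surjection of the affine space generated by the $\lambda_i$; its image affinely spans $\mathbb Z_2^r/H'$ because $\widehat\Pi$ is onto and the $\lambda_i$ affinely span $\mathbb Z_2^r$. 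By Corollary~\ref{corh'}, $N(P,\lambda)/H'=N(P,\Lambda)/H'\simeq N(P,\Pi\circ\Lambda)=N(P,\widehat\Pi\circ\lambda)$, as required.

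The only step that is not a formal citation is this last passage between the linear picture (quotients of the vector spaces $\mathbb Z_2^{r+1}$ and $H_0'$ by $H'$) and the affine picture (the affine surjection $\widehat\Pi$ of the affine space spanned by $\lambda_1,\dots,\lambda_m$), and that is where I would be most careful: one must verify that the compatible splitting of $\mathbb Z_2^{r+1}$ used to build $\Pi$ exists precisely because $H'\subset H_0'$, which is exactly what forces $\Pi\circ\Lambda$ to take values on an affine hyperplane and hence to correspond to a genuine affine coloring. Everything else reduces to Proposition~\ref{propH12} and Corollaries~\ref{corh'}, \ref{cororc}, and \ref{H'or}.
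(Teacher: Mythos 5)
Your argument is correct and follows the same route the paper intends: the paper gives no separate proof of this corollary, stating only that it follows from Proposition~\ref{propH12} and Corollary~\ref{H'or}, which is exactly the reduction you carry out. The extra care you take in choosing the splitting $\mathbb Z_2^{r+1}=\mathbb Z_2\boldsymbol{e}_0\oplus H_0'$ so that $\Pi\circ\Lambda_i=(1,\pi(\lambda_i))$ is precisely the detail the paper leaves implicit, and you fill it in correctly.
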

\begin{corollary}\label{cor:affH'P}
For an~affine coloring $\lambda$ of rank $r$ of a~simple $n$-polytope $P$ 
the~subgroups $H\colon H(\lambda)\subset H\subset H_0\subset\mathbb Z_2^m$
are in bijection with 
\begin{itemize}
\item affine surjections $\widehat{\Pi}\colon\mathbb Z_2^r\to\mathbb Z_2^l$ defined up to affine changes of coordinates in 
$\mathbb Z_2^l$; 
\item affine colorings $\lambda'$ of rank $l$ of the form $\lambda'=\widehat{\Pi}\circ\lambda$ 
defined up to affine changes of coordinates in 
$\mathbb Z_2^l$; 
\item subgroups 
$H'\subset\mathbb Z_2^r=H_0'\subset\mathbb Z_2^{r+1}$ of involutions preserving the~orientation of $N(P,\lambda)$. 
\end{itemize}
The correspondence between the projections and the subgroups is given as 
$$
H'\to \left[\mathbb Z_2^r\to \mathbb Z_2^r/H'\simeq\mathbb Z_2^l\right],\quad
\left[A\boldsymbol{x}+\boldsymbol{b}\colon \mathbb Z_2^r\to \mathbb Z_2^l\right]\to {\rm \Ker A}.
$$  
\end{corollary}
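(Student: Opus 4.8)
The plan is to reduce the statement to Corollary~\ref{corh'} together with Corollaries~\ref{Hor}, \ref{H'or} and \ref{cor:affH'}, regarding the three listed families as three encodings of one object: a linear subspace $H'\subset\mathbb Z_2^r=H_0'$. Recall that $\lambda$ being an affine coloring of rank $r$ means the vectors $\Lambda_i=(1,\lambda_i)$ span $\mathbb Z_2^{r+1}$, so $\Lambda$ is a vector-coloring of rank $r+1$ and $N(P,\lambda)=N(P,\Lambda)$ is a closed orientable pseudomanifold, whence $H(\lambda)\subset H_0$ by Corollary~\ref{Hor}; in particular the range $H(\lambda)\subset H\subset H_0$ is nonempty. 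First I would set up the bijection between such $H$ and the subgroups $H'\subset H_0'=\mathbb Z_2^r$. Corollary~\ref{corh'} applied to $\Lambda$ gives an inclusion-preserving bijection between subgroups of $\mathbb Z_2^m$ containing $H(\lambda)$ and subgroups $H''\subset\mathbb Z_2^{r+1}\simeq\mathbb Z_2^m/H(\lambda)$; under it $H_0$ corresponds to $H_0'=\{x_0=0\}=\mathbb Z_2^r$ by Corollary~\ref{Hor} (equivalently, by Corollary~\ref{cor:affH'}, $H''\subset\mathbb Z_2^r$ iff $N(P,\lambda)/H''$ is closed orientable iff $\mathbb R\mathcal Z_P/H$ is). Hence $H\mapsto H':=H/H(\lambda)$ is the desired bijection, and $N(P,\lambda)/H'=\mathbb R\mathcal Z_P/H$.

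Next I would identify linear subspaces $H'\subset\mathbb Z_2^r$ with affine surjections $\widehat\Pi\colon\mathbb Z_2^r\to\mathbb Z_2^l$ taken modulo affine changes of coordinates in the target. Here $\mathbb Z_2^r$ is viewed as the affine span of $\lambda_1,\dots,\lambda_m$, with translation space $H_0'\simeq\mathbb Z_2^r$; for $\widehat\Pi(\boldsymbol x)=A\boldsymbol x+\boldsymbol b$ the linear part $A$ acts between the translation spaces and the fibers of $\widehat\Pi$ are precisely the cosets of $\Ker A$. Two affine surjections onto $\mathbb Z_2^l$ have the same fibers if and only if they differ by an affine automorphism of $\mathbb Z_2^l$ (both factor as an affine isomorphism composed with the quotient by the common fiber partition), so $\widehat\Pi\mapsto\Ker A$ is a well-defined injection on equivalence classes; it is surjective because each $H'$ is realized by the affine quotient map $\mathbb Z_2^r\to\mathbb Z_2^r/H'$ sending a point to its coset modulo $H'$ --- which is exactly the inverse map displayed in the statement --- and it forces $l=r-\dim H'$, so $\widehat\Pi$ is indeed a surjection.

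Finally I would pass to the colorings $\lambda'=\widehat\Pi\circ\lambda$. Since $\lambda_1,\dots,\lambda_m$ affinely span $\mathbb Z_2^r$, an affine map on $\mathbb Z_2^r$ is determined by its restriction to $\{\lambda_i\}$, so $\widehat\Pi\mapsto\widehat\Pi\circ\lambda$ is injective and manifestly compatible with affine coordinate changes in $\mathbb Z_2^l$, hence descends to a bijection between the two classes; and $\lambda'$ is an affine coloring of rank $l$ because $\widehat\Pi$ is an affine surjection. Composing the three bijections and using Corollary~\ref{cor:affH'} gives $\mathbb R\mathcal Z_P/H=N(P,\lambda)/H'=N(P,\widehat\Pi\circ\lambda)=N(P,\lambda')$, confirming that $H\leftrightarrow H'\leftrightarrow\widehat\Pi\leftrightarrow\lambda'$ is the intended correspondence. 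I expect the only delicate point to be the bookkeeping in the middle step: verifying that ``affine change of coordinates in $\mathbb Z_2^l$'' is precisely the equivalence absorbing the translation part $\boldsymbol b$ and the identification $\mathbb Z_2^r/H'\simeq\mathbb Z_2^l$, so that the fiber partition --- equivalently $\Ker A$ --- is a complete invariant of an affine surjection.
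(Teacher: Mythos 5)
Your proposal is correct and follows essentially the same route as the paper, which states this corollary without a separate proof as a direct consequence of Proposition \ref{propH12} (via Corollary \ref{corh'}) and Corollaries \ref{Hor}, \ref{H'or}, \ref{cor:affH'}. Your write-up simply unwinds those references, with the only added content being the (correct) bookkeeping that an affine surjection modulo affine coordinate changes in the target is determined by the kernel of its linear part.
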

\begin{definition} 
Let $\lambda_P$ and $\lambda_Q$ be affine colorings of rank $r$ of simple $n$-polytopes $P$ and $Q$.
We call the pairs $(P,\lambda_P)$ and $(Q,\lambda_Q)$ {\it equivalent}, 
if there is an equivalence $\sigma$ between $\mathcal{C}(P,\lambda_P)$ and $\mathcal{C}(Q,\lambda_Q)$ 
and an affine isomorphism $\mathcal{A}\colon \mathbb Z_2^r \to \mathbb Z_2^r$ 
such that  $\lambda_Q(\sigma(G_i))=\mathcal{A}\lambda_P(G_i)$ for all $i=1, \dots, M$.
\end{definition}
\begin{corollary}
The spaces $N(P,\lambda_P)$ and $N(Q,\lambda_Q)$ are weakly equivariantly homeomorphic if and only if the pairs
$(P,\lambda_P)$ and $(Q,\lambda_Q)$ are equivalent.
\end{corollary}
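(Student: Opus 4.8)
The plan is to deduce this from the weakly equivariant classification of spaces $N(P,\Lambda)$ defined by vector-colorings (the Proposition in Section~\ref{sec:wecL}), via the correspondence $\lambda\leftrightarrow\Lambda=(1,\lambda)$. First I would set $\Lambda_{P,i}=(1,\lambda_{P,i})$ and $\Lambda_{Q,i}=(1,\lambda_{Q,i})$ in $\mathbb Z_2^{r+1}$; by Definition~\ref{def:acol} these are vector-colorings of rank $r+1$ with $N(P,\lambda_P)=N(P,\Lambda_P)$ and $N(Q,\lambda_Q)=N(Q,\Lambda_Q)$ as spaces with their canonical $\mathbb Z_2^{r+1}$-actions. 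Since $v\mapsto(1,v)$ is injective, $\lambda_P$ and $\Lambda_P$ partition the facets of $P$ identically, so $\mathcal{C}(P,\lambda_P)=\mathcal{C}(P,\Lambda_P)$ as complexes and $\Lambda_P(G_i)=(1,\lambda_P(G_i))$ on every facet $G_i$ of this complex (same for $Q$). Applying the Proposition of Section~\ref{sec:wecL} with $r$ replaced by $r+1$, the two spaces are weakly equivariantly homeomorphic if and only if there are an equivalence $\sigma$ between $\mathcal{C}(P,\lambda_P)$ and $\mathcal{C}(Q,\lambda_Q)$ and a linear isomorphism $A\in GL_{r+1}(\mathbb Z_2)$ with $\Lambda_Q(\sigma(G_i))=A\,\Lambda_P(G_i)$ for all $i$. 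So the whole content of the corollary becomes: this is equivalent to the existence of the \emph{same} $\sigma$ together with an \emph{affine} isomorphism $\mathcal{A}\colon\mathbb Z_2^r\to\mathbb Z_2^r$ satisfying $\lambda_Q(\sigma(G_i))=\mathcal{A}\,\lambda_P(G_i)$.

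The main point — and the only step I expect to be non-formal — is that the linear $A$ furnished by the vector-coloring classification automatically respects the affine hyperplane $\{x_0=1\}$ and hence descends to an affine isomorphism of $\mathbb Z_2^r$. To see this I would use the uniqueness of the ``orientation functional'': let $\boldsymbol{c}\in(\mathbb Z_2^{r+1})^*$ be the $x_0$-coordinate, so that $\boldsymbol{c}\Lambda_{P,i}=\boldsymbol{c}\Lambda_{Q,i}=1$ for all $i$; since the $\Lambda_{P,i}$ span $\mathbb Z_2^{r+1}$, $\boldsymbol{c}$ is the unique functional with this property (Corollary~\ref{cororc}, Remark~\ref{remor}). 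From $A\,\Lambda_P(G_i)=\Lambda_Q(\sigma(G_i))$ one then gets $\boldsymbol{c}A\,\Lambda_{P,i}=1$ for every $i$, whence $\boldsymbol{c}A=\boldsymbol{c}$ by the spanning property; so $A$ preserves $\{x_0=0\}$ and $\{x_0=1\}=\{1\}\times\mathbb Z_2^r$, and restricting $A$ to the latter coset gives an affine isomorphism $\mathcal{A}$ with $A(1,v)=(1,\mathcal{A}v)$, which yields $\lambda_Q(\sigma(G_i))=\mathcal{A}\,\lambda_P(G_i)$. Topologically, $\boldsymbol{c}A=\boldsymbol{c}$ is just the statement that a weakly equivariant homeomorphism carries the orientation-preserving subgroup $H_0'=\mathbb Z_2^r\subset\mathbb Z_2^{r+1}$ of $N(P,\lambda_P)$ onto that of $N(Q,\lambda_Q)$ (cf. Corollaries~\ref{cororc} and~\ref{H'or}), so no separate orientation argument is needed.

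For the converse I would run this backwards: given $\sigma$ and an affine isomorphism $\mathcal{A}v=Lv+w$ with $L\in GL_r(\mathbb Z_2)$, $w\in\mathbb Z_2^r$ and $\lambda_Q(\sigma(G_i))=\mathcal{A}\,\lambda_P(G_i)$, the map $A(x_0,x')=(x_0,\,Lx'+x_0w)$ is a linear automorphism of $\mathbb Z_2^{r+1}$ with $A(1,v)=(1,\mathcal{A}v)$, so $\Lambda_Q(\sigma(G_i))=A\,\Lambda_P(G_i)$ and $(P,\Lambda_P)$, $(Q,\Lambda_Q)$ are equivalent as pairs of vector-colorings; the Proposition of Section~\ref{sec:wecL} then produces a weakly equivariant homeomorphism. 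Combining the two directions finishes the argument. The remaining work is purely the elementary bookkeeping of splitting an element of $GL_{r+1}(\mathbb Z_2)$ fixing the coordinate $x_0$ into its linear part on $\mathbb Z_2^r$ and a translation vector, and identifying $\mathbb Z_2^r=H_0'$ with the vector space underlying the affine space spanned by the points $\lambda_i$.
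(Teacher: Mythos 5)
Your proposal is correct and follows essentially the same route as the paper: the paper's proof likewise reduces to the weakly equivariant classification for vector-colorings via $\Lambda_i=(1,\lambda_i)$ and observes that linear automorphisms of $\mathbb Z_2^{r+1}$ sending the spanning set $\{(1,\lambda_i)\}$ into $\{1\}\times\mathbb Z_2^r$ are exactly the maps $(1,\boldsymbol{x})\mapsto(1,C\boldsymbol{x}+\boldsymbol{b})$, i.e.\ affine isomorphisms of $\mathbb Z_2^r$. Your spelled-out argument via the uniqueness of the functional $\boldsymbol{c}$ with $\boldsymbol{c}\Lambda_i=1$ is just a more detailed justification of that same observation.
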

\begin{proof}
Indeed, linear isomorphisms $\mathbb Z_2^{r+1}\to\mathbb Z_2^{r+1}$
such that the vectors $(1,\lambda_i)$ spanning $\mathbb Z_2^{r+1}$ are mapped to vectors $(1,\lambda_j')$
have the form $(1,\boldsymbol{x})\to (1, C\boldsymbol{x}+\boldsymbol{b})$, where $\det C=1$, that is they correspond to 
affine isomorphisms $\mathbb Z_2^r\to \mathbb Z_2^r$.
\end{proof}
\section{A criterion when $N(P,\Lambda)$ is a manifold}\label{sec:NPman}

\begin{theorem}\label{th:ZHM}
The space $N(P,\Lambda)$ defined by a vector-coloring $\Lambda$ of a rank $r$ of a simple $n$-polytope $P$
is a closed topological manifold if and only if all the vectors $\Lambda_i$ are nonzero and 
for any vertex $v=F_{i_1}\cap\dots\cap F_{i_n}$ of $P$ all the different vectors among $\{\Lambda_{i_1},\dots,\Lambda_{i_n}\}$ 
are linearly independent.  
It is a topological manifold with a boundary if and only if $\Lambda_j=0$ for some $j$, and for any vertex $v$ 
all the nonzero different vectors among $\{\Lambda_{i_1},\dots,\Lambda_{i_n}\}$ are linearly independent. In this case the
boundary is glued of copies of facets $F_j$ with $\Lambda_j=0$.
\end{theorem}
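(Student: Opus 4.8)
The plan is to reduce the question to a purely local statement: since $N(P,\Lambda)=P\times\mathbb Z_2^r/\sim$ is glued from $2^r$ copies of $P$ and has a PL-structure coming from the barycentric subdivision, $N(P,\Lambda)$ is a topological manifold (with or without boundary) if and only if a neighbourhood of every point is homeomorphic to $\mathbb R^n$ (resp. to $\mathbb R^n_{\geqslant}$ along the boundary locus). By Lemma \ref{lem:Mc} applied to the trivial coloring that separates the facets through a given vertex, it suffices to analyze points lying over a vertex $v=F_{i_1}\cap\dots\cap F_{i_n}$ of $P$, since over the relative interior of a face of codimension $k$ the local picture is a product of the picture over a vertex of a $k$-dimensional face with $\mathbb R^{n-k}$. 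So the whole problem localizes to the following model: the orbit space of the action of $\langle\Lambda_{i_1},\dots,\Lambda_{i_n}\rangle\subset\mathbb Z_2^r$ on $\mathbb R^n$, where $\mathbb Z_2^n$ acts by sign changes of the $n$ coordinates and the $j$-th generator acts through the vector $\Lambda_{i_j}$ (a generator acts trivially precisely when $\Lambda_{i_j}=\boldsymbol 0$).

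Next I would identify that model orbit space explicitly. Let $W=\langle\Lambda_{i_1},\dots,\Lambda_{i_n}\rangle$ and let $d=\dim_{\mathbb Z_2}W$; the action factors through $W$ acting on $\R^n=\R^n_{\geqslant}\times\mathbb Z_2^n$, so the orbit space is $\R^n_{\geqslant}\times(\mathbb Z_2^n/H)$ where $H=\{x\in\mathbb Z_2^n:\sum x_j\Lambda_{i_j}=\boldsymbol 0\}$ is the kernel of $\mathbb Z_2^n\to W$. Group the coordinates $1,\dots,n$ by the value of $\Lambda_{i_j}$: say there are blocks of sizes $q_0,q_1,\dots,q_d$ where $q_0$ counts the indices with $\Lambda_{i_j}=\boldsymbol 0$ and $q_1,\dots,q_d$ count the indices realizing the $d$ distinct nonzero values; note $q_1+\dots+q_d$ could exceed $d$ exactly when some nonzero value is repeated, i.e. when the distinct nonzero vectors fail to be linearly independent. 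A short computation shows $\mathbb Z_2^n/H\cong(\mathbb Z_2^{q_0})\times(\mathbb Z_2^{q_1}/\Delta)\times\dots$, more precisely the quotient $\R^n_{\geqslant}\times(\mathbb Z_2^n/H)$ is PL-homeomorphic to a product $\prod_{s=1}^{d}\big(\R^{q_s}_{\geqslant}\times\mathbb Z_2^{q_s}/\langle\boldsymbol 1\rangle\big)\times\R^{q_0}_{\geqslant}\times\mathbb Z_2^{q_0}$, because within each block the relation identifies $(p,a)$ with $(p,a+\boldsymbol 1)$, i.e. glues two copies of $\R^{q_s}_{\geqslant}$ along their boundary, while the nonzero vectors of distinct blocks being distinct means no further gluing occurs across blocks, and the zero block contributes a genuine boundary. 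The factor $\R^{q}_{\geqslant}\times\mathbb Z_2/\langle\boldsymbol 1\rangle$ is the union of two copies of $\R^q_{\geqslant}$ along $\partial\R^q_{\geqslant}$, which is homeomorphic to $\R^{q-1}\times\R=\R^q$ when $q\geqslant 1$ (this is exactly the "double" used in the proof of Lemma \ref{lem:Mc} via the identity $\R^q_{\geqslant}\cong\R^{q-1}\times\R_{\geqslant}$ followed by doubling the $\R_{\geqslant}$ factor), and is homeomorphic to $\R^q_{\geqslant}$ built into a manifold boundary only through the zero block.

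Assembling: the local model is homeomorphic to $\R^{(q_1-1)+\dots+(q_d-1)}\times\R^d\times\R^{q_0}_{\geqslant}=\R^{n-q_0-d}\times\R^{d}\times\R^{q_0}_{\geqslant}$ — wait, I should be careful: each nonzero block of size $q_s$ contributes an open factor $\R^{q_s-1}\times\R=\R^{q_s}$, total $\sum_{s\geqslant 1}q_s=n-q_0$; the zero block contributes $\R^{q_0}_{\geqslant}$; and crucially no extra collapsing happens iff the $d$ nonzero vectors are \emph{distinct} (automatic) \emph{and} linearly independent — the latter is what guarantees that the quotient map on the $\mathbb Z_2^n$-factor is exactly the product projection block by block rather than something with additional identifications. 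If two distinct nonzero vectors coincide we already folded them into one block; if the $d$ distinct nonzero vectors are linearly \emph{dependent}, then $\dim W=d'<d$ and $\mathbb Z_2^n/H$ has dimension $n-q_0-d'>n-q_0-d$, producing a point whose link is a join of spheres that is not a sphere (the standard obstruction: $\mathbb Z_2^{d}/(\text{smaller})$ acting gives a quotient of $S^{d-1}$ by reflections across non-independent hyperplanes, whose orbit space is not a ball), so $N(P,\Lambda)$ fails to be a manifold there; if some nonzero vector repeats without dependence the block analysis still goes through since repetition is harmless after grouping. Hence $N(P,\Lambda)$ is a closed manifold iff every $\Lambda_i\neq\boldsymbol 0$ (so $q_0=0$ everywhere) and the distinct nonzero vectors at each vertex are linearly independent; it is a manifold with boundary iff some $\Lambda_j=\boldsymbol 0$ and at every vertex the distinct nonzero ones are independent, the boundary being the image of the facets with $\Lambda_j=\boldsymbol 0$, which matches the description of $\partial N(P,\Lambda)$ from Section \ref{Sec:RMAM}.

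The main obstacle I anticipate is making the "no extra identification across blocks, and linear dependence is the genuine obstruction" step fully rigorous: one must show that when the distinct nonzero vectors $\Lambda_{i_{j_1}},\dots,\Lambda_{i_{j_d}}$ are linearly dependent, the resulting orbit space of $S^{n-1}$ (the link of the cone point) is genuinely not homeomorphic to $S^{n-1}$ — equivalently, that the quotient of a sphere by the corresponding finite reflection-type group action is not a sphere. This is where one would either invoke the Mikhailova–Lange criterion (the subgroup of $O(n)$ must be generated by reflections and rotations, and a linearly dependent family of "reflection-like" generators produces a group that is not a reflection group in the right sense), or give a direct homology computation of the link showing $H_{n-1}\neq\mathbb Z$ or that it is not simply connected in low degrees. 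Since the paper advertises a proof "not based on results by Mikhailova and Lange," I would push the direct route: compute the local homology $H_*(N(P,\Lambda),N(P,\Lambda)\setminus x)$ at a point over the vertex using the block decomposition and the Künneth formula, observing that a dependent block contributes a factor whose local homology is not that of $\R^k$ precisely because $\mathbb Z_2^a$ modulo a proper subgroup containing no coordinate vector still has rank $>$ (number of independent directions), so the corresponding quotient of a simplex-boundary is a non-sphere pseudomanifold; the vanishing/nonvanishing of the appropriate local homology group is then the clean criterion, and everything else (the "if" direction, and the boundary case) follows from the explicit product-of-balls-and-spheres identification above together with Lemma \ref{lem:Mc} and Proposition \ref{prop:dnr}.
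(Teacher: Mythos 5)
Your ``if'' direction is essentially the paper's argument: group the facets through a vertex by the value of $\Lambda$, use the PL identification $\R^{q}_{\geqslant}\simeq\R^{q-1}\times\R_{\geqslant}$ from Lemma \ref{lem:Mc}, and observe that linear independence of the distinct nonzero vectors makes the local gluing a product of doubled orthants, hence $\R^l\times\R^{n-l}$ (or $\R_{\geqslant}\times\R^{n-1}$ along the zero-colored facets). That part is fine, modulo the slip where you assert that $\mathbb Z_2^n/H$ has dimension $n-q_0-d'$ (it is isomorphic to $W$ and has dimension $d'$; dependence means \emph{fewer} copies and \emph{more} identifications, not more).

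The genuine gap is in the ``only if'' direction, and it is exactly the step you flag as the main obstacle. Your plan is to compute local homology at the point over the vertex via the block decomposition and K\"unneth. But a linear dependence among \emph{distinct} nonzero vectors is by definition a relation \emph{across} blocks (within a block all vectors are equal), so the kernel $H=\{x\in\mathbb Z_2^n:\sum x_j\Lambda_{i_j}=0\}$ does not split as a product over the blocks, the quotient $\R^n/H$ does not factor, and K\"unneth is unavailable. For example, with $\Lambda_{i_1}=e_1$, $\Lambda_{i_2}=e_2$, $\Lambda_{i_3}=e_1+e_2$ one gets $H=\langle(1,1,1)\rangle$, coupling all three ``blocks''. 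Analyzing $\R^n/H$ at the vertex for a general such $H$ is essentially the hard half of the Mikhailova--Lange criterion, which is precisely what the paper's short proof avoids. The paper's two missing-from-your-sketch ideas are: (1) pass to a \emph{minimal} dependence $\Lambda_{j_k}=\Lambda_{j_1}+\dots+\Lambda_{j_{k-1}}$ with $k\geqslant 3$, so that $\Lambda_{j_1},\dots,\Lambda_{j_{k-1}}$ are independent; and (2) evaluate local homology not at the vertex but at a point $p$ lying in exactly the $k$ facets $G_{j_1},\dots,G_{j_k}$ of $\mathcal{C}(P,\Lambda)$ (such $p$ exists by Lemma \ref{lem:Mc}). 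There the first $k-1$ walls glue up to a genuine $\R^{k-1}$-factor and the $k$-th identification becomes $(y_1,\dots,y_k)\sim(-y_1,\dots,-y_k)$, so the link is $\Sigma^{n-k}\mathbb RP^{k-1}$ and $H_{n+2-k}\bigl(N(P,\Lambda),N(P,\Lambda)\setminus[p\times a]\bigr)\simeq\mathbb Z_2$ for $k\geqslant 3$. Without the reduction to this clean local model, your claim that ``a dependent block contributes a factor whose local homology is not that of $\R^k$'' is not justified; with it, the rest of your outline goes through.
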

\begin{remark}
Theorem \ref{th:ZHM} can be extracted from general results by A.V.~Mikhailova \cite{M85} and C.~Lange \cite{L19}.
Nevertheless, we give a short self-sufficient proof here. For $r=m-n+1$ Theorem \ref{th:ZHM} also 
follows from results of \cite{G23}.
\end{remark}

\begin{example}
In the case of $3$-polytopes the first condition means that at each vertex $v=F_i\cap F_j\cap F_k$ either
$\Lambda_i=\Lambda_j=\Lambda_k$, or for a relabelling $\Lambda_i\ne\Lambda_j$ and $\Lambda_k\in \{\Lambda_i,\Lambda_j\}$,
or the vectors $\Lambda_i$, $\Lambda_j$, and $\Lambda_k$ are linearly independent.
\end{example}
\begin{corollary}
The space $N(P,\Lambda)$ defined by a vector-coloring $\Lambda$
is a closed topological manifold if and only if $\Lambda$ induces a~linearly independent
coloring of the complex $\mathcal{C}(P,\Lambda)$. 
\end{corollary}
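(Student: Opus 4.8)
The plan is to derive the corollary directly from Theorem \ref{th:ZHM} by unwinding the phrase \emph{$\Lambda$ induces a linearly independent coloring of $\mathcal{C}(P,\Lambda)$}. In analogy with condition (*), I read it as: for every face of $\mathcal{C}(P,\Lambda)$, the colours $\Lambda(G_{i_1}),\dots,\Lambda(G_{i_k})$ of the facets $G_{i_1},\dots,G_{i_k}$ of $\mathcal{C}(P,\Lambda)$ containing that face are linearly independent. Since the zero vector lies in no linearly independent set, the case of a single facet already forces $\Lambda(G_i)\ne\boldsymbol{0}$ for every facet $G_i$ of $\mathcal{C}(P,\Lambda)$, i.e. $\Lambda_i\ne\boldsymbol{0}$ for all $i$.

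First I would record a bookkeeping observation. For $p\in\partial P$ let $D(p)$ be the set of distinct values of $\Lambda$ on $\{F_i:p\in F_i\}$. Then $G\mapsto\Lambda(G)$ is a bijection from the facets of $\mathcal{C}(P,\Lambda)$ containing $p$ onto $D(p)$: each $F_i$ with $p\in F_i$ lies in a unique facet $G$ of $\mathcal{C}(P,\Lambda)$, which gives a surjection onto the facets of $\mathcal{C}(P,\Lambda)$ through $p$; two facets of $\mathcal{C}(P,\Lambda)$ of equal colour are distinct connected components of the union of the facets of $P$ of that colour, hence disjoint, so at most one of them can contain $p$; and if $\Lambda_i=\Lambda_j$ with $p\in F_i\cap F_j$, then $F_i\cup F_j$ is connected and lies in that union, so $F_i$ and $F_j$ lie in a common facet of $\mathcal{C}(P,\Lambda)$.

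Next I would reduce the independence condition to the vertices of $P$. For any face $G'$ of $\mathcal{C}(P,\Lambda)$ and any $p\in G'$, every facet of $\mathcal{C}(P,\Lambda)$ containing $G'$ contains $p$; conversely, for any $p\in\partial P$ let $G(p)$ be the connected component, containing $p$, of the intersection of all facets of $\mathcal{C}(P,\Lambda)$ through $p$; then $G(p)$ is a face of $\mathcal{C}(P,\Lambda)$ and its set of containing facets is exactly the set of facets of $\mathcal{C}(P,\Lambda)$ through $p$. Hence \emph{$\Lambda$ induces a linearly independent coloring of $\mathcal{C}(P,\Lambda)$} is equivalent to: for every $p\in\partial P$, the set $D(p)$ is linearly independent. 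Now, if $v$ is a vertex of $P$ lying in the minimal face $\bigcap_{F_i\ni p}F_i$ of $P$ that contains $p$ (such a $v$ exists because every face of a polytope contains a vertex), then $D(p)\subseteq D(v)$, and a subset of a linearly independent set is linearly independent; so it suffices to test the condition at vertices. For a vertex $v=F_{i_1}\cap\dots\cap F_{i_n}$ the set $D(v)$ consists of the distinct vectors among $\Lambda_{i_1},\dots,\Lambda_{i_n}$, so, together with $\Lambda_i\ne\boldsymbol{0}$ for all $i$, this is precisely the criterion of Theorem \ref{th:ZHM} for $N(P,\Lambda)$ to be a closed topological manifold, which proves the corollary.

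I expect the only delicate point to be fixing the precise meaning of ``linearly independent coloring of $\mathcal{C}(P,\Lambda)$'' and justifying the passage from faces of the non-polytopal complex $\mathcal{C}(P,\Lambda)$ to vertices of $P$; everything else is an immediate translation via the bijection of the first step. If preferred, the pointwise reformulation of the condition can instead be read off directly from the local model of $(P,\mathcal{C}(P,\Lambda))$ provided by Lemma \ref{lem:Mc}.
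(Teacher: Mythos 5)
Your proof is correct and follows exactly the route the paper intends: the corollary is stated without proof as an immediate reformulation of Theorem \ref{th:ZHM}, and your argument supplies precisely the missing bookkeeping (the bijection between facets of $\mathcal{C}(P,\Lambda)$ through a point and the distinct colours there, the reduction from arbitrary points to vertices via the minimal face, and the observation that the single-facet case of linear independence forces $\Lambda_i\ne\boldsymbol{0}$). Nothing is missing or superfluous.
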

\begin{corollary} The space $N(P,\lambda)$ defined by an affine coloring $\lambda$
is a closed orientable topological manifold if and only if $\lambda$ induces an~affinely independent
coloring of the complex $\mathcal{C}(P,\lambda)$.
\end{corollary}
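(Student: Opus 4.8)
The plan is to deduce this from the preceding Corollary (the criterion for $N(P,\Lambda)$ to be a closed topological manifold in terms of linear independence of the induced coloring of $\mathcal{C}(P,\Lambda)$), together with the substitution $\Lambda_i=(1,\lambda_i)$ and the elementary relation between affine independence over $\mathbb Z_2^r$ and linear independence over $\mathbb Z_2^{r+1}$.

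First I would note that since $\Lambda_i=(1,\lambda_i)$, two facets of $P$ receive equal $\Lambda$-colors exactly when they receive equal $\lambda$-colors; hence the complexes $\mathcal{C}(P,\Lambda)$ and $\mathcal{C}(P,\lambda)$ are the same subdivision of $\partial P$, with common facets $G_1,\dots,G_M$, and $\Lambda(G_i)=(1,\lambda(G_i))$, where $\lambda(G_i)\in\mathbb Z_2^r$ is the common value of $\lambda$ on the facets of $P$ lying in $G_i$. Since $N(P,\lambda)=N(P,\Lambda)$ and all $\Lambda_i\ne\boldsymbol{0}$ (their first coordinate is $1$), the preceding Corollary applies verbatim: $N(P,\lambda)$ is a closed topological manifold if and only if for every nonempty face $G_{i_1}\cap\dots\cap G_{i_k}$ of the complex the vectors $\Lambda(G_{i_1}),\dots,\Lambda(G_{i_k})$ are linearly independent in $\mathbb Z_2^{r+1}$.

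The key (and only nontrivial) step is then the linear-algebra observation that a relation $\sum_{s=1}^{k} c_s(1,\lambda(G_{i_s}))=\boldsymbol{0}$ with $c_s\in\mathbb Z_2$ amounts exactly to $\sum_s c_s=0$ together with $\sum_s c_s\lambda(G_{i_s})=\boldsymbol{0}$; so $\Lambda(G_{i_1}),\dots,\Lambda(G_{i_k})$ are linearly independent precisely when $\lambda(G_{i_1}),\dots,\lambda(G_{i_k})$ are affinely independent in $\mathbb Z_2^r$. This turns the criterion of the preceding Corollary into the statement that $\lambda$ induces an affinely independent coloring of $\mathcal{C}(P,\lambda)$, i.e. condition $(**)$ read on the complex. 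Finally, to account for the word \emph{orientable} I would recall that $N(P,\lambda)$ is by construction a closed orientable pseudomanifold, so if it is a topological manifold it is automatically a closed orientable one, while Theorem \ref{th:ZHM} ensures there is no boundary since each $\Lambda_i\ne\boldsymbol{0}$. No serious obstacle is anticipated: the entire content is the homogenization identity above, which is routine.
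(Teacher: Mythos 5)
Your proof is correct and follows essentially the same route the paper intends: the corollary is an immediate consequence of the preceding corollary (equivalently Theorem \ref{th:ZHM}) once one observes that $\mathcal{C}(P,\Lambda)=\mathcal{C}(P,\lambda)$, that affine independence of the $\lambda(G_{i_s})$ in $\mathbb Z_2^r$ is equivalent to linear independence of the homogenized vectors $(1,\lambda(G_{i_s}))$ in $\mathbb Z_2^{r+1}$, and that $N(P,\lambda)$ is automatically a closed orientable pseudomanifold. Nothing is missing.
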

\begin{proof}[Proof of Theorem \ref{th:ZHM}]
Consider the complex $\mathcal{C}(P,\Lambda)$. By construction the mapping 
$\Lambda$ induces the vector-coloring of its facets $G_1,\dots, G_M$.
We have 
\begin{equation}\label{NPC}
N(P,\Lambda)=P\times \mathbb Z_2^r/\sim, \text{ where }(p,a)\sim(q,b)\text{ if and only if }p=q \text{ and }a-b
\in\langle \Lambda_i\colon p\in G_i\rangle.
\end{equation}
If at each vertex $v=F_{i_1}\cap\dots\cap F_{i_n}$ all the different vectors among $\{\Lambda_{i_1},\dots,\Lambda_{i_n}\}$ 
are linearly independent, then for each point $p\in \partial P$, which 
belongs to exactly $l$ facets $G_{i_1}$, $\dots$, $G_{i_l}$, the vectors  $\Lambda_{i_1},\dots, \Lambda_{i_l}$ are linearly
independent. 
By Lemma \ref{lem:Mc} $p$ has a neighbourhood in $P$ homeomorphic to $\mathbb R^l_{\geqslant}\times\mathbb R^{n-l}$.
Then in $N(P,\Lambda)$ for the point $p\times a$ these neighbourhoods are glued to~the~neighbourhood homeomorphic to 
$\mathbb R^l\times \mathbb R^{n-l}$. Indeed, in $p\times a$ the copies $P\times (a+\varepsilon_1\Lambda_{i_1}+
\dots+\varepsilon_l\Lambda_{i_l})$, $\varepsilon_s=\pm1$, are glued locally as the sets 
$\{\varepsilon_1y_1\geqslant 0,\dots, \varepsilon_ly_l\geqslant 0\}$,  where the addition of the vector $\Lambda_{i_s}$ corresponds to the operation $y_s\to -y_s$. Hence, $N(P,\Lambda)$ is a closed topological manifold.

On the other hand, if $\Lambda_j=0$ for some $j$ but  at each vertex $v=F_{i_1}\cap\dots\cap F_{i_n}$ 
all the nonzero different vectors among $\{\Lambda_{i_1},\dots,\Lambda_{i_n}\}$ 
are linearly independent, then for the the points $p$ lying in the facets $G_j$ with $\Lambda_j=0$ the neighbourhoods
of the form $\mathbb R^l_{\geqslant}\times\mathbb R^{n-l}$ are glued to $\mathbb R_{\geqslant}\times \mathbb R^{n-1}$,
where the coordinate $y_s\geqslant 0$ corresponds to the facet $G_j$. Thus, $N(P,\Lambda)$ is topological manifold with 
a boundary glued from copies of the facets $G_j$ with $\Lambda_j=0$.

Now assume that at some vertex $v=F_{i_1}\cap\dots\cap F_{i_n}$ we have 
$\Lambda_{j_k}=\Lambda_{j_1}+\dots+\Lambda_{j_{k-1}}$ for $\{j_1,\dots,j_k\}\subset\{i_1,\dots,i_n\}$ 
and all the vectors $\Lambda_{j_1}$, $\dots$, $\Lambda_{j_k}$ are nonzero and different (in particular, $k\geqslant 3$). 
Moreover, assume that $k$ is minimal. In particular, the vectors $\Lambda_{j_1}, \dots, \Lambda_{j_{k-1}}$ are linearly independent.
Consider a point $p$ such that $G_{j_1}$, $\dots$, $G_{j_k}$ are exactly the facets containing this point. Such a point
exists by Lemma \ref{lem:Mc} applied to the point $v$. Also by this lemma some neighbourhood of $p$ in $P$ is homeomorphic to 
$\mathbb R^k_{\geqslant}\times\mathbb R^{n-k}$, and the facets $G_{j_s}$ are mapped to the hyperplanes 
$y_s=0$. Then for the space $N(P,\Lambda)$ in the point $p\times a$ the copies 
$(P\setminus G_{j_k})\times (a+\varepsilon_1\Lambda_{j_1}+\dots+\varepsilon_{k-1}\Lambda_{j_{k-1}})$, 
$\varepsilon_s=\pm1$, are glued locally as the sets 
$\{\varepsilon_1y_1\geqslant 0,\dots, \varepsilon_{k-1}y_{k-1}\geqslant 0,y_k>0\}$ and form 
$\mathbb R^{k-1}\times \mathbb R_>\times\mathbb R^{n-k}$, where the addition of the vector $\Lambda_{j_s}$ 
corresponds to the operation $y_s\to -y_s$. 
The points in $G_{j_k}\subset P$ correspond to the points in 
$\mathbb R^{k-1}\times\{0\}\times\mathbb R^{n-k}$. In $N(P,\Lambda)$ for these points
we have the additional identification $(x,a)\sim (x,a+\Lambda_{j_k})=(x,a+\Lambda_{j_1}+\dots+\Lambda_{j_{k-1}})$.
This means that the point $(y_1,\dots, y_{k-1},0,y_{k+1},\dots, y_n)$ is identified with 
$(-y_1,\dots,-y_{k-1},0,y_{k+1},\dots,y_n)$. Equivalently, the copies of $\mathbb R^k_{\geqslant}\times\mathbb R^{n-k}$ are
glued to the space $\mathbb R^n/\sim$, where $(y_1,\dots,y_k,y_{k+1},\dots,y_n)\sim (-y_1,\dots,-y_k,y_{k+1},\dots,y_n)$,
and the point $p\times a$ corresponds to the equivalence class $[\boldsymbol{y}_0]$ of some point 
$\boldsymbol{y}_0=(0,\dots,0,y_{k+1}^0,\dots,y_n^0)$.
In $\mathbb R^n$ the point $\boldsymbol{y}_0$ has a~ball neighbourhood $B$ of radius $\varepsilon$  with the boundary sphere $S^{n-1}$
homeomorphic to the join 
$$
S^{k-1}*S^{n-k-1}=S^{k-1}\times S^{n-k-1}\times [0,1]/(a_1,b,0)\sim (a_2,b,0),\,(a,b_1,1)\sim (a,b_2,1)
$$
via the mapping $S^{k-1}*S^{n-k-1}\to S^{n-1}\colon (a,b,t)\to (\sqrt{t}a,\sqrt{1-t}b)$. 
There is a homeomorphism $B\simeq CS^{n-1}\simeq C(S^{k-1}*S^{n-k-1})$, where $CX$ is the cone over $X$.
In  $\mathbb R^n/\sim$ this  gives a neighbourhood homeomorphic to $C(\mathbb RP^{k-1}*S^{n-k-1})=C\Sigma^{n-k}
\mathbb RP^{k-1}$, where $\Sigma X$ is a suspension over $X$.
Then  
\begin{multline*}
H_i(N(P,\Lambda),N(P,\Lambda)\setminus [p\times a])\simeq 
H_i(C\Sigma^{n-k}\mathbb RP^{k-1},C\Sigma^{n-k}\mathbb RP^{k-1}\setminus \text{apex})\simeq\\
H_i(C\Sigma^{n-k}\mathbb RP^{k-1},\Sigma^{n-k}\mathbb RP^{k-1})\simeq \widetilde{H}_{i-1}(\Sigma^{n-k}\mathbb RP^{k-1})
\simeq \widetilde{H}_{i+k-n-1}(\mathbb RP^{k-1}).
\end{multline*}
In particular, for $k\geqslant 3$ we have $H_{n+2-k}(N(P,\Lambda),N(P,\Lambda)\setminus [p\times a])=\mathbb Z_2$, and 
$N(P,\Lambda)$ is not a manifold.
\end{proof}
\begin{corollary}\label{cor:manor}
For any affine coloring of a~simple $3$-polytope $P$ the space $N(P,\lambda)$ is a closed
orientable manifold. 
\end{corollary}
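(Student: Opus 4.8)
The plan is to reduce the statement to Theorem \ref{th:ZHM}. Recall that for an affine coloring $\lambda$ of rank $r$ we put $\Lambda_i=(1,\lambda_i)\in\mathbb Z_2^{r+1}$ and define $N(P,\lambda)=N(P,\Lambda)$. Each vector $\Lambda_i$ is nonzero, because its first coordinate equals $1$. Hence, by Theorem \ref{th:ZHM}, it is enough to check that at every vertex $v$ of the simple $3$-polytope $P$ — which, by simplicity, is the intersection of exactly three facets $F_i\cap F_j\cap F_k=v$ — the pairwise distinct vectors among $\Lambda_i,\Lambda_j,\Lambda_k$ are linearly independent over $\mathbb Z_2$.

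First I would record the elementary fact that over $\mathbb Z_2$ any collection of one or two distinct nonzero vectors is automatically linearly independent: a single nonzero vector is independent, and two distinct nonzero vectors $u\ne w$ satisfy $u+w\ne 0$. So the only case that needs an argument is when $\Lambda_i$, $\Lambda_j$, $\Lambda_k$ are pairwise distinct. In that case the sole possible nontrivial linear relation among three pairwise distinct nonzero vectors is $\Lambda_i+\Lambda_j+\Lambda_k=0$; but the first coordinate of the left-hand side is $1+1+1=1\ne 0$, so this relation cannot hold, and the three vectors are linearly independent. This verifies the hypothesis of Theorem \ref{th:ZHM}, and since moreover no $\Lambda_j$ vanishes, the theorem gives that $N(P,\lambda)$ is a \emph{closed} topological manifold.

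It remains to note orientability, which is already part of the setup: $N(P,\lambda)$ is a closed orientable pseudomanifold by the definition of $N(P,\lambda)$, and equivalently one may invoke Corollary \ref{cororc} with the linear function $\boldsymbol{c}\in(\mathbb Z_2^{r+1})^*$ taken to be the first coordinate, for which $\boldsymbol{c}\Lambda_i=1$ for all $i$. Combining the three observations — nonzero vectors, linear independence of distinct vectors at each vertex, and orientability — yields the corollary.

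I do not expect a genuine obstacle here; the only point requiring care is the $\mathbb Z_2$-linear-algebra casework at a vertex, and it is worth pausing on why the argument is special to dimension $3$. In higher dimensions a vertex of a simple $n$-polytope lies in $n\ge 4$ facets, and four distinct vectors of the form $(1,\lambda_{i_1}),\dots,(1,\lambda_{i_4})$ may satisfy $\Lambda_{i_1}+\Lambda_{i_2}+\Lambda_{i_3}+\Lambda_{i_4}=0$, since $1+1+1+1=0$ in $\mathbb Z_2$; thus the hypothesis $\dim P=3$ is genuinely used and the higher-dimensional analogue is false in general.
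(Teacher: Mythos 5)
Your proof is correct and is essentially the paper's argument: the paper's one-line proof observes that any two or three distinct points in $\mathbb Z_2^r$ are affinely independent, which is exactly your computation that distinct vectors of the form $(1,\lambda_i)$ cannot satisfy $\Lambda_i+\Lambda_j+\Lambda_k=0$ because the first coordinates sum to $1$. Your closing remark on why the argument breaks for $n\geqslant 4$ is a nice bonus but not needed for the statement.
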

\begin{proof}
This follows from the fact that any two or three different points in $\mathbb Z_2^r$ are affinely independent.
\end{proof}
\begin{corollary} Let $e_1,\dots,e_r$ be a basis in $\mathbb Z_2^r$. Then for any mapping 
$\Lambda\colon  \{F_1,\dots, F_m\}\to \{e_1,\dots, e_r,e_1+\dots+e_r\}$ the space $N(P,\Lambda)$
is a closed topological manifold. Moreover, for odd $r$ it is orientable.
\end{corollary}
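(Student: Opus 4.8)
The plan is to reduce the claim to Theorem \ref{th:ZHM}. Fix a basis $e_1,\dots,e_r$ of $\mathbb Z_2^r$ and set $e_0=e_1+\dots+e_r$, so the coloring $\Lambda$ takes values in the $(r+1)$-element set $S=\{e_1,\dots,e_r,e_0\}$. By Theorem \ref{th:ZHM}, since all values of $\Lambda$ are nonzero, $N(P,\Lambda)$ is a closed topological manifold precisely when, at every vertex $v=F_{i_1}\cap\dots\cap F_{i_n}$, the \emph{distinct} vectors appearing among $\Lambda_{i_1},\dots,\Lambda_{i_n}$ are linearly independent. So the one thing to verify is a purely linear-algebraic fact about the set $S$: \emph{any subset of $S$ is linearly independent}. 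Indeed the distinct vectors at a vertex form a subset of $S$, and if every subset of $S$ is independent we are done.

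To prove every subset $T\subset S$ is independent, I would argue as follows. If $e_0\notin T$, then $T\subset\{e_1,\dots,e_r\}$ is a subset of a basis, hence independent. If $e_0\in T$, write $T=\{e_0\}\cup T'$ with $T'\subset\{e_1,\dots,e_r\}$, say $T'=\{e_{j}: j\in J\}$ for some $J\subsetneq\{1,\dots,r\}$ (it is proper, else $T$ has $r+1$ vectors in an $r$-dimensional space). A linear dependence $e_0+\sum_{j\in J}\varepsilon_j e_j=0$ with $\varepsilon_j\in\mathbb Z_2$ would read $\sum_{k=1}^r e_k+\sum_{j\in J}\varepsilon_j e_j=0$; comparing coefficients in the basis $e_1,\dots,e_r$, each coordinate $k\notin J$ contributes $1\ne 0$, and since $J$ is a proper subset there is at least one such $k$, a contradiction. (The same coefficient comparison also rules out a dependence involving only the $e_j$'s, trivially.) Hence $T$ is independent. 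This establishes that $N(P,\Lambda)$ is a closed topological manifold.

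For the orientability claim when $r$ is odd, I would invoke Corollary \ref{cororc}: $N(P,\Lambda)$ is orientable iff there is $\boldsymbol c\in(\mathbb Z_2^r)^*$ with $\boldsymbol c\Lambda_i=1$ for all $i$ (all $\Lambda_i$ being nonzero). Take $\boldsymbol c$ to be the sum-of-coordinates functional in the basis $e_1,\dots,e_r$. Then $\boldsymbol c(e_k)=1$ for each $k=1,\dots,r$, and $\boldsymbol c(e_0)=\boldsymbol c(e_1+\dots+e_r)=r=1$ in $\mathbb Z_2$ because $r$ is odd. Thus $\boldsymbol c$ takes the value $1$ on every vector of $S$, hence on every $\Lambda_i$, and $N(P,\Lambda)$ is orientable.

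The argument is essentially immediate once Theorem \ref{th:ZHM} and Corollary \ref{cororc} are in hand, so there is no real obstacle; the only point requiring a moment's care is the linear-algebra lemma that every subset of $\{e_1,\dots,e_r,e_1+\dots+e_r\}$ is independent, and even there the coefficient-comparison argument above handles it in one line. I would present the whole proof as two short paragraphs: one citing Theorem \ref{th:ZHM} plus the subset-independence observation, and one citing Corollary \ref{cororc} with the explicit functional $\boldsymbol c$.
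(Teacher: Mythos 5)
The paper states this corollary without proof, as an immediate consequence of Theorem \ref{th:ZHM} and Corollary \ref{cororc}, and your overall strategy is exactly the intended one; the orientability paragraph is correct. However, your key linear-algebra lemma --- that \emph{every} subset of $S=\{e_1,\dots,e_r,e_1+\dots+e_r\}$ is linearly independent --- is false: the full set $S$ is linearly dependent, since the sum of all $r+1$ of its elements is $\boldsymbol{0}$. Your parenthetical ``(it is proper, else $T$ has $r+1$ vectors in an $r$-dimensional space)'' is exactly where the argument breaks down: the fact that $r+1$ vectors in $\mathbb Z_2^r$ must be dependent does not prevent all $r+1$ elements of $S$ from occurring as the distinct colors at a single vertex; it only says that if they do occur they are dependent, which is precisely the situation Theorem \ref{th:ZHM} forbids. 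This can genuinely happen whenever $n\geqslant r+1$. For example, take $n=3$, $r=2$, $P=\Delta^3$ and $\Lambda_1=e_1$, $\Lambda_2=e_2$, $\Lambda_3=e_1+e_2$, $\Lambda_4=e_1$: at the vertex $F_1\cap F_2\cap F_3$ the three distinct nonzero vectors satisfy $e_1+e_2+(e_1+e_2)=0$, the local model of $N(P,\Lambda)$ at the corresponding points is the cone over $\mathbb RP^2$, and the space is not a manifold.

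What is true, and what your coefficient comparison does prove, is that every \emph{proper} subset of $S$ is linearly independent. Since at most $n$ distinct vectors occur at a vertex of a simple $n$-polytope, the corollary holds whenever $r\geqslant n$, and in general it needs the additional hypothesis that not all $r+1$ elements of $S$ appear at a single vertex. So the gap in your proof is real; it in fact points at a missing hypothesis in the statement itself rather than at something repairable within your argument as written.
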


\begin{construction}\label{con:spn}
Let $P$ be a simple $n$-polytope and $\lambda$ be its affine coloring of rank $r$. 
If~the~complex $\mathcal{C}(P,\lambda)$ is equivalent to $\mathcal{C}(n,r+1)$ then the~induced coloring
is affinely independent, the polytope is homeomorphic to $S^n_{r+1,\geqslant}$, 
and the manifold $N(P,\lambda)$ is homeomorphic to $S^n$ glued from $2^{r+1}$ copies of $S^n_{r+1,\geqslant}$. 
\end{construction}
\begin{example}\label{ex:spnG}
Examples for Construction \ref{con:spn} are provided by Example \ref{Ex:CG}.
Each face $G=F_{i_1}\cap\dots \cap F_{i_k}$ corresponds to an affine coloring 
$$
\lambda_i=\begin{cases}
\boldsymbol{e}_s,&\text{ if }i=i_s, s=1,\dots, k,\\
\boldsymbol{0},&\text{ otherwise},
\end{cases}
$$
where $\boldsymbol{e}_1=(1,0,\dots,0)$, $\dots$, $\boldsymbol{e}_{k}=(0,\dots, 0,1)\in\mathbb Z_2^k$.
Then the subgroup $H_G=H(\lambda)$ of rank $m-k-1$ 
is defined in $\mathbb Z_2^m$ by the equations $x_{i_1}=0$, $\dots$, $x_{i_k}=0$,
and $x_1+\dots+x_m=0$. This is the intersection of the subgroup $H_0$ consisting of all the orientation preserving involutions
with the coordinate subgroup corresponding to $G$. We have $\mathbb{R}\mathcal{Z}_P/H_G\simeq S^n$.

In particular, each vertex $v\in P$ corresponds to a subgroup $H_v$ of rank $m-n-1$ such that 
$\mathbb{R}\mathcal{Z}_P/H_v\simeq S^n$. The particular case of this construction was presented in \cite{G19}.
This corresponds to the case when $P=\Delta^{n_1}\times\dots\times\Delta^{n_k}$ and $v$ is any vertex.
We obtain an~action of~$\mathbb Z_2^{k-1}$ on~$S^{n_1}\times\dots\times S^{n_k}$ with the orbit space $S^{n_1+\dots+n_k}$.
\end{example}
\begin{conjecture}
The space $N(P,\lambda)$ corresponding to an affine coloring $\lambda$ of rank $r$ of a simple $n$-polytope $P$
is homeomorphic to $S^n$ if and only if $\mathcal{C}(P,\lambda)\simeq\mathcal{C}(n,r+1)$.
\end{conjecture}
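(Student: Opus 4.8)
The plan is to prove the two implications separately: ``$\Leftarrow$'' is essentially free, and all the difficulty is in ``$\Rightarrow$''. For ``$\Leftarrow$'', if $\mathcal C(P,\lambda)\simeq\mathcal C(n,r+1)$ this is exactly Construction \ref{con:spn}: $P$ is homeomorphic to $S^n_{r+1,\geqslant}$ compatibly with the coloring, so $N(P,\lambda)$ is glued from $2^{r+1}$ copies of $S^n_{r+1,\geqslant}$ in precisely the way $S^n\subset\mathbb R^{n+1}$ is glued from its $2^{r+1}$ pieces $\{x_0\geqslant 0,\dots,x_r\geqslant 0\}$ and their reflections in the first $r+1$ coordinate hyperplanes, and the resulting homeomorphism $N(P,\lambda)\cong S^n$ is genuine. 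So from now on assume $N=N(P,\lambda)\cong S^n$ and write $\mathcal C=\mathcal C(P,\lambda)$. Since $S^n$ is a closed manifold, Theorem \ref{th:ZHM} (in its form for affine colorings) forces the coloring that $\lambda$ induces on $\mathcal C$ to be affinely independent; in particular $(P,\mathcal C)$ is a manifold with facets (the remark after Lemma \ref{lem:Mc}) and at most $r+1$ facets of $\mathcal C$ meet at any point of $\partial P$. The remaining argument splits into a homological half, valid in every dimension, and a combinatorial half that I can complete only for $n\leqslant 3$.

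Homological half. By the transfer isomorphism (Theorem \ref{th:transfer}) and the standard decomposition of $H^*(\mathbb R\mathcal Z_P;\mathbb Q)$ into the summands $\widetilde H^{*-1}(K_\omega;\mathbb Q)$, $K=\partial P^*$, on which $\mathbb Z_2^m$ acts through the sign characters $\chi_\omega(\boldsymbol x)=(-1)^{\sum_{i\in\omega}x_i}$, one recovers the description of $H^*(N;\mathbb Q)$ of \cite{ST12,T12,CP17} as $\bigoplus_{\ell\in(\mathbb Z_2^{r+1})^*}\widetilde H^{*-1}(K_{\omega_\ell};\mathbb Q)$ with $\omega_\ell=\{i\colon\ell(\Lambda_i)=1\}$; for $\Lambda_i=(1,\lambda_i)$ these sets run through $\varnothing$, $[m]$ and $\omega_\pi=\{i\colon\lambda_i\in\pi\}$ over all affine hyperplanes $\pi\subset\mathbb Z_2^r$. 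Faces of a simple polytope being contractible, the nerve lemma gives $K_\omega\simeq P_\omega=\bigcup_{i\in\omega}F_i$; hence $N\cong S^n$ (so $N$ a $\mathbb Q$-homology sphere) forces every $P_{\omega_\pi}$ to be $\mathbb Q$-acyclic (cf.\ Proposition \ref{pr:rhs}). Each $\omega_\pi$ is a nonempty proper subset of $[m]$, so each $P_{\omega_\pi}$ is a $\mathbb Q$-acyclic submanifold-with-boundary of $\partial P\cong S^{n-1}$; for $n\leqslant 3$ such a surface-with-boundary in $S^2$ is necessarily a disk.

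Combinatorial half. The $\mathbb Q$-homology-sphere condition alone is too weak: over $\Delta^3$ a coloring $\lambda$ mapping the four facets bijectively onto $\mathbb Z_2^2$ yields $N\cong\mathbb R\mathrm P^3$, with $\mathcal C(P,\lambda)\simeq\mathcal C(3,4)\ne\mathcal C(3,3)$. So one must also use $\pi_1(N)=1$. Presenting $N$ as the Davis-type gluing of $2^{r+1}$ copies of $P$ along the facets of $\mathcal C$, whose universal cover is the analogous gluing over the right-angled Coxeter group $W$ on the facet set of $\mathcal C$, one has $\pi_1(N)=\ker\bigl(W\to\mathbb Z_2^{r+1}\bigr)$, the homomorphism sending a facet $G_\alpha$ to $(1,\lambda(G_\alpha))$ (cf.\ \cite{E22M}). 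Triviality of $\pi_1$ forces $W$ to be finite, hence its nerve a simplex: the facets of $\mathcal C$ pairwise intersect and the vectors $(1,\lambda(G_\alpha))$ are linearly independent; together with $\rk\lambda=r$ this means $\mathcal C$ has exactly $r+1$ facets $G_0,\dots,G_r$ with affinely independent colors $v_0,\dots,v_r$. Feeding this back into the homological half, since $v_0,\dots,v_r$ is then an affine basis, every nonempty proper $S\subsetneq\{0,\dots,r\}$ equals $\{j\colon v_j\in\pi\}$ for a suitable affine hyperplane $\pi$, so in fact every union $\bigcup_{j\in S}G_j$ is $\mathbb Q$-acyclic, and (for $n\leqslant 3$) each $G_j$ is a disk. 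These constraints should pin down the face poset of $\mathcal C$ as that of a coloring of $\Delta^n$ in $r+1$ colors, i.e.\ $\mathcal C(P,\lambda)\simeq\mathcal C(n,r+1)$. For $n=3$ this last step is the concrete one: by the classification of complexes $\mathcal C(P,c)$ over $3$-polytopes (Theorem \ref{th:CPc3}), the $1$-skeleton of $\mathcal C$ is a disjoint union of simple curves and bridgeless $3$-valent graphs, and the constraints above reduce it to one of $\varnothing$, a circle, a theta-graph or $K_4$, namely $\mathcal C^1(3,r+1)$ — this is Theorem \ref{th:NP3sp}.

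The step I expect to be the real obstacle — and the reason the statement remains a conjecture for general $n$ — is the implication ``$\mathbb Q$-acyclic $\Rightarrow$ ball'' for the sets $P_{\omega_\pi}$ and ultimately for the facets of $\mathcal C$: in dimension $\geqslant 4$ a $\mathbb Q$-acyclic, even contractible, submanifold-with-boundary of $S^{n-1}$ need not be a disk (a Schoenflies-type failure), so the homology and $\pi_1$ data need not determine the combinatorial type of $\mathcal C$, and no analogue of Theorem \ref{th:CPc3} is available for $n\geqslant 4$. For $n=4$ I would expect the scheme above to go through using Freedman's four-dimensional Schoenflies and Poincaré theorems.
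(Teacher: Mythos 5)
This statement is a conjecture: the paper itself establishes only the ``if'' direction in general (Construction \ref{con:spn}) and the ``only if'' direction for $n\leqslant 3$ (the example after the conjecture for $n\leqslant 2$, and Theorem \ref{th:NP3sp} for $n=3$). Your proposal has exactly the same scope, and your ``if'' direction and your diagnosis of where the general case breaks down (no Schoenflies-type control over $\mathbb Q$-acyclic pieces of $\partial P$ in dimension $\geqslant 4$) are sound. For $n=3$, however, your route is genuinely different from the paper's. The paper never invokes the cohomology description of Theorem \ref{HNPLth} in the proof of Theorem \ref{th:NP3sp}; instead it shows directly that a non-disk facet of $\mathcal C(P,\lambda)$, or two facets meeting in $\geqslant 2$ edges, produce $S^2\times S^1$ summands in the Kneser--Milnor decomposition (Lemmas \ref{ml1}, \ref{lem:NPL2e} and Corollaries \ref{mcor1}, \ref{mcor2}), then reduces via Steinitz to $N(P,\lambda)=N(Q,\lambda')$ covered by $\mathbb R\mathcal Z_Q$, and kills $Q\ne\Delta^3$ by a second connected-sum argument ($3$-belts) together with asphericity of $\mathbb R\mathcal Z_Q$ for flag $Q$ \cite{DJS98}. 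Your replacement of the first stage by the transfer plus \cite{CP17} (essentially reproving Proposition \ref{pr:rhs}), and of the second stage by the computation $\pi_1=\ker(W\to\mathbb Z_2^{r+1})$ for the right-angled Coxeter group $W$ on the facets of $\mathcal C$, is more uniform in spirit and cleanly separates the homological from the homotopical obstruction; the paper's argument buys more (explicit $S^2\times S^1$ summands, hence the stronger Corollaries \ref{mcor1}--\ref{mcor2} about homology spheres over any coefficients) at the cost of being specific to $3$-manifold topology.

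One step is stated too strongly and is false in the generality in which you assert it: the identification $\pi_1(N)=\ker\bigl(W\to\mathbb Z_2^{r+1}\bigr)$ requires the basic construction $\mathcal U(W,P)$ over the mirrored space $(P,\mathcal C)$ to be simply connected, and this fails when two facets of $\mathcal C$ with commuting generators intersect in a disconnected set. In exactly that situation Lemma \ref{lem:NPL2e} shows that $N(P,\lambda)$ acquires $S^2\times S^1$ summands, so $\pi_1(N)$ is strictly larger than the kernel even when the nerve of $\mathcal C$ is a simplex with linearly independent colors. As written, your ``combinatorial half'' applies the formula before connectivity of the pairwise intersections has been established, so the argument is circular at that point. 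The gap is repairable for $n=3$: your homological half already forces every $P_{\omega_\pi}$ to be a disk, and (as in the second proof of Proposition \ref{prop:rhs}) choosing a hyperplane through $\lambda_i$ separating facets in different holes of $G_i$, or through $\lambda_i,\lambda_j$ avoiding the third colors at the endpoints of two parallel common edges, shows that all facets of $\mathcal C$ are disks and all pairwise intersections are connected. Only after this clean-up is $\mathcal U(W,P)$ simply connected and the kernel free (by Theorem \ref{th:ZHM}), and only then does $\pi_1(N)=1$ legitimately force $W\cong\mathbb Z_2^{r+1}$, a complete nerve, and hence $\mathcal C(P,\lambda)\simeq\mathcal C(3,r+1)$ via Lemma \ref{lem:c1pc}. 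With that reordering your proof of the $n=3$ case is correct.
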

\begin{example}
In dimension $n=1$ we have $P=I^1=\Delta^1$ and the conjecture is valid.

In dimension $n=2$ the complex $\mathcal{C}(P_m,\lambda)$ corresponding to an $m$-gon is equivalent 
ether to $\mathcal{C}(2,1)$, or to $\mathcal{C}(2,2)$, or to a complex $\mathcal{C}(P_l,\lambda')$ corresponding to
an~affinely independent coloring of an $l$-gon $P_l$, $l\geqslant 3$. In the latter case $N(P_m,\lambda)=N(P_l,\lambda')$ 
is a sphere with $g$ handles, where $\chi(N(P_l,\lambda'))=2-2g=2^{r-1}l-2^rl+2^{r+1}$. Therefore,
$g=1+2^{r-2}(l-4)$ and  $N(P_m,\lambda)\not \simeq S^2$ for $l>3$. Thus, the conjecture is valid.

As we will see in Section \ref{Sec:3sph} the conjecture is valid in dimension $n=3$. 

As it will be shown in \cite{E24b} the conjecture is also valid in dimension $n=4$.

\end{example}

Now we will prove a fact about skeletons of the complexes $\mathcal{C}(P,\Lambda)$
and $\mathcal{C}(P,\Pi\circ \Lambda)$ which we will
need below.
\begin{proposition}
Let $\Lambda$ be a vector-coloring of rank $r$ of a simple $n$-polytope $P$ 
such that $N(P,\Lambda)$ is a manifold, and $H'\subset\mathbb Z_2^r$ be a subgroup of rank $k$
corresponding to a vector-coloring $\Lambda'=\Pi\circ \Lambda$, where 
$\Pi\colon \mathbb Z_2^r\to \mathbb Z_2^r/H'\simeq\mathbb Z_2^{r-k}$ is the canonical projection. Then 
any $q$-skeleton $\mathcal{C}^q(P,\Lambda)$ belongs to the $(q+k)$-skeleton $C^{q+k}(P,\Lambda')$.
\end{proposition}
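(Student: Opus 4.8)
The plan is to analyze how the complex $\mathcal{C}(P,\Lambda)$ changes when we post-compose $\Lambda$ with the projection $\Pi$. Recall that the facets of $\mathcal{C}(P,\Lambda)$ are the connected components of unions $\bigcup_{\Lambda_i = \text{const}} F_i$, while the facets of $\mathcal{C}(P,\Lambda')$ are the connected components of unions $\bigcup_{\Lambda'_i = \text{const}} F_i$. Since $\Lambda'_i = \Pi(\Lambda_i)$, each monochromatic union for $\Lambda'$ is a union of monochromatic unions for $\Lambda$ (over the colors in a single $H'$-coset, which has size $2^k$). Hence each facet $G'_j$ of $\mathcal{C}(P,\Lambda')$ is a union of facets $G_i$ of $\mathcal{C}(P,\Lambda)$, and more generally every face of $\mathcal{C}(P,\Lambda')$ is a union of faces of $\mathcal{C}(P,\Lambda)$. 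In particular $\mathcal{C}(P,\Lambda)$ refines $\mathcal{C}(P,\Lambda')$ as a subdivision of $\partial P$, so any point of the $q$-skeleton $\mathcal{C}^q(P,\Lambda)$ lies on some face of $\mathcal{C}(P,\Lambda')$; the content of the statement is the dimension bound $q+k$ on that face.

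First I would fix a point $p$ and let $l$ be the number of facets $G_{i_1},\dots,G_{i_l}$ of $\mathcal{C}(P,\Lambda)$ containing $p$, so that $p$ lies in the $(n-l)$-skeleton but not the $(n-l-1)$-skeleton of $\mathcal{C}(P,\Lambda)$; we want $p$ to lie in a face of $\mathcal{C}(P,\Lambda')$ of dimension at least $n-l-k$, equivalently to lie on at most $l+k$ facets of $\mathcal{C}(P,\Lambda')$. By Lemma \ref{lem:Mc}, near $p$ the polytope looks like $\mathbb{R}^l_{\geqslant}\times\mathbb{R}^{n-l}$ with $G_{i_s}$ cut out by $y_s=0$, and the $\Lambda$-colors of the facets $G_{i_1},\dots,G_{i_l}$ are $l$ distinct vectors $\Lambda_{i_1},\dots,\Lambda_{i_l}$; moreover, since $N(P,\Lambda)$ is a manifold, by Theorem \ref{th:ZHM} these $l$ vectors are linearly independent. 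The facets of $\mathcal{C}(P,\Lambda')$ through $p$ correspond to the distinct values among $\Pi(\Lambda_{i_1}),\dots,\Pi(\Lambda_{i_l})$, together with possible ``new'' coincidences that arise because $\Pi$ may merge $G_{i_s}$ with an adjacent $\Lambda$-facet of $p$ whose color differs from $\Lambda_{i_s}$ only by an element of $H'$.

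The key estimate is then purely linear-algebraic: the number of distinct values of $\Pi$ on an $l$-element linearly independent set in $\mathbb{Z}_2^r$ is at least $l-k$, since the kernel of the restriction of $\Pi$ to the span of $\{\Lambda_{i_1},\dots,\Lambda_{i_l}\}$ has dimension at most $k=\operatorname{rk} H'$, so the fibers partition the $l$ vectors into at least $l-k$ nonempty classes. I must also control how merging with adjacent facets of $p$ affects the count; the cleanest way is to work at a vertex $v$ in the face $\bigcap_{F_i\ni p}F_i$ as in the proof of Lemma \ref{lem:Mc}, where $p$ lies in exactly the facets $F_{j_1},\dots,F_{j_k}$ of $P$ grouped by $\Lambda$ into the classes $\omega_{i_1}(p),\dots,\omega_{i_l}(p)$, and observe that regrouping these same $F_j$'s by $\Lambda'$ can only split into classes indexed by distinct $\Pi$-values, of which there are at most $l$ and at least $l-k$; the local picture $\mathbb{R}^l_\geqslant \to \mathbb{R}^{l'}_\geqslant$ from coarsening a product of orthants then shows $p$ lies in exactly $l'$ facets of $\mathcal{C}(P,\Lambda')$ with $l-k\le l' \le l$, giving the face through $p$ dimension $n-l' \ge n-l-k = q+k$ when $q=n-l$.

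I expect the main obstacle to be bookkeeping: making precise that ``merging'' of $\Lambda$-facets under $\Pi$ corresponds exactly to collapsing coordinates in the local orthant model, and that no spurious lower-dimensional faces of $\mathcal{C}(P,\Lambda')$ are created that would violate the bound. The linear-algebra core ($\dim\ker(\Pi|_{\text{span}}) \le k$) is immediate; the care is all in the local combinatorics of how connected components of monochromatic unions behave under coarsening, which Lemma \ref{lem:Mc} was set up precisely to handle.
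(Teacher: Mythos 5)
Your proposal follows essentially the same route as the paper: pass to the minimal face of $P$ containing the point, invoke the manifold hypothesis via Theorem \ref{th:ZHM} to get linear independence of the colors $\Lambda(G_{i_1}),\dots,\Lambda(G_{i_l})$ of the facets of $\mathcal{C}(P,\Lambda)$ through the point, and bound the number of distinct $\Pi$-images from below by $\dim \langle\Lambda(G_{i_1}),\dots,\Lambda(G_{i_l})\rangle-\dim {\rm Ker}\,\Pi\geqslant l-k$. That inequality is exactly the paper's argument and is the entire content of the proof. Your worry about ``new coincidences'' from $\Pi$ merging $G_{i_s}$ with facets not containing $p$ is unnecessary: the facets of $\mathcal{C}(P,\Lambda')$ through $p$ are in bijection with the distinct $\Lambda'$-colors of the facets of $P$ through $p$ (two connected components of the same color class are disjoint, so at most one of them contains $p$), and this set of colors is exactly $\{\Pi\Lambda(G_{i_1}),\dots,\Pi\Lambda(G_{i_l})\}$.

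The one genuine defect is a direction error in translating facet counts into skeleton membership, which makes your stated intermediate goal and your closing line wrong as written. If $p$ lies on exactly $l'$ facets of $\mathcal{C}(P,\Lambda')$, then it lies in the relative interior of a face of dimension $n-l'$, and $p\in\mathcal{C}^{q+k}(P,\Lambda')$ precisely when $n-l'\leqslant q+k=n-l+k$, that is, when $l'\geqslant l-k$: what is needed is a \emph{lower} bound on the number of facets of $\mathcal{C}(P,\Lambda')$ through $p$, not the upper bound ``at most $l+k$ facets / dimension at least $n-l-k$'' that you announce. Your final assertion ``$n-l'\geqslant n-l-k=q+k$'' is incorrect twice over: $q+k=n-l+k\neq n-l-k$, and the inequality actually required is $n-l'\leqslant q+k$. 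Fortunately the estimate you do prove, $l'\geqslant l-k$, is precisely the lower bound needed, so the argument closes once this bookkeeping is corrected.
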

\begin{proof}
Consider a point $\boldsymbol{x}\in \mathcal{C}^q(P,\Lambda)$. It lies in the intersection 
of $(n-q)$ facets $G_{i_1}$, $\dots$, $G_{i_{n-q}}$. Let $F_{j_1}\cap\dots\cap F_{j_l}$ be the minimal face of $P$
containing $\boldsymbol{x}$. Then  
$\{\Lambda(F_{j_1}), \dots, \Lambda(F_{j_l})\}=\{\Lambda(G_{i_1}),\dots, \Lambda(G_{i_{n-q}})\}$
and the latter set of vectors in linearly independent. If the set $\{\Lambda'(G_{i_1}),\dots, \Lambda'(G_{i_{n-q}})\}$
consists of $n-s$ different vectors, then $\boldsymbol{x}\in \mathcal{C}^s(P,\Lambda')$. We have
\begin{multline*}
n-s\geqslant \dim \langle\Lambda'(G_{i_1}),\dots, \Lambda'(G_{i_{n-q}})\rangle=\\
=\dim \langle\Lambda(G_{i_1}),\dots, \Lambda(G_{i_{n-q}})\rangle-
\dim {\rm Ker}\,\Pi\left.\right|_{\langle\Lambda(G_{i_1}),\dots, \Lambda(G_{i_{n-q}})\rangle}\geqslant \\
\geqslant \dim \langle\Lambda(G_{i_1}),\dots, \Lambda(G_{i_{n-q}})\rangle-
\dim {\rm Ker}\,\Pi=n-q-k.
\end{multline*}
Thus, $s\leqslant q+k$ and $\boldsymbol{x}\in \mathcal{C}^{q+k}(P,\Lambda')$.
\end{proof}
\begin{corollary}\label{cor:C01inv}
Let $\Lambda$ be a vector-coloring of rank $r$ of a simple $n$-polytope $P$ 
such that $N(P,\Lambda)$ is a manifold, and $\tau\subset\mathbb Z_2^r$ be an involution.
Then any vertex of $\mathcal{C}(P,\Lambda)$  is either a~vertex of~$C(P,\Lambda_\tau)$ or belongs to its $1$-face,
where $\Lambda_\tau=\Pi\circ \Lambda$, and $\Pi\colon \mathbb Z_2^r\to \mathbb Z_2^r/\langle\tau\rangle\simeq\mathbb Z_2^{r-1}$ is~the~canonical projection.
\end{corollary}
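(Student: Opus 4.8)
The plan is to deduce the statement directly from the previous Proposition, applied to the subgroup $H'=\langle\tau\rangle\subset\mathbb Z_2^r$. This $H'$ has rank $k=1$, and its associated vector-coloring is precisely $\Lambda_\tau=\Pi\circ\Lambda$, where $\Pi\colon\mathbb Z_2^r\to\mathbb Z_2^r/\langle\tau\rangle\simeq\mathbb Z_2^{r-1}$ is the canonical projection. Since $\Lambda$ spans $\mathbb Z_2^r$ and $\Pi$ is surjective, $\Lambda_\tau$ spans $\mathbb Z_2^{r-1}$, so it is a genuine vector-coloring and the complex $\mathcal{C}(P,\Lambda_\tau)$ is well defined; moreover $N(P,\Lambda)$ is a manifold by hypothesis (so in particular all $\Lambda_i\neq\boldsymbol 0$ by Theorem \ref{th:ZHM}, and $\mathcal{C}(P,\Lambda)$ is an honest complex). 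Hence the hypotheses of the Proposition are met.

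Next I would observe that, by definition, a vertex of $\mathcal{C}(P,\Lambda)$ is a $0$-face, i.e.\ a point of the $0$-skeleton $\mathcal{C}^0(P,\Lambda)$. Invoking the previous Proposition with $q=0$ and $k=1$ yields the inclusion $\mathcal{C}^0(P,\Lambda)\subseteq\mathcal{C}^1(P,\Lambda_\tau)$. Therefore every vertex of $\mathcal{C}(P,\Lambda)$ lies in the $1$-skeleton of the complex $\mathcal{C}(P,\Lambda_\tau)$.

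Finally I would unwind what membership in $\mathcal{C}^1(P,\Lambda_\tau)$ means: the $1$-skeleton is the union of the $0$-faces and the $1$-faces of $\mathcal{C}(P,\Lambda_\tau)$, so a point of it is either a vertex of $\mathcal{C}(P,\Lambda_\tau)$ or lies on one of its $1$-faces, which is exactly the asserted dichotomy. I do not expect any real obstacle here: the entire combinatorial content is carried by the previous Proposition, and what remains is only bookkeeping with the definitions of $\mathcal{C}^q$ and of the skeleton filtration. The single point genuinely worth checking explicitly is that $\Lambda_\tau$ is indeed a vector-coloring, so that the target complex $\mathcal{C}(P,\Lambda_\tau)$ is meaningful, as noted above.
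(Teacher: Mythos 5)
Your proposal is correct and matches the paper's intent exactly: the corollary is stated as an immediate consequence of the preceding Proposition, obtained by taking $H'=\langle\tau\rangle$ (so $k=1$) and $q=0$, which gives $\mathcal{C}^0(P,\Lambda)\subseteq\mathcal{C}^1(P,\Lambda_\tau)$, and the rest is unwinding the definition of the $1$-skeleton. Your extra check that $\Lambda_\tau$ is a genuine vector-coloring is harmless bookkeeping that the paper leaves implicit.
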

\section{Manifolds with torus actions}\label{sec:mantorus}

Results obtained in Section \ref{sec:NPman} can be generalized to~actions of~compact
torus $\mathbb T^m=(S^1)^m$ instead of $\mathbb Z_2^m$. 
Namely, let us identify $S^1$ with $\mathbb R/\mathbb Z$ and $\mathbb T^r$ with $\mathbb R^r/\mathbb Z^r$. Then for 
a~mapping $\Lambda\colon \{F_1,\dots,F_m\}\to \mathbb Z^r$ such that 
$\langle\Lambda_1,\dots\Lambda_m\rangle=\mathbb Z^r$ one can define a~space
$$
M(P,\Lambda)=P\times \mathbb T^r/\sim,
$$
where $(\boldsymbol{p}_1, \boldsymbol{t}_1) \sim (\boldsymbol{p}_2, \boldsymbol{t}_2)$
if and only if $\boldsymbol{p}_1=\boldsymbol{p}_2$ and 
$\boldsymbol{t}_1-\boldsymbol{t}_2\in \left\{\sum\limits_{i\colon\boldsymbol{p}_1\in F_i} 
\Lambda_i\varphi_i, \varphi_i\in \mathbb R/\mathbb Z\right\}$. 

We will call  the~mapping $\Lambda$ {\it an integer vector-coloring of rank $r$}.

The space $M(P,\Lambda)$ has a~canonical action of~$\mathbb T^r$ and $M(P,\Lambda)/\mathbb T^r=P$.

When $\Lambda$ has an additional property 
\begin{equation}
\{\Lambda_{i_1},\dots,\Lambda_{i_k}\}\text{ is a part of some basis in }\mathbb Z^r\text{ if }F_{i_1}\cap\dots\cap F_{i_k}\ne\varnothing,
\end{equation} 
then it is known that $M(P,\Lambda)$ is a topological (even smooth) manifold obtained as~an~orbit space
of~a~free action of~the~group 
$$
H(\Lambda)=\{(\varphi_1,\dots, \varphi_m)\in\mathbb T^m\colon \Lambda_1\varphi_1+\dots+\Lambda_m\varphi_m=\boldsymbol{0}\}\simeq \mathbb T^{m-r}
$$ 
on~the~moment-angle manifold $\mathcal{Z}_P=M(P,E)$, $E(F_i)=\boldsymbol{e}_i$, where $\boldsymbol{e}_1$, $\dots$, $\boldsymbol{e}_m$ is the~standard basis in~$\mathbb Z^m$ (see \cite{DJ91, BP15}).  
We have the~following generalization.
\begin{proposition}\label{prop:ta}
Let $P$ be a~simple $n$-polytope and  $\Lambda\colon \{F_1,\dots,F_m\}\to \mathbb Z^r\setminus\{\boldsymbol{0}\}$ 
be~an integer vector-coloring of rank $r$ such that for any
vertex  $v=F_{i_1}\cap\dots\cap F_{i_n}$ all the different vectors among $\{\Lambda_{i_1},\dots,\Lambda_{i_n}\}$ 
form a part of a~basis in $\mathbb Z^r$. Then $M(P,\Lambda)$ is a~closed topological $(n+r)$-manifold.
\end{proposition}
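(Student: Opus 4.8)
The plan is to mimic the proof of Theorem~\ref{th:ZHM}, replacing the combinatorial $\mathbb Z_2^r$-gluing analysis by the corresponding torus-gluing analysis, and to deduce the manifold property by producing a Euclidean chart around every point. First I would pass to the complex $\mathcal C(P,\Lambda)$ attached to the integer vector-coloring $\Lambda$ (viewed modulo the equivalence ``same vector''), noting exactly as in the proof of Theorem~\ref{th:ZHM} that
$$
M(P,\Lambda)=P\times\mathbb T^r/\sim,\qquad (\boldsymbol p,\boldsymbol t_1)\sim(\boldsymbol p,\boldsymbol t_2)\iff \boldsymbol t_1-\boldsymbol t_2\in\langle\Lambda_i\colon \boldsymbol p\in G_i\rangle_{\mathbb R/\mathbb Z},
$$
where $G_1,\dots,G_M$ are the facets of $\mathcal C(P,\Lambda)$ and $\Lambda_i$ their (well-defined) colors. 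The hypothesis that at every vertex the distinct vectors among $\{\Lambda_{i_1},\dots,\Lambda_{i_n}\}$ form part of a basis of $\mathbb Z^r$ implies, just as in the $\mathbb Z_2$-case, that for every point $p\in\partial P$ lying in exactly $l$ facets $G_{i_1},\dots,G_{i_l}$ of $\mathcal C(P,\Lambda)$ the vectors $\Lambda_{i_1},\dots,\Lambda_{i_l}$ form part of a $\mathbb Z$-basis; indeed $p$ lies in a face of $P$, which lies in a vertex, and a subset of a part-of-a-basis is again part of a basis.

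Next I would build the local chart. By Lemma~\ref{lem:Mc} the point $p$ has a neighbourhood $U\subset P$ piecewise-linearly homeomorphic to $\mathbb R^l_{\geqslant}\times\mathbb R^{n-l}$ with $G_{i_s}\cap U$ going to $\{y_s=0\}$. Since $\Lambda_{i_1},\dots,\Lambda_{i_l}$ can be completed to a basis $\Lambda_{i_1},\dots,\Lambda_{i_l},\boldsymbol f_{l+1},\dots,\boldsymbol f_r$ of $\mathbb Z^r$, a change of coordinates on $\mathbb T^r$ identifies the torus with $(\mathbb R/\mathbb Z)^r$ so that the gluing subgroup $\langle\Lambda_{i_1},\dots,\Lambda_{i_l}\rangle_{\mathbb R/\mathbb Z}$ becomes the coordinate subtorus in the first $l$ coordinates. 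Then over $U$ the space $M(P,\Lambda)$ looks exactly like $\bigl(\mathbb R^l_{\geqslant}\times(\mathbb R/\mathbb Z)^l\bigr)/\!\sim\ \times\ \mathbb R^{n-l}\times(\mathbb R/\mathbb Z)^{r-l}$, and the first factor is the standard identification space $\bigl(\mathbb R^l_{\geqslant}\times\mathbb T^l\bigr)/\sim$ collapsing the $j$-th circle over $\{y_j=0\}$, which is well known to be homeomorphic to $\mathbb R^{2l}$ (this is precisely the local model $\mathbb C^l\supset\mathbb C^l_{\geqslant}\times\mathbb T^l$ appearing in the standard description of smooth moment-angle manifolds, \cite{DJ91,BP15}). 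Hence $p\times(\text{orbit})$ has a neighbourhood homeomorphic to $\mathbb R^{2l}\times\mathbb R^{n-l}\times(\mathbb R/\mathbb Z)^{r-l}$, locally $\mathbb R^{n+r}$; over interior points of $P$ one has $l=0$ and the neighbourhood is $\mathbb R^n\times\mathbb T^r$, again locally Euclidean. Thus $M(P,\Lambda)$ is a topological $(n+r)$-manifold, and it is closed because $P$ is compact and $\mathbb T^r$ is compact.

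The only genuine point requiring care — and the main obstacle — is verifying that the local model $\bigl(\mathbb R^l_{\geqslant}\times\mathbb T^l\bigr)/\!\sim$ really is a manifold chart, i.e. that collapsing the $j$-th factor circle exactly over the wall $\{y_j=0\}$ yields $\mathbb R^{2l}$; this is where the ``part of a basis'' hypothesis is essential, since if the $\Lambda_{i_s}$ were merely linearly independent over $\mathbb Q$ but not part of an integral basis, the quotient would acquire a quotient singularity (a lens-type cone) and fail to be a manifold, in complete analogy with the obstruction $\widetilde H_*(\mathbb R P^{k-1})$ found in the proof of Theorem~\ref{th:ZHM}. Once this standard local model is in hand the rest is routine. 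I would also remark, as the authors do, that an alternative route is to invoke the general structure theory of \cite{S09,AGo24}: under the stated hypothesis $M(P,\Lambda)$ is the quotient of $\mathcal Z_P$ by a subtorus $H(\Lambda)\subset\mathbb T^m$ acting with finite (in fact, at each point, trivial-after-the-model) stabilizers whose slice representations are of the reflection-free type guaranteeing a manifold quotient, but the direct chart argument above is shorter and self-contained.
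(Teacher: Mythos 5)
Your proposal is correct and follows essentially the same route as the paper: reduce to the complex $\mathcal{C}(P,\Lambda)$, observe that at each point $p$ the colors of the facets of $\mathcal{C}(P,\Lambda)$ containing $p$ form part of a $\mathbb Z$-basis, apply Lemma~\ref{lem:Mc} to get the local model $\mathbb R^l_{\geqslant}\times\mathbb R^{n-l}$, and identify the piece of $M(P,\Lambda)$ over it with $\mathbb C^l\times\mathbb R^{n-l}\times\mathbb T^{r-l}$ via the standard collapse $(\mathbb R^l_{\geqslant}\times\mathbb T^l)/\!\sim\ \simeq\mathbb C^l$. The paper's proof is exactly this chart argument, stated more tersely.
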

\begin{proof}
Consider the complex $\mathcal{C}(P,\Lambda)$. There is an induced mapping  
$\Lambda$ for the set of its facets $G_1,\dots, G_M$. 
For each point $p\in \partial P$, which belongs to exactly $l$ facets $G_{i_1}$, $\dots$, $G_{i_l}$, 
the vectors  $\Lambda_{i_1},\dots, \Lambda_{i_l}$ form a part of a~basis in $\mathbb Z^r$. 
By Lemma \ref{lem:Mc} the point $p$ 
has a neighbourhood in $P$ homeomorphic to $\mathbb R^l_{\geqslant}\times\mathbb R^{n-l}$.
The open set in $M(P,\Lambda)$ over this neighbourhood is homeomorphic~to 
$$
\mathbb R^l_{\geqslant}\times\mathbb R^{n-l}\times \mathbb T^l\times\mathbb T^{r-l}/\sim
\simeq \mathbb C^l\times  \mathbb R^{n-l}\times \mathbb T^{r-l}.
$$
Thus, $M(P,\Lambda)$ is a~closed topological $(n+r)$-manifold. 
\end{proof}
This result can  be obtained as a~corollary of general results in \cite{S09} and also of \cite[Theorem 1.1]{AGo24}.

\begin{proposition}\label{prop:spn+r}
Let $P$ be a simple $n$-polytope and $\Lambda\colon\{F_1,\dots,F_m\}\to\{\boldsymbol{e}_1,\dots,\boldsymbol{e}_r\}$
be~an~epimorphism, where  $\{\boldsymbol{e}_1,\dots,\boldsymbol{e}_r\}$ is~a~basis in~$\mathbb Z^r$.
If~the~complex $\mathcal{C}(P,\Lambda)$ is equivalent to $\mathcal{C}(n,r)$ then the~polytope 
is~homeomorphic to $S^n_{r,\geqslant}$, and the manifold $M(P,\Lambda)$ is homeomorphic to $S^{n+r}$. 
\end{proposition}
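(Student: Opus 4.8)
The plan is to reduce the statement to the model case $P=S^n_{r,\geqslant}$ equipped with the tautological coloring, and then exhibit an explicit ``polar coordinate'' homeomorphism of that model onto a sphere.

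First I would observe that, since every $\Lambda_i$ is one of the basis vectors $\boldsymbol e_1,\dots,\boldsymbol e_r$, the map $\Lambda$ factors through the coloring $c\colon\{F_1,\dots,F_m\}\to\{1,\dots,r\}$ with $c(F_i)=j$ whenever $\Lambda_i=\boldsymbol e_j$, so that $\mathcal{C}(P,\Lambda)=\mathcal{C}(P,c)$; moreover, if a point $\boldsymbol p\in P$ lies on the facets $G_{i_1},\dots,G_{i_l}$ of $\mathcal{C}(P,\Lambda)$, then $\left\{\sum_{i\colon\boldsymbol p\in F_i}\Lambda_i\varphi_i\right\}$ is simply the coordinate subtorus of $\mathbb T^r$ spanned by $\boldsymbol e_{i_1},\dots,\boldsymbol e_{i_l}$. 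Writing $G_1,\dots,G_M$ for the facets of $\mathcal{C}(P,\Lambda)$, this gives
\[
M(P,\Lambda)=P\times\mathbb T^r/\!\sim,\qquad (\boldsymbol p,\boldsymbol t)\sim(\boldsymbol p,\boldsymbol t')\ \Longleftrightarrow\ \boldsymbol t-\boldsymbol t'\in\left\{\sum\limits_{j\colon\boldsymbol p\in G_j}\boldsymbol e_j\varphi_j:\varphi_j\in\mathbb R/\mathbb Z\right\},
\]
so that $M(P,\Lambda)$ depends only on the polytope together with the colored complex $\mathcal{C}(P,\Lambda)$: an equivalence $\varphi\colon P\to P'$ of colored complexes induces, via $\varphi\times\mathrm{id}_{\mathbb T^r}$, a $\mathbb T^r$-equivariant homeomorphism $M(P,\Lambda)\to M(P',\Lambda')$ (the torus counterpart of the argument behind the weakly equivariant classification of the spaces $N(P,\Lambda)$).

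Next I would invoke Proposition \ref{prop:dnr}. Transporting the coloring along an equivalence realizing $\mathcal{C}(P,\Lambda)\simeq\mathcal{C}(n,r)$, and permuting the basis vectors $\boldsymbol e_j$ if necessary (which only alters $M(P,\Lambda)$ by an equivariant homeomorphism), we may assume $\mathcal{C}(P,\Lambda)=\mathcal{C}(\Delta^n,c)$ with the $i$-th facet colored~$i$; then Proposition \ref{prop:dnr} supplies a color-respecting homeomorphism $P\simeq\Delta^n\simeq S^n_{r,\geqslant}$ sending $G_j$ onto $S^n_{r,\geqslant}\cap\{x_j=0\}$. This already proves the first assertion, and, combined with the previous paragraph, gives a $\mathbb T^r$-equivariant homeomorphism $M(P,\Lambda)\simeq M(S^n_{r,\geqslant},\Lambda_0)$, where $\Lambda_0$ assigns $\boldsymbol e_j$ to $S^n_{r,\geqslant}\cap\{x_j=0\}$.

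Finally I would identify this model with $S^{n+r}$. Realizing $S^{n+r}\subset\mathbb C^r\times\mathbb R^{\,n+1-r}$ as the set of points $(z_1,\dots,z_r,x_{r+1},\dots,x_{n+1})$ with $\sum_j|z_j|^2+\sum_k x_k^2=1$, on which $\mathbb T^r$ acts by rotating the coordinates $z_j$, I would check that the assignment
\[
(z_1,\dots,z_r,x_{r+1},\dots,x_{n+1})\ \longmapsto\ \big[\,(|z_1|,\dots,|z_r|,x_{r+1},\dots,x_{n+1}),\,(\arg z_1,\dots,\arg z_r)\,\big]
\]
is well defined (the value $\arg z_j$ is immaterial exactly when $|z_j|=0$, i.e.\ on the facet $\{x_j=0\}$), continuous, surjective, and injective, hence a homeomorphism $S^{n+r}\to M(S^n_{r,\geqslant},\Lambda_0)$ since $S^{n+r}$ is compact and the target is Hausdorff. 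Composing all the homeomorphisms produced above yields $M(P,\Lambda)\simeq S^{n+r}$. As a sanity check, Proposition \ref{prop:ta} already guarantees that $M(P,\Lambda)$ is a closed $(n+r)$-manifold, since pairwise distinct basis vectors always form part of a basis of $\mathbb Z^r$. I expect the only step requiring genuine care to be this last identification — verifying that the polar-coordinate map really is a continuous bijection; once that is in hand, compactness closes the argument, so I do not anticipate a serious obstacle, only routine bookkeeping with the coordinates.
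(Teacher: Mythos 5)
Your proposal is correct and follows essentially the same route as the paper: reduce to the model $S^n_{r,\geqslant}\times\mathbb T^r/\sim$ via the equivalence of colored complexes and Proposition \ref{prop:dnr}, then identify that quotient with $S^{n+r}$ by the polar-coordinate map (your map is simply the inverse of the one the paper writes down). The extra details you supply — the reduction to the model case and the continuity/bijectivity check — are exactly what the paper leaves implicit.
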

\begin{proof}
Indeed, 
$S^n_{r,\geqslant}\times \mathbb T^r/\sim\simeq S^{n+r}$,
and the~homeomorphism is~given as 
{\small 
$$
\left[(x_1,\dots, x_{n+1}), (\varphi_1,\dots,\varphi_r)\right]\to
(x_1\cos(2\pi\varphi_1), x_1\sin(2\pi\varphi_1),
\dots,x_r\cos(2\pi\varphi_r), x_r\sin(2\pi\varphi_r),x_{r+1},\dots, x_{n+1}).
$$}
\end{proof}
\begin{example}\label{ex:spn+k+1G}
Examples for Proposition \ref{prop:spn+r} are provided by Example \ref{Ex:CG}.
Each face $G=F_{i_1}\cap\dots \cap F_{i_k}$ corresponds to~a~mapping 
$$
\Lambda_i=\begin{cases}
\boldsymbol{e}_s,&\text{ if }i=i_s, s=1,\dots, k,\\
\boldsymbol{e}_{k+1},&\text{ otherwise},
\end{cases}
$$
where $\boldsymbol{e}_1=(1,0,\dots,0)$, $\dots$, $\boldsymbol{e}_{k+1}=(0,\dots, 0,1)\in\mathbb Z^{k+1}$.
Then the subgroup $H_G=H(\Lambda)\simeq \mathbb T^{m-k-1}$ 
is defined in $\mathbb T^m$ by the equations $\varphi_{i_1}=0$, $\dots$, $\varphi_{i_k}=0$,
and $\varphi_1+\dots+\varphi_m=0$. We have $\mathcal{Z}_P/H_G\simeq S^{n+k+1}$.
\end{example}
\begin{example}\label{ex:AGU}
For each polytope $P$ the~mapping $\Lambda_i=1\in\mathbb Z$ gives the complex 
$\mathcal{C}(P,\Lambda)\simeq \mathcal{C}(n,1)$. The subgroup $H_0=H(\Lambda)$
is defined by the equation $\varphi_1+\dots+\varphi_m=0$. We have $\mathcal{Z}_P/H_0=S^{n+1}$.

For any vector-coloring $\Lambda$ such that there is~a~function $\boldsymbol{c}=(c_1,\dots,c_r)\in(\mathbb Z^r)^*$
with $\boldsymbol{c}\Lambda_i=1$ for all $i$ we have $H(\Lambda)\subset H_0$ and on
the space $M(P,\Lambda)$ there is~an~action of~$H_0'=H_0/H(\Lambda)\simeq \mathbb T^{r-1}$ 
such that $M(P,\Lambda)/H_0'=\mathcal{Z}_P/H_0\simeq S^{n+1}$. The subgroup $H_0'$
is defined in $\mathbb T^r$ by the equation $c_1\psi_1+\dots+c_r\psi_r=0$.

In particular, for the product of polytopes $P^n=P_1^{n_1}\times\dots\times P_k^{n_k}$ 
each facet has the form $P_1\times\dots F_{i,j}\times\dots\times P_k$, where $F_{i,j}$
is a facet of $P_i$. We have a mapping $\Lambda(P_1\times\dots F_{i,j}\times\dots\times P_k)=\boldsymbol{e}_i$,
where $\boldsymbol{e}_1$, $\dots$, $\boldsymbol{e}_k$ is~the~standard basis in~$\mathbb Z^k$.
For the function $\boldsymbol{c}=(1,\dots, 1)\in(\mathbb Z^k)^*$ we have $\boldsymbol{c}\boldsymbol{e}_i=1$
for all $i$. Then
$$
\mathcal{Z}_P=\mathcal{Z}_{P_1}\times\dots\times\mathcal{Z}_{P_k}\text{ and }
M(P,\Lambda)=\mathcal{Z}_P/(H_{1,0}\times\dots\times H_{k,0})=S^{n_1+1}\times\dots\times S^{n_k+1}.
$$
On this manifold there is~an~action of $H_0'=H_0/(H_{1,0}\times\dots\times H_{k,0})\simeq \mathbb T^{k-1}$.
This  subgroup is defined in  $\mathbb T^k$ by the equation $\psi_1+\dots+\psi_k=0$.
Then $S^{n_1+1}\times\dots\times S^{n_k+1}/H_0'\simeq \mathcal{Z}_P/H_0\simeq S^{n+1}$.
This torus analog of Dmitry Gugnin's construction from \cite{G19} was described in \cite{AGu23}.

The latter example can be generalized as follows. Given integer vector-colorings $\Lambda_{P_i}$
of ranks $r_i$ on polytopes $P_i$ such that $\mathcal{C}(P_i,\Lambda_{P_i})\simeq \mathcal{C}(n,r_i)$
we have the product coloring $\Lambda_P$ on $P_1\times\dots\times P_k$ such that 
$M(P,\Lambda)\simeq S^{n_1+r_1}\times\dots\times S^{n_k+r_k}$ and an action of $H_0'\simeq \mathbb T^{r_1+\dots+r_k-1}$
such that $S^{n_1+r_1}\times\dots\times S^{n_k+r_k}/\mathbb T^{r_1+\dots+r_k-1}\simeq S^{n+1}$. 
\end{example}

\section{Boolean simplices and simplicial prisms}\label{sec:bospx}
In this section we will give definitions and prove basic facts 
about the notions we will need in subsequent sections.

\begin{definition}
Let us call an affinely independent set of points $\{\boldsymbol{p}_1,\dots,\boldsymbol{p}_{r+1}\}\in\mathbb Z_2^N$
a~{\it boolean $r$-simplex} and denote it $\Delta_2^r$. By definition set $\Delta^{-1}=\varnothing$. 
Let us call a~set of points $S\subset\mathbb Z_2^N$
affinely equivalent to~the~direct product $\Delta^{r-1}_2\times\mathbb Z_2$ a~{\it boolean simplicial prism} and denote 
it $\Pi^r$. We have $\Pi^1=\mathbb Z_2=\Delta^1$ and $\Pi^2=\mathbb Z_2^2$.
\end{definition}
A boolean simplicial prism $\Pi^r$ consists of two disjoint boolean $(r-1)$-simplices (``bases'')
$\boldsymbol{a}_1,\dots,\boldsymbol{a}_r$ and $\boldsymbol{b}_1,\dots,\boldsymbol{b}_r$ in $\mathbb Z_2^r$
such that any two points $\boldsymbol{a}_i,\boldsymbol{b}_i$ form a~boolean line parallel
to~the~same vector $\boldsymbol{l}$ (``main direction'') that is not parallel to~bases. This means that 
$\boldsymbol{l}=\boldsymbol{a}_i+\boldsymbol{b}_i$ for all $i$, and the~disjoint union of any base
and a vertex of the other base is an $r$-simplex. It is easy to see that for any $i$ there is a unique 
affine isomorphism exchanging $\boldsymbol{a}_i$ and $\boldsymbol{b}_i$ and 
leaving all $\boldsymbol{a}_j$ and $\boldsymbol{b}_j$ with $j\ne i$ fixed. 
\begin{lemma}\label{affindab}
A subset of $\Pi^r=\{\boldsymbol{a}_1,\boldsymbol{b}_1,\dots,\boldsymbol{a}_r,\boldsymbol{b}_r\}$
is affinely independent if and only if it contains at most one pair $\{\boldsymbol{a}_i,\boldsymbol{b}_i\}$.
\end{lemma}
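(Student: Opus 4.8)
The plan is to prove both directions by a direct affine-algebra argument, using the concrete coordinate description of $\Pi^r$. First I would fix coordinates: realize $\Pi^r$ as $\Delta_2^{r-1}\times\mathbb Z_2\subset\mathbb Z_2^{r-1}\times\mathbb Z_2$, with bases $\boldsymbol a_i=(\boldsymbol v_i,0)$ and $\boldsymbol b_i=(\boldsymbol v_i,1)$, where $\boldsymbol v_1,\dots,\boldsymbol v_r$ are the vertices of a boolean $(r-1)$-simplex in $\mathbb Z_2^{r-1}$ (hence affinely independent) and the main direction is $\boldsymbol l=(0,\dots,0,1)$. Recall that a set $\{\boldsymbol p_0,\dots,\boldsymbol p_s\}$ in a $\mathbb Z_2$-affine space is affinely independent iff the differences $\boldsymbol p_1-\boldsymbol p_0,\dots,\boldsymbol p_s-\boldsymbol p_0$ are linearly independent over $\mathbb Z_2$; equivalently, no nonempty proper subset has coordinate sum equal to the complementary-sized sum, i.e.\ the only affine dependence $\sum t_j\boldsymbol p_j=\boldsymbol 0$ with $\sum t_j=0$ is the trivial one.

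For the "only if" direction I would show that if a subset $S\subseteq\Pi^r$ contains two full pairs $\{\boldsymbol a_i,\boldsymbol b_i\}$ and $\{\boldsymbol a_j,\boldsymbol b_j\}$ with $i\ne j$, then $S$ is affinely dependent: indeed $\boldsymbol a_i+\boldsymbol b_i=\boldsymbol l=\boldsymbol a_j+\boldsymbol b_j$, so $\boldsymbol a_i+\boldsymbol b_i+\boldsymbol a_j+\boldsymbol b_j=\boldsymbol 0$ is a nontrivial affine dependence (four terms, coefficient sum $0$ in $\mathbb Z_2$). This is immediate and needs no coordinates. For the "if" direction, suppose $S$ contains at most one pair; after relabeling, $S\subseteq\{\boldsymbol a_1,\boldsymbol b_1,\boldsymbol a_2,\dots,\boldsymbol a_r\}$ (taking $\{\boldsymbol a_1,\boldsymbol b_1\}$ as the possibly-present pair and one representative $\boldsymbol a_j$ from each other used index). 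It suffices to prove the largest such set $T=\{\boldsymbol a_1,\boldsymbol b_1,\boldsymbol a_2,\dots,\boldsymbol a_r\}$ is affinely independent, since subsets of affinely independent sets are affinely independent. Using the coordinates above, take $\boldsymbol a_1$ as the base point; the differences are $\boldsymbol b_1-\boldsymbol a_1=\boldsymbol l=(\boldsymbol 0,1)$ and $\boldsymbol a_j-\boldsymbol a_1=(\boldsymbol v_j-\boldsymbol v_1,0)$ for $j=2,\dots,r$. The latter $r-1$ vectors are linearly independent in $\mathbb Z_2^{r-1}\times 0$ because $\boldsymbol v_1,\dots,\boldsymbol v_r$ is an affinely independent set; appending $(\boldsymbol 0,1)$, which is not in that subspace, keeps independence. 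Hence $|T|=r+1$ vectors are affinely independent, and $\Pi^r$ indeed affinely spans $\mathbb Z_2^r$ as claimed in the definition, consistent with $T$ being a boolean $r$-simplex.

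I do not expect any serious obstacle here; the statement is essentially bookkeeping once the right coordinates and the characterization of affine independence over $\mathbb Z_2$ are in place. The only point requiring mild care is the "if" direction: one must argue that it is enough to treat the single maximal candidate set $T$ rather than checking every subset, and that every subset with at most one pair embeds (after the affine symmetries exchanging $\boldsymbol a_i\leftrightarrow\boldsymbol b_i$ mentioned before the lemma) into such a $T$. These symmetries act transitively on the choice of representative in each pair, so without loss of generality each "unpaired" index contributes its $\boldsymbol a_j$, reducing to $T$. Everything else is the two-line dependence computation and the rank count above.
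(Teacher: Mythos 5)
Your proof is correct and follows essentially the same route as the paper, which simply remarks that the claim is ``straightforward using the equality $\boldsymbol{a}_i+\boldsymbol{b}_i+\boldsymbol{a}_j+\boldsymbol{b}_j=0$'': your ``only if'' direction is exactly that dependence, and your ``if'' direction spells out the routine verification (reduction via the swaps $\boldsymbol{a}_i\leftrightarrow\boldsymbol{b}_i$ to the set $\{\boldsymbol{a}_1,\dots,\boldsymbol{a}_r,\boldsymbol{b}_1\}$, which is an $r$-simplex by the definition of $\Pi^r$) that the paper leaves implicit.
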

\begin{proof}
The proof is straightforward using the equality $\boldsymbol{a}_i+\boldsymbol{b}_i+\boldsymbol{a}_j+\boldsymbol{b}_j=0$.
\end{proof}
\begin{corollary}\label{cor:a2p}
A subset $S\subset \Pi^r$ is an affine $2$-plane if and only if 
$S=\{\boldsymbol{a}_i,\boldsymbol{b}_i,\boldsymbol{a}_j,\boldsymbol{b}_j\}$ for $i\ne j$.
\end{corollary}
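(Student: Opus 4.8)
The plan is to deduce this directly from Lemma \ref{affindab} together with the elementary structure of affine planes over $\mathbb Z_2$. First I would record the two basic facts about affine flats in $\mathbb Z_2^N$ that make the statement almost immediate: an affine $2$-plane contains exactly $2^2=4$ points, and any three distinct points of $\mathbb Z_2^N$ are automatically affinely independent (an affine line over $\mathbb Z_2$ has only two points, so three distinct points can never be ``collinear''). Consequently, a set $S$ of four distinct points in $\mathbb Z_2^N$ is an affine $2$-plane \emph{if and only if} $S$ is affinely dependent: if the four points were affinely independent they would span an affine $3$-flat, and if they are dependent then, choosing $\boldsymbol p\in S$ and writing the other three as $\boldsymbol p+\boldsymbol u$, $\boldsymbol p+\boldsymbol v$, $\boldsymbol p+\boldsymbol w$, the dependence forces $\boldsymbol w=\boldsymbol u+\boldsymbol v$ with $\boldsymbol u,\boldsymbol v$ linearly independent, so $S$ is the affine span of $\boldsymbol p$, $\boldsymbol u$, $\boldsymbol v$.

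Next I would apply Lemma \ref{affindab}. A four-element subset $S\subset\Pi^r$ fails to be affinely independent precisely when it contains at least two of the pairs $\{\boldsymbol a_i,\boldsymbol b_i\}$; since $|S|=4$, this is possible only if $S$ consists of exactly two such pairs, i.e.\ $S=\{\boldsymbol a_i,\boldsymbol b_i,\boldsymbol a_j,\boldsymbol b_j\}$ for some $i\ne j$. Combined with the dichotomy of the previous paragraph, this shows that the affine $2$-planes contained in $\Pi^r$ are exactly the sets of this shape.

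For the converse inclusion I would check directly that each $S=\{\boldsymbol a_i,\boldsymbol b_i,\boldsymbol a_j,\boldsymbol b_j\}$, $i\ne j$, really is an affine $2$-plane: it has four distinct points and, using $\boldsymbol a_i+\boldsymbol b_i=\boldsymbol a_j+\boldsymbol b_j=\boldsymbol l$, one gets $\boldsymbol a_i+\boldsymbol b_i+\boldsymbol a_j+\boldsymbol b_j=\boldsymbol 0$, so the four points are affinely dependent and hence (by the dichotomy) form an affine $2$-plane; explicitly it is the affine span of $\boldsymbol a_i$ with directions $\boldsymbol a_i+\boldsymbol b_i$ and $\boldsymbol a_i+\boldsymbol a_j$. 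There is essentially no obstacle here; the only point requiring a line of justification is the dichotomy ``four points, affinely dependent $\Leftrightarrow$ affine $2$-plane'' over $\mathbb Z_2$, and everything else is bookkeeping on top of Lemma \ref{affindab}.
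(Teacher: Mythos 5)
Your proof is correct and follows essentially the same route as the paper: the "only if" direction is Lemma \ref{affindab} (a four-point subset with fewer than two pairs $\{\boldsymbol a_i,\boldsymbol b_i\}$ is affinely independent, hence not a $2$-plane), and the "if" direction is the relation $\boldsymbol a_i+\boldsymbol b_i+\boldsymbol a_j+\boldsymbol b_j=\boldsymbol 0$, which the paper phrases as $\boldsymbol b_j=\boldsymbol a_i+\boldsymbol b_i+\boldsymbol a_j$. Your explicit justification of the dichotomy ``four distinct points are affinely dependent $\Leftrightarrow$ they form an affine $2$-plane'' is a useful extra line that the paper leaves implicit, but it does not change the argument.
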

\begin{proof}
Indeed, the points $\boldsymbol{a}_i,\boldsymbol{b}_i,\boldsymbol{a}_j$ are affinely
independent and $\boldsymbol{b}_j=\boldsymbol{a}_i+\boldsymbol{b}_i+\boldsymbol{a}_j$.
Hence, $\{\boldsymbol{a}_i,\boldsymbol{b}_i,\boldsymbol{a}_j,\boldsymbol{b}_j\}$ is an~affine $2$-plane. On the other hand,
if $S$ does not contain two pairs $\{\boldsymbol{a}_i,\boldsymbol{b}_i\}$, then 
$S$ is affinely independent.
\end{proof}
\begin{definition}
Consider two subsets $S_1$, $S_2$ of the affine space $\mathbb Z_2^N$. If the planes ${\rm aff}(S_1)$ and 
${\rm aff}(S_2)$ are skew, that is they do not intersect and
the intersection of the corresponding vector subspaces is zero, then we call the set $S_1\sqcup S_2$ 
a {\it join} of $S_1$ and $S_2$ and denote it $S_1*S_2$. If $S_1$ and $S_1'$ are affinely equivalent as well as 
$S_2$ and $S_2'$, then $S_1*S_2$ and $S_1'*S_2'$ are also affinely equivalent. 
Therefore, up to an affine equivalence we can define a join of any two sets $S_1$, $S_2\subset\mathbb Z_2^N$,
if we put them to skew planes. Then $(S_1*S_2)*S_3=S_1*(S_2*S_3)$.

A join of a set $S$ and a point $\boldsymbol{p}$ is called a {\it cone} over $S$ and is denoted $CS$.
By definition the~cone $CS$ is a disjoint union of $S$ and a point $\boldsymbol{p}\notin {\rm aff}(S)$. 
We have $C^kS=\Delta^{k-1}_2*S$.
The~boolean simplex $\Delta^r_2$ is a join of its vertices and a cone over $\Delta^{r-1}_2$.
\end{definition} 
\begin{lemma}\label{lem:SPK}
Any full-dimensional subset $S$ of $\Pi^r$ of cardinality $r+k$ is affinely isomorphic to
$\Delta^{r-k-1}_2*\Pi^k=C^{r-k}\Pi^k$. In particular, for $k=1$ it is $\Delta^r_2$, and
for $k=2$ it is $\Delta_2^{r-3}*\mathbb Z_2^2$.
\end{lemma}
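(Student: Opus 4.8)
The plan is to prove Lemma \ref{lem:SPK} by induction on $r-k$, using the structure of $\Pi^r$ as an iterated cone over $\Pi^k$ together with Lemma \ref{affindab}. The base case $r-k=0$ is immediate: a full-dimensional subset $S\subset\Pi^r$ of cardinality $r+k=2r$ must be all of $\Pi^r=\Pi^k$ (since $|\Pi^r|=2r$), and $\Delta^{-1}_2*\Pi^k=\varnothing*\Pi^k=\Pi^k$. So suppose $r-k\geqslant 1$ and the statement holds for smaller values.

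\medskip

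\textbf{Key steps.} First I would record the basic structural fact: writing $\Pi^r=\{\boldsymbol a_1,\boldsymbol b_1,\dots,\boldsymbol a_r,\boldsymbol b_r\}$ with main direction $\boldsymbol l=\boldsymbol a_i+\boldsymbol b_i$, for any index $i$ the set $\Pi^r\setminus\{\boldsymbol a_i,\boldsymbol b_i\}$ is a boolean simplicial prism $\Pi^{r-1}$ (with the same main direction $\boldsymbol l$), because deleting a ``rung'' of the prism leaves a prism on one fewer vertex in each base. Second, since $|S|=r+k<2r$, the set $S$ omits at least one full pair $\{\boldsymbol a_i,\boldsymbol b_i\}$: indeed if $S$ contained both elements of every pair it would equal $\Pi^r$, contradicting $|S|<2r$. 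Actually the sharper count I need: $S$ omits at least $r-k$ pairs \emph{completely} — no wait, that is not automatic, since $S$ could omit $r-k$ elements spread as singletons. Let me instead argue more carefully using affine dimension.

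\medskip

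\textbf{The correct inductive reduction.} The hypothesis is that $S$ is \emph{full-dimensional}, i.e.\ $\operatorname{aff}(S)=\operatorname{aff}(\Pi^r)\cong\mathbb Z_2^r$, so $\dim\operatorname{aff}(S)=r$ and $|S|=r+k$. Since $\Pi^r$ spans an $r$-dimensional affine space, by Lemma \ref{affindab} a maximal affinely independent subset of $\Pi^r$ has $r+1$ elements and contains exactly one pair $\{\boldsymbol a_i,\boldsymbol b_i\}$. I claim $S$ contains some pair $\{\boldsymbol a_j,\boldsymbol b_j\}$: if not, $S$ would be affinely independent by Lemma \ref{affindab}, forcing $|S|\leqslant r+1$, i.e.\ $k\leqslant 1$; the cases $k\leqslant 1$ I handle separately (for $k=1$, $S$ is a full-dimensional affinely independent set of size $r+1$, hence an $r$-simplex $\Delta^r_2$, and $\Delta^{r-2}_2*\Pi^1=\Delta^{r-2}_2*\Delta^1_2=\Delta^r_2$ by the cone identities in the excerpt; for $k=0$ handled above). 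So assume $k\geqslant 2$ and fix a pair $\{\boldsymbol a_j,\boldsymbol b_j\}\subset S$. Now consider whether $S$ contains a \emph{second} pair. If $S$ contains only the one pair $\{\boldsymbol a_j,\boldsymbol b_j\}$, then $S=\{\boldsymbol a_j,\boldsymbol b_j\}\sqcup S'$ where $S'$ consists of one element from each of some $r+k-2$ of the remaining $r-1$ pairs — but $r+k-2>r-1$ when $k\geqslant 2$ with equality only at $k=2$, and $S'$ would then meet too many pairs; so this sub-case only survives for small $k$ and I treat it by hand. Generically $S$ meets $\geqslant 2$ pairs fully. Pick $i\ne j$ with $\{\boldsymbol a_i,\boldsymbol b_i\}\not\subset S$ — possible since $S$ omits $(2r)-(r+k)=r-k\geqslant 1$ elements and we may arrange one omitted element to lie in a pair of which $S$ contains the other (or neither). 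Then $S$ meets $\Pi^{r-1}:=\Pi^r\setminus\{\boldsymbol a_i,\boldsymbol b_i\}$ in a set $S\cap\Pi^{r-1}$ of cardinality $r+k-1$ or $r+k$. Tracking dimensions and applying the inductive hypothesis to $\Pi^{r-1}$, together with reattaching $\boldsymbol a_i$ or $\boldsymbol b_i$ as a cone point (using that the single retained vertex of the $i$-th pair is affinely independent from everything in $\Pi^{r-1}$, again by Lemma \ref{affindab}), gives $S\cong C(\text{something})$, and the cone identity $C^{r-k}\Pi^k = \Delta^{r-k-1}_2*\Pi^k$ from the Definition preceding the lemma closes the induction.

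\medskip

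\textbf{Main obstacle.} The delicate point is the bookkeeping of \emph{which} pairs $S$ meets fully, partially, or not at all, and ensuring the inductive step picks an index $i$ for which deletion genuinely reduces to the same shape of problem; the boundary cases (small $k$, or $S$ containing exactly one pair) need to be isolated and checked directly against the cone formula. Once the reduction $\Pi^r\rightsquigarrow\Pi^{r-1}$ via rung-deletion is set up cleanly and Lemma \ref{affindab} is used to certify affine independence at each stage, the rest is the routine join/cone algebra already established in the excerpt. I would also remark that the two explicit cases $k=1$ ($S=\Delta^r_2$) and $k=2$ ($S=\Delta^{r-3}_2*\mathbb Z_2^2$, using $\Pi^2=\mathbb Z_2^2$) can alternatively be read off directly from Lemma \ref{affindab} and Corollary \ref{cor:a2p} without invoking the induction, which is perhaps the cleanest way to present the low cases.
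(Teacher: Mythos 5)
Your proposal is an outline rather than a proof: the decisive step is missing and the induction is never actually executed. The fact you need --- and which the paper's (very short) proof takes as its starting point --- is that full-dimensionality pins down the combinatorics of $S$ completely. If $S$ misses a pair $\{\boldsymbol a_i,\boldsymbol b_i\}$ entirely, then $S$ lies in ${\rm aff}(\Pi^r\setminus\{\boldsymbol a_i,\boldsymbol b_i\})$, which is only $(r-1)$-dimensional, contradicting full-dimensionality; so $S$ meets every pair. Writing $p$ for the number of pairs contained in $S$ and $q$ for the number met in exactly one point, this gives $p+q=r$ and $2p+q=r+k$, hence $p=k$ and $q=r-k$: exactly $k$ full pairs and one point from each of the remaining $r-k$ pairs. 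Applying the affine involutions that swap $\boldsymbol a_i\leftrightarrow\boldsymbol b_i$ (noted just before Lemma \ref{affindab}), one normalizes $S$ to $\{\boldsymbol a_1,\dots,\boldsymbol a_r,\boldsymbol b_1,\dots,\boldsymbol b_k\}$, and the join decomposition is then immediate: ${\rm aff}(\boldsymbol a_1,\dots,\boldsymbol a_k,\boldsymbol b_1,\dots,\boldsymbol b_k)={\rm aff}(\boldsymbol a_1,\dots,\boldsymbol a_k,\boldsymbol b_1)$ is skew with ${\rm aff}(\boldsymbol a_{k+1},\dots,\boldsymbol a_r)$ because $\{\boldsymbol a_1,\dots,\boldsymbol a_r,\boldsymbol b_1\}$ is affinely independent. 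This is the paper's entire argument; no induction is needed.

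In your write-up this counting never appears: you oscillate on whether $S$ omits pairs completely or as singletons, you defer the sub-case where $S$ contains exactly one pair and the small-$k$ cases to ``by hand'' without carrying them out, and the inductive step ends with ``gives $S\cong C(\text{something})$'', which is not a verification. In particular, the inductive hypothesis requires $S\cap\Pi^{r-1}$ to be full-dimensional in $\Pi^{r-1}$ and to have the correct cardinality relative to the correct $k'$, and you check neither. As it stands the proposal cannot be accepted; the direct normalization above both fills the gap and makes the induction unnecessary.
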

\begin{proof}
Indeed, $S$ is affinely isomorphic to $\{\boldsymbol{a}_1,\dots, \boldsymbol{a}_r,\boldsymbol{b}_1,\dots, \boldsymbol{b}_k\}$.
We have 
$$
{\rm aff}(\boldsymbol{a}_1,\dots, \boldsymbol{a}_k,\boldsymbol{b}_1,\dots, \boldsymbol{b}_k)=
{\rm aff}(\boldsymbol{a}_1,\dots, \boldsymbol{a}_k, \boldsymbol{b}_1).
$$ 
This plane is skew with ${\rm aff}(\boldsymbol{a}_{k+1},\dots, \boldsymbol{a}_r)$, since the points 
$\{\boldsymbol{a}_1,\dots, \boldsymbol{a}_r,\boldsymbol{b}_1\}$ are affinely independent.
\end{proof}
\begin{lemma}
For $k\geqslant 2$ and $r\geqslant k+1$ the subsets $\Delta^{r-k-1}$ and $\Pi^k$
are affine invariants of~the~join $\Delta^{r-k-1}*\Pi^k$.
\end{lemma}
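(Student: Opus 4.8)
The plan is to give an intrinsic, affine-geometric description of the two summands of $S:=\Delta^{r-k-1}_2*\Pi^k$ and then observe that such a description is automatically preserved by affine isomorphisms. Concretely, I will show that $\Pi^k$ is exactly the union of all affine $2$-planes contained in $S$, while $\Delta^{r-k-1}_2=S\setminus\Pi^k$; both conditions being phrased purely in terms of the affine structure of $S$, any affine isomorphism between two sets of the form $\Delta^{r-k-1}_2*\Pi^k$ must carry prism summand onto prism summand and simplex summand onto simplex summand, which is exactly the assertion.

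The key step is a description of affine dependence in $S$. Write an arbitrary subset $T\subseteq S$ as $T=T_\Delta\sqcup T_\Pi$ with $T_\Delta\subseteq\Delta^{r-k-1}_2$ and $T_\Pi\subseteq\Pi^k$. I claim that $T$ is affinely independent if and only if $T_\Pi$ contains at most one pair $\{\boldsymbol a_i,\boldsymbol b_i\}$ of $\Pi^k$. The ``only if'' direction is immediate from the identity $\boldsymbol a_i+\boldsymbol b_i+\boldsymbol a_j+\boldsymbol b_j=\boldsymbol 0$, which makes any set containing two distinct pairs affinely dependent. For the ``if'' direction: $T_\Delta$ is affinely independent as a subset of a boolean simplex, $T_\Pi$ is affinely independent by Lemma \ref{affindab}, and by the definition of the join the planes ${\rm aff}(\Delta^{r-k-1}_2)$ and ${\rm aff}(\Pi^k)$ are skew, hence so are the sub-planes ${\rm aff}(T_\Delta)$ and ${\rm aff}(T_\Pi)$; since skew planes are disjoint with direction spaces meeting only in $\boldsymbol 0$, a dimension count gives $\dim{\rm aff}(T)=\dim{\rm aff}(T_\Delta)+\dim{\rm aff}(T_\Pi)+1=(|T_\Delta|-1)+(|T_\Pi|-1)+1=|T|-1$ (with the convention $\dim{\rm aff}\varnothing=-1$), so $T$ is affinely independent. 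This is the main obstacle of the proof; everything else is either a one-line computation over $\mathbb Z_2$ or a direct appeal to Lemma \ref{affindab} and Corollary \ref{cor:a2p}.

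Granting the claim, I would finish as follows. Over $\mathbb Z_2$ any three distinct points are affinely independent, so a $4$-element subset of $S$ is an affine $2$-plane precisely when it is affinely dependent; by the claim this happens precisely when it equals $\{\boldsymbol a_i,\boldsymbol b_i,\boldsymbol a_j,\boldsymbol b_j\}$ for some $i\ne j$, in particular it lies inside $\Pi^k$ (this also re-proves Corollary \ref{cor:a2p}). Since $k\geqslant 2$, for each index $i$ one may pick $j\ne i$, so both $\boldsymbol a_i$ and $\boldsymbol b_i$ lie in the affine $2$-plane $\{\boldsymbol a_i,\boldsymbol b_i,\boldsymbol a_j,\boldsymbol b_j\}\subseteq S$; hence $\Pi^k$ is exactly the union of all affine $2$-planes contained in $S$. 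Since $r\geqslant k+1$, the complementary set $\Delta^{r-k-1}_2=S\setminus\Pi^k$ is nonempty. As both $\Pi^k$ and $\Delta^{r-k-1}_2$ have been described by affinely invariant conditions, any affine isomorphism between two joins of this type carries each summand of the first onto the corresponding summand of the second, which proves that $\Delta^{r-k-1}_2$ and $\Pi^k$ are affine invariants of $\Delta^{r-k-1}_2*\Pi^k$. Note that the two hypotheses enter exactly once each: $k\geqslant 2$ guarantees the prism summand actually carries affine $2$-planes (so that it is detected), and $r\geqslant k+1$ guarantees the simplex summand is nonempty (so that the splitting is nontrivial).
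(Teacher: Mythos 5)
Your proof is correct, but it isolates a different intrinsic invariant than the paper does. The paper's argument is a one-liner: a point of $\Delta^{r-k-1}_2$ is characterized by not lying in the affine hull of the remaining points of the join (whereas each $\boldsymbol{a}_i\in\Pi^k$ satisfies $\boldsymbol{a}_i=\boldsymbol{b}_i+\boldsymbol{a}_j+\boldsymbol{b}_j$ for any $j\ne i$, which exists because $k\geqslant 2$). You instead characterize $\Pi^k$ as the union of all affine $2$-planes contained in the join, which you justify via a clean description of affine independence for arbitrary subsets $T=T_\Delta\sqcup T_\Pi$; the dimension count for skew planes and the observation that over $\mathbb Z_2$ a four-point set is a $2$-plane exactly when it is affinely dependent are both correct, and your argument re-derives Corollary \ref{cor:a2p} along the way. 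Both routes hinge on $k\geqslant 2$ in the same essential place (so that every prism point is "detected" by the rest of the prism), and both are purely affine-invariant descriptions, so both conclude identically. Your version is longer but yields a slightly stronger by-product (the full classification of affinely independent subsets of the join), while the paper's is the minimal argument needed for the statement.
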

\begin{proof}
Indeed, $\Delta^{r-k-1}$ consists of points not lying in the affine hull of the rest points.
\end{proof}
\begin{lemma}\label{lem:pklk}
For $r\geqslant 3$, the main direction $\boldsymbol{l}$ 
is a unique direction $\boldsymbol{d}$ such that $\Pi^r$ consists of $r$ lines of this direction. 
In particular, $\boldsymbol{l}$ is an affine invariant of the boolean simplicial prism $\Pi^r$.
For $r=2$ we have $\Pi^2=\mathbb Z_2^2$ and any direction can be chosen as~main.
\end{lemma}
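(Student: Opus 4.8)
The plan is to recast the hypothesis in terms of translations. For a nonzero vector $\boldsymbol{d}\in\mathbb Z_2^N$ the translation $T_{\boldsymbol{d}}\colon\boldsymbol{x}\mapsto\boldsymbol{x}+\boldsymbol{d}$ is a fixed-point-free involution of $\mathbb Z_2^N$, so ``$\Pi^r$ consists of $r$ lines of direction $\boldsymbol{d}$'' is equivalent to $T_{\boldsymbol{d}}(\Pi^r)=\Pi^r$: in that case the $\langle T_{\boldsymbol{d}}\rangle$-orbits on the $2r$-point set $\Pi^r$ are exactly $r$ unordered pairs $\{\boldsymbol{x},\boldsymbol{x}+\boldsymbol{d}\}$, and conversely each such pair is a $T_{\boldsymbol{d}}$-orbit. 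Since $\boldsymbol{b}_i=\boldsymbol{a}_i+\boldsymbol{l}$, the translation $T_{\boldsymbol{l}}$ preserves $\Pi^r$, so $\boldsymbol{l}$ is one such direction, and everything reduces to proving uniqueness.

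For uniqueness I would fix an affine realization $\Pi^r=\{\boldsymbol{0},\boldsymbol{e}_1,\dots,\boldsymbol{e}_{r-1}\}\times\mathbb Z_2\subset\mathbb Z_2^{r-1}\times\mathbb Z_2$, so that the two bases $\{\boldsymbol{a}_i\}$ and $\{\boldsymbol{b}_i\}$ are the points with last coordinate $0$ and $1$ respectively and $\boldsymbol{l}=(\boldsymbol{0},1)$. Suppose $\boldsymbol{d}=(\boldsymbol{v},\varepsilon)$ satisfies $T_{\boldsymbol{d}}(\Pi^r)=\Pi^r$. If $\varepsilon=0$, then $T_{\boldsymbol{d}}$ preserves each base, and reading off the first $r-1$ coordinates shows that $T_{\boldsymbol{v}}$ maps the affinely independent set $S=\{\boldsymbol{0},\boldsymbol{e}_1,\dots,\boldsymbol{e}_{r-1}\}\subset\mathbb Z_2^{r-1}$ onto itself; if $\varepsilon=1$, then $T_{\boldsymbol{d}}$ interchanges the two bases, and again the first $r-1$ coordinates give $T_{\boldsymbol{v}}(S)=S$. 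The crux is then the elementary claim that \emph{a translation preserving an affinely independent set $S$ of $r\ge 3$ points is the identity}: if $\boldsymbol{v}\ne\boldsymbol{0}$ it acts freely on $S$, forcing $r$ even and $S$ to be a disjoint union of $r/2\ge 2$ parallel lines, but two distinct parallel lines $\{\boldsymbol{p},\boldsymbol{p}+\boldsymbol{v}\}$ and $\{\boldsymbol{q},\boldsymbol{q}+\boldsymbol{v}\}$ satisfy $\boldsymbol{p}+(\boldsymbol{p}+\boldsymbol{v})+\boldsymbol{q}+(\boldsymbol{q}+\boldsymbol{v})=\boldsymbol{0}$ and hence form an affinely dependent quadruple, contradicting affine independence of $S$ (and for $r=3$ the odd-size set cannot be paired up at all). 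Thus $\boldsymbol{v}=\boldsymbol{0}$; combined with $\boldsymbol{d}\ne\boldsymbol{0}$ this excludes $\varepsilon=0$ and yields $\boldsymbol{d}=(\boldsymbol{0},1)=\boldsymbol{l}$.

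Affine invariance of $\boldsymbol{l}$ is then formal: an affine isomorphism between two boolean simplicial prisms sends lines to lines, hence sends the uniquely characterized main direction to the main direction. For $r=2$ one simply notes $\Pi^2=\mathbb Z_2^2$, each of whose three nonzero vectors partitions it into two lines, so every direction may be chosen as main. The only step requiring genuine care is the parenthetical claim about translations and affinely independent sets, which rests on the characteristic-$2$ observation that a ``parallelogram'' $\{\boldsymbol{p},\boldsymbol{p}+\boldsymbol{v},\boldsymbol{q},\boldsymbol{q}+\boldsymbol{v}\}$ is affinely dependent; the rest is bookkeeping.
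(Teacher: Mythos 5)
Your proof is correct. It differs from the paper's in organization rather than in substance: the paper argues directly on the $2r$ points of $\Pi^r$, using the prism's symmetries to assume WLOG that one line of direction $\boldsymbol{d}$ is $\{\boldsymbol{a}_i,\boldsymbol{a}_j\}$, and then derives a contradiction with its Lemma on affine independence of subsets containing at most one pair $\{\boldsymbol{a}_i,\boldsymbol{b}_i\}$ (Lemma \ref{affindab}); you instead exploit the product structure $\Pi^r=\Delta_2^{r-1}\times\mathbb Z_2$, split on the last coordinate of $\boldsymbol{d}$, and reduce everything to a clean standalone claim that a nontrivial translation cannot preserve an affinely independent set of at least three points. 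Both arguments bottom out in the same characteristic-$2$ identity $\boldsymbol{p}+(\boldsymbol{p}+\boldsymbol{v})+\boldsymbol{q}+(\boldsymbol{q}+\boldsymbol{v})=\boldsymbol{0}$, but your intermediate claim is slightly more general and reusable, and the translation/fixed-point-free-involution framing makes the parity argument (the $r=3$ case in particular) completely transparent, whereas the paper's case analysis on line types is terser and, as written, silently folds the $r=3$ situation into the exclusion $\boldsymbol{d}\ne\boldsymbol{l}$. The one point worth making explicit in your write-up is that passing to a fixed affine realization is legitimate because the property ``$\Pi^r$ is a union of $r$ lines of direction $\boldsymbol{d}$'' transforms covariantly under affine isomorphisms — which is exactly the observation you invoke at the end for affine invariance of $\boldsymbol{l}$.
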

\begin{proof}
Indeed, if $\boldsymbol{d}\ne \boldsymbol{l}$,
then without loss of generality we may assume that one line consists of~$\boldsymbol{a}_i$
and $\boldsymbol{a}_j$. Then $\boldsymbol{d}=\boldsymbol{a}_i+\boldsymbol{a}_j=\boldsymbol{b}_i+\boldsymbol{b}_j$. 
Since $r\geqslant 3$, there are at least three lines. 
Then either $\boldsymbol{d}=\boldsymbol{a}_k+\boldsymbol{a}_l=\boldsymbol{b}_k+\boldsymbol{b}_l$
or $\boldsymbol{d}=\boldsymbol{a}_k+\boldsymbol{b}_l=\boldsymbol{b}_k+\boldsymbol{a}_l$ for some $k\ne l$ such that 
$\{k,l\}\cap \{i,j\}=\varnothing$. We obtain a contradiction to Lemma \ref{affindab}. 
\end{proof}
\begin{lemma}\label{lem:DPkl}
For $r\geqslant k\geqslant 3$ the main direction $\boldsymbol{l}$ of $\Pi^k$ is a unique direction $\boldsymbol{d}$
such that the~image of~$\Delta^{r-k-1}_2*\Pi^k$ under the~projection 
$\mathbb Z_2^r\to \mathbb Z_2^r/\langle\boldsymbol{d}\rangle$
is $\Delta^{r-1}_2$. 

For $k=2$ such directions are  three main directions of $\Pi^2$. 

For $k=1$ there are $\frac{r(r-1)}{2}$ such directions corresponding 
to the pairs of vertices of $\Delta^{r-k-1}_2*\Pi^k=\Delta^r_2$.
\end{lemma}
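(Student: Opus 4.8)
The plan is to reduce the statement to a count of pairwise differences inside $S=\Delta^{r-k-1}_2*\Pi^k$. First I would normalize coordinates: by Lemma~\ref{lem:SPK} one may fix an affine identification
$$
S=\{\boldsymbol{a}_1,\dots,\boldsymbol{a}_r,\boldsymbol{b}_1,\dots,\boldsymbol{b}_k\}\subset\mathbb Z_2^r
$$
in which $\{\boldsymbol{a}_1,\dots,\boldsymbol{a}_r,\boldsymbol{b}_1\}$ is an affine frame of $\mathbb Z_2^r$, the points $\boldsymbol{a}_{k+1},\dots,\boldsymbol{a}_r$ are the vertices of the factor $\Delta^{r-k-1}_2$, and $\boldsymbol{b}_i=\boldsymbol{a}_i+\boldsymbol{l}$ for $i=1,\dots,k$, where $\boldsymbol{l}$ is the main direction of $\Pi^k$ (unique for $k\ge 3$ by Lemma~\ref{lem:pklk}; one of the three directions of the $2$-plane $\Pi^2$ for $k=2$; irrelevant for $k=1$). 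I would also set $V=\langle\boldsymbol{a}_i+\boldsymbol{a}_1:i=2,\dots,r\rangle$, the $(r-1)$-dimensional difference space of $\{\boldsymbol{a}_1,\dots,\boldsymbol{a}_r\}$, and note that the frame property forces $\boldsymbol{l}\notin V$.

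Next I would record the elementary combinatorics of the projection. For nonzero $\boldsymbol{d}$ the map $\pi_{\boldsymbol{d}}\colon\mathbb Z_2^r\to\mathbb Z_2^r/\langle\boldsymbol{d}\rangle$ identifies two distinct points $\boldsymbol{p},\boldsymbol{q}$ precisely when $\boldsymbol{p}+\boldsymbol{q}=\boldsymbol{d}$; three pairwise distinct points cannot be pairwise at distance $\boldsymbol{d}$, so this relation is a partial matching on $S$ with, say, $j$ edges, and then $|\pi_{\boldsymbol{d}}(S)|=(r+k)-j$. Since a boolean $(r-1)$-simplex has $r$ points, $\pi_{\boldsymbol{d}}(S)$ can be one only if $j=k$, i.e.\ $\boldsymbol{d}$ is the difference of exactly $k$ pairs of $S$. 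The heart of the argument is then the multiplicity table: splitting the pairs of $S$ into the four types $\{\boldsymbol{a}_i,\boldsymbol{a}_j\}$, $\{\boldsymbol{b}_i,\boldsymbol{b}_j\}$, $\{\boldsymbol{a}_i,\boldsymbol{b}_j\}$ with $i\ne j$, and $\{\boldsymbol{a}_i,\boldsymbol{b}_i\}$, and using the affine independence of $\{\boldsymbol{a}_1,\dots,\boldsymbol{a}_r\}$ together with $\boldsymbol{l}\notin V$, one should obtain: $\boldsymbol{d}=\boldsymbol{l}$ occurs exactly $k$ times; $\boldsymbol{d}=\boldsymbol{a}_i+\boldsymbol{a}_j$ occurs twice if $i,j\le k$ and once otherwise; $\boldsymbol{d}=\boldsymbol{a}_i+\boldsymbol{a}_j+\boldsymbol{l}$ ($i\ne j$) occurs twice if $i,j\le k$ and once if exactly one of $i,j$ is $\le k$; and every other $\boldsymbol{d}$ occurs zero times. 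In particular all multiplicities lie in $\{0,1,2,k\}$, and the value $k$ is attained only by $\boldsymbol{d}=\boldsymbol{l}$.

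The three cases would then follow at once. For $k\ge 3$ the requirement $j=k$ forces $\boldsymbol{d}=\boldsymbol{l}$. For $k=2$ the multiplicity $2=k$ is attained exactly by $\boldsymbol{d}\in\{\boldsymbol{l},\ \boldsymbol{a}_1+\boldsymbol{a}_2,\ \boldsymbol{a}_1+\boldsymbol{a}_2+\boldsymbol{l}\}$, which are precisely the three directions of $\Pi^2=\{\boldsymbol{a}_1,\boldsymbol{b}_1,\boldsymbol{a}_2,\boldsymbol{b}_2\}$ (recall $\boldsymbol{a}_1+\boldsymbol{a}_2+\boldsymbol{l}=\boldsymbol{a}_1+\boldsymbol{b}_2$), by Corollary~\ref{cor:a2p} and Lemma~\ref{lem:pklk}. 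For $k=1$ the set $S=\Delta^r_2$ is an affine frame, $j=1$ is needed, and each attained value is attained exactly once — by a unique pair of vertices of $\Delta^r_2$, since the pairwise differences of an affine frame are all distinct (an equality of two of them would be an affine relation on at most four frame points) — so the admissible directions are in bijection with the pairs of vertices of $\Delta^r_2$. In every case it remains to check that the $r$ image points are genuinely affinely independent; I would do this uniformly. For an admissible $\boldsymbol{d}$ choose a transversal $T\subset S$ of the corresponding matching (one endpoint of each edge plus all unmatched points); then $T$ is an affinely independent $r$-subset of $S$ by the join/prism structure and Lemma~\ref{affindab}, and $\pi_{\boldsymbol{d}}(S)=\pi_{\boldsymbol{d}}(T)$. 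Expressing the $(r-1)$-dimensional difference space $W_T$ of $T$ as a coordinate subspace of the frame basis $\{\boldsymbol{a}_2+\boldsymbol{a}_1,\dots,\boldsymbol{a}_r+\boldsymbol{a}_1,\boldsymbol{l}\}$ (after exchanging $\boldsymbol{l}$ for $\boldsymbol{d}$ in that basis when necessary), a one-line check gives $\boldsymbol{d}\notin W_T$, hence $W_T\cap\langle\boldsymbol{d}\rangle=0$, so $\pi_{\boldsymbol{d}}(T)$ is a set of $r$ affinely independent points and $\pi_{\boldsymbol{d}}(S)$ is a boolean $(r-1)$-simplex.

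The hard part is the multiplicity table in the second paragraph: one has to be sure that no accidental coincidence occurs between differences of the four types. This is exactly where all three hypotheses are spent — affine independence of $\{\boldsymbol{a}_1,\dots,\boldsymbol{a}_r\}$ separates the values lying in $V$, the relation $\boldsymbol{l}\notin V$ separates those outside $V$ from those inside and pins down $\boldsymbol{l}$ itself, and $k\ge 3$ is what makes multiplicity $k$ unattainable by the other three types. Everything else — the matching description of $\pi_{\boldsymbol{d}}$ and the affine-independence checks through transversals — is routine linear algebra over $\mathbb Z_2$.
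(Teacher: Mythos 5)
Your proposal is correct, and it rests on the same basic observation as the paper's proof: the projection along $\boldsymbol{d}$ collapses $S=\Delta^{r-k-1}_2*\Pi^k$ to $r$ points exactly when $\boldsymbol{d}$ is the common difference of precisely $k$ pairs of $S$. Where you diverge is in how that condition is analyzed. The paper avoids your multiplicity table entirely: it notes that since $k\geqslant 2$ there are at least two matched pairs, so if a vertex $\boldsymbol{p}$ of the join factor $\Delta^{r-k-1}_2$ were matched with some $\boldsymbol{q}$, then $\boldsymbol{p}=\boldsymbol{q}+\boldsymbol{u}+\boldsymbol{v}$ for another matched pair $\{\boldsymbol{u},\boldsymbol{v}\}$, placing $\boldsymbol{p}$ in the affine hull of the remaining points of $S$ and contradicting the join structure. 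Hence all $k$ matched pairs lie inside $\Pi^k$, which is therefore a union of $k$ parallel lines of direction $\boldsymbol{d}$, and Lemma~\ref{lem:pklk} immediately identifies $\boldsymbol{d}$ (uniquely for $k\geqslant 3$, one of three for $k=2$); the converse is delegated to Lemma~\ref{lem:SPK}. Your explicit coordinate computation of the full difference multiset proves more than is needed, but it is self-contained, it makes the $k=2$ trichotomy and the $k=1$ bijection completely transparent, and your transversal argument supplies the affine-independence check of the image that the paper leaves implicit for the non-main directions. One byproduct of your argument worth recording: the bijection with pairs of vertices of $\Delta^r_2$ (which has $r+1$ vertices) gives $\binom{r+1}{2}=\frac{r(r+1)}{2}$ directions for $k=1$, consistent with the count used in Proposition~\ref{prop:clhi} and Theorem~\ref{th:shmclass}; the figure $\frac{r(r-1)}{2}$ in the statement of the lemma is a misprint.
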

\begin{proof}
For $k=1$ the statement is trivial. Assume that $k\geqslant 2$. 
The set  $\Delta^{r-k-1}_2*\Pi^k$ consisting of $r+k\geqslant r+2$ points lies on $r$ lines of direction $\boldsymbol{d}$.
Since at least two lines contain two points and any point of $\Delta^{r-k-1}_2$ does not lie in the affine hull
of all the other points of $\Delta^{r-k-1}_2*\Pi^k$, each point of $\Delta^{r-k-1}_2$ is a single point on the corresponding line.
Hence, $\Pi^k$ consists of $k$ lines of direction $\boldsymbol{d}$ and by Lemma \ref{lem:pklk} these
lines have a main direction. Lemma \ref{lem:SPK} implies that a~main direction satisfies the desired condition.
\end{proof}

\section{Special hyperelliptic manifolds $N(P,\Lambda)$}\label{sec:shm}
\begin{definition}
Following \cite{VM99S1} we call a closed $n$-manifold $M$ {\it hyperelliptic} if it has an involution $\tau$ such that
the orbit space $M/\langle\tau\rangle$ is homeomorphic to an $n$-sphere. The corresponding involution $\tau$
is called a {\it hyperelliptic involution}.
\end{definition}
In this section we consider hyperelliptic involutions $\tau$ in the group $\mathbb Z_2^{r+1}$ canonically acting 
on~the~closed manifold $N(P,\Lambda)$ corresponding to~a~vector-coloring of rank $r+1$ of~a~simple $n$-polytope $P$.  
By Corollary \ref{H'or} the~manifold $N(P,\Lambda)$ should be orientable. Hence, $N(P,\Lambda)=N(P,\lambda)$ 
for an~affine coloring $\lambda$ of rank $r$. Moreover, by Corollary \ref{cor:affH'} the involution 
$\tau$ preserves the orientation, that is 
$\tau\in\mathbb Z_2^r=H_0'$. Corollary \ref{cor:affH'P} implies the following result.
\begin{lemma}\label{lem:affhl}
Let $\lambda$ be an affine coloring of rank~$r$ of a simple $n$-polytope $P$.
An involution $\tau\in\mathbb Z_2^r$ is~hyperelliptic if~and only if 
$N(P,\lambda)/\langle\tau\rangle=N(P,\lambda_\tau)$ is homeomorphic to $S^3$, where
$\lambda_\tau$ is~the~composition $\widehat{\Pi}\circ\lambda$ of $\lambda$ and the~affine 
surjection $\widehat{\Pi}\colon \mathbb Z_2^r\to \mathbb Z_2^r/\langle\tau\rangle$.
\end{lemma}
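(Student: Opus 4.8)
The plan is to show that Lemma~\ref{lem:affhl} is essentially a restatement of earlier results, so the proof is a short bookkeeping argument. First I would recall the setup: $N(P,\lambda)=N(P,\Lambda)$ with $\Lambda_i=(1,\lambda_i)\in\mathbb Z_2^{r+1}$, and the canonical $\mathbb Z_2^{r+1}$-action on $N(P,\lambda)$ has orientation-preserving subgroup $H_0'=\{x_0=0\}\cong\mathbb Z_2^r$, which we identify with the vector space underlying the affine space $\mathbb Z_2^r$ spanned by the $\lambda_i$. An involution $\tau\in\mathbb Z_2^{r+1}$ acting on $N(P,\lambda)$ with $N(P,\lambda)/\langle\tau\rangle\simeq S^n$ must in particular make the quotient orientable, so by Corollary~\ref{H'or} (applied with $H'=\langle\tau\rangle$) we need $\langle\tau\rangle\subset H_0'$, i.e.\ $\tau\in\mathbb Z_2^r$. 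This already restricts attention to $\tau\in\mathbb Z_2^r$ as in the statement.

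Next I would apply Corollary~\ref{cor:affH'}: for a subgroup $H'\subset\mathbb Z_2^{r+1}$, the space $N(P,\lambda)/H'$ is a closed orientable pseudomanifold iff $H'\subset H_0'=\mathbb Z_2^r$, and in that case $N(P,\lambda)/H'=N(P,\widehat\Pi\circ\lambda)$ where $\widehat\Pi\colon\mathbb Z_2^r\to\mathbb Z_2^r/H'$ is the induced affine surjection. Taking $H'=\langle\tau\rangle$ with $\tau\ne\boldsymbol 0$, this gives $N(P,\lambda)/\langle\tau\rangle=N(P,\lambda_\tau)$ with $\lambda_\tau=\widehat\Pi\circ\lambda$ and $\widehat\Pi\colon\mathbb Z_2^r\to\mathbb Z_2^r/\langle\tau\rangle$ the affine quotient map. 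Combining the two observations: $\tau$ is hyperelliptic (i.e.\ $N(P,\lambda)/\langle\tau\rangle\simeq S^n$) precisely when $\tau\in\mathbb Z_2^r$ and $N(P,\lambda_\tau)\simeq S^n$, which is exactly the asserted equivalence.

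There is essentially no obstacle here; the only point requiring a word of care is the reduction $\tau\in\mathbb Z_2^{r+1}\Rightarrow\tau\in\mathbb Z_2^r$, which uses that a sphere is orientable and hence the orbit space being homeomorphic to $S^n$ forces the orientability hypothesis of Corollary~\ref{H'or}; I would also note $\tau\ne\boldsymbol 0$ is implicit since $\tau$ is an involution, so $\langle\tau\rangle\cong\mathbb Z_2$ and $\widehat\Pi$ is a genuine (non-identity) affine surjection onto $\mathbb Z_2^{r-1}$. With these remarks the lemma follows directly from Corollaries~\ref{H'or}, \ref{cor:affH'} and~\ref{cor:affH'P}.

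\begin{proof}
By definition $N(P,\lambda)=N(P,\Lambda)$ with $\Lambda_i=(1,\lambda_i)\in\mathbb Z_2^{r+1}$, and the canonical $\mathbb Z_2^{r+1}$-action has orientation-preserving subgroup $H_0'=\{(x_0,\dots,x_r)\colon x_0=0\}\cong\mathbb Z_2^r$. Suppose $\tau\in\mathbb Z_2^{r+1}$ is a hyperelliptic involution acting on $N(P,\lambda)$, so $N(P,\lambda)/\langle\tau\rangle\simeq S^n$. Since $S^n$ is orientable and the action of $\langle\tau\rangle$ is simplicial with respect to a barycentric subdivision of $P$, Corollary~\ref{H'or} applied to $H'=\langle\tau\rangle$ forces $\langle\tau\rangle\subset H_0'$, i.e.\ $\tau\in\mathbb Z_2^r$. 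Conversely any $\tau\in\mathbb Z_2^r$ lies in $H_0'$. Thus a hyperelliptic involution of $N(P,\lambda)$ automatically lies in $\mathbb Z_2^r$.

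Now fix $\tau\in\mathbb Z_2^r$, $\tau\ne\boldsymbol 0$, and set $H'=\langle\tau\rangle\cong\mathbb Z_2$. By Corollary~\ref{cor:affH'} the quotient $N(P,\lambda)/\langle\tau\rangle$ is a closed orientable pseudomanifold and equals $N(P,\widehat\Pi\circ\lambda)=N(P,\lambda_\tau)$, where $\widehat\Pi\colon\mathbb Z_2^r\to\mathbb Z_2^r/\langle\tau\rangle\cong\mathbb Z_2^{r-1}$ is the induced affine surjection. Hence $\tau$ is hyperelliptic, i.e.\ $N(P,\lambda)/\langle\tau\rangle\simeq S^n$, if and only if $N(P,\lambda_\tau)\simeq S^n$, which is the assertion of the lemma.
\end{proof}
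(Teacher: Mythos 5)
Your proof is correct and follows essentially the same route as the paper, which derives this lemma directly from Corollaries \ref{H'or}, \ref{cor:affH'} and \ref{cor:affH'P} in the discussion immediately preceding its statement (orientability of the sphere forces $\tau\in H_0'=\mathbb Z_2^r$, and the quotient is then identified with $N(P,\lambda_\tau)$). The only cosmetic difference is that you correctly write $S^n$ where the paper's statement has the typo $S^3$.
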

\begin{definition}
Let us call an involution $\tau\in\mathbb Z_2^r$ {\it special}, if the complex $\mathcal{C}(P,\lambda_{\tau})$ is equivalent 
to $\mathcal{C}(n,r)$.
\end{definition}
\begin{proposition}
Any special involution is hyperelliptic.
\end{proposition}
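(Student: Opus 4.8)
The plan is to unwind the definitions and invoke Construction \ref{con:spn}. By Lemma \ref{lem:affhl}, a special involution $\tau\in\mathbb Z_2^r$ acts on $N(P,\lambda)$ with orbit space $N(P,\lambda_\tau)$, where $\lambda_\tau=\widehat{\Pi}\circ\lambda$ and $\widehat{\Pi}\colon\mathbb Z_2^r\to\mathbb Z_2^r/\langle\tau\rangle\simeq\mathbb Z_2^{r-1}$ is an affine surjection. Thus $\lambda_\tau$ is an affine coloring of rank at most $r-1$. The definition of \emph{special} says exactly that $\mathcal{C}(P,\lambda_\tau)\simeq\mathcal{C}(n,r)$.

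First I would observe that $\mathcal{C}(n,r)=\mathcal{C}(n,(r-1)+1)$, so that the hypothesis of Construction \ref{con:spn} is met for the affine coloring $\lambda_\tau$: its associated complex is equivalent to $\mathcal{C}(n,(r-1)+1)$ where $r-1$ is the rank. (One should note in passing that the equivalence $\mathcal{C}(P,\lambda_\tau)\simeq\mathcal{C}(n,r)$ forces $\lambda_\tau$ to actually have rank $r-1$, since $\mathcal{C}(n,r)$ has exactly $r$ facets and by Corollary \ref{cor:dnc} the number of colors is a complete invariant; an affine coloring realizing $r$ distinct color-classes on disjoint facet-unions must affinely span a space of dimension $r-1$.) Then Construction \ref{con:spn} applies verbatim and yields $N(P,\lambda_\tau)\simeq S^n$.

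Hence $N(P,\lambda)/\langle\tau\rangle=N(P,\lambda_\tau)$ is homeomorphic to $S^n$, which by the definition of hyperelliptic involution means $\tau$ is hyperelliptic. This completes the argument. The proof is essentially a one-line deduction, so there is no real obstacle; the only point requiring a moment of care is confirming that the rank of $\lambda_\tau$ is $r-1$ rather than something smaller, so that "$\mathcal{C}(n,r)$" is read as "$\mathcal{C}(n,(\operatorname{rank}\lambda_\tau)+1)$" and Construction \ref{con:spn} can be invoked with matching indices. This is immediate from the combinatorial count of facets in $\mathcal{C}(n,r)$ together with Corollary \ref{cor:dnc}.
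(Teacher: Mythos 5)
Your argument is correct and is essentially the paper's own proof, which consists of the single line ``This follows from Construction \ref{con:spn}'': one unwinds the definition of \emph{special}, notes $N(P,\lambda)/\langle\tau\rangle=N(P,\lambda_\tau)$ via Lemma \ref{lem:affhl}, and applies the construction to $\lambda_\tau$. One caveat on your parenthetical: the claim that $r$ distinct color-classes force the affine span to have dimension $r-1$ is false in general (the four points of $\mathbb Z_2^2$ give four colors but an affine span of dimension $2$, not $3$), so that route does not establish $\operatorname{rank}\lambda_\tau=r-1$; the correct and immediate reason is that $\lambda_\tau=\widehat{\Pi}\circ\lambda$ where $\widehat{\Pi}\colon\mathbb Z_2^r\to\mathbb Z_2^r/\langle\tau\rangle\simeq\mathbb Z_2^{r-1}$ is an affine surjection and $\lambda$ affinely spans $\mathbb Z_2^r$, so its image affinely spans $\mathbb Z_2^{r-1}$.
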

\begin{proof}
This follows from Construction \ref{con:spn}. 
\end{proof}
\begin{definition}
Let us call a manifold $N(P,\lambda)$ equipped with a special involution $\tau$ a {\it special hyperelliptic manifold of rank $r$}.
\end{definition}

It follows from the definition that any special hyperelliptic manifold is obtained by~the~following construction.
\begin{construction}[A special hyperelliptic manifold]\label{con:shm}
Let $P$ be a simple $n$-polytope and $c$ be its coloring in $r\geqslant 1$ colors such that the~complex 
$\mathcal{C}(P,c)$ is equivalent to $\mathcal{C}(n,r)$. Let $G_1$, $\dots$, $G_r$ be its facets. Choose any
coloring $\chi$ of $P$ in two colors $0$ and $1$ such that at least one restriction $\chi\left.\right|_{G_i}$ is non-constant.
Define a space $N(P, c,\chi)=N(P,\lambda(c,\chi))$, where 
$$
\lambda(c,\chi)(F_i)=
\begin{cases}\boldsymbol{a}_{c(i)},&\text{if }\chi(F_i)=1,\\
\boldsymbol{b}_{c(i)},&\text{if }\chi(F_i)=0,
\end{cases}
$$
and $\{\boldsymbol{a}_1,\dots,\boldsymbol{a}_r,\boldsymbol{b}_1,\dots,\boldsymbol{b}_r\}\subset\mathbb Z_2^r$
is a~boolean simplicial prism of dimension $r$. If we exchange the colors $0$ and $1$ at one 
facet $G_i$, then $\lambda(c,\chi)$ will be changed to an affinely equivalent coloring, and 
the weakly equivariant type of $N(P, c,\chi)$ will remain the same.
If $N(P,c,\chi)$ is~a~manifold, then by definition it~is~a~special hyperelliptic manifold of rank $r$ 
with the special  involution $\boldsymbol{l}=\boldsymbol{a}_i+\boldsymbol{b}_i\in\mathbb Z_2^r$. 
\end{construction}
\begin{remark}
The image of the mapping $\lambda(c,\chi)\colon\{F_1,\dots,F_m\}\to \mathbb Z_2^r$
consists of $r+k$ points if and only if $\chi\left.\right|_{G_i}$ is non-constant exactly for $k$ facets $G_i$.
\end{remark}

Lemma \ref{affindab} and Corollary \ref{cor:a2p} imply the following criterion.
\begin{corollary}\label{cor:hypman}
The space $N(P,c,\chi)$ is a manifold if and only if
one of the following equivalent conditions hold:
\begin{enumerate}
\item $F_i\cap F_j\cap F_k\cap F_l=\varnothing$ whenever $c(F_i)=c(F_j)\ne c(F_k)=c(F_l)$
and $\chi(F_i)=\chi(F_k)\ne\chi(F_j)=\chi(F_l)$;
\item  $F_i\cap F_j\cap F_k\cap F_l=\varnothing$ whenever 
$\lambda(c,\chi)(\{F_i,F_j,F_k,F_l\})=\{\boldsymbol{a}_p,\boldsymbol{b}_p, \boldsymbol{a}_q,\boldsymbol{b}_q\}$ for $p\ne q$.
\item  $F_i\cap F_j\cap F_k\cap F_l=\varnothing$ whenever 
$\lambda(c,\chi)(\{F_i,F_j,F_k,F_l\})$ is an affine $2$-plane.
\end{enumerate}
\end{corollary}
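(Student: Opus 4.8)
The plan is to read off the manifold condition directly from Theorem~\ref{th:ZHM}, applied to the rank-$(r+1)$ vector-coloring $\Lambda_i=(1,\lambda(c,\chi)(F_i))$, for which $N(P,c,\chi)=N(P,\lambda(c,\chi))=N(P,\Lambda)$. Every $\Lambda_i$ is nonzero because its first coordinate is $1$, so Theorem~\ref{th:ZHM} will say that $N(P,c,\chi)$ is a closed topological manifold precisely when, at every vertex $v=F_{i_1}\cap\dots\cap F_{i_n}$ of $P$, the pairwise distinct vectors among $\Lambda_{i_1},\dots,\Lambda_{i_n}$ are linearly independent. I would then use the elementary fact that $(1,\boldsymbol{x}_1),\dots,(1,\boldsymbol{x}_s)$ are linearly independent in $\mathbb{Z}_2^{r+1}$ iff $\boldsymbol{x}_1,\dots,\boldsymbol{x}_s$ are affinely independent in $\mathbb{Z}_2^r$, to restate the criterion as: for every vertex $v$ the distinct points among $\lambda(c,\chi)(F_{i_1}),\dots,\lambda(c,\chi)(F_{i_n})$ are affinely independent. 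Since all these points lie in the boolean simplicial prism $\Pi^r=\{\boldsymbol{a}_1,\dots,\boldsymbol{a}_r,\boldsymbol{b}_1,\dots,\boldsymbol{b}_r\}$, Lemma~\ref{affindab} turns this into the statement that the set of $\lambda(c,\chi)$-values of the facets through $v$ contains at most one pair $\{\boldsymbol{a}_p,\boldsymbol{b}_p\}$.

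Next I would translate this per-vertex condition into the per-quadruple condition~(2). If it fails at some $v$, then the values of facets through $v$ contain two distinct pairs $\{\boldsymbol{a}_p,\boldsymbol{b}_p\}$ and $\{\boldsymbol{a}_q,\boldsymbol{b}_q\}$ with $p\ne q$; choosing four facets $F_i,F_j,F_k,F_l$ through $v$ realizing these four distinct values gives four distinct facets with $F_i\cap F_j\cap F_k\cap F_l\ni v$ and $\lambda(c,\chi)(\{F_i,F_j,F_k,F_l\})=\{\boldsymbol{a}_p,\boldsymbol{b}_p,\boldsymbol{a}_q,\boldsymbol{b}_q\}$. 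Conversely, given such a quadruple, the intersection $F_i\cap F_j\cap F_k\cap F_l$, being a nonempty intersection of facets, is a face of $P$ and so contains a vertex $v$; the four facets all contain $v$, so the per-vertex condition fails at $v$. This establishes that $N(P,c,\chi)$ is a manifold if and only if~(2) holds.

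Finally, I would dispatch the two remaining equivalences cheaply. The equivalence (2)$\Leftrightarrow$(3) is exactly Corollary~\ref{cor:a2p}, which says the affine $2$-planes inside $\Pi^r$ are precisely the sets $\{\boldsymbol{a}_p,\boldsymbol{b}_p,\boldsymbol{a}_q,\boldsymbol{b}_q\}$ with $p\ne q$. For (1)$\Leftrightarrow$(2) I would unwind $\lambda(c,\chi)$: a quadruple $\{F_i,F_j,F_k,F_l\}$ has $\lambda(c,\chi)$-image $\{\boldsymbol{a}_p,\boldsymbol{b}_p,\boldsymbol{a}_q,\boldsymbol{b}_q\}$, $p\ne q$, exactly when, after relabeling, $c(F_i)=c(F_j)=p$, $c(F_k)=c(F_l)=q$, and---reading $\boldsymbol{a}_\bullet$ as $\chi=1$ and $\boldsymbol{b}_\bullet$ as $\chi=0$---$\chi(F_i)=\chi(F_k)\ne\chi(F_j)=\chi(F_l)$, which is the hypothesis of~(1). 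So conditions (1), (2), (3) forbid the same family of quadruples, and the three clauses are equivalent.

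I do not expect a genuine obstacle: the content is entirely carried by Theorem~\ref{th:ZHM}, Lemma~\ref{affindab}, and Corollary~\ref{cor:a2p}. The only step requiring a little care is the back-and-forth between ``a vertex whose facets carry two value-pairs'' and ``four facets with nonempty common intersection and the prescribed values'', where one must invoke that in a polytope a nonempty intersection of facets is a face and hence contains a vertex, and keep track that four distinct values force four distinct facets.
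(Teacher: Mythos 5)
Your proposal is correct and follows essentially the same route as the paper, which derives this corollary from Theorem~\ref{th:ZHM} (in its affine form) together with Lemma~\ref{affindab} and Corollary~\ref{cor:a2p}; you have merely spelled out the routine translation between the per-vertex affine-independence condition and the per-quadruple conditions (1)--(3), which the paper leaves implicit.
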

\begin{corollary}\label{n3chi}
In dimension $n=3$ in Construction \ref{con:shm} the space $N(P,c,\chi)$ is a special hyperelliptic manifold for any $\chi$.
\end{corollary}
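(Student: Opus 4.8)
The plan is to reduce the statement to the single assertion that, for $n=3$, the space $N(P,c,\chi)$ is always a topological manifold. Once this is established, Construction \ref{con:shm} immediately declares $N(P,c,\chi)$ to be a special hyperelliptic manifold of rank $r$ with special involution $\boldsymbol{l}=\boldsymbol{a}_i+\boldsymbol{b}_i$, and there is nothing more to prove.

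To see that $N(P,c,\chi)$ is a manifold I would apply the criterion of Corollary \ref{cor:hypman}: the space can fail to be a manifold only if there are facets $F_i,F_j,F_k,F_l$ with $F_i\cap F_j\cap F_k\cap F_l\ne\varnothing$ such that $\lambda(c,\chi)(\{F_i,F_j,F_k,F_l\})$ is an affine $2$-plane. In each of the three equivalent formulations of that corollary the four facets involved are pairwise distinct: in formulation (1) the relation $c(F_i)=c(F_j)\ne c(F_k)=c(F_l)$ separates $\{F_i,F_j\}$ from $\{F_k,F_l\}$, while $\chi(F_i)\ne\chi(F_j)$ and $\chi(F_k)\ne\chi(F_l)$ force $F_i\ne F_j$ and $F_k\ne F_l$; in formulations (2)--(3) the four facets are sent to the four distinct points of an affine $2$-plane. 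But in a simple $3$-polytope every nonempty intersection of facets is a face, and a face is contained in at most $n=3$ facets, so four distinct facets can never have a common point. Hence the forbidden configuration does not occur and $N(P,c,\chi)$ is a manifold.

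An equivalent, perhaps more conceptual route goes through the affine form of Theorem \ref{th:ZHM} together with Corollary \ref{cor:manor}. One first notes that $\lambda(c,\chi)$ really is an affine coloring of rank $r$ in the sense of Definition \ref{def:acol}: for every color $i$ at least one of $\boldsymbol{a}_i,\boldsymbol{b}_i$ lies in the image, these $r$ points (one per color) are affinely independent by Lemma \ref{affindab}, and adjoining the partner of some $\boldsymbol{a}_j,\boldsymbol{b}_j$ with $\chi|_{G_j}$ non-constant (which exists by hypothesis) yields $r+1$ affinely independent points, so the image affinely spans $\mathbb Z_2^r$. Then Corollary \ref{cor:manor} applies verbatim: at any vertex of the simple $3$-polytope $P$ exactly three facets meet, the values of $\lambda(c,\chi)$ there are at most three distinct points of $\mathbb Z_2^r$, and any at most three distinct points of $\mathbb Z_2^r$ are affinely independent. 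The only step needing a moment's care — and the only place the hypothesis that some $\chi|_{G_i}$ is non-constant enters — is this rank-$r$ verification; otherwise there is no obstacle, the real reason being that the obstruction of Theorem \ref{th:ZHM}, which lives at a vertex lying in $k\ge 3$ facets carrying affinely dependent colors, has no room to appear when every vertex meets exactly three facets.
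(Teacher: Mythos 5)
Your proof is correct and follows essentially the same route as the paper: the corollary is stated there as an immediate consequence of Corollary \ref{cor:hypman} (four pairwise distinct facets of a simple $3$-polytope cannot meet, so the forbidden affine-$2$-plane configuration never occurs), and the paper's accompanying remark records exactly your second route via Corollary \ref{cor:manor}. Both of your arguments, including the check that $\lambda(c,\chi)$ is an affine coloring of rank $r$, are sound.
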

\begin{remark}
Corollary \ref{n3chi} also follows from Corollary \ref{cor:manor}.
\end{remark}
\begin{proposition}\label{prop:conhi}
Any special hyperelliptic manifold can be obtained by Construction \ref{con:shm}.
\end{proposition}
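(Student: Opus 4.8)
The plan is to recover, from a given special hyperelliptic manifold, exactly the data $(c,\chi)$ that feeds Construction \ref{con:shm}. So let $N(P,\lambda)$ be a manifold, $\lambda$ an affine coloring of rank $r$ of a simple $n$-polytope $P$, and $\tau\in\mathbb Z_2^r$ a special involution; write $\lambda_\tau=\widehat\Pi\circ\lambda$ for the affine surjection $\widehat\Pi\colon\mathbb Z_2^r\to\mathbb Z_2^r/\langle\tau\rangle\cong\mathbb Z_2^{r-1}$, so that $\mathcal C(P,\lambda_\tau)\simeq\mathcal C(n,r)$ by the definition of a special involution. The first step is to reconstruct the coloring. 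Since $\mathcal C(n,r)$ has exactly $r$ facets, so does $\mathcal C(P,\lambda_\tau)$; but the facets of $\mathcal C(P,\lambda_\tau)$ are the connected components of the unions $\bigcup_{\lambda_\tau(F_i)=\boldsymbol{c}}F_i$ over $\boldsymbol{c}$ in the image of $\lambda_\tau$, and that image, affinely spanning $\mathbb Z_2^{r-1}$, already has at least $r$ elements. Equality therefore forces the image of $\lambda_\tau$ to be a boolean $(r-1)$-simplex $\{\boldsymbol{c}_1,\dots,\boldsymbol{c}_r\}$ and each union $G_j=\bigcup_{\lambda_\tau(F_i)=\boldsymbol{c}_j}F_i$ to be connected; these $G_1,\dots,G_r$ are precisely the facets of $\mathcal C(P,\lambda_\tau)$. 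Setting $c(F_i)=j$ whenever $\lambda_\tau(F_i)=\boldsymbol{c}_j$ gives a coloring of $P$ in $r$ colors inducing the same partition of $\{F_1,\dots,F_m\}$ as $\lambda_\tau$, so $\mathcal C(P,c)=\mathcal C(P,\lambda_\tau)\simeq\mathcal C(n,r)$, with facets $G_1,\dots,G_r$.

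The second step recovers $\chi$ and checks the remaining hypotheses of Construction \ref{con:shm}. For each $j$ the fibre $\widehat\Pi^{-1}(\boldsymbol{c}_j)$ has two elements, which I label $\{\boldsymbol{a}_j,\boldsymbol{b}_j\}$ with $\boldsymbol{b}_j=\boldsymbol{a}_j+\tau$, the labelling chosen arbitrarily (this is exactly the ambiguity noted in Construction \ref{con:shm}). If $c(F_i)=j$ then $\lambda(F_i)\in\{\boldsymbol{a}_j,\boldsymbol{b}_j\}$, so putting $\chi(F_i)=1$ when $\lambda(F_i)=\boldsymbol{a}_{c(F_i)}$ and $\chi(F_i)=0$ otherwise yields $\lambda=\lambda(c,\chi)$ identically, whence $N(P,\lambda)=N(P,c,\chi)$. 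It remains to see that $\{\boldsymbol{a}_1,\dots,\boldsymbol{a}_r,\boldsymbol{b}_1,\dots,\boldsymbol{b}_r\}$ is a boolean simplicial prism of dimension $r$ with main direction $\tau$: the points $\boldsymbol{a}_j$ project under $\widehat\Pi$ onto the affinely independent $\boldsymbol{c}_j$, hence are themselves affinely independent; $\{\boldsymbol{a}_1,\dots,\boldsymbol{a}_r,\boldsymbol{b}_1\}$ is full-dimensional because $\tau\in\Ker\widehat\Pi$ while the differences $\boldsymbol{a}_j-\boldsymbol{a}_1$ map to a basis of $\mathbb Z_2^{r-1}$; and the two bases are disjoint, with every line $\{\boldsymbol{a}_j,\boldsymbol{b}_j\}$ parallel to $\tau\ne\boldsymbol{0}$, a direction not parallel to either base by the same dimension count. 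Finally, at least one restriction $\chi|_{G_j}$ is non-constant: were they all constant, $\lambda$ would be constant on each $G_j$ and its image would consist of $r$ points, one out of each pair $\{\boldsymbol{a}_j,\boldsymbol{b}_j\}$, which by Lemma \ref{affindab} is affinely independent and so cannot affinely span $\mathbb Z_2^r$, contradicting that $\lambda$ has rank $r$. Since $N(P,\lambda)$ is a manifold by hypothesis, Construction \ref{con:shm} applied to $(P,c,\chi)$ is legitimate and returns $N(P,\lambda)$ together with the special involution $\boldsymbol{a}_j+\boldsymbol{b}_j=\tau$.

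I expect the only genuinely delicate point to be the counting argument that reconstructs $c$: one has to be sure that $\mathcal C(P,\lambda_\tau)\simeq\mathcal C(n,r)$ forces simultaneously that $\lambda_\tau$ takes exactly $r$ values and that the associated unions of facets of $P$ are connected, i.e.\ that $\mathcal C(P,\lambda_\tau)$ is as small as its rank permits. This rests on $\mathcal C(n,r)$ having exactly $r$ facets (Proposition \ref{prop:dnr}, Corollary \ref{cor:dnc}) together with the elementary inequality ``number of facets of $\mathcal C(P,\mu)$ $\geqslant$ number of values of $\mu$ $\geqslant$ rank of $\mu$ plus one'' for an affine coloring $\mu$ (here $\mu=\lambda_\tau$). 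Everything after that --- the fibre bookkeeping for $\widehat\Pi$ and the verification of the boolean-prism conditions --- is routine given the affine-independence statements of Section \ref{sec:bospx}.
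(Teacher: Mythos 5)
Your proof is correct and follows essentially the same route as the paper's: take $c=\lambda_\tau$, use the two-point fibres of $\widehat\Pi$ over the $r$ values of $\lambda_\tau$ to define the prism $\{\boldsymbol{a}_i,\boldsymbol{b}_i\}$ and the $0/1$-coloring $\chi$. You supply some details the paper leaves implicit (the facet-counting argument forcing the image of $\lambda_\tau$ to be exactly $r$ affinely independent points, the verification of the prism axioms, and the non-constancy of some $\chi|_{G_i}$), but the underlying argument is the same.
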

\begin{proof}
Indeed, if $\tau$ is a special involution on the manifold $N(P,\lambda)$, then 
$\mathcal{C}(P,\lambda_{\tau})\simeq \mathcal{C}(n,r)$. Hence, we can choose $c=\lambda_{\tau}$.
The image of $c$ consists of affinely independent points $\boldsymbol{p}_1$, $\dots$, 
$\boldsymbol{p}_r\in \mathbb Z_2^r/\langle\tau\rangle$ corresponding to~facets $G_1$, $\dots$, $G_r$ of 
$\mathcal{C}(P,\lambda_{\tau})$. Let $\widehat{\Pi}\colon \mathbb Z_2^r\to\mathbb Z_2^r/\langle\tau\rangle$ 
be the~canonical projection.
Choose for each $i$ some facet $F_{j_i}\subset G_i$ and set $\boldsymbol{a}_i=\lambda_{j_i}$. Then the~points 
$\boldsymbol{a}_1$, $\dots$, $\boldsymbol{a}_r$ are affinely independent and 
$\widehat{\Pi}^{-1}(\boldsymbol{p}_i)=\{\boldsymbol{a}_i,\boldsymbol{b}_i\}$ for each $i$, where 
$\boldsymbol{b}_i=\boldsymbol{a}_i+\tau$.
Thus, setting $\boldsymbol{l}=\tau$ and $\chi(F_i)=\begin{cases}1,&\text{if }\lambda_i=\boldsymbol{a}_i,\\
0,&\text{if }\lambda_i=\boldsymbol{b}_i
\end{cases}$ we finish the proof.
\end{proof}

Now let us enumerate all special involutions on a manifold $N(P,\lambda)$. 
\begin{proposition}\label{prop:clhi}
Let $N(P,c,\chi)$ be a special hyperelliptic manifold of rank $r$ and $G_1$, $\dots$, $G_r$ be facets
of $\mathcal{C}(P,c)\simeq \mathcal{C}(n,r)$. 
\begin{itemize}
\item If $\chi\left.\right|_{G_i}$ is non-constant exactly for one facet $G_i$ (that is, 
the image of $\lambda(c,\chi)\colon\{F_1,\dots,F_m\}\to \mathbb Z_2^r$
is a boolean simplex), 
then $\tau\in\mathbb Z_2^r$ is a~special involution if and only if the~vector
$\tau$ connects two vertices of the simplex and $\mathcal{C}(P,\lambda_\tau)\simeq\mathcal{C}(n,r)$. 
There are at most $\frac{r(r+1)}{2}$ such involutions.
\item If $\chi\left.\right|_{G_i}$ is non-constant exactly for two facets $G_i$ and $G_j$
(that is, the image of $\lambda(c,\chi)$ is $\Delta^{r-3}*\Pi^2$), then $\tau\in\mathbb Z_2^r$ is 
a~special involution if and only if $\tau\in\{\boldsymbol{l},\boldsymbol{a}_i+\boldsymbol{a}_j,
\boldsymbol{a}_i+\boldsymbol{b}_j\}$ (that is, $\tau$ is a main direction of $\Pi^2$) 
and $\mathcal{C}(P,\lambda_\tau)\simeq\mathcal{C}(n,r)$.
There are at most three such involutions.
\item If $\chi\left.\right|_{G_i}$ is non-constant  for more than two facets $G_i$ (that is, 
the image of $\lambda(c,\chi)$ is $\Delta^{r-k-1}*\Pi^k$ for $k\geqslant 3$), then $\tau\in\mathbb Z_2^r$ is 
a~special involution if and only if $\tau=\boldsymbol{l}$ (the main direction of $\Pi^k$). 
That is, there is only one special involution.   
\end{itemize} 
\end{proposition}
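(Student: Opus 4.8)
The plan is to split the condition ``$\tau$ is special'' into an affine-geometric constraint on $\tau$, read off from Lemma \ref{lem:DPkl}, together with the defining requirement $\mathcal{C}(P,\lambda_\tau)\simeq\mathcal{C}(n,r)$ itself. Write $\widehat\Pi\colon\mathbb Z_2^r\to\mathbb Z_2^r/\langle\tau\rangle$ for the canonical projection, so $\lambda_\tau=\widehat\Pi\circ\lambda$ with $\lambda=\lambda(c,\chi)$. First I would describe $\mathrm{Im}\,\lambda$: by the remark in Construction \ref{con:shm} swapping the colors $0,1$ on a single facet $G_j$ does not change the weakly equivariant type, so we may assume every $G_j$ with $\chi|_{G_j}$ constant is colored by $\boldsymbol a_j$, and then $\mathrm{Im}\,\lambda=\{\boldsymbol a_1,\dots,\boldsymbol a_r\}\cup\{\boldsymbol b_j\colon\chi|_{G_j}\text{ non-constant}\}$, a full-dimensional subset of $\Pi^r$ of cardinality $r+k$ when exactly $k$ restrictions $\chi|_{G_i}$ are non-constant. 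By Lemma \ref{lem:SPK}, $\mathrm{Im}\,\lambda$ is affinely isomorphic to $\Delta^{r-k-1}_2*\Pi^k$, with the $\Pi^k$-factor spanned by the pairs $\{\boldsymbol a_j,\boldsymbol b_j\}$ over the non-constant $G_j$ and main direction $\boldsymbol l$.

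The heart of the argument is the necessary condition: if $\tau$ is special, then $\lambda_\tau$ takes exactly $r$ values, and these form a boolean $(r-1)$-simplex. On one hand, $\lambda$ is an affine coloring of rank $r$, so $\mathrm{Im}\,\lambda$ affinely spans $\mathbb Z_2^r$, hence $\mathrm{Im}\,\lambda_\tau=\widehat\Pi(\mathrm{Im}\,\lambda)$ affinely spans the $(r-1)$-dimensional space $\mathbb Z_2^r/\langle\tau\rangle$ and thus has at least $r$ points. On the other hand, for any coloring $c'$ the complex $\mathcal{C}(P,c')$ has at least $|\mathrm{Im}\,c'|$ facets, since distinct colors give disjoint nonempty families of connected components; as $\mathcal{C}(n,r)$ has exactly $r$ facets, $\mathcal{C}(P,\lambda_\tau)\simeq\mathcal{C}(n,r)$ forces $|\mathrm{Im}\,\lambda_\tau|\le r$. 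So $|\mathrm{Im}\,\lambda_\tau|=r$, and $r$ points affinely spanning an $(r-1)$-dimensional space are affinely independent. Now Lemma \ref{lem:DPkl} applied with $\boldsymbol d=\tau$ to $\Delta^{r-k-1}_2*\Pi^k\cong\mathrm{Im}\,\lambda$ produces exactly the three lists: for $k=1$, the $\frac{r(r+1)}{2}$ directions joining two vertices of the simplex $\Delta^r_2$; for $k=2$, the three main directions $\boldsymbol l,\ \boldsymbol a_i+\boldsymbol a_j,\ \boldsymbol a_i+\boldsymbol b_j$ of $\Pi^2$; for $k\ge 3$, only $\boldsymbol l$. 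This gives the claimed bounds $\frac{r(r+1)}{2}$, $3$, $1$.

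For the converse, a vector $\tau$ on one of these lists is special exactly when, in addition, $\mathcal{C}(P,\lambda_\tau)\simeq\mathcal{C}(n,r)$ — which is the definition — so nothing more is needed there. The one remaining point is that $\tau=\boldsymbol l$ is \emph{always} special; this is what makes the extra condition automatic in the case $k\ge 3$ (where $\boldsymbol l$ is the only candidate) and keeps each list non-empty. For $\tau=\boldsymbol l$ we have $\boldsymbol b_j=\boldsymbol a_j+\boldsymbol l$, so $\widehat\Pi(\boldsymbol a_j)=\widehat\Pi(\boldsymbol b_j)=:\boldsymbol p_j$ for all $j$ and hence $\lambda_{\boldsymbol l}$ has the same fibres as the coloring $c$; since $\{\boldsymbol a_1,\dots,\boldsymbol a_r,\boldsymbol a_1+\boldsymbol l\}$ is an affine basis of $\mathbb Z_2^r$ the points $\boldsymbol p_1,\dots,\boldsymbol p_r$ are distinct and affinely independent, so $\mathcal{C}(P,\lambda_{\boldsymbol l})=\mathcal{C}(P,c)\simeq\mathcal{C}(n,r)$. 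The only mildly delicate step I expect is the bookkeeping that identifies $\mathrm{Im}\,\lambda$ with $\Delta^{r-k-1}_2*\Pi^k$ and tracks the main direction correctly into Lemma \ref{lem:DPkl}; everything else reduces to the definition of ``special'' and the elementary facet-count bound for $\mathcal{C}(P,c')$.
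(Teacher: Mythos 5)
Your proposal is correct and follows essentially the same route as the paper, which simply derives the proposition from Lemma \ref{lem:DPkl}; you have filled in the implicit steps (identifying $\mathrm{Im}\,\lambda$ with $\Delta^{r-k-1}_2*\Pi^k$ via Lemma \ref{lem:SPK}, and the facet-count argument showing a special $\tau$ forces $\mathrm{Im}\,\lambda_\tau$ to be a boolean $(r-1)$-simplex so that Lemma \ref{lem:DPkl} applies) accurately. The observation that $\boldsymbol l$ is always special, which justifies the absence of the extra condition in the $k\geqslant 3$ case, is exactly what Construction \ref{con:shm} provides.
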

\begin{proof}
The Proposition follows from Lemma \ref{lem:DPkl}.
\end{proof}

We can summarise the above results as follows.
\begin{definition}\label{def:iml}
For an affine coloring $\lambda$ of rank $r$ of a simple $n$-polytope $P$ denote
$I(\lambda)=\{\lambda_1,\dots,\lambda_m\}\subset\mathbb Z_2^r$. For a subset $S\subset\mathbb Z_2^r$
denote $G(S)=\bigcup\limits_{q\colon \lambda_q\in S}F_q\subset \partial P$.
\end{definition}
\begin{theorem}\label{th:shmclass}
Let $\lambda$ be an~affine coloring of rank $r$ of~a~simple $n$-polytope $P$. 
The space $N(P,\lambda)$ is a~special hyperelliptic manifold if and only if $1\leqslant r\leqslant n+1$ and 
one of the following conditions hold:
\begin{enumerate}
\item $I(\lambda)=\{\boldsymbol{p}_1, \dots, \boldsymbol{p}_{r+1}\}$ is a boolean $r$-simplex, 
and at least for one direction $\tau=\boldsymbol{p}_i+\boldsymbol{p}_j$, $i\ne j$, 
the complex $\mathcal{C}(P,\lambda_\tau)$ is equivalent to $\mathcal{C}(n,r)$. 
In this case each special involution $\tau\in \mathbb Z_2^r$ has this form and there are at
most $\frac{r(r+1)}{2}$ such involutions. 
\item $I(\lambda)=\Delta^{r-3}*\Pi^2$, $\bigcap_{\lambda_j\in\Pi_2}G(\lambda_j)=\varnothing$
and at least for one main direction $\tau$ of $\Pi^2$
the~complex $\mathcal{C}(P,\lambda_\tau)$ is equivalent to $\mathcal{C}(n,r)$. In this case each special involution 
$\tau\in \mathbb Z_2^r$ has this form and there are at most three such involutions. 
\item $I(\lambda)=\Delta^{r-k-1}*\Pi^k$, $k\geqslant 3$, $\bigcap_{\lambda_j\in\Pi_2}G(\lambda_j)=\varnothing$ 
for any $2$-plane  $\Pi^2\subset \Pi^k$, and for the~main direction $\tau$ of $\Pi^k$ the complex 
$\mathcal{C}(P,\lambda_\tau)$ is equivalent to $\mathcal{C}(n,r)$. In this case the~main direction $\tau$
is a unique special  involution in $\mathbb Z_2^r$.
\end{enumerate}
Moreover, in all these cases any vertex of $\mathcal{C}(P,\lambda)$ belongs 
to~the~$1$-skeleton of  $\mathcal{C}(P,\lambda_\tau)\simeq \mathcal{C}(n,r)$.
\end{theorem}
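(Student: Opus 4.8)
The plan is to assemble Theorem \ref{th:shmclass} from the constructions and lemmas already established, treating the three enumerated cases uniformly via the structure of the point set $I(\lambda)$. First I would observe that $N(P,\lambda)$ being a special hyperelliptic manifold means, by definition and Proposition \ref{prop:conhi}, that it arises from Construction \ref{con:shm}: there is a coloring $c$ with $\mathcal{C}(P,c)\simeq\mathcal{C}(n,r)$ and a $0/1$-coloring $\chi$ with $\lambda=\lambda(c,\chi)$, and conversely any such data with $N(P,c,\chi)$ a manifold yields a special hyperelliptic manifold. The rank bound $1\leqslant r\leqslant n+1$ follows because $\mathcal{C}(n,r)$ is realized on a simple $n$-polytope only in this range (the simplex $\Delta^n$ has $n+1$ facets, so a coloring in $r$ colors forces $r\leqslant n+1$; and $r\geqslant 1$ is automatic). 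The key structural input is the Remark after Construction \ref{con:shm}: the image $I(\lambda)=\lambda(c,\chi)(\{F_1,\dots,F_m\})$ is a full-dimensional subset of the boolean simplicial prism $\Pi^r$ of cardinality $r+k$, where $k$ is the number of facets $G_i$ on which $\chi$ is non-constant; by Lemma \ref{lem:SPK} this set is affinely isomorphic to $\Delta^{r-k-1}_2*\Pi^k$, which is a boolean $r$-simplex when $k=1$, is $\Delta^{r-3}_2*\Pi^2$ when $k=2$, and is $\Delta^{r-k-1}_2*\Pi^k$ for $k\geqslant 3$. This is exactly the case trichotomy in the statement.

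Next I would translate the manifold condition and the special-involution condition into the stated intrinsic conditions on $I(\lambda)$. For the manifold condition: by Corollary \ref{cor:hypman}(3), $N(P,c,\chi)$ is a manifold if and only if $F_i\cap F_j\cap F_k\cap F_l=\varnothing$ whenever $\lambda(\{F_i,F_j,F_k,F_l\})$ is an affine $2$-plane, and by Corollary \ref{cor:a2p} the affine $2$-planes inside $\Pi^r$ are precisely the sets $\{\boldsymbol{a}_p,\boldsymbol{b}_p,\boldsymbol{a}_q,\boldsymbol{b}_q\}$; restricting to which of these actually lie in $I(\lambda)$ gives exactly the condition $\bigcap_{\lambda_j\in\Pi^2}G(\lambda_j)=\varnothing$ for every $2$-plane $\Pi^2\subset I(\lambda)$. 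In the $k=1$ case $I(\lambda)$ is a boolean simplex and contains no affine $2$-plane, so the manifold condition is vacuous — matching the absence of such a clause in case (1). For the special-involution condition: by Lemma \ref{lem:affhl} and the definition of special, $\tau$ is special iff $\mathcal{C}(P,\lambda_\tau)\simeq\mathcal{C}(n,r)$; Proposition \ref{prop:clhi} (which is itself a consequence of Lemma \ref{lem:DPkl}) tells us which directions $\tau$ can possibly be special — pairs of simplex vertices when $k=1$ (at most $\tfrac{r(r+1)}{2}$), the three main directions of $\Pi^2$ when $k=2$, and the unique main direction of $\Pi^k$ when $k\geqslant 3$ — and in each case the condition $\mathcal{C}(P,\lambda_\tau)\simeq\mathcal{C}(n,r)$ must additionally be checked. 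Assembling these gives the three enumerated conditions, and the ``at least one'' phrasing in cases (1) and (2) is precisely the requirement that the resulting involution actually exists.

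The final clause, that any vertex of $\mathcal{C}(P,\lambda)$ lies in the $1$-skeleton of $\mathcal{C}(P,\lambda_\tau)\simeq\mathcal{C}(n,r)$, I would deduce directly from Corollary \ref{cor:C01inv}: since $N(P,\lambda)$ is a manifold and $\tau$ is an involution with projection $\widehat{\Pi}\colon\mathbb Z_2^r\to\mathbb Z_2^r/\langle\tau\rangle$, every vertex of $\mathcal{C}(P,\lambda)$ is either a vertex of $\mathcal{C}(P,\lambda_\tau)$ or lies on one of its $1$-faces, hence in all cases lies in $\mathcal{C}^1(P,\lambda_\tau)$. For the converse direction of the theorem (the ``if''), I would simply run Construction \ref{con:shm} backwards: given $I(\lambda)$ of the stated form, choose the decomposition $I(\lambda)=\Delta^{r-k-1}_2*\Pi^k$, read off $c$ as the composition $\widehat{\Pi}\circ\lambda$ for a witnessing special direction $\tau$ (so that $\mathcal{C}(P,c)=\mathcal{C}(P,\lambda_\tau)\simeq\mathcal{C}(n,r)$), define $\chi$ by which preimage point in $\{\boldsymbol{a}_i,\boldsymbol{b}_i\}$ the facet maps to, and invoke Corollary \ref{cor:hypman} with the stated emptiness-of-intersection hypotheses to conclude $N(P,\lambda)=N(P,c,\chi)$ is a manifold, hence a special hyperelliptic manifold. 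The main obstacle I anticipate is purely bookkeeping: making sure the affine-equivalence identifications of $I(\lambda)$ with $\Delta^{r-k-1}_2*\Pi^k$ are canonical enough (Lemmas about affine invariance of the join summands and of the main direction for $r\geqslant 3$ — and the degenerate behavior for $r=2$, where $\Pi^2=\mathbb Z_2^2$ and all three directions are ``main'') so that the count of special involutions and the listed conditions are genuinely well-defined and not artifacts of a choice; the geometry itself is already packaged in the earlier lemmas.
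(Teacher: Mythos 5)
Your proposal is correct and follows essentially the same route as the paper: the author's proof is a one-line assembly of Corollary \ref{cor:hypman}, Propositions \ref{prop:conhi} and \ref{prop:clhi}, and Corollary \ref{cor:C01inv}, and your write-up expands exactly these ingredients (together with Lemma \ref{lem:SPK}, Corollary \ref{cor:a2p} and Lemma \ref{lem:DPkl}, on which those propositions already rest) into the three-case trichotomy on $I(\lambda)$. The only addition is your explicit justification of the bound $1\leqslant r\leqslant n+1$ via the $n+1$ facets of $\Delta^n$, which is a correct gap-fill rather than a deviation.
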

\begin{proof}
The proof follows from Corollary \ref{cor:hypman}, Propositions \ref{prop:conhi} and  \ref{prop:clhi}, and  Corollary \ref{cor:C01inv}.
\end{proof}
We will specify this result for $3$-dimensional polytopes in Section \ref{sec:3hm}.

\begin{corollary}
If $N(P,\lambda)$ is a~special hyperelliptic manifold of rank $r$,  where $1\leqslant r\leqslant n-2$, then 
the~complex $C(P,\lambda)$ has no vertices. 
\end{corollary}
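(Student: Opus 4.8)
The plan is to deduce the corollary from the final assertion of Theorem~\ref{th:shmclass} (equivalently, from Corollary~\ref{cor:C01inv}) together with a one-line dimension count inside the complex $\mathcal{C}(n,r)$. Since $N(P,\lambda)$ is a special hyperelliptic manifold of rank $r$, there is a special involution $\tau\in\mathbb Z_2^r$ with $\mathcal{C}(P,\lambda_\tau)\simeq\mathcal{C}(n,r)$, and every vertex of $\mathcal{C}(P,\lambda)$ lies in the $1$-skeleton $\mathcal{C}^1(P,\lambda_\tau)$. Hence it suffices to prove that $\mathcal{C}^1(n,r)=\varnothing$ whenever $1\le r\le n-2$.

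To see this I would argue directly from the definition of $\mathcal{C}(n,r)$ as the equivalence class of $\mathcal{C}(\Delta^n,c)$ for a coloring $c$ of $\Delta^n$ in $r$ colors: such a complex has \emph{exactly $r$ facets} $G_1,\dots,G_r$ (the color classes), and by construction a $k$-face of it is a connected component of an intersection of $n-k$ distinct facets. Thus a $k$-face can occur only when $n-k\le r$, i.e. only when $k\ge n-r$; in particular $\mathcal{C}(n,r)$ has no faces of dimension $<n-r$. (The same bound also drops out of Proposition~\ref{prop:dnr}: under $\Delta^n\simeq S^n_{r,\geqslant}$ the facets become $S^n_{r,\geqslant}\cap\{x_i=0\}$, and an intersection of $n-k$ of them is $S^{k}_{r-n+k,\geqslant}$, which makes sense only for $n-k\le r$.)

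Combining the two steps, when $r\le n-2$ every face of $\mathcal{C}(n,r)$ has dimension at least $n-r\ge 2$, so it has neither vertices nor edges, i.e. $\mathcal{C}^1(n,r)=\varnothing$; therefore $\mathcal{C}(P,\lambda)$ has no vertices. There is no serious obstacle here: the only point that needs a moment's attention is that the ``facets'' of $\mathcal{C}(P,c)$ are unions of facets of $P$, so that $\mathcal{C}(n,r)$ has $r$ facets rather than $n+1$; once this is used, the dimension count—and hence the corollary—is immediate.
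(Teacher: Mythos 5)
Your proposal is correct and follows essentially the same route as the paper: reduce via the last assertion of Theorem~\ref{th:shmclass} to showing $\mathcal{C}^1(n,r)=\varnothing$, and then observe that since $\mathcal{C}(n,r)$ has only $r$ facets, its faces all have dimension at least $n-r\geqslant 2$ (the paper phrases this as the intersection of all facets being $S^{n-r}$). No gaps.
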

\begin{proof}
If follows from the fact that the $1$-skeleton of the complex $\mathcal{C}(n,r)\simeq \mathcal{C}(P,\lambda_\tau)$ 
is empty for $r\leqslant n-2$, since the intersection of all its facets is $S^{n-r}$, $n-r\geqslant 2$.
\end{proof}
\begin{example}
Example \ref{ex:spnG} produces the following special hyperelliptic manifolds.
Each face $G=F_{i_1}\cap\dots \cap F_{i_k}$ and an epimorphism  
$\chi\colon \{F_1,\dots,F_m\}\setminus\{F_{i_1},\dots, F_{i_k}\}\to\{0,1\}$ correspond to an~affine coloring of~rank $k+1$ 
$$
\lambda_i=\begin{cases}
\boldsymbol{e}_s,&\text{ if }i=i_s, s=1,\dots, k;\\
\boldsymbol{e}_{k+1},&\text{ if }i\notin\{i_1,\dots, i_k\}\text{ and }\chi(F_i)=1;\\
\boldsymbol{0},&\text{ if }i\notin\{i_1,\dots, i_k\}\text{ and }\chi(F_i)=0,
\end{cases}
$$
where $\boldsymbol{e}_1=(1,0,\dots, 0)$, $\dots$, 
$\boldsymbol{e}_{k+1}=(0,\dots,1)\in\mathbb Z_2^{k+1}$.
Then the subgroup $H_{G,\chi}=H(\lambda)$ of rank $m-k-2$ 
is defined in $\mathbb Z_2^m$ by the equations $x_{i_1}=0$, $\dots$, $x_{i_k}=0$,
$x_1+\dots+x_m=0$, and $\sum_{i\colon \chi(F_i)=1}x_i=0$. The space $N(P,\lambda)$ is a special hyperelliptic manifold of
rank $k+1$ with a special involution $\boldsymbol{e}_{k+1}\in\mathbb Z_2^{k+1}$.
\end{example}
\begin{example}\label{ex:cnindep}
If $\lambda$ is an affinely independent coloring of a simple $n$-polytope $P$ and $N(P,\lambda)$
is a special hyperelliptic manifold of rank $r$, then $n-1\leqslant r\leqslant n+1$, and all the vertices
of $P$ belong to the $1$-skeleton of $\mathcal{C}(P,\lambda_{\tau})\simeq \mathcal{C}(n,r)$, which is a subset of 
the graph of $P$. For $r=n-1$, this $1$-skeleton is a single circle without vertices. We have a simple edge-cycle in the graph
of $P$ containing all its vertices. Such cycles are called {\it Hamiltonian}.
For $r=n$ the $1$-skeleton of $\mathcal{C}(P,\lambda_{\tau})\simeq \mathcal{C}(n,r)$ is a 
graph with two vertices and $n$ multiple edges. For $r=n+1$
it is a complete graph $K_{n+1}$.
\end{example}
\begin{example}
For $n=1$ the only small cover over $P=I^1=\Delta^1$ is $N(P,\lambda)=\mathbb RP^1\simeq S^1$, and it is not a special hyperelliptic manifold.

For $n=2$ any orientable small cover $N(P_k,\lambda)$ over a $k$-gon $P_k$ is a special hyperelliptic manifold. In this
case $k$ is even and $\lambda$ corresponds to a coloring of edges of $P_k$ in two colors
such that adjacent edges have different colors.

For $n=3$ special hyperelliptic small covers $N(P,\lambda)$ correspond to Hamiltonian cycles on $P$.
We will see such examples in Sections \ref{Sec:RHS} and \ref{sec:3Ham}. For example, there
is a special hyperelliptic small cover over the dodecahedron with three special involutions, see
Fig. \ref{Hamdod}. It is a classical fact that not any simple
$3$-polytope admits a Hamiltonian cycle (see \cite{T46, G68}).

For $n=4$ if a polytope $P$ admits a special hyperelliptic small cover, then $P$ has
a Hamiltonian cycle $\gamma$ and all the facets of $P$ can be colored in $3$ colors in such a way that
any edge of $\gamma$ is an~intersection of $3$ facets of different colors. 
Moreover, the union of all the facets of each color is a $3$-disk. Since $P$ has at least $5$ facets, 
there are two adjacent facets $F_i$ and $F_j$ of the same color. Then 
no edge of the polygon $F_i\cap F_j$ belongs to $\gamma$, and
at each vertex of this polygon $\gamma$ passes through two complementary edges of $P$. Then 
the colors of~the~facets $F_k\ne F_i, F_j$ containing the successive edges of $F_i\cap F_j$ alter.
Thus, $F_i\cap F_j$ has an even number of edges. Moreover, at each vertex of $P$ there are 
exactly two facets of the same color. Therefore, this vertex lies on exactly one such an even-gon.

\begin{proposition}
If a simple $4$-polytope $P$ admits a special hyperelliptic small cover, then all the vertices of $P$ lie on a disjoint
union of $2$-faces with even numbers of edges.
\end{proposition}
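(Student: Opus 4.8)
The plan is to unwind the combinatorial data that a special hyperelliptic small cover produces and then finish with a purely local analysis at the vertices of $P$. A small cover $N(P,\lambda)$ over a simple $4$-polytope corresponds to an affinely independent coloring $\lambda$ of rank $r=n-1=3$, so there is a special involution $\tau$ with $\mathcal{C}(P,\lambda_\tau)\simeq\mathcal{C}(4,3)$. As in Example~\ref{ex:cnindep}, the $1$-skeleton of $\mathcal{C}(4,3)$ is a single circle without vertices, and by Theorem~\ref{th:shmclass} it visits every vertex of $P$; thus it is a Hamiltonian cycle $\gamma$ in the graph of $P$. Since $\mathcal{C}(P,\lambda_\tau)$ has exactly three facets $G_1,G_2,G_3$, letting $c$ be the induced $3$-coloring of the facets of $P$, an edge $e$ of $P$ lies on $\gamma=G_1\cap G_2\cap G_3$ if and only if the three facets of $P$ through $e$ carry three pairwise distinct colors. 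All subsequent steps use only $(\gamma,c)$.

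Next I would analyze a vertex $v$ of $P$. As $P$ is simple, $v$ lies on exactly four facets and four edges, each edge being the intersection of three of the four facets. With three colors and four facets, some color repeats at $v$; but if some color occurred at least three times, or if two disjoint monochromatic pairs occurred, then every triple of the four facets at $v$ would have a repeated color, so no edge at $v$ would lie on $\gamma$ --- impossible, since $\gamma$ is Hamiltonian. Hence, up to relabeling, the colors at $v$ are $(1,1,2,3)$: there is a unique monochromatic pair $F_i,F_j$ among the facets at $v$. In a simple polytope two facets either miss each other or meet in a ridge, so $Q_v:=F_i\cap F_j$ is a $2$-face of $P$ through $v$ and is the only $2$-face through $v$ that equals an intersection of two equally colored facets. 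In particular two such $2$-faces cannot share a vertex, so the $2$-faces of the form $F_i\cap F_j$ with $c(F_i)=c(F_j)$ are pairwise disjoint and their union contains every vertex of $P$; it remains only to see they are even-gons.

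To show a $2$-face $Q=F_i\cap F_j$ with $c(F_i)=c(F_j)$ has an even number of edges, I would traverse $\partial Q$. Each edge $e_\ell$ of $Q$ lies on $F_i$, $F_j$, and exactly one further facet $F_{k_\ell}$; two of these three share a color, so no edge of $Q$ lies on $\gamma$. At a vertex $w=e_\ell\cap e_{\ell+1}$ of $Q$ the four facets of $P$ are $F_i,F_j,F_{k_\ell},F_{k_{\ell+1}}$ (so $F_{k_\ell}\neq F_{k_{\ell+1}}$), and the two edges of $P$ at $w$ besides $e_\ell,e_{\ell+1}$ are $F_i\cap F_{k_\ell}\cap F_{k_{\ell+1}}$ and $F_j\cap F_{k_\ell}\cap F_{k_{\ell+1}}$; since $\gamma$ runs through $w$ using two of its four edges and cannot use the two edges of $Q$, both of these lie on $\gamma$. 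Applying the coloring criterion to one of them and using $c(F_i)=c(F_j)$ forces $\{c(F_{k_\ell}),c(F_{k_{\ell+1}})\}$ to be exactly the two colors different from $c(F_i)$; in particular $c(F_{k_\ell})\neq c(F_{k_{\ell+1}})$. Hence the colors $c(F_{k_\ell})$ alternate between two fixed values around the cycle $\partial Q$, forcing its length to be even. Together with the previous paragraph this proves the proposition.

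The part that carries the actual content, rather than bookkeeping, is the vertex trichotomy: excluding the color patterns $(1,1,1,\ast)$ and $(1,1,2,2)$ at each vertex via Hamiltonicity of $\gamma$, which is what simultaneously yields ``exactly one monochromatic pair per vertex'' and hence the disjointness of the family of even $2$-faces. The only input I would take from outside the elementary argument is the identification of the $1$-skeleton of $\mathcal{C}(P,\lambda_\tau)\simeq\mathcal{C}(4,3)$ with the set of edges of $P$ having three distinctly colored incident facets, together with the fact that this circle passes through all vertices of $P$; both are recorded in the description of $\mathcal{C}(4,3)$, in Example~\ref{ex:cnindep}, and in Theorem~\ref{th:shmclass}.
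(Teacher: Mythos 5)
Your proof is correct and follows essentially the same route as the paper: translate the special hyperelliptic small cover into a Hamiltonian cycle $\gamma$ together with a $3$-coloring of the facets for which the edges of $\gamma$ are exactly the trichromatic ones, observe that each vertex carries exactly one monochromatic pair of facets (so the monochromatic $2$-faces are disjoint and cover all vertices), and show each such $2$-face is an even-gon because the colors of the third facets along its boundary must alternate. Your vertex trichotomy just makes explicit the step the paper states tersely as ``at each vertex of $P$ there are exactly two facets of the same color.''
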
 
\begin{corollary}
The simplex $\Delta^4$ and the $120$-cell have no special hyperelliptic small covers.
\end{corollary}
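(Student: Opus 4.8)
The plan is to apply the preceding Proposition: a simple $4$-polytope admitting a special hyperelliptic small cover has all its vertices lying on a disjoint union of $2$-faces with an even number of edges. So it suffices to exhibit, for each of $\Delta^4$ and the $120$-cell, a vertex that does \emph{not} lie on any $2$-face with an even number of edges. I will treat the two cases separately.

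First, consider $\Delta^4$. Every proper face of $\Delta^4$ is again a simplex, so every $2$-face is a triangle, i.e.\ a $2$-face with $3$ edges, which is odd. Hence no vertex of $\Delta^4$ lies on a $2$-face with an even number of edges, and a fortiori the vertices cannot be covered by a disjoint union of even-gonal $2$-faces. By the Proposition, $\Delta^4$ has no special hyperelliptic small cover. (Note $\Delta^4$ is simple, being self-dual of dimension $4$; every vertex lies in exactly $4$ facets.)

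Next, consider the $120$-cell. It is a simple $4$-polytope whose $2$-faces are all pentagons (each of its $720$ two-dimensional faces is a regular pentagon), so every $2$-face has $5$ edges, again odd. Therefore no vertex lies on an even-gonal $2$-face, and again by the Proposition the $120$-cell admits no special hyperelliptic small cover. The only point requiring a word of justification is that the $120$-cell is a simple polytope: indeed it is the dual of the $600$-cell, and since the $600$-cell is simplicial (all its facets are tetrahedra and, more to the point, it is a simplicial polytope), its dual is simple; concretely, each vertex of the $120$-cell is contained in exactly $4$ of its dodecahedral facets.

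I do not anticipate any real obstacle: the entire content is that the Proposition reduces the claim to a statement about the combinatorics of $2$-faces, and for both polytopes all $2$-faces are odd-gons (triangles, resp.\ pentagons) — a well-known fact. The only mild care needed is to record that both polytopes are simple so that the Proposition applies; for $\Delta^4$ this is immediate, and for the $120$-cell it follows from duality with the simplicial $600$-cell.
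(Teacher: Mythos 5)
Your proof is correct and matches the paper's (implicit) argument: the corollary is stated as an immediate consequence of the preceding Proposition, precisely because every $2$-face of $\Delta^4$ is a triangle and every $2$-face of the $120$-cell is a pentagon, so neither polytope has any even-gonal $2$-faces at all. Your added remarks on simplicity of the two polytopes are accurate and harmless.
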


Moreover, it can be shown than the products $\Delta^3\times I$, $\Delta^2\times I^2$,  $\Delta^2\times\Delta^2$,
and the cube $I^4$ also admit no special hyperelliptic small covers.

It will be shown in \cite{E24b} that if a $4$-polytope admits a special hyperelliptic small cover, then it 
has a triangular or a quadrangular $2$-face. In particular,  this is impossible for any compact right-angled hyperbolic $4$-polytope.

An example of a four-dimensional hyperelliptic small cover was built by Alexei Koretskii \cite{K24} over a polytope with $9$ facets.
The vertices of this polytope lie on a disjoint union of $6$ quadrangles, and $9$ facets are split into $3$ triples of~the~same color.
\end{example}

\section{A structure of the complex $\mathcal{C}(P,c)$ for $3$-polytopes}\label{sec:CPc3}
\subsection{Basic facts from the graph theory}
\begin{agreement}
In this article by a spherical graph we mean a graph realized on the sphere $S^2$ 
piecewise linearly in some triangulation of $S^2$.
\end{agreement}
For additional details on the graph theory see \cite{BE17I}.
\begin{definition}
A graph is {\it  simple} if it has no loops and multiple edges.

Following \cite{Z95} we call a~connected graph $G$ with at least two edges {\it $2$-connected} if 
it has no loops and a~deletion of any vertex with all incident edges leaves the graph connected.

A connected graph $G$ with at least four edges is called {\it $3$-connected},
if it is simple and a~deletion of any vertex or any two 
vertices with all incident edges leaves the graph connected.

A {\it face} of a spherical graph $G\subset S^2$ is a connected component of the complement
$S^2\setminus G$. A~vertex or an edge of $G$ is {\it incident} to a face if it belongs to its closure. 
By definition a vertex of an edge is incident to it.

Two spherical graphs are called {\it combinatorially equivalent}, if there is a bijection
between the~sets of their vertices, edges and faces preserving the incidence relation.  

A {\it bridge} of a graph $G$ is an edge such that a deletion of this edge makes
the~graph disconnected.
\end{definition}
The proof of the following classical facts can be found in \cite[Lemmas 2.4.1 and 2.4.2]{BE17I} and \cite[Lemma 1.27]{BE17S}.
\begin{lemma}\label{lem:sgc}
A spherical graph $G$ with more than one vertex is connected if and only if any its face is a disk (equivalently, has 
one connected component of the boundary).
\end{lemma}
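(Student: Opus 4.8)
The plan is to prove the two implications separately, viewing $G$ as a compact subset of $S^2$ (it is finite and piecewise linear), so that its faces are the connected components of the open set $S^2\setminus G$; I will treat ``face is a disk'' in the sense made precise by the parenthetical, i.e. ``face has connected boundary'', using the standard facts that a closed disk has connected boundary and that an open connected proper subset of $S^2$ is a disk iff its complement is connected.

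\emph{If $G$ is connected, every face is a disk.} First I would choose a spanning tree $T\subseteq G$. Since $T$ is contractible, $S^2\setminus T$ is an open disk: a regular neighbourhood of $T$ in $S^2$ is a disk, and removing an open disk from $S^2$ leaves a disk, so the complement of $T$ is homeomorphic to an open $2$-ball (equivalently, one builds $T$ by successively attaching pendant edges and applies the Jordan arc theorem to see that $T$ does not separate $S^2$ and leaves a simply connected complement). Then I would add the finitely many edges of $G$ not in $T$ one at a time. At each stage the new edge $e$ is a PL arc or loop whose relative interior is disjoint from the graph built so far, hence lies in a single face $F$, which is an open disk by the inductive hypothesis, and whose endpoint(s) lie on $\partial F$; by the Schoenflies/Jordan arc theorem such a properly embedded arc cuts $F$ into two open disks while leaving all other faces unchanged. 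After all edges are added we recover $G$, and all its faces are open disks, in particular have connected boundary. (The same argument applies verbatim to a connected $G$ with a single vertex, so the hypothesis ``more than one vertex'' serves only to exclude the degenerate empty graph.)

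\emph{If $G$ is disconnected, some face is not a disk.} Let $A$ be one connected component of $G$ and $B=G\setminus A$ the union of the others; both are nonempty and compact, hence $d(A,B)>0$. Applying the first part to the connected graph $A$, every face of $A$ is an open disk, so $S^2\setminus A$ is a disjoint union of open disks; as $B$ is nonempty and compact it meets one of them, say the open disk $D$, and $B\cap D$ is a nonempty compact subset of $D$ disjoint from a collar of $\partial D$. Now $D\setminus B$ is a nonempty open subset of the surface $D$ containing that collar; let $F$ be a connected component of $D\setminus B$ whose closure meets $\partial D$ (such a component exists by a routine local-connectedness argument using that $G$ is PL, that $\partial D\subseteq A$, and that $d(A,B)>0$). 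Then $F$ is a face of $G$ with $\overline F\cap A\neq\varnothing$; and a clopen argument in $D$ shows that every component of $D\setminus B$ has closure meeting $B$, so $\overline F\cap B\neq\varnothing$ as well. Hence $\partial F=\overline F\setminus F$ contains points of the two distinct components $A$ and $B$ of $G$, so $\partial F\subseteq G$ is disconnected and $F$ is not a disk. This is the contrapositive of ``all faces disks $\Rightarrow$ $G$ connected.''

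The main technical point is the reverse direction: producing a single face whose boundary touches two distinct components of $G$. Conceptually this just says ``there is a face lying between two components,'' but making it rigorous needs the point-set care indicated above, where positivity of $d(A,B)$ and piecewise-linearity of $G$ rule out pathologies. A slicker alternative for this direction, which I would mention: if each face is a disk in the strong sense that its closure is the image of a characteristic map $D^2\to S^2$, then $(S^2,G)$ is a CW pair with $G$ as $1$-skeleton, so attaching the $2$-cells does not change $\pi_0$, whence $\pi_0(S^2)=\pi_0(G)$ and $G$ is connected; but I would still give the direct argument, which uses only the weaker implication ``disk $\Rightarrow$ connected boundary.''
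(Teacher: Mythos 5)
The paper does not actually prove this lemma: it is presented as a classical fact and the reader is referred to \cite[Lemmas 2.4.1 and 2.4.2]{BE17I} and \cite[Lemma 1.27]{BE17S}, so there is no in-paper argument to compare yours against. Your proof is the standard one and is correct. The forward direction (complement of a spanning tree is an open disk; attaching the remaining edges one at a time cuts a disk face into two disk faces by a properly embedded arc, the loop/coincident-endpoint case being handled by the same Jordan--Schoenflies argument in the PL category) is exactly the classical induction. The reverse direction is where the content lies, and your separation argument goes through: with $A$ a component of $G$ and $B$ the union of the others, $B\cap\overline D=B\cap D$ is compact and at positive distance from $\partial D\subseteq A$, so in the PL setting (finitely many sectors of $D$ at any point of $\partial D$) a face of $G$ inside $D$ accumulating on $\partial D$ exists; your clopen argument correctly shows every component of $D\setminus B$ also accumulates on $B$, since otherwise it would be clopen in the connected set $D$; and a face whose frontier meets both of the disjoint compacta $A$ and $B$ has disconnected frontier. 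The only property of disks you use, that an open disk in $S^2$ has connected frontier, follows from unicoherence of $S^2$ (and is immediate for PL faces), so your reliance on the parenthetical reformulation is consistent throughout. In short: nothing to object to; the only caveat is that the paper's official proof lives in the cited references, so strictly speaking your argument replaces a citation rather than an argument.
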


\begin{lemma}
A simple spherical graph $G$ with more than one vertex is $3$-connected if
and only if any its face is bounded by a simple cycle and if the boundary cycles of two faces intersect,
then their intersection is~a~vertex or~an~edge.  
\end{lemma}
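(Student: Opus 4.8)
($\Rightarrow$) The plan for this direction is: first, since $G$ is $3$-connected it is $2$-connected, so by the classical fact that in a $2$-connected plane graph every face is bounded by a cycle (here a \emph{simple} cycle, as $G$ is simple) the first assertion holds. For the second assertion I would argue by contradiction via the Jordan curve theorem. Suppose distinct faces $f_1,f_2$ have $\partial f_1\cap\partial f_2\ne\varnothing$ but this intersection is neither a single vertex nor a single edge. As a subgraph of the cycle $\partial f_1$ it is a disjoint union of paths with at least two vertices in total, so one can pick vertices $a\ne b$ in it joined by no edge of $\partial f_1\cap\partial f_2$ (choose them in distinct path components, or at distance $\ge 2$ along a common path). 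Take simple arcs $\gamma_i\subset f_i$ joining $a$ to $b$; since $f_1\cap f_2=\varnothing$ the curve $\gamma=\gamma_1\cup\gamma_2$ is a simple closed curve on $S^2$ meeting $G$ exactly in $\{a,b\}$. Writing $D,D'$ for the two components of $S^2\setminus\gamma$, no edge of $G$ crosses $\gamma$ off $\{a,b\}$, so $G-\{a,b\}=(G\cap D)\sqcup(G\cap D')$. Since near an interior point of $\gamma_1$ both sides lie in $f_1$, and similarly for $\gamma_2$ and $f_2$, each of $D,D'$ meets both $f_1$ and $f_2$, so neither is contained in a single face and hence both meet $G$. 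If $G\cap D$ were contained in one edge of $G$, that edge would be $ab$ (its endpoints lie in $\overline{D}\cap G=\{a,b\}$), and then the chord $\overline{ab}$ would cut $D$ into a piece of $f_1$ and a piece of $f_2$, forcing $ab\in\partial f_1\cap\partial f_2$ against the choice of $a,b$; so $D$, and likewise $D'$, contains a vertex of $G$. Thus $\{a,b\}$ is a $2$-cut, contradicting $3$-connectedness.

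($\Leftarrow$) Here I would establish connectivity, $2$-connectivity and $3$-connectivity in turn, each time by looking at the faces around a hypothetical separator. If $G$ were disconnected, a face incident to two distinct components would have disconnected boundary, which is impossible; so $G$ is connected, and as every vertex lies on a face cycle, $G$ has minimum degree $\ge 2$. If $v$ were a cut vertex, write $G=A\cup B$ with $A\cap B=\{v\}$ and both sides having an edge at $v$; picking rotation-consecutive edges $e_A\in A$, $e_B\in B$ at $v$, the face in that corner has a boundary cycle through $v$ using $e_A$ and $e_B$, which then must leave $A$ for $B$ through the single vertex $v$ a second time --- impossible for a simple cycle. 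So $G$ is $2$-connected. Now suppose $\{a,b\}$ is a $2$-cut (the hypotheses force $G\ne K_3$, hence $|E(G)|\ge 4$, so $3$-connectedness is the relevant notion). Let $K$ be a component of $G-\{a,b\}$, put $G_2=G[V(K)\cup\{a,b\}]$ and let $G_1$ be the rest together with $a,b$, the edge $ab$ (if present) being placed in $G_1$; by the standard structure of $2$-separations in $2$-connected plane graphs we may assume $G_1$ is $2$-connected, and $a,b$ each have an edge into $K$. Then $K$ lies in one face $\phi$ of $G_1$ with $a,b\in\partial\phi$, and $K\cup\{a,b\}$, which meets $\partial\phi$ only in $\{a,b\}$, separates $\phi$; let $g_1,g_2$ be the two faces of $G$ adjacent to the two arcs of $\partial\phi$ between $a$ and $b$. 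They are distinct, and at $a$ and at $b$ the simple cycle $\partial g_i$ uses one edge of $G_1$ and one of $G_2$, so it passes through both $a$ and $b$. Hence $\{a,b\}\subset\partial g_1\cap\partial g_2$, so this intersection is not a single vertex; and it is not the single edge $ab$, since that would make $ab$ bound the two faces $g_1,g_2\subset\phi$ and therefore lie inside the face $\phi$ of $G_1$, absurd as $ab\in G_1$. This contradicts the hypothesis, so $G$ has no $2$-cut and is $3$-connected.

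I expect the main difficulty to be the planar/Jordan-curve bookkeeping needed to certify that these separators are genuine. In ($\Rightarrow$) the delicate point is ruling out the ``lens'' situation where one side of $\gamma$ carries only the edge $ab$ and no vertex --- exactly the configuration prevented by insisting that $a,b$ be joined by no edge of $\partial f_1\cap\partial f_2$, and the reason two triangular faces of $K_4$ sharing an edge are permitted. In ($\Leftarrow$) one must take care that $g_1\ne g_2$ and that both $\partial g_1,\partial g_2$ run through $a$ and $b$, which is why it is convenient to pass to a $2$-connected piece $G_1$ before locating the face $\phi$.
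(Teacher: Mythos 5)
The paper itself gives no proof of this lemma: it is quoted as a classical fact with a pointer to \cite[Lemmas 2.4.1 and 2.4.2]{BE17I} and \cite[Lemma 1.27]{BE17S}, so there is no in-paper argument to compare yours against, and your proposal has to stand on its own. Your forward direction does: the reduction to a Jordan curve $\gamma_1\cup\gamma_2$ meeting $G$ only in $\{a,b\}$, together with the explicit exclusion of the ``lens'' configuration by choosing $a,b$ not joined by an edge of $\partial f_1\cap\partial f_2$, correctly certifies that $\{a,b\}$ is a $2$-cut. The connectivity and $2$-connectivity parts of the converse are also sound.

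The gap is in the $3$-connectivity step of the converse, at the sentence ``by the standard structure of $2$-separations in $2$-connected plane graphs we may assume $G_1$ is $2$-connected.'' This is not a standard fact and is false as stated: for the $5$-cycle with $2$-cut $\{a,b\}$, both choices of the component $K$ leave $G_1$ a path, hence not $2$-connected. (Tutte's decomposition gives $2$-connectivity of the pieces only after adjoining the virtual edge $ab$, which need not be present in $G$ and whose insertion would disturb the embedding and the face $\phi$ you rely on.) Your subsequent argument genuinely needs $\partial\phi$ to be a simple cycle through $a$ and $b$ splitting into two arcs $P_1,P_2$; if $G_1$ has a cut vertex separating $a$ from $b$ in $G_1$, the boundary of $\phi$ is a non-simple closed walk and the construction of the two faces $g_1,g_2$ breaks down. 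The conclusion you are after is still true in these degenerate cases (in the $5$-cycle both faces share the entire cycle as boundary), but your proof does not reach it. To close the hole you must either (i) choose the $2$-cut $\{a,b\}$ and the component $K$ so that $G-V(K)$ is vertex-minimal and show that a cut vertex of $G_1$ would yield a strictly smaller admissible piece, or (ii) rerun the argument with the boundary walk of $\phi$ in place of a boundary cycle, or (iii) derive the $2$-connectivity of $G_1$ from the lemma's hypothesis itself rather than from general $2$-separation theory. As written, this step is a genuine gap, though the surrounding strategy is correct and repairable.
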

To characterize the graphs of $3$-polytopes we will use the following result (see \cite{Z95}).
\begin{theorem}[The Steinitz theorem]\label{th:St}
A simple graph $G$ is a graph of some $3$-polytope if and only if it is planar and $3$-connected.
\end{theorem}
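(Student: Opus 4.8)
The plan is to prove the two implications separately; that a polytope graph is simple, planar and $3$-connected is classical and quick, whereas the converse is Steinitz's realizability theorem and carries all the weight. For necessity, let $G$ be the edge graph of a $3$-polytope $P\subset\R^3$. Two vertices of $P$ span at most one common edge and there are no loops, so $G$ is simple. Radial projection of $\partial P$ from an interior point onto a small surrounding sphere (or a Schlegel diagram: projection of $\partial P$ from a point just beyond a facet $F$ onto $\mathrm{aff}\,F$) carries the $1$-skeleton to an embedded copy of $G$ in $S^2$, so $G$ is planar. Finally $G$ is $3$-connected by Balinski's theorem (the graph of a $d$-polytope is $d$-connected); for $d=3$ one picks a generic affine functional $\varphi$ with equal values at the two prescribed vertices and no other vertex on that level set, and uses the monotone-path lemma to join, within $G$ minus those two vertices, every vertex with $\varphi$ larger (resp.\ smaller) than that level to the face on which $\varphi$ is maximal (resp.\ minimal).

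For sufficiency, let $G$ be simple, planar and $3$-connected. By Whitney's theorem the embedding of $G$ in $S^2$ is combinatorially unique; fix it. By Lemma~\ref{lem:sgc} and the subsequent lemma, each face is a disk bounded by a simple cycle and two boundary cycles meet, if at all, in a shared vertex or edge. Pick one face $F_0$ with boundary cycle $v_1v_2\cdots v_k$ to serve as the outer face, and place $v_1,\dots,v_k$, in this cyclic order, as the vertices of a strictly convex $k$-gon in $\R^2$. For every interior vertex $v$ impose the barycentric relation $p(v)=\frac{1}{\deg v}\sum_{u\sim v}p(u)$; the resulting linear system has a unique solution because the Dirichlet (interior) graph Laplacian is positive definite. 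Tutte's spring theorem then says that $p$ draws the fixed embedding with straight edges and with every bounded face a strictly convex polygon.

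The barycentric relations are exactly an equilibrium self-stress of this planar framework that is positive on every interior edge, so by the Maxwell--Cremona correspondence there is a piecewise-linear function $z\colon\R^2\to\R$, affine on each face, whose slope jump across an interior edge is a positive multiple of the edge's stress; positivity makes $z$ strictly convex (up to reflection) across every interior edge. Put $Q=\operatorname{conv}\{(p(v),z(v)):v\in V(G)\}$, with $z$ chosen affine on $F_0$ and small enough there that $F_0$ lifts to the unique lower facet of $Q$. Then the upper boundary of $Q$ is the strictly convex polyhedral cap assembled from the faces and edges of the embedding, so $Q$ is a $3$-polytope whose face lattice realizes that embedding and whose edge graph is $G$, completing the proof.

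The real work is in Tutte's theorem: proving the spring drawing is non-degenerate --- no two faces overlapping, no edge collapsing --- and has strictly convex faces when $G$ is $3$-connected. The standard argument studies, for each line $\ell\subset\R^2$, the vertices minimizing $\langle p(\cdot),\ell\rangle$ and uses $3$-connectedness to forbid a strict local minimum of this discrete harmonic function at an interior vertex; combined with the bookkeeping in Maxwell--Cremona needed to get \emph{strict} convexity, this is what makes $Q$ a genuine polytope rather than a flat or self-overlapping sheet. Whitney's uniqueness theorem and Balinski's theorem are the other nonelementary inputs. Two alternative routes would also work: Steinitz's original induction, which reduces $G$ to $K_4$ by a sequence of $\Delta$-$Y$ exchanges and realizes each elementary step by truncating a vertex or stacking a pyramid on a facet (after a projective transformation to make room); or the Koebe--Andreev--Thurston circle packing of $G$ together with its dual, which produces a midscribed realization outright.
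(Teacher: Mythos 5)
The paper does not prove this statement: it is quoted as a classical theorem with a pointer to Ziegler's book, so there is no in-paper argument to compare against. Your necessity direction (simplicity, planarity via a Schlegel diagram or radial projection, $3$-connectivity via Balinski) is correct and standard, and your sufficiency direction follows the well-known Tutte--Maxwell--Cremona route, which is a legitimate modern proof of Steinitz's theorem. As written, though, it is an outline that delegates all the substance to Tutte's spring theorem and the Maxwell--Cremona correspondence, neither of which you prove.

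There is one genuine gap in the lifting step. You fix an \emph{arbitrary} face $F_0$ as the outer face and assert that the barycentric relations constitute an equilibrium self-stress to which Maxwell--Cremona applies. But the Tutte relations give equilibrium only at the \emph{interior} vertices; the $k$ pinned boundary vertices carry nonzero reaction forces. To invoke Maxwell--Cremona you must extend the stress to the $k$ boundary edges so that equilibrium holds at every vertex. At each boundary vertex this is a two-dimensional vector equation in which only the two incident boundary-edge stresses are free, so one gets $2k$ scalar equations in $k$ unknowns; the global force and torque balance kill only $3$ of them, and the system is solvable in general only when $k=3$. This is exactly why the standard proof (Richter-Gebert) first reduces to the case where the outer face is a triangle: by Euler's formula a $3$-connected planar graph $G$ has either a triangular face or a degree-$3$ vertex (a triangular face of the dual $G^*$), and one realizes whichever of $G$, $G^*$ admits a triangular outer face and then passes to the polar polytope. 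Without this reduction (or some substitute), the step from the Tutte drawing to the convex lift does not go through for a general choice of $F_0$.
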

Moreover, by H.~Whitney's theorem (see \cite{Z95}) any two spherical realizations of the graph of a $3$-polytope
are combinatorially equivalent.  
\begin{corollary}
A connected  simple spherical graph with more than one vertex is combinatorially equivalent to a graph of
a $3$-polytope if and only if any its face is bounded by a simple cycle and if the~boundary cycles of two faces intersect,
then their intersection is~a~vertex or~an~edge.  
\end{corollary}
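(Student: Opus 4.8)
The plan is to obtain the corollary as a direct synthesis of the Steinitz theorem (Theorem~\ref{th:St}), Whitney's uniqueness theorem, and the $3$-connectedness criterion for simple spherical graphs stated immediately above. The point is that, for a simple spherical graph with more than one vertex, the condition ``every face is bounded by a simple cycle, and the boundary cycles of two faces meet in a vertex or an edge whenever they intersect'' is, by that criterion, just a restatement of $3$-connectedness, while planarity is free because we start from a graph realized on $S^2$.

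For the forward implication, assume $G$ is combinatorially equivalent to the graph of a $3$-polytope $P$. By the Steinitz theorem the abstract graph of $P$ is $3$-connected, hence so is $G$; being also a simple spherical graph with more than one vertex, $G$ satisfies the two face conditions by the cited lemma. One should only remark that a combinatorial equivalence carries the face structure across, so these really are statements about the given spherical realization of $G$ and not merely about its abstract isomorphism type.

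For the converse, assume $G$ satisfies the face conditions. By the lemma $G$ is $3$-connected, and it is planar since it lives on $S^2$; the Steinitz theorem then produces a $3$-polytope $P$ whose graph is abstractly isomorphic to $G$. To promote this to a combinatorial equivalence of spherical graphs one invokes Whitney's theorem: the given spherical realization of $G$ and the boundary complex of $P$ are two spherical realizations of the same $3$-polytopal graph, hence combinatorially equivalent. Thus $G$ is combinatorially equivalent to the graph of $P$.

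The only delicate point --- and the closest thing to an obstacle --- is the bookkeeping around the notion of combinatorial equivalence: one must check that the face conditions depend only on the spherical graph (so that they transfer under combinatorial equivalence), and that the passage through the Steinitz theorem, which a priori yields only an abstract graph isomorphism, is legitimately upgraded to the level of spherical realizations by Whitney's statement. Both checks are immediate once spelled out, so the corollary follows.
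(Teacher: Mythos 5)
Your proof is correct and follows exactly the route the paper intends: the corollary is stated without proof immediately after the lemma characterizing $3$-connectedness of simple spherical graphs, the Steinitz theorem, and the remark on Whitney's theorem, and is meant to be their direct combination. Your explicit attention to upgrading the abstract isomorphism from Steinitz to a combinatorial equivalence of spherical realizations via Whitney is precisely the (implicit) content of the paper's derivation.
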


\begin{lemma}\label{lem:2con}
For a  connected $3$-valent spherical graph $G$ the following
conditions are equivalent:
\begin{enumerate}
\item $G$ is $2$-connected (in particular, it has no loops);
\item $G$ has no bridges;
\item any face of $G$ is a disk bounded by a~simple edge-cycle.
\end{enumerate}
\end{lemma}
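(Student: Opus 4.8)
The plan is to prove the cyclic chain $(1)\Rightarrow(2)\Rightarrow(3)\Rightarrow(1)$. Throughout I would use that a $3$-valent graph has $3|V|=2|E|$, so $|V|$ is even; hence $|V|\geqslant 2$ and $|E|\geqslant 3$, which means $G$ has ``more than one vertex'' and ``at least two edges'', so in particular Lemma \ref{lem:sgc} is available. I would also record once and for all that in a connected $3$-valent graph a loop at a vertex $x$ occupies two of the three half-edges at $x$, so the third edge at $x$ is the only edge joining $x$ to the rest of the graph, hence a bridge; thus ``no loops'' is a consequence of ``no bridges'' and need not be tracked separately.

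For $(1)\Rightarrow(2)$, assuming $G$ is $2$-connected I would suppose it had a bridge $e=xy$ and derive a contradiction: since there is no loop and no parallel edge $xy$, the two edges at $x$ other than $e$ lead into the component of $G-e$ containing $x$, which therefore contains a vertex besides $x$; deleting $x$ with its incident edges then separates that vertex from $y$ (the only edge between the two sides of $e$ was $e$ itself), contradicting $2$-connectedness. For $(2)\Rightarrow(3)$, given that $G$ has no bridges (hence no loops), Lemma \ref{lem:sgc} already gives that every face is a disk, so it remains to see that the boundary walk of a face $F$ is a simple cycle. Here I would invoke the standard fact for connected spherical graphs that an edge is traversed twice by the boundary walk of a face (equivalently, is incident to the same face on both of its sides) precisely when it is a bridge --- which I would cite from \cite{BE17I} rather than reprove --- so no edge repeats along $\partial F$; and at a $3$-valent vertex the three incident edges in cyclic order give three corners, any two of which share an edge, so a vertex appearing twice on $\partial F$ would again produce a bridge. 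Thus $\partial F$ is a simple edge-cycle.

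For $(3)\Rightarrow(1)$ I would first note that if $G$ had a bridge, the unique face meeting it on both sides would traverse that edge twice, contradicting the simple-cycle conclusion of $(3)$; so $G$ has no bridges and no loops, while $|E|\geqslant 3$ and $|V|\geqslant 2$. To finish, suppose deleting some vertex $x$ with its incident edges disconnects $G$; since $G$ is connected, every component of $G-x$ contains a neighbour of $x$, and since $x$ has only three incident edges some component $C$ is joined to $x$ by a single edge $e=xw$. Then any $x$--$w$ path in $G-e$ would have to leave $x$ into a component different from $C$ and could never re-enter $C$ except through $x$, so $e$ is a bridge --- a contradiction. This closes the cycle.

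I expect the only non-routine ingredient to be the topological equivalence ``an edge is a bridge if and only if it bounds a single face'' for connected graphs on $S^2$; everything else is local bookkeeping at $3$-valent vertices and elementary connectivity arguments. I would lean on the cited graph-theory references for that equivalence, observing that it is the natural companion of Lemma \ref{lem:sgc} and follows from the Jordan curve theorem applied to a cycle through any non-bridge edge.
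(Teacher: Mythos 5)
Your proposal is correct and follows essentially the same route as the paper: the same use of Lemma \ref{lem:sgc} for faces being disks, the same equivalence ``bridge $\Leftrightarrow$ same face on both sides'' to rule out repeated edges on a boundary walk, the same $3$-valence observation that a repeated vertex forces a repeated edge, and the same pigeonhole on the three edges at a deleted cut-vertex. The only cosmetic difference is that you organize the implications as a cycle $(1)\Rightarrow(2)\Rightarrow(3)\Rightarrow(1)$ while the paper proves the two biconditionals $(1)\Leftrightarrow(2)$ and $(2)\Leftrightarrow(3)$ separately.
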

\begin{proof}
If $G$ is $2$-connected, then it has no bridges, since the deletion of any 
vertex of a bridge disconnects the graph. If $G$ has no bridges, then it has no loops since
the vertex of a loop necessarily belongs to a bridge.  Also $G$ has at least $3$ edges, since it is $3$-valent.
If~a~deletion of~a~vertex and incident edges makes the graph disconnected, then at least one edge in this vertex
is a bridge. A contradiction. Thus, $G$ is $2$-connected and items (1) and (2) are equivalent.

If each face of $G$ is a disk bounded by a~simple edge-cycle, then  $G$ has no bridges since 
a~bridge has the~same face on both sides and the boundary cycle of this face is not simple. 
Let $G$ have no bridges. Since $G$ is connected, each its face is a disk. If 
a~boundary cycle passes a~vertex more than once, then it passes an edge more than once since $G$ is $3$-valent.
Then this edge has the same face on both sides. Hence, it is a bridge, which is a contradiction.  Thus, items (2) and (3) are equivalent. 
\end{proof}
\begin{lemma}\label{lem:even}
Any $3$-valent graph $G$ has an even number of vertices. 
\end{lemma}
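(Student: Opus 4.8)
The plan is to apply the classical handshake lemma. Let $G$ have $v$ vertices and $e$ edges, and count edge-endpoints in two ways. Summing the valences over all vertices gives exactly $3v$, since every vertex is $3$-valent by hypothesis. On the other hand, each edge has precisely two endpoints (a loop being counted with multiplicity two at its vertex, so that it contributes $2$ to the valence of that vertex), hence the same quantity equals $2e$.

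Equating the two expressions yields $3v = 2e$. The right-hand side is even, and $3$ is odd, so $v$ must be even, which is the assertion. There is no real obstacle here; the only point worth noting is that the double count of edge-endpoints is valid whether or not $G$ is simple, so the conclusion holds for every $3$-valent graph, loops and multiple edges included — and in particular for the $3$-valent spherical graphs appearing in the subsequent sections.
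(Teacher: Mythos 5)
Your proof is correct and coincides with the paper's: both count edge--vertex incidences (half-edges) in two ways to get $3V=2E$ and conclude that $V$ is even. Your remark that the count is valid for loops and multiple edges is a welcome clarification but does not change the argument.
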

\begin{proof}
Indeed, if we cut each edge in two parts, then each vertex is incident to three such parts, hence 
$3V=2E$, where $V$ and  $E$ are numbers of vertices and edges. In particular, $V$ is even. 
\end{proof}

\subsection{A characterization of complexes $\mathcal{C}(P,c)$ of $3$-polytopes}
In dimension $n=3$ each facet of the complex $\mathcal{C}(P,c)$ with a non-constant mapping $c$
is a sphere with holes. Its boundary consists of $1$-faces and $0$-faces, which we call vertices. 
Each $1$-face belongs to two different facets and each vertex -- to three different facets and three different $1$-faces. 
Each $1$-face is~either the~whole circle without vertices, 
or a simple path connecting two different vertices. 
\begin{definition}
We call $1$-faces of $\mathcal{C}(P,c)$ containing no vertices {\it circles}, 
and $1$-faces connecting two vertices {\it edges}. 
\end{definition}

Consider the $1$-skeleton $\mathcal{C}^1(P,c)$, which is the union of all vertices and $1$-faces. Each its connected component
is either a circle without vertices or a connected $3$-valent spherical graph without loops and bridges. 
Indeed, a bridge should have the same facet  on both sides,
hence it can not be an intersection of two different facets.  
\begin{theorem}\label{th:CPc3}
Complexes $\mathcal{C}(P,c)$ corresponding to $3$-polytopes $P$ are exactly subdivisions 
of~the~$2$-sphere arising from disjoint unions (perhaps empty) of simple closed curves and 
connected $3$-valent graphs without bridges.  
\end{theorem}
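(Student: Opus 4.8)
The plan is to prove both inclusions. First I would verify that every complex $\mathcal{C}(P,c)$ arising from a $3$-polytope is a subdivision of $S^2$ of the stated type. Since $P$ is simple, $\partial P\cong S^2$, and by Lemma \ref{lem:Mc} the complex $\mathcal{C}(P,c)$ is a subdivision of this $2$-sphere whose cells are the facets $G_i$ (spheres with holes), the $1$-faces, and the vertices. As noted just before the statement, each connected component of the $1$-skeleton $\mathcal{C}^1(P,c)$ is either a circle without vertices (when a $1$-face closes up, bounding two facets) or a connected graph in which every vertex is $3$-valent (a vertex of $\mathcal{C}(P,c)$ lies in exactly three facets and three $1$-faces). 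It remains to observe that such a graph has no bridges: a bridge would have the same facet on both sides, contradicting that each $1$-face is the intersection of two \emph{different} facets of the complex. Thus $\mathcal{C}(P,c)$ has the claimed form. Here I should also remark that the pieces are genuinely disjoint, i.e. distinct components of the $1$-skeleton do not meet, which is immediate since a shared point would be a vertex of valence $>3$ or would force a circle to pass through a vertex.

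For the converse, suppose we are given a subdivision $\mathcal{D}$ of $S^2$ whose $1$-skeleton is a disjoint union $\Gamma$ of finitely many simple closed curves and finitely many connected bridgeless $3$-valent graphs; I must produce a simple $3$-polytope $P$ and a coloring $c$ with $\mathcal{C}(P,c)\simeq\mathcal{D}$. The idea is to \emph{refine} $\mathcal{D}$ to the boundary complex of a simple polytope. By Lemma \ref{lem:2con}, each $3$-valent bridgeless component of $\Gamma$ cuts out of its region of $S^2$ a collection of disk faces bounded by simple cycles; likewise each simple closed curve in $\Gamma$ separates its complementary region(s). In general the faces of $\mathcal{D}$ need not be disks and $\Gamma$ need not be $3$-connected, so $\mathcal{D}$ itself is typically not polytopal. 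I would therefore subdivide each $2$-cell of $\mathcal{D}$ interior-to-interior by adding new vertices and edges (none of which touch $\Gamma$) so that the resulting graph $\widehat{\Gamma}\supset\Gamma$ is $3$-valent, simple and $3$-connected; by the Steinitz theorem (Theorem \ref{th:St}) $\widehat{\Gamma}$ is the graph of a simple $3$-polytope $P$. Finally, color the facets of $P$: two facets of $P$ get the same color exactly when they lie in the closure of the same $2$-cell of $\mathcal{D}$ (equivalently, are connected through edges of $\widehat{\Gamma}\setminus\Gamma$). Then by construction the unions of equally-colored facets of $P$ are precisely the regions of $\mathcal{D}$, their connected intersections are the original edges of $\Gamma$, and $\mathcal{C}(P,c)\simeq\mathcal{D}$.

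The main obstacle is the explicit refinement step: given an arbitrary face of $\mathcal{D}$ (a sphere-with-holes, possibly an annulus, possibly with $\Gamma$-boundary consisting of circles with no vertices on them), one must triangulate or otherwise subdivide its interior so that (i) the only vertices on $\Gamma$ remain the original vertices of $\mathcal{D}$ and they stay $3$-valent in $\widehat\Gamma$, (ii) every new vertex is $3$-valent, (iii) the global graph becomes $3$-connected and simple, and (iv) no new edge merges two originally distinct faces of $\mathcal{D}$ or splits off a piece of $\Gamma$. A clean way to handle (i)--(iv) uniformly is to first take a fine enough subdivision of each face, then repeatedly perform local moves (truncating a vertex of valence $>3$, or contracting to kill multiplicities and separating pairs) that never touch $\Gamma$; one checks that finitely many such moves produce a simple $3$-connected $3$-valent graph refining $\mathcal{D}$. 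I expect the bookkeeping that $\Gamma$ survives untouched, together with verifying $3$-connectivity of $\widehat\Gamma$, to be the technically heaviest part; the coloring and the final equivalence $\mathcal{C}(P,c)\simeq\mathcal{D}$ are then formal, using Lemma \ref{lem:Mc} to identify the cells.
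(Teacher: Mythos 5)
Your forward direction is correct and is essentially the paper's own argument: a bridge of $\mathcal{C}^1(P,c)$ would have the same facet of the complex on both sides, which is impossible for a $1$-face, i.e.\ an intersection of two \emph{different} facets; and the components are circles or connected $3$-valent graphs because every vertex lies in exactly three facets.

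The converse is where all the content of the theorem sits, and there your proposal has a genuine gap. The overall strategy (refine the given subdivision to the graph of a simple $3$-polytope, invoke Steinitz, and recover $c$ by coloring facets of $P$ according to which original $2$-cell they lie in) is the same as the paper's, but the refinement step is both left unexecuted and, as constrained, impossible. You insist that the new edges be added ``interior-to-interior'', touching neither $\Gamma$ nor its original vertices, which must remain $3$-valent. Under these constraints no new edge is ever incident to a point of $\Gamma$, so $\widehat\Gamma$ is a disjoint union of $\Gamma$ and the added graphs; it is disconnected, hence never $3$-connected, and a face of $\mathcal{D}$ that is an annulus (or a sphere with more holes, or a disk bounded by a vertexless circle) can never be cut into disks this way. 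The paper's construction unavoidably introduces \emph{new} vertices in the relative interiors of edges and circles of $\Gamma$ (each is $2$-valent there and becomes $3$-valent after one new edge is attached): arcs joining points of the same boundary component cut a sphere-with-holes into rings, three arcs per ring join its two boundary circles, an extra edge kills each $2$-gon, and a final ``cutting off'' move replaces all but one common edge of two facets by quadrangles to force $3$-connectivity, with an explicit decreasing count guaranteeing termination. Your substitute --- ``fine enough subdivision'' followed by unspecified local moves, with the claim that ``one checks'' they terminate in a simple $3$-connected $3$-valent graph --- is precisely the part that needs proof, and none of the delicate cases (vertexless circles, annular faces, multiple common edges, verifying simplicity of the resulting graph) is addressed. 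As written this is a correct plan with the central construction missing and one of its stated requirements contradictory.
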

\begin{proof}
We have already proof the theorem in one direction. Consider the other direction. By~Lemma \ref{lem:2con}
each connected $3$-valent spherical graph without bridges has no loops.
We~will call by ``facets'' the connected components of~the~complement 
in $S^2$ to a disjoint union of~simple closed curves and connected $3$-valent graphs without bridges, and
by ``circles'' simple closed curves from the union. 

The empty union corresponds to a constant function $c$ on any polytope. Now let us assume that the union is non-empty.

Consider a facet $C$ and a component $\gamma$ of $\partial C$ that is not a circle.
There is a vertex on $\gamma$. This vertex belongs to three different edges and to closures of three different facets,
for otherwise some of the edges is a bridge. Two of these edges belong to $\gamma$ and 
the third edge does not belong. Then $\gamma$ is a simple edge-cycle, since it passes each vertex at most 
once. Also $C$ is~a~sphere with holes bounded by such simple edge-cycles  and circles from the union. Each 
edge or~circle belongs to~the~closures of exactly two different facets, and each vertex -- to the closures of three different facets. 

Now we will add edges to this data to obtain a $1$-skeleton
of some simple $3$-polytope. Each edge will have two new different $3$-valent vertices and 
will divide a facet into two new different facets. 
If a facet $C$ is not a disk, we can first add edges connecting points on the same
boundary component to subdivide $C$ into rings, and then for each ring add three edges connecting different boundary
components to subdivide it into three ``quadrangles'' (see Fig. \ref{S2holes}a). 
\begin{figure}[h]
\begin{center}
\includegraphics[width=0.6\textwidth]{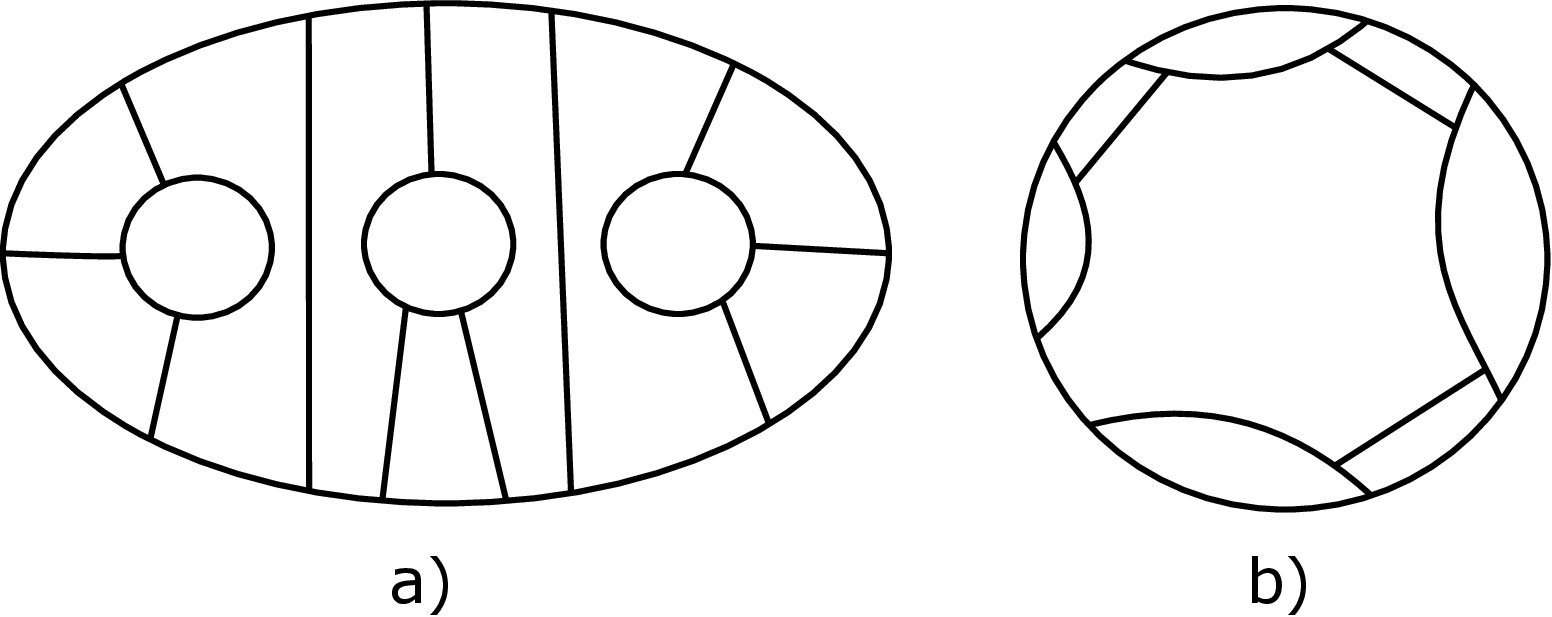}
\end{center}
\caption{a) A subdivision of a sphere with holes; b) Cutting off the common edges}\label{S2holes}
\end{figure}

After this procedure we obtain a new subdivision of a $S^2$ with $3$-valent
vertices and each facet being a disk bounded by a simple edge-cycle or a circle without vertices. 
In the latter case $\mathcal{C}(P,c)$ consists of two disks glued along the common boundary circle. 
We can add two edges to these disks to obtain the boundary complex of a simplex. 
Thus, we can assume that each facet has at least one vertex on the boundary.  Then there are at least two vertices, 
for otherwise the adjacent facet is not bounded by a simple cycle. If there are exactly 
two vertices, we add an edge separating the $2$-gon into two triangles. Repeating this step for all $2$-gons,
we obtain a~$3$-valent partition of $S^2$ into polygons with at list $3$-edges. The graph defining this partition is simple. 
Indeed, there are no loops by construction. 
If two edges have the same vertices,  then they form a simple closed curve dividing the sphere into two disks. 
The third edges at both vertices should lie in the same disk, for
otherwise there arise two equal facets in both vertices. Thus, two multiple edges bound a $2$-gon. A contradiction. 

At the end of this step we obtain a simple spherical graph with each facet bounded by a~simple cycle with at least $3$ edges.
Now we will add edges to this partition to obtain another $3$-valent partition such that
each facet is bounded by a simple edge-cycle with at least $3$ edges 
and the closures of two different facets have at most one edge in common. The last condition
is equivalent to the condition that all the edges of any facet belong to different facets surrounding it.  
The graph of the new partition is $3$-connected and by the Steinitz theorem it 
corresponds to a boundary of a simple $3$-polytope $P$. Then the original complex 
is obtained from $P$ by a sequence of operations of a deletion of an edge and has the form $\mathcal{C}(P,c)$,
where  $c(F_i)=c(F_j)$ if and only if the facets of $P$ belong to the same facet in the initial partition.

Now let us describe the last step.  If  the closure of a facet has with the closure of another facet more than one common edge, 
then their intersection consists of a disjoint set of edges lying on the boundary of each facet.  
We can ``cut off'' all but one these edges. Namely,
for each edge we add inside the first facet an edge with vertices on its boundary close to
the vertices of the chosen edge outside it.  As a result the edge is substituted by a quadrangle 
adjacent to $4$ different facets (see Fig. \ref{S2holes}b).  
Repeating this procedure we will obtain a new partition of the sphere such that
all the edges of the chosen facet belong to different facets and all the arising quadrangles also satisfy this condition.
Applying this argument to all the facets one by one we see that at each step there arise no new ``bad'' facets, and their  total 
number is decreasing~by~one. 
\end{proof}

\section{A criterion when $N(P,\Lambda)$ is a sphere for $3$-polytopes}\label{Sec:3sph}
In this section we will give a criterion when a manifold $N(P,\Lambda)$ corresponding to a vector-coloring $\Lambda$ of rank 
$r+1$ of a simple $3$-polytope is homeomorphic to a sphere $S^3$. Since $N(P,\Lambda)$ should be closed and orientable,
it has the form $N(P,\lambda)$ for an affine coloring  $\lambda$ of rank $r$. Thus we will consider only affine colorings.

Following \cite{VM99S1} we call a $3$-valent graph consisting of $2$ vertices and three multiple edges connecting them
a {\it theta-graph}. By $K_n$ we denote a complete graph on $n$-vertices. 

\begin{theorem}\label{th:NP3sp}
Let $\lambda$ be an~affine coloring  of rank $r$ of a~simple $3$-polytope $P$. The space $N(P,\lambda)$ is 
homeomorphic to $S^3$ if and only if $\mathcal{C}(P,\lambda)$ is equivalent to $\mathcal{C}(3,r+1)$. In other words, 
if and only if one of the~following conditions holds: 
\begin{enumerate}
\item $r=0$ and $\mathcal{C}^1(P,\lambda)$ is empty;
\item $r=1$ and  $\mathcal{C}^1(P,\lambda)$ is a circle;
\item $r=2$ and $\mathcal{C}^1(P,\lambda)$ is a theta-graph;
\item $r=3$ and $\mathcal{C}^1(P,\lambda)$ is the complete graph $K_4$.
\end{enumerate}
In all these cases the image of $\lambda$ is a boolean $(r+1)$-simplex.
\end{theorem}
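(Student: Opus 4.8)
The plan is to prove the two implications separately, the converse being the substantial one. For the ``if'' direction, if $\mathcal{C}(P,\lambda)\simeq\mathcal{C}(3,r+1)$ then Construction \ref{con:spn} already gives $N(P,\lambda)\simeq S^3$, and the $r+1$ facets of $\mathcal{C}(P,\lambda)$ carry $\lambda$-values which affinely span $\mathbb{Z}_2^r$, hence form an affine basis, i.e.\ a boolean simplex on $r+1$ vertices. For the explicit list, $\mathcal{C}(3,r+1)$ is $\mathcal{C}(\Delta^3,c)$ for a coloring $c$ of the four facets of $\Delta^3$ in $r+1$ colors, which is possible only for $r+1\leqslant 4$; colouring the tetrahedron directly (the color classes having sizes $4$; $3,1$ or $2,2$; $2,1,1$; $1,1,1,1$) yields $\mathcal{C}^1(3,1)=\varnothing$, $\mathcal{C}^1(3,2)$ a circle, $\mathcal{C}^1(3,3)$ a theta-graph, $\mathcal{C}^1(3,4)=K_4$.

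For the converse, assume $N:=N(P,\lambda)\simeq S^3$. By Corollary \ref{cor:manor} it is a closed orientable manifold, so by the corollary to Theorem \ref{th:ZHM} the coloring induced by $\lambda$ on $\mathcal{C}:=\mathcal{C}(P,\lambda)$ is affinely independent and, by Theorem \ref{th:CPc3}, $\mathcal{C}^1$ is a disjoint union of simple closed curves and connected bridgeless $3$-valent graphs. I would argue by induction on $r$. For $r=0$ a rank-$0$ affine coloring is constant, $N$ is the double $P\cup_{\partial P}P\simeq S^3$, and $\mathcal{C}\simeq\mathcal{C}(3,1)$. For $r\geqslant 1$ the coloring is non-constant, so $\mathcal{C}$ has a $1$-face; choose two facets $G_i,G_j$ of $\mathcal{C}$ adjacent along one and put $\tau:=\lambda_i+\lambda_j\ne\boldsymbol{0}$ in $\mathbb{Z}_2^r=H_0'$. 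Comparing stabilizers (cf.\ the discussion preceding Corollary \ref{cor:C01inv}), $\tau$ acts on $N$ with nonempty fixed set, lying exactly over the ``$\tau$-$1$-faces'' of $\mathcal{C}$---those separating two facets whose $\lambda$-colors differ by $\tau$. By the solution of the Smith conjecture for $S^3$, $\mathrm{Fix}(\tau)$ is an unknotted circle and $N/\langle\tau\rangle=N(P,\lambda_\tau)\simeq S^3$, where $\lambda_\tau=\widehat{\Pi}\circ\lambda$ (Lemma \ref{lem:affhl}) has rank $r-1$; by the inductive hypothesis $\mathcal{C}(P,\lambda_\tau)\simeq\mathcal{C}(3,r)$.

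It then remains to reconstruct $\mathcal{C}(P,\lambda)$ from $\mathcal{C}(P,\lambda_\tau)$: passing to $\lambda_\tau$ merges exactly the adjacent $\tau$-related pairs of facets and erases exactly the $\tau$-$1$-faces. Since at most one $\tau$-pair can meet at a vertex of $\mathcal{C}$ (two would force two colors to coincide), the union $\Sigma$ of the $\tau$-$1$-faces is a disjoint union of circles and closed arcs; as the continuous image of the connected circle $\mathrm{Fix}(\tau)$ it is connected, hence a single circle or a single closed arc. Fitting $\Sigma$ onto $\mathcal{C}^1(P,\lambda_\tau)\simeq\mathcal{C}^1(3,r)$ then forces, for $r=1$ (with $\mathcal{C}^1_\tau$ empty) a single circle, for $r=2$ a circle with a single chord (a theta-graph), for $r=3$ the graph $K_4$, and it caps the facet count at $r+1$. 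The degenerate possibility that $\Sigma$ is a circle disjoint from $\mathcal{C}^1(P,\lambda_\tau)$ (an inessential ``parallel'' circle) is excluded via the elementary rank-$1$ analysis: a rank-$1$ $N(P,\lambda)$ is the double cover of $S^3=N(P,\lambda_\tau)$ branched over the $c$-component unlink $\mathcal{C}^1(P,\lambda)\subset\partial P\subset S^3$, hence $\#_{c-1}(S^1\times S^2)$, which is $S^3$ only if $c=1$; concretely the inessential circle yields a freely acting involution $\tau'$, so $S^3$ would doubly cover the rank-$1$ space $N(P,\lambda_{\tau'})=\#_{c-1}(S^1\times S^2)$, forcing its (free) fundamental group to have order $2$---impossible.

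The main obstacle is exactly this reconstruction: checking at every level that the added arc or circle $\Sigma$ meets $\mathcal{C}^1(P,\lambda_\tau)$ precisely so as to yield $\mathcal{C}^1(3,r+1)$---neither disconnecting the $1$-skeleton nor creating extra facets (which is in particular how $r\geqslant 4$ is ruled out)---by combining the covering geometry of $\mathrm{Fix}(\tau)\to\Sigma$ (fiber size $2^{r-1}$ over relative interiors of $\tau$-$1$-faces) with the structural description of Theorem \ref{th:CPc3} and the rank-$1$ classification. The only non-elementary input is the Smith conjecture; alternatively one can bypass the induction altogether by using that every finite group action on $S^3$ is conjugate to a linear one: the generators $\Lambda_i$ of the effective $\mathbb{Z}_2^{r+1}$-action act as hyperplane reflections of $N\simeq S^3$ (over each facet $G_i$ the orbit map is a reflection), hence after diagonalizing they are coordinate reflections of $S^3\subset\mathbb{R}^4$, so $\mathbb{Z}_2^{r+1}$ is a coordinate subgroup of $\{\pm1\}^4$ with $r+1\leqslant 4$ and $P=S^3/\mathbb{Z}_2^{r+1}=S^3_{r+1,\geqslant}$, whose facet complex is $\mathcal{C}(3,r+1)$ by Proposition \ref{prop:dnr}.
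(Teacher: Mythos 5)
Your ``if'' direction and the identification of the four skeleta agree with the paper. For the converse you propose a genuinely different route (induction on $r$ via the Smith conjecture, or linearization of the whole $\mathbb Z_2^{r+1}$-action), whereas the paper never touches equivariant $3$-manifold topology: it shows that any deviation from $\mathcal C(3,r+1)$ forces either an $S^2\times S^1$ summand in the Kneser--Milnor decomposition (Lemmas \ref{ml1} and \ref{lem:NPL2e}, obtained by cutting $\mathcal C(P,\lambda)$ along a curve in a non-disk facet or between two common edges of two facets), or reduces $N(P,\lambda)$ to $\mathbb R\mathcal Z_Q$ for a simple $3$-polytope $Q$, which is then killed by the $3$-belt connected-sum formula or by asphericity of $\mathbb R\mathcal Z_Q$ for flag $Q$.

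The problem is that the step you yourself call ``the main obstacle'' --- reconstructing $\mathcal C(P,\lambda)$ from $\mathcal C(P,\lambda_\tau)\simeq\mathcal C(3,r)$ --- is not carried out, and the criteria you offer for it (``neither disconnecting the $1$-skeleton nor creating extra facets,'' plus connectivity of $\Sigma$) are not sufficient. Two concrete configurations slip through. First, for $r=3$ the single arc $\Sigma$ may have both endpoints in the interior of the \emph{same} edge of the theta-graph $\mathcal C^1(P,\lambda_\tau)$; this creates no extra facets and keeps the $1$-skeleton connected, yet the resulting complex is not $K_4$ (two facets meet along two edges), and excluding it needs either Corollary \ref{mcor2} or a further Smith-type argument applied to a cleverly chosen second involution --- neither of which you supply. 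Second, your claim that $r\geqslant 4$ is ruled out because extra facets appear is not an argument: splitting one triangle of $K_4$ by an arc joining two different edges yields the boundary complex of the triangular prism with five affinely independent colors, in which every facet is a disk and any two adjacent facets share exactly one edge; here every $\Sigma_{\tau''}$ is a single edge (so Smith connectivity gives no contradiction), and one genuinely needs the global input $N(P,\lambda)=\mathbb R\mathcal Z_{\Delta^2\times I}=S^2\times S^1$ (or the Suciu--Trevisan/Choi--Park computation of $H^1$), which is absent from your outline. Finally, both of your topological inputs (Smith conjecture; linearizability of finite group actions on $S^3$) are false for merely topological actions (Bing's wild involutions), so you would also have to verify that the residual $\mathbb Z_2^{r+1}$-action on the quotient manifold $N(P,\lambda)$ is locally linear/smoothable before invoking them; this is true here because of the local models in the proof of Theorem \ref{th:ZHM}, but it must be said. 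As it stands the proposal is a plausible alternative strategy with the decisive combinatorial-topological exclusions missing.
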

\begin{remark}
The spheres in the theorem arise in Construction \ref{con:spn} and can be imagined as follows.
In the first case $S^3$  is glued of two copies of a polytope along the boundaries. In the second case  -- of $4$ copies of the ball  
with the boundary sphere subdivided into two hemispheres. If we glue two copies along the hemispheres we obtain a ball with the boundary subdivided into two hemispheres again. Then we glue two copies of this ball along boundaries.
In the third case the sphere $N(P,\lambda)$ is glued of $8$ copies of the ball 
with the boundary sphere subdivided into three $2$-gons by the theta-graph. 
Let the vertices of the theta-graph be the north and the south poles and edges be three meridians. The sphere and the ball
are subdivided by the equatorial plane into two balls combinatorially equivalent to a $3$-simplex $\Delta^3$. 
Then $8$ copies of this simplex are glued at one vertex to an octahedron as the coordinate octants in $\mathbb R^3$. 
The resulting sphere is glued of two copies of this octahedron along the boundaries.
In the case of $K_4$ the space $N(P,\lambda)$ is homeomorphic to $\mathbb R\mathcal{Z}_{\Delta^3}\simeq S^3$.
All these $4$ cases arise if we subdivide the standard $3$-sphere in $\mathbb R^4$ into $3$-disks by 
$1$, $2$, $3$, or $4$ coordinate hyperplanes.
\end{remark}
\begin{remark}
It will be shown in \cite{E24b} that analogs of Theorem \ref{th:NP3sp} and Corollary \ref{cor:3hem} hold for $n=4$.
\end{remark}

\begin{proof}[Proof of Theorem \ref{th:NP3sp}]
The ``if'' direction follows from Construction \ref{con:spn}. 

Now let us prove the theorem in the ``only if'' direction. By Corollary \ref{cor:manor} $N(P,\lambda)$ is 
a~closed  orientable $3$-manifold for any affine coloring $\lambda$ of a~simple $3$-polytope $P$.

If a facet $G_j$ of $\mathcal{C}(P,\lambda)$ is a sphere with at least two holes,  
then there is a simple closed curve $ \gamma$ inside $G_j$ separating its 
two boundary components.
Then $\mathcal{C}(P,\lambda)$ can be represented as~a~connected sum of complexes $\mathcal{C}(P,\lambda')$
and $\mathcal{C}(P,\lambda'')$ arising if we change the points of~the~affine coloring at all the facets of $P$ 
inside one of the connected component of $\partial P\setminus\gamma$ to $\lambda_j$. 
Denote $r'={\rm rk }\,\lambda'$ and $r''={\rm rk }\,\lambda''$.  
Both spaces $N(P,\lambda')$ and $N(P,\lambda'')$ are closed orientable manifolds by Corollary \ref{cor:manor}.
\begin{lemma}\label{ml1} There is a homeomorphism 
\begin{equation}\label{eq:s2holes}
N(P,\lambda)\simeq N(P,\lambda')^{\#2^{r-r'}}\#N(P,\lambda'')^{\#2^{r-r''}}
\#(S^2\times S^1)^{\#\left[2^r-2^{r-r'}-2^{r-r''}+1\right]}
\end{equation}
\end{lemma}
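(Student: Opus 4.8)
The plan is to analyze the gluing structure of $N(P,\lambda)$ directly from its description as $2^r$ copies of $P$ glued along facets, using the decomposition of $\mathcal{C}(P,\lambda)$ induced by the separating curve $\gamma$. Recall that $\gamma$ lies inside a facet $G_j$ of $\mathcal{C}(P,\lambda)$ and separates $\partial P$ into two pieces; the colorings $\lambda'$ and $\lambda''$ are obtained by collapsing the color on all facets of $P$ inside one of these pieces to the constant value $\lambda_j$. The subgroups $H(\lambda'),H(\lambda'')$ then contain $H(\lambda)$, so by Proposition \ref{propH12} there are coverings $N(P,\lambda)\to N(P,\lambda')$ and $N(P,\lambda)\to N(P,\lambda'')$ with fibers of orders $2^{r-r'}$ and $2^{r-r''}$ respectively (here $r=\operatorname{rk}\lambda$, $r'=\operatorname{rk}\lambda'$, $r''=\operatorname{rk}\lambda''$, all taken as affine ranks).

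First I would set up the combinatorial model: write $N(P,\lambda)=P\times\mathbb{Z}_2^{r+1}/\!\sim$ and think of it as a disjoint union of $2^{r+1}$ solid copies $P\times a$ glued along facets $F_i$ (copies $P\times a$ and $P\times(a+\Lambda_i)$ glued along $F_i$ when $\Lambda_i\neq 0$). Cutting along the curve $\gamma$ — more precisely along the preimage of $\gamma$ in $N(P,\lambda)$, which is a disjoint union of $2$-spheres, one over each copy of $P$ on which the two boundary pieces become genuinely separated — expresses $N(P,\lambda)$ as a connected sum along those spheres. The key bookkeeping is: after cutting, the piece coming from the ``$\lambda'$-side'' reassembles into $2^{r-r'}$ copies of $N(P,\lambda')$ (the number of copies of $P$ on the far side that become color-identified), the ``$\lambda''$-side'' into $2^{r-r''}$ copies of $N(P,\lambda'')$, and the remaining ``necks'' contribute handles. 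A clean way to count is via Euler characteristic / the multiplicativity of connected sum together with the covering degrees: a connected sum of $k$ closed oriented $3$-manifolds each $\neq S^3$ requires $k-1$ connecting spheres, and each extra matching of boundary spheres beyond a spanning tree contributes an $S^2\times S^1$ summand. Counting the total number of cut spheres minus the number of pieces gives exactly the exponent $2^r-2^{r-r'}-2^{r-r''}+1$ for the $S^2\times S^1$ factors.

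The cleanest rigorous route I would actually write: consider the common refinement. The subgroup $H(\lambda')\cap H(\lambda'')$ acting on $\mathbb{R}\mathcal{Z}_P$ has a quotient $N(P,\lambda\cup)$ which maps onto both $N(P,\lambda')$ and $N(P,\lambda'')$; but more to the point, I would build $N(P,\lambda)$ explicitly by induction. Take one copy $P\times 0$ and its image $S_0$ of $\gamma$; it separates $P\times 0$ into two balls-with-handles neighborhoods. Glue copies one at a time following a spanning tree of the ``gluing graph'' on $\mathbb{Z}_2^r$ generated by the vectors $\lambda_i-\lambda_j$: each tree-edge gluing along a facet away from $\gamma$-influence produces a connected sum (adding either an $N(P,\lambda')$-type or $N(P,\lambda'')$-type summand), while each non-tree gluing that re-identifies two already-present boundary $2$-spheres produces an $S^2\times S^1$. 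Since the $\gamma$-side on which we collapsed has $2^{r-r'}$ fibers mapping to one $N(P,\lambda')$ and similarly $2^{r-r''}$ for $N(P,\lambda'')$, and the separating spheres number $2^r$ in total (one per pair of glued half-copies across $\gamma$), the count $2^r-(2^{r-r'}+2^{r-r''})+1$ of excess identifications falls out of $(\#\text{spheres})-(\#\text{pieces})+1$.

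The main obstacle I expect is the careful verification that cutting $N(P,\lambda)$ along the preimage of $\gamma$ genuinely yields the disjoint union I claim — i.e. that the preimage is a disjoint union of $2$-spheres (not higher-genus surfaces) and that each complementary piece is homeomorphic to an appropriate ``punctured'' $N(P,\lambda')$ or $N(P,\lambda'')$ with the ``erased'' copies of $P$ replaced by collar regions — together with tracking orientations so that the connected sums and the $S^2\times S^1$ summands (rather than $S^2\tilde\times S^1$) appear. Since $\gamma\subset G_j$ and $G_j$ carries the single color $\lambda_j$, the preimage over each copy $P\times a$ is a copy of $\gamma$ (a circle) and these circles assemble, copy by copy, into $2$-spheres exactly because $\gamma$ separates $G_j$ which is itself two-sided in $\partial P$; this is where Lemma \ref{lem:Mc} and the manifold structure (Corollary \ref{cor:manor}) are used to guarantee everything is locally standard. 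Once the cut surfaces and pieces are identified, the homeomorphism \eqref{eq:s2holes} is a formal consequence of the standard uniqueness of prime decomposition bookkeeping, so no delicate calculation remains.
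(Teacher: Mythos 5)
Your overall strategy --- cut $N(P,\lambda)$ along $2^r$ embedded spheres sitting over a separating object in $P$, recognize the complementary pieces as punctured copies of $N(P,\lambda')$ and $N(P,\lambda'')$, and obtain the $S^2\times S^1$ exponent as the first Betti number $E-V+1$ of the reassembly graph --- is the right one and matches the argument the paper delegates to \cite[Proposition 3.6]{E22M}; your final count $2^r-(2^{r-r'}+2^{r-r''})+1$ is correct. However, two steps as written would fail. The central one concerns the cutting surfaces: the preimage of the curve $\gamma\subset G_j$ in $N(P,\lambda)$ is a disjoint union of $2^r$ \emph{circles} (the stabilizer along $\gamma$ is $\langle\Lambda_j\rangle$, of order $2$, so the preimage is $\gamma\times\mathbb Z_2^{r+1}/\langle\Lambda_j\rangle$), not a union of $2$-spheres, and circles cannot ``assemble into $2$-spheres''; the verification you yourself flag as the main obstacle is therefore unverifiable as stated. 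The correct cut locus is the preimage of a properly embedded disk $D\subset P$ with $\partial D=\gamma$ (e.g.\ the cone over $\gamma$ from an interior point of $P$): over the interior of $D$ nothing is identified, while along $\partial D\subset G_j$ the copies $D\times a$ and $D\times(a+\Lambda_j)$ are glued, so the preimage consists of exactly $2^r$ spheres, each made of two disks, and its complement is the preimage of the two halves $P'$, $P''$ into which $D$ cuts $P$.

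The second problem is your opening claim that $H(\lambda)\subset H(\lambda')$, giving coverings $N(P,\lambda)\to N(P,\lambda')$ of degree $2^{r-r'}$. This is false in general: by Proposition \ref{propH12} such an inclusion would force $\lambda'=\widehat{\Pi}\circ\lambda$ for an affine surjection $\widehat{\Pi}$, but a color occurring on both sides of $\gamma$ would then have to be sent both to itself and to $\lambda_j$. The correct source of the exponent $2^{r-r'}$ is a component count: two copies $P'\times a$ and $P'\times b$ lie in the same component of the preimage of $P'$ iff $a-b$ lies in the span of the vectors $\Lambda_i$ with $F_i\cap P'\ne\varnothing$, which is $(r'+1)$-dimensional, so there are $2^{r+1}/2^{r'+1}=2^{r-r'}$ components, each with $2^{r'}$ boundary spheres. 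Finally, the identification of each capped-off component with $N(P,\lambda')$ deserves a sentence you omit: in $N(P,\lambda')$ all facets on the recolored side carry the single color $\lambda_j$, so the preimage of $P''$ there is a union of $2^{r'}$ balls (each glued from two copies of $P''$ along $\partial P''\setminus D$), which is exactly what capping the boundary spheres inserts. With these repairs the graph count $V=2^{r-r'}+2^{r-r''}$, $E=2^r$ yields \eqref{eq:s2holes} as you indicate.
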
   
The proof is similar to the proof of \cite[Proposition 3.6]{E22M}.
\begin{corollary}\label{mcor1}
If $\mathcal{C}(P,\lambda)$ has a facet, which is a sphere with at least two holes, 
then in~the~Knezer-Milnor prime decomposition of the orientable manifold 
$N(P,\lambda)$ there is a summand $S^1\times S^2$. In particular,
$N(P,\lambda)$ is not homeomorphic to a sphere and it is not a homology sphere for any coefficient group.
\end{corollary}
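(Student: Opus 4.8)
The plan is to derive the corollary directly from the homeomorphism in Lemma \ref{ml1}, so the only thing to check is that the connected-sum formula \eqref{eq:s2holes} genuinely contains at least one $S^1\times S^2$ summand. First I would record that $G_j$ is a sphere with at least two holes, so $\partial P\setminus\gamma$ has (at least) two connected components and neither of the reduced colorings $\lambda'$, $\lambda''$ is trivial on the other side; in particular $r'=\operatorname{rk}\lambda'\geqslant 1$ and $r''=\operatorname{rk}\lambda''\geqslant 1$, hence also $r\geqslant r'\geqslant 1$ and $r\geqslant r''\geqslant 1$. The key numerical point is that the exponent of the $S^2\times S^1$ factor in \eqref{eq:s2holes}, namely $2^r-2^{r-r'}-2^{r-r''}+1=(2^{r-r'}-1)(2^{r-r''}-1)$, is a product of two nonnegative integers each of which is $\geqslant 1$ because $r-r'\geqslant 0$ and $r-r''\geqslant 0$ could fail to be positive; so I must be a little careful. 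Concretely: since $G_j$ is a facet of the complex obtained from a \emph{non-constant} coloring (otherwise there are no proper $1$-faces and no holes), the global rank $r$ is strictly larger than each of $r'$, $r''$ — the curve $\gamma$ lies strictly inside $G_j$, so moving the colors on one side to the single value $\lambda_j$ strictly lowers the affine span. Thus $r-r'\geqslant 1$ and $r-r''\geqslant 1$, whence $(2^{r-r'}-1)(2^{r-r''}-1)\geqslant 1$ and the $S^2\times S^1$ factor occurs with a positive exponent.

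Next I would invoke uniqueness of the Kneser--Milnor prime decomposition for closed orientable $3$-manifolds: $N(P,\lambda)$ is closed and orientable by Corollary \ref{cor:manor}, its prime decomposition is unique up to order and orientation, and $S^1\times S^2$ is prime and is not $S^3$. Since \eqref{eq:s2holes} exhibits $N(P,\lambda)$ as a connected sum one of whose summands is $S^1\times S^2$, that summand must appear (with the same multiplicity) in the prime decomposition. In particular the decomposition is non-trivial and contains a factor different from $S^3$, so $N(P,\lambda)$ is not homeomorphic to $S^3$.

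Finally, for the homology statement I would use the behaviour of homology under connected sum: for closed oriented $3$-manifolds $M_1,M_2$ one has $H_1(M_1\# M_2;\mathbf{k})\cong H_1(M_1;\mathbf{k})\oplus H_1(M_2;\mathbf{k})$ for any coefficient group $\mathbf{k}$, and $H_1(S^1\times S^2;\mathbf{k})\cong \mathbf{k}\neq 0$. Hence $H_1(N(P,\lambda);\mathbf{k})$ has $H_1(S^1\times S^2;\mathbf{k})=\mathbf{k}$ as a direct summand and is therefore non-zero, so $N(P,\lambda)$ is not a homology sphere with \emph{any} coefficient group. I expect the only genuinely delicate point to be the strict inequalities $r-r'\geqslant 1$ and $r-r''\geqslant 1$ — i.e. verifying that passing to each side strictly drops the rank — everything else is a routine application of Lemma \ref{ml1}, prime decomposition, and the connected-sum formula for $H_1$.
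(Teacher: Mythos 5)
Your overall route is the paper's: apply Lemma \ref{ml1}, check that the exponent $2^r-2^{r-r'}-2^{r-r''}+1$ of the $S^2\times S^1$ summand is positive, and then conclude via uniqueness of the Kneser--Milnor decomposition and additivity of $H_1$ under connected sum (those last two steps are fine). But the numerical step --- the only nontrivial one --- is wrong on two counts. First, the identity $2^r-2^{r-r'}-2^{r-r''}+1=(2^{r-r'}-1)(2^{r-r''}-1)$ is false in general: the right-hand side equals $2^{2r-r'-r''}-2^{r-r'}-2^{r-r''}+1$, so your identity holds only when $r'+r''=r$. Second, the strict inequalities $r>r'$ and $r>r''$ need not hold: replacing the colors on one side of $\gamma$ by $\lambda_j$ does not have to lower the affine span, since the facets on the retained side may already affinely span everything. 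For instance, if $r=2$, the side retained by $\lambda'$ carries colors $\lambda_j,\boldsymbol{p}_2,\boldsymbol{p}_3$ affinely spanning the plane while the other side carries only $\boldsymbol{p}_2$, then $r'=2=r$ and $r''=1$; your factorization would give $(2^{0}-1)(2^{1}-1)=0$, whereas the true exponent is $2^2-2^0-2^1+1=2$.

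What is actually needed, and what the paper uses, is the inequality $r'+r''\geqslant r$, which follows from $\lambda_j\in{\rm aff}(\lambda')\cap{\rm aff}(\lambda'')$ together with $r'+r''=r+\dim\bigl({\rm aff}(\lambda')\cap{\rm aff}(\lambda'')\bigr)$. Combined with $1\leqslant r',r''\leqslant r$ (which you do justify correctly: each side of $\gamma$ contains a facet colored differently from $\lambda_j$), one gets
$2^r-2^{r-r'}-2^{r-r''}+1=2^{r-r'}(2^{r'}-1)-(2^{r-r''}-1)\geqslant(2^{r-r'}-1)(2^{r'}-1)\geqslant 0$,
using $r-r''\leqslant r'$ in the middle step, and equality throughout would force $r'=0$ or $r''=0$, a contradiction. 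Without the input $r'+r''\geqslant r$ your argument does not close.
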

\begin{proof}
Indeed, in the~Knezer-Milnor decomposition  of $N(P,\lambda)$  
there is a summand $\#(S^2\times S^1)^{\#\left[2^r-2^{r-r'}-2^{r-r''}+1\right]}$.
But $1\leqslant r',r''\leqslant r$, since on both sides of the curve $\gamma$ there is a facet  with $\lambda_i\ne \lambda_j$,
where  $G_j$ is a chosen facet, which is a sphere with at least two holes.
Also $r'+r''=r+\dim{\rm aff}(\lambda')\cap{\rm aff}(\lambda'')\geqslant r$, 
since $\lambda_j\in {\rm aff}(\lambda')\cap {\rm aff}(\lambda'')$. Hence,
\begin{multline*}
2^r-2^{r-r'}-2^{r-r''}+1=2^{r-r'}(2^{r'}-1)-(2^{r-r''}-1)\geqslant \\
\geqslant 2^{r-r'}(2^{r'}-1)-(2^{r'}-1)=(2^{r-r'}-1)(2^{r'}-1)\geqslant 0
\end{multline*}
Moreover, if the left part is equal to zero, then $r'=r-r''$ and either $r'=0$ or $r=r'$ (then $r''=0$). A contradiction.
\end{proof}
If a facet of $\mathcal{C}(P,\lambda)$ is the whole sphere, then $\mathcal{C}^1(P,\lambda)=\varnothing$.
Thus, we can assume that each facet of $\mathcal{C}(P,\lambda)$ is a disk. 
If the intersection of two facets
$G_i$ and $G_j$ is a boundary circle of both facets, then $\mathcal{C}^1(P,\lambda)$ is a single circle. 
Thus, we can assume that a nonempty intersection of each two disks $G_i$ and $G_j$ consists of a disjoint union of edges.
If there are more then one edge, consider a simple closed curve $\gamma$ consisting of two simple paths
connecting the points inside two common edges -- one path inside $G_i$ and the other inside $G_j$.

Then $\mathcal{C}(P,\lambda)$ can be represented as a connected sum of complexes $\mathcal{C}(P,\lambda')$
and $\mathcal{C}(P,\lambda'')$ arising if we change the points of~the~affine coloring at all the facets of $P\setminus G_i$ 
inside one of~the~connected component  of $\partial P\setminus\gamma$ to $\lambda_j$. Denote $r'={\rm rk }\,\lambda'$ and $r''={\rm rk }\,\lambda''$.  
Both spaces $N(P,\lambda')$ and $N(P,\lambda'')$ are closed orientable manifolds by Corollary \ref{cor:manor}.

\begin{lemma} \label{lem:NPL2e}
There is a homeomorphism 
\begin{equation}\label{eq:2fe2}
N(P,\lambda)\simeq N(P,\lambda')^{\#2^{r-r'}}\#N(P,\lambda'')^{\#2^{r-r''}}
\#(S^2\times S^1)^{\#\left[2^{r-1}-2^{r-r'}-2^{r-r''}+1\right]}
\end{equation}
\end{lemma}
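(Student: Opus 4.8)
The plan is to repeat the argument of Lemma~\ref{ml1} (which itself follows \cite[Proposition~3.6]{E22M}), the only structural novelty being the number of spheres along which one cuts $N(P,\lambda)$. First I would realise $\gamma$ as the boundary of a properly embedded disk $\Delta\subset P$, obtained by pushing $\gamma$ slightly into the interior of $P$; cutting $P$ along $\Delta$ yields two $3$-balls $P'$ and $P''$ lying over the two disks $D'$ and $D''$ of $\partial P\setminus\gamma$, and $\lambda'$, $\lambda''$ are by construction the affine colorings obtained from $\lambda$ by recolouring to $\lambda_j$ all facets of $P$ in $D'$ (respectively $D''$) that are not subfacets of $G_i$. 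By Corollary~\ref{cor:manor} the spaces $N(P,\lambda)$, $N(P,\lambda')$ and $N(P,\lambda'')$ are all closed orientable $3$-manifolds.

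Next I would assemble the lifts of $\Delta$ into spheres inside $N(P,\lambda)=P\times\mathbb Z_2^{r+1}/\sim$. The arc $\gamma_i\subset{\rm relint}\,G_i$ of $\partial\Delta$ is glued by the reflection $b\mapsto b+\Lambda_i$ and the arc $\gamma_j\subset{\rm relint}\,G_j$ by $b\mapsto b+\Lambda_j$, while the two points $\gamma_i\cap\gamma_j$ lie on edges of $\mathcal{C}(P,\lambda)$ along which $G_i$ and $G_j$ meet. Since the facets $G_i\ne G_j$ share a point they carry different colours, so $\Lambda_i=(1,\lambda_i)$ and $\Lambda_j=(1,\lambda_j)$ are distinct nonzero vectors, hence linearly independent, and $4$ copies of $P$ meet along each of those edges. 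Consequently the disks $\Delta\times b$ over one coset of $\langle\Lambda_i,\Lambda_j\rangle$ glue along their boundary arcs into a single embedded $2$-sphere --- two ``bigons'' glued along both of their boundary arcs, of Euler characteristic $2$ --- and one obtains $2^{r+1}/4=2^{r-1}$ pairwise disjoint spheres $\Sigma_1,\dots,\Sigma_{2^{r-1}}\subset N(P,\lambda)$. This is the sole point where the argument departs from Lemma~\ref{ml1}: there $\gamma$ lies inside a single facet of $\mathcal{C}(P,\lambda)$, each sphere is glued from $2$ copies of $\Delta$, and one gets $2^r$ of them.

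Then I would cut $N(P,\lambda)$ along $\bigsqcup_k\Sigma_k$, cap the $2\cdot 2^{r-1}$ resulting boundary spheres with balls, and identify the pieces exactly as in Lemma~\ref{ml1}. The cut opens each copy $P\times b$ into $P'\times b\sqcup P''\times b$; since the remaining facet gluings never cross $\Delta$, a $P'$-piece is only ever glued to $P'$-pieces and a $P''$-piece to $P''$-pieces, so the cut manifold is a disjoint union of ``$D'$-components'' and ``$D''$-components''. Two pieces $P''\times b$, $P''\times b'$ lie in the same component precisely when $b-b'$ belongs to the span of the vectors $(1,\lambda(F))$ taken over facets $F$ of $P$ meeting $\overline{D''}$, a subgroup of rank $\rk\lambda'+1$; hence there are $2^{r-r'}$ $D''$-components, and capping turns each of them into a copy of $N(P,\lambda')$. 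Symmetrically there are $2^{r-r''}$ $D'$-components, each capping to a copy of $N(P,\lambda'')$. Since $N(P,\lambda)$ is connected, cutting it along the $E=2^{r-1}$ disjoint $2$-spheres $\Sigma_k$, whose dual graph has $N=2^{r-r'}+2^{r-r''}$ vertices, produces the connected sum of the $N$ capped pieces with $E-N+1=2^{r-1}-2^{r-r'}-2^{r-r''}+1$ copies of $S^2\times S^1$, which is exactly \eqref{eq:2fe2}.

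The hard part will be the bookkeeping in this last step: one has to check that a capped $D''$-component is genuinely homeomorphic to $N(P,\lambda')$ rather than to some quotient of it or to $N(P,\lambda')$ with extra handles, and that the subgroup of $\mathbb Z_2^{r+1}$ generated by the colours occurring on the $\overline{D''}$-side of $\gamma$ has the asserted rank $\rk\lambda'+1$ --- equivalently, that recolouring the $D'$-facets to $\lambda_j$ collapses precisely the $D'$-part of the connected sum while leaving $N(P,\lambda')$ intact. This is the same verification performed in \cite[Proposition~3.6]{E22M} and in the proof of Lemma~\ref{ml1}; I would carry it out by the same method, keeping track of the additional generator $\Lambda_i$ that enters here because $\gamma$ runs through two facets $G_i$ and $G_j$ of $\mathcal{C}(P,\lambda)$ instead of one.
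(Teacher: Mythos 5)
Your proposal is correct and takes essentially the same route the paper intends: the paper offers no independent argument for this lemma beyond the remark that the proof is similar to that of Lemma~\ref{ml1} and \cite[Proposition~3.6]{E22M}, and your reconstruction (cutting along the $2^{r-1}$ spheres lying over $\Delta$, each assembled from the four copies of $\Delta$ in a coset of $\langle\Lambda_i,\Lambda_j\rangle$, counting the capped components via the subgroups of rank $r'+1$ and $r''+1$, and applying the dual-graph count $E-N+1$) is exactly that argument, including the correct identification of where it diverges from Lemma~\ref{ml1}. The only slip is cosmetic: each sphere is glued from the four copies of $\Delta$ in a coset, not two bigons, but the Euler characteristic is still $2$ and the conclusion is unaffected.
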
   
The proof is similar to the proof of \cite[Proposition 3.6]{E22M}.
\begin{corollary}\label{mcor2}
Let each facet of $\mathcal{C}(P,\lambda)$ be a disk and the intersection of some two different facets be~a~disjoint 
set of~at~least two edges. Then in the  Knezer-Milnor prime decomposition of~the~orientable manifold $N(P,\lambda)$ 
there is a summand $S^1\times S^2$. In particular,
$N(P,\lambda)$ is not homeomorphic to a sphere and it is not a homology sphere for any coefficient group.
\end{corollary}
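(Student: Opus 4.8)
The plan is to reproduce, with one extra combinatorial input, the proof of Corollary \ref{mcor1}. First I would apply Lemma \ref{lem:NPL2e} to write
$$
N(P,\lambda)\simeq N(P,\lambda')^{\#2^{r-r'}}\#N(P,\lambda'')^{\#2^{r-r''}}\#(S^2\times S^1)^{\#e},\qquad
e=2^{r-1}-2^{r-r'}-2^{r-r''}+1 ,
$$
where $r'=\rk\lambda'$, $r''=\rk\lambda''$, so that everything reduces to the arithmetic claim $e\geqslant 1$: once this is known, $S^2\times S^1$ occurs as a prime summand of $N(P,\lambda)$, and since it is prime with $H_1(S^2\times S^1;\mathbb G)=\mathbb G\ne 0$ for every nontrivial coefficient group $\mathbb G$, uniqueness of the Kneser--Milnor decomposition shows that $N(P,\lambda)$ is neither homeomorphic to $S^3$ nor a homology sphere.

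Next come the constraints on $r'$ and $r''$. The curve $\gamma$ cuts each of $G_i$, $G_j$ into nonempty pieces on both sides, and the recolouring leaves $G_i$ untouched, so both $\lambda_i$ and $\lambda_j$ lie in the image of $\lambda'$ and in the image of $\lambda''$; hence $\lambda_i,\lambda_j\in{\rm aff}(\lambda')\cap{\rm aff}(\lambda'')$, and since $\lambda_i\ne\lambda_j$ this intersection has dimension $\geqslant 1$. As the images of $\lambda'$ and $\lambda''$ together exhaust the image of $\lambda$, their affine hulls span $\mathbb Z_2^r$, whence $r'+r''=r+\dim\bigl({\rm aff}(\lambda')\cap{\rm aff}(\lambda'')\bigr)\geqslant r+1$. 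Feeding $r-r''\leqslant r'-1$ into $e$ gives
$$
e\;\geqslant\;2^{r-1}-2^{r-r'}-2^{r'-1}+1\;=\;(2^{r-r'}-1)(2^{r'-1}-1)\;\geqslant\;0 ,
$$
and the right side is $\geqslant 1$ unless $r'\in\{1,r\}$; tracing the inequalities back, $e=0$ is possible only when $(r',r'')$ equals $(r,1)$ or $(1,r)$, i.e. when one of the two complementary discs of $\gamma$ in $\partial P$ carries only the two colours $\lambda_i,\lambda_j$.

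To exclude this I would invoke the hypothesis that $G_i\cap G_j$ is a disjoint union of at least two edges: the two edges $e_1,e_2$ that $\gamma$ crosses then share no vertex, so on each of the two arcs into which they divide $\partial G_i$ there is an intermediate edge bordering $G_i$ and a facet of $\mathcal C(P,\lambda)$ distinct from $G_i$ and $G_j$, and these facets lie on opposite sides of $\gamma$. After, if necessary, first applying the same connected-sum reduction to split off any $\lambda_i$- or $\lambda_j$-coloured ``islands'' such a facet may produce and inducting on the number of facets of $\mathcal C(P,\lambda)$, one obtains that both discs carry a third colour, so $r',r''\geqslant 2$. Then $(2^{r-r'}-1)(2^{r'-1}-1)\geqslant 1$ when $r'<r$ (both factors positive), and $e=2^{r-1}-2^{r-r''}\geqslant 2^{r-2}\geqslant 1$ when $r'=r$ (using $r''\geqslant 2$ and $r=r'\geqslant 2$). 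In either case $e\geqslant 1$, completing the proof.

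The main obstacle is exactly this last step: purely arithmetically $e$ can vanish, so the point is the combinatorial fact that the pairwise disjointness of the edges of $G_i\cap G_j$ forces a third colour onto each side of $\gamma$ — possibly only after a preliminary simplification of $\mathcal C(P,\lambda)$ by the connected-sum decomposition — which is what upgrades $r',r''\geqslant 1$ to $r',r''\geqslant 2$. Everything else is a routine inequality mimicking Corollary \ref{mcor1}.
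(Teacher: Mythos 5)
Your proposal is correct and follows essentially the same route as the paper: Lemma \ref{lem:NPL2e}, the bound $r'+r''\geqslant r+1$ from $\lambda_i,\lambda_j\in{\rm aff}(\lambda')\cap{\rm aff}(\lambda'')$, the factorization $(2^{r-r'}-1)(2^{r'-1}-1)$, and the exclusion of the degenerate case via $r',r''\geqslant 2$. The only difference is in how $r',r''\geqslant 2$ is justified: the paper observes directly that each side of $\gamma$ contains an endpoint of a crossed common edge, i.e.\ a vertex of $\mathcal{C}(P,\lambda)$, where three facets of pairwise distinct colours must meet (same-coloured facets meeting at a point would be one connected component), so a third colour $\lambda_k\notin\{\lambda_i,\lambda_j\}$ appears on each side at once — this makes your ``island''/induction hedge unnecessary.
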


\begin{proof}
Indeed, in the Knezer-Milnor prime decomposition of $N(P,\lambda)$ there is a summand 
$\#(S^2\times S^1)^{\#\left[2^{r-1}-2^{r-r'}-2^{r-r''}+1\right]}$.
But $2\leqslant r',r''\leqslant r$, since on both sides of the curve $\gamma$ there is~a~vertex of~a~common edge, 
and therefore a facet  with $\lambda_k\notin \{\lambda_i, \lambda_j\}$,
where $G_i$ and $G_j$ are the facets under consideration.
Also $r'+r''=r+\dim{\rm aff}(\lambda')\cap{\rm aff}(\lambda'')\geqslant r+1$,
since $\lambda_i,\lambda_j\in {\rm aff}(\lambda')\cap{\rm aff}(\lambda'')$. Hence,
\begin{multline*}
2^{r-1}-2^{r-r'}-2^{r-r''}+1=2^{r-r'}(2^{r'-1}-1)-(2^{r-r''}-1)\geqslant \\
\geqslant 2^{r-r'}(2^{r'-1}-1)-(2^{r'-1}-1)=(2^{r-r'}-1)(2^{r'-1}-1)\geqslant 0
\end{multline*}
Moreover, if the left part is equal to zero, then $r'-1=r-r''$ and either $r'=1$ or $r=r'$ (then $r''=1$). A contradiction.
\end{proof}
Thus, we can assume that any facet of $\mathcal{C}(P,\lambda)$ is a disk bounded by a simple edge-cycle and 
any nonempty intersection of two facets is an edge. We know, that the boundary cycle of a~facet can not contain
only one vertex. If there are only two vertices $v$ and $w$ on the boundary of a facet $G_i$, then the vertex $v$ 
belongs to some other facets $G_j$ and $G_k$. Moreover, each facet $G_j$ and $G_k$ has a common edge with $G_i$,
and this edge contains $w$. Then $G_j\cap G_k$ is an edge connecting $v$ and $w$, and $\mathcal{C}^1(P,\lambda)$ is a theta-graph.

Now assume that each facet has at least $3$ vertices on its boundary. Then   $\mathcal{C}^1(P,\lambda)$ has no multiple edges,
for otherwise a $2$-gonal facet arises. Then $\mathcal{C}^1(P,\lambda)$ is a simple planar $3$-connected graph
with at least $4$ edges, and by the Steinitz theorem it 
corresponds to~a~boundary of~some simple $3$-polytope $Q$. This polytope has an induced affinely 
independent coloring $\lambda$ and $N(P,\lambda)=N(Q,\lambda)$, where $N(Q,\lambda)$ is a quotient space of a free action
of a subgroup $K\subset\mathbb Z_2^{m_Q}$ 
on $\mathbb R\mathcal{Z}_Q$. In particular, it is covered by $\mathbb R\mathcal{Z}_Q$.
Hence, if $N(P,\lambda)$ is a sphere, then $N(P,\lambda)=\mathbb R\mathcal{Z}_Q$. 

Assume that $Q\ne\Delta^3$. If $Q$ has a $3$-belt, that is a triple of facets
$G_i$, $G_j$ and $G_k$ with an~empty intersection such that any two of them are adjacent, 
then $Q$ is a~connected sum of two polytopes $Q_1$ and $Q_2$ along vertices (see details in \cite{E22M}). It is
proved in \cite[Corollary 3.8]{E22M} that there is a homeomorphism
$$
\mathbb R\mathcal{Z}_Q\simeq\mathbb{R} \mathcal{Z}_{Q_1}^{\#2^{m_Q-m_1}}\#\mathbb{R} \mathcal{Z}_{Q_2}^{\#2^{m_Q-m_2}}\#(S^2\times S^1)^{\#\left[(2^{m_Q-m_1}-1)\cdot(2^{m_Q-m_2}-1)\right]},
$$
where $m_Q$, $m_1$ and $m_2$ are the numbers of facets of $Q$, $Q_1$ and $Q_2$ respectively. 
Also $m_1,m_2\leqslant m_Q-1$.
Hence, if $Q$ contains a $3$-belt, then $\mathbb R\mathcal{Z}_Q$ contains a summand $S^2\times S^1$ in its
Knezer-Milnor decomposition. If $Q\ne \Delta^3$ has no $3$-belts, then $Q$ is a flag polytope and  $\mathbb R\mathcal{Z}_Q$
is aspherical (that is $\pi_i(\mathbb R\mathcal{Z}_Q)=0$ for $i\geqslant 2$, 
see \cite[Theorem 2.2.5]{DJS98} or \cite[Proposition 1.2.3]{D08}). 
Thus, if $N(P,\lambda)\simeq S^3$, then $Q=\Delta^3$ and the theorem is proved.
\end{proof}
\begin{corollary}\label{cor:3hem}
Let  $\lambda$ be an affine coloring of rank $r$ of a simple $3$-polytope $P$.
Then any hyperelliptic involution $\tau\in\mathbb Z_2^r$ is special, that is $\mathcal{C}(P,\lambda_\tau)\simeq \mathcal{C}(3,r)$.
\end{corollary}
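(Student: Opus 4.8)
The plan is to deduce Corollary \ref{cor:3hem} directly from Lemma \ref{lem:affhl} and Theorem \ref{th:NP3sp}; the argument is a short two-step chain and I do not expect a genuine obstacle.

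First I would fix notation. Since $\tau\in\mathbb Z_2^r$ is an involution, it is a nonzero element of the translation group $H_0'=\mathbb Z_2^r$ of the affine space spanned by the colors $\lambda_1,\dots,\lambda_m$, so the affine surjection $\widehat{\Pi}\colon\mathbb Z_2^r\to\mathbb Z_2^r/\langle\tau\rangle$ drops the affine dimension by one. Hence $\lambda_\tau=\widehat{\Pi}\circ\lambda$ is again an affine coloring of the simple $3$-polytope $P$, now of rank $r-1$: its images affinely span $\mathbb Z_2^r/\langle\tau\rangle$ because those of $\lambda$ affinely span $\mathbb Z_2^r$.

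Next, by Lemma \ref{lem:affhl} the assumption that $\tau$ is hyperelliptic means exactly that $N(P,\lambda)/\langle\tau\rangle=N(P,\lambda_\tau)$ is homeomorphic to $S^3$. I would then apply Theorem \ref{th:NP3sp} to the affine coloring $\lambda_\tau$ of rank $r-1$: it gives $N(P,\lambda_\tau)\simeq S^3$ if and only if $\mathcal{C}(P,\lambda_\tau)$ is equivalent to $\mathcal{C}(3,(r-1)+1)=\mathcal{C}(3,r)$. Chaining the two equivalences, hyperellipticity of $\tau$ forces $\mathcal{C}(P,\lambda_\tau)\simeq\mathcal{C}(3,r)$, which is precisely the definition of $\tau$ being special.

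The only points to watch are that $\lambda_\tau$ really has rank $r-1$, so that the index shift in Theorem \ref{th:NP3sp} produces $\mathcal{C}(3,r)$ rather than $\mathcal{C}(3,r+1)$, and that Theorem \ref{th:NP3sp} is invoked for $\lambda_\tau$ and not for $\lambda$. Together with the already-proved fact that every special involution is hyperelliptic, this shows that for simple $3$-polytopes the two notions coincide.
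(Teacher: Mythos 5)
Your proof is correct and follows exactly the route the paper intends: the corollary is stated as an immediate consequence of Theorem \ref{th:NP3sp}, obtained by combining Lemma \ref{lem:affhl} (hyperelliptic means $N(P,\lambda_\tau)\simeq S^3$) with Theorem \ref{th:NP3sp} applied to the rank-$(r-1)$ affine coloring $\lambda_\tau$, which yields $\mathcal{C}(P,\lambda_\tau)\simeq\mathcal{C}(3,r)$. Your attention to the rank shift ($r-1$ versus $r$, hence $\mathcal{C}(3,r)$ rather than $\mathcal{C}(3,r+1)$) is exactly the one point that needs care, and you handle it correctly.
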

\begin{definition}
Let us call by a~{\it theta-subgraph} and a~{\it $K_4$-subgraph} of $P$
the image of~an~embedding of~the~theta-graph or the~compete graph $K_4$
to~the~$1$-skeleton of~$P$ such that each vertex of~the~embedded graph is mapped to~a~vertex of~$P$
and each edge -- to a~simple edge-path.
\end{definition}
\begin{corollary}\label{cor:spheres}
Let $P$ be a simple $3$-polytope. The subgroups $H\ne H_0$ of $\mathbb Z_2^m$ such that $N(P,H)\simeq S^3$
are in one-to-one correspondence with simple edge-cycles, theta-subgraphs and $K_4$-subgraphs of $P$.
The  subgroup corresponding to a subgraph is defined by the linear equations  
$\sum_{F_i\subset G}x_i=0$ corresponding to its facets $G$. 
\end{corollary}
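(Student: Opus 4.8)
The plan is to realise the claimed correspondence explicitly and then check that it is a bijection, using Theorem~\ref{th:NP3sp} in the ``only if'' direction, Theorem~\ref{th:CPc3} together with Construction~\ref{con:spn} in the ``if'' direction, and a short linear-algebra computation to pin down the subgroup attached to a subgraph. Throughout, the $1$-skeleton of the relevant complex will be the subgraph, and the defining equations $\sum_{F_i\subset G}x_i=0$ will emerge from expanding the epimorphism $\Lambda$ in a basis.

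First I would set up the forward map. Given $H\subsetneq H_0$ with $N(P,H)\simeq S^3$, Corollary~\ref{Hor} forces $H\subset H_0$ (since $N(P,H)$ is closed and orientable), so, as noted after Definition~\ref{def:acol}, $H=H(\lambda)$ for an affine coloring $\lambda$ of some rank $r$, with $r\geqslant 1$ because $H\neq H_0$. By Theorem~\ref{th:NP3sp} we then have $r\in\{1,2,3\}$, the $1$-skeleton $\mathcal{C}^1(P,\lambda)$ is a circle, a theta-graph, or $K_4$, and the image of $\lambda$ is a boolean $(r+1)$-simplex. Since $\mathcal{C}^1(P,\lambda)$ sits inside $\partial P$ with its vertices among the vertices of $P$ and its $1$-faces simple edge-paths of $P$ (or a single edge-cycle when $\mathcal{C}^1(P,\lambda)$ is a circle), it is exactly a simple edge-cycle, a theta-subgraph, or a $K_4$-subgraph of $P$; call it $\Gamma(H)$. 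It depends only on $H$: $\lambda$ is determined by $H$ up to an affine change of coordinates in $\mathbb{Z}_2^r$, which leaves the induced coloring of the facets of $P$, hence the complex $\mathcal{C}(P,\lambda)\subset\partial P$, unchanged.

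Next I would construct the backward map and the explicit formula. Given a simple edge-cycle, theta-subgraph, or $K_4$-subgraph $\Gamma$ of $P$, observe that inside $S^2=\partial P$ it is a simple closed curve or a connected $3$-valent graph without bridges, so by Theorem~\ref{th:CPc3} the coloring $c_\Gamma$ that sends each $F_i$ to the connected component $G_j$ of $\partial P\setminus\Gamma$ containing its interior produces a complex $\mathcal{C}(P,c_\Gamma)$ whose $1$-skeleton is $\Gamma$, with $r+1\in\{2,3,4\}$ facets, so $\mathcal{C}(P,c_\Gamma)\simeq\mathcal{C}(3,r+1)$. Choosing any boolean $(r+1)$-simplex $\{\boldsymbol{p}_1,\dots,\boldsymbol{p}_{r+1}\}\subset\mathbb{Z}_2^r$ and setting $\lambda_\Gamma(F_i)=\boldsymbol{p}_j$ for $F_i\subset G_j$ gives $\mathcal{C}(P,\lambda_\Gamma)=\mathcal{C}(P,c_\Gamma)\simeq\mathcal{C}(3,r+1)$, so Construction~\ref{con:spn} yields $N(P,\lambda_\Gamma)\simeq S^3$. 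Writing $\boldsymbol{q}_j=(1,\boldsymbol{p}_j)$ and $\Lambda_i=(1,\lambda_\Gamma(F_i))$, affine independence of the $\boldsymbol{p}_j$ makes $\{\boldsymbol{q}_1,\dots,\boldsymbol{q}_{r+1}\}$ a basis of $\mathbb{Z}_2^{r+1}$, and $\Lambda(\boldsymbol{x})=\sum_i x_i\Lambda_i=\sum_{j}\bigl(\sum_{F_i\subset G_j}x_i\bigr)\boldsymbol{q}_j$; hence $H(\lambda_\Gamma)=\Ker\Lambda$ equals $\{\boldsymbol{x}\in\mathbb{Z}_2^m:\sum_{F_i\subset G_j}x_i=0\text{ for every region }G_j\text{ of }\Gamma\}=:H_\Gamma$. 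This is independent of the chosen boolean simplex and is cut out by exactly the advertised equations; the sum of those equations is $\sum_i x_i=0$, so $H_\Gamma\subset H_0$, and nonemptiness of each $G_j$ makes the $r+1$ equations linearly independent, so $\codim H_\Gamma=r+1\geqslant 2$ and $H_\Gamma\neq H_0$.

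Finally I would check the two maps are mutually inverse. One composite is immediate: $N(P,H_\Gamma)=N(P,\lambda_\Gamma)\simeq S^3$ and $\mathcal{C}^1(P,\lambda_\Gamma)=\Gamma$, so $\Gamma(H_\Gamma)=\Gamma$. For the other, given $H=H(\lambda)$ with $N(P,H)\simeq S^3$, the regions $G_1,\dots,G_{r+1}$ of $\Gamma(H)=\mathcal{C}^1(P,\lambda)$ carry the values $\lambda_1,\dots,\lambda_{r+1}$ of $\lambda$, which by Theorem~\ref{th:NP3sp} form a boolean $(r+1)$-simplex, so $(1,\lambda_1),\dots,(1,\lambda_{r+1})$ is a basis of $\mathbb{Z}_2^{r+1}$ and the same expansion gives $H=H(\lambda)=H_{\Gamma(H)}$. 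This yields the bijection and the stated description of the subgroup. I expect the only delicate point to be precisely this identification $H(\lambda)=H_{\Gamma(H)}$: it hinges on the closing assertion of Theorem~\ref{th:NP3sp} that the values of $\lambda$ on the regions form an affine basis, which is exactly what collapses $\Ker\Lambda$ onto the common zero locus of the region-sum functionals; the well-definedness of $\Gamma(H)$ and the fact that $\mathcal{C}(P,c_\Gamma)$ has $1$-skeleton exactly $\Gamma$ (via Theorem~\ref{th:CPc3}) are the remaining bookkeeping points.
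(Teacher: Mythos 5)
Your proposal is correct and follows exactly the route the paper intends: the paper states this as an unproved corollary of Theorem~\ref{th:NP3sp}, and your write-up supplies precisely the implicit steps (orientability via Corollary~\ref{Hor} to reduce to affine colorings, Theorem~\ref{th:NP3sp} for the forward direction, Construction~\ref{con:spn} for the converse, and the boolean-simplex basis expansion to read off the equations $\sum_{F_i\subset G}x_i=0$). The only blemish is the phrasing ``Given $H\subsetneq H_0$'' where you mean ``Given $H\ne H_0$'' before deducing $H\subset H_0$; the logic is otherwise sound.
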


\begin{example} 
Any facet $F_i$ is bounded by a simple edge-cycle. This fits Example \ref{Ex:CG} for $G=F_i$.  
\end{example}
\begin{example}
It is known that for any two different vertices of $P$ there is a theta-subgraph with these vertices.
This is one of the equivalent definitions of the $3$-connectivity of the graph (see \cite[Section 11.3]{Gb03}).
Each edge $F_i\cap F_j$ of $P$ corresponds to a theta-subgraph according to Example \ref{Ex:CG}. 
Its two additional edges are formed by edges of the facets $F_i$ and $F_j$ complementary to $F_i\cap F_j$.
\end{example}
\begin{example}
Each vertex $F_i\cap F_j\cap F_k$ of $P$ corresponds to a~$K_4$-subgraph according to Example \ref{Ex:CG}. 
Its edges are $F_i\cap F_j$, $F_j\cap F_k$, $F_k\cap F_i$, and three additional edges formed by edges of the facets $F_i$, $F_j$ and $F_k$ complementary to the first three edges. 
\end{example}

\begin{example}
It is known that any simple $3$-polytope can be combinatorially obtained from $\Delta^3$ by a sequence of operations of  
cutting off a vertex or a set of successive edges of some facet by a single plane 
(V.~Eberhard (1891), M.~Br\"{u}ckner (1900), see \cite{Gb03}).  Each operation corresponds to a subdivision of a facet
of a graph into two facets by a~new edge. Each sequence of such operations connecting $\Delta^3$ and $P$ 
corresponds a $K_4$-subgraph~of ~$P$. 
\end{example}

There is the~following characterisation of  complexes $\mathcal{C}(3,k)$.
\begin{lemma}\label{lem:c1pc} 
Let $c$ be a coloring of a simple $3$-polytope $P$. Then 
\begin{enumerate}
\item $\mathcal{C}^1(P,c)$ is empty (equivalently, $\mathcal{C}(P,c)\simeq \mathcal{C}(3,1)$) if and only if the complex 
$\mathcal{C}(P,c)$ has exactly one facet;
\item $\mathcal{C}^1(P,c)$ is a~circle (equivalently, $\mathcal{C}(P,c)\simeq \mathcal{C}(3,2)$) 
if and only if the complex $\mathcal{C}(P,c)$ has exactly two facets;
\item $\mathcal{C}^1(P,c)$ is a~theta-graph (equivalently, $\mathcal{C}(P,c)\simeq \mathcal{C}(3,3)$) 
if and only if $\mathcal{C}(P,c)$ has exactly three facets and all of them are disks;
\item $\mathcal{C}^1(P,c)$ is a~$K_4$-graph (equivalently, $\mathcal{C}(P,c)\simeq \mathcal{C}(3,4)$) 
if and only if $\mathcal{C}(P,c)$ has exactly four facets, all of them are disks and any
two of them intersect.
\end{enumerate}
\end{lemma}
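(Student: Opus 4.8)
The plan is to run the whole argument through the structural description of $\mathcal{C}^1(P,c)$ for simple $3$-polytopes: by Theorem~\ref{th:CPc3} (and the discussion preceding it) each connected component of $\mathcal{C}^1(P,c)$ is either a simple closed curve without vertices or a connected $3$-valent spherical graph without bridges, hence without loops by Lemma~\ref{lem:2con}; moreover the facets of $\mathcal{C}(P,c)$ are exactly the closures of the connected components of $\partial P\setminus\mathcal{C}^1(P,c)$, so the number of facets equals the number of those components, and two facets sharing a $1$-face are distinct. First I would record the relevant face counts. A single circle cuts $S^2$ into two disks; a connected $3$-valent bridgeless graph with $V$ vertices has $3V/2$ edges and, by Euler's formula, $2+V/2$ faces, all disks by Lemma~\ref{lem:2con}; and a disjoint union of $t$ such components carrying $2s$ vertices in total (all on the $3$-valent pieces, since $3$-valent graphs have an even number of vertices by Lemma~\ref{lem:even}) has $1+t+s$ faces. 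I would also compute $\mathcal{C}(3,k)=\mathcal{C}(\Delta^3,c)$ directly for a $k$-coloring $c$ of the four facets of $\Delta^3$, $k=1,2,3,4$, using Corollary~\ref{cor:dnc}: one gets respectively one facet ($=\partial\Delta^3$) with empty $1$-skeleton; two disk facets meeting along a circle; three disk facets with $1$-skeleton a theta-graph; four pairwise-intersecting triangular facets with $1$-skeleton $K_4$. This yields the implications ``$\mathcal{C}(P,c)\simeq\mathcal{C}(3,k)\Rightarrow$ stated $1$-skeleton and facet conditions'' in each item.

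For the converse I would proceed in two steps. Step one, the facet condition forces the $1$-skeleton: if there is one facet it is all of $\partial P$, so $\mathcal{C}^1$ is empty; if there are exactly two facets there is no vertex (a vertex lies on three distinct facets), so $\mathcal{C}^1$ is a disjoint union of circles, and $1+t=2$ forces one circle; if there are three facets all disks then the two-nested-circles configuration is excluded (its middle region is an annulus), so there is a $3$-valent component, and $1+t+s=3$ forces $t=1$, $s=1$, i.e. a theta-graph; if there are four disk facets that pairwise intersect then $1+t+s=4$, so $(t,s)\in\{(1,2),(2,1),(3,0)\}$, the cases $(3,0)$ (three nested circles) and $(2,1)$ (a circle together with a theta-graph) each produce a non-disk annular face and are excluded, while $(1,2)$ means a single connected $3$-valent bridgeless multigraph on four vertices, of which there are up to isomorphism exactly two, namely $K_4$ and the graph $W$ obtained from a $4$-cycle by doubling two opposite edges; in any spherical embedding of $W$ each of the two double edges, together with the fact that the rest of $W$ is connected and attached only at its endpoints, bounds a $2$-gon face, and these two $2$-gons share neither a vertex nor an edge, so $W$ is excluded by the pairwise-intersection hypothesis, leaving $K_4$. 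Step two, the $1$-skeleton realizes $\mathcal{C}(3,k)$: in each of the four cases both $P$ and $\Delta^3$ are $3$-balls whose boundary spheres carry the same graph (empty, a circle, a theta-graph, or $K_4$) with all complementary faces disks, and any two such cellular embeddings in $S^2$ differ by a self-homeomorphism of $S^2$ (classical for the circle, a direct check of the unique rotation system for the theta-graph, Whitney's theorem for the $3$-connected graph $K_4$); this homeomorphism matches the graphs, hence the faces, hence the facets, and extends over the balls to an equivalence $\mathcal{C}(P,c)\simeq\mathcal{C}(3,k)$.

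The main obstacle is the $K_4$ case of ``facet condition $\Rightarrow$ $1$-skeleton'': one must know that the only connected $3$-valent bridgeless multigraphs on four vertices are $K_4$ and the two-double-edge graph $W$, and verify that ``all faces disks'' together with ``pairwise intersecting faces'' is precisely what rules out the competing configurations (three nested circles, a circle plus a theta-graph, and $W$). The enumeration is elementary — degree sequence $(3,3,3,3)$ with no loops leaves only these two graphs — but it is the one place where a genuine case analysis, rather than an Euler-characteristic count, is required; the analogous checks in items (1)--(3) are immediate.
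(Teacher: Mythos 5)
Your proof is correct, but it takes a genuinely different route from the paper's. The paper argues locally with intersection patterns of disk facets: with two facets they meet along their common boundary circle; with three disks the intersection of any two must be a single edge (more pieces would disconnect the complement) and the complement of their union is the third disk, giving the theta-graph; with four pairwise-intersecting disks the intersection of any two is a single edge or a pair of edges, and a pair of edges would make the union an annulus separating the remaining two facets, so every pairwise intersection is one edge, every facet is a triangle, and the $1$-skeleton is $K_4$. You instead work globally: you invoke the structure theorem for $\mathcal{C}^1(P,c)$ (disjoint circles and connected $3$-valent bridgeless components), count faces by Euler's formula as $1+t+s$, and then enumerate the admissible configurations for each prescribed facet count, discarding the competitors (nested circles, circle plus theta, and the doubled-edge graph $W$) via the disk and pairwise-intersection hypotheses. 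Both arguments are sound; yours trades the paper's short local analysis for a uniform counting scheme at the cost of a small multigraph enumeration in the $K_4$ case (which you carry out correctly, including the observation that the two bigon faces of $W$ are disjoint). You also make explicit the parenthetical equivalences with $\mathcal{C}(3,k)$ via uniqueness of the spherical embeddings and extension over the ball, a point the paper's proof leaves implicit.
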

\begin{proof}
The ``only if'' part follows from the definition. If $\mathcal{C}(P,c)$ has exactly two facets, then both of them are 
disks and they intersect at the common boundary circle $\mathcal{C}^1(P,c)$. If $\mathcal{C}(P,c)$ has exactly three facets
and all of them are disks, consider two of them. Their intersection should be an edge, and the complement to their union
is the interior of the third disk. Thus, $\mathcal{C}^1(P,c)$ is a~theta-graph. 
If $\mathcal{C}(P,c)$ has exactly four facets, all of them are disks and any two of them intersect, consider
two disks. Their intersection can be either an edge, or a pair of edges, for otherwise there are more than $4$ facets.
If the intersection is a pair of edges, then the complementary two facets do not intersect, which is a contradiction.
Thus, the intersection of any two facets is an edge and any edge belongs to two facets. Then any facet is a triangle and 
$\mathcal{C}^1(P,c)$ is a~$K_4$-graph. 
\end{proof}

\section{Hyperelliptic manifolds $N(P,\lambda)$ over $3$-polytopes}\label{sec:3hm}

\begin{definition}
A {\it Hamiltonian cycle} of a polytope $P$ is a simple edge-cycle in the graph of $P$ containing all the vertices of $P$.
Let us call a~theta-subgraph or a~$K_4$-subgraph of $P$ {\it Hamiltonian} if it contains all the vertices of $P$.
More generally, for a coloring $\kappa$ of a simple polytope $P$ 
we call an~empty set $\varnothing$, a~simple cycle, 
a~theta-subgraph or a~$K_4$-subgraph of~$\mathcal{C}^1(P,\kappa)$ {\it Hamiltonian},
if~it~contains all the vertices of $\mathcal{C}(P,\kappa)$. Here by a~simple cycle we mean either a~circle 
(that is a $1$-face without vertices) or a~simple edge-cycle in $\mathcal{C}^1(P,\kappa)$.
In particular, if an~empty set or~a~circle is Hamiltonian, then $\mathcal{C}^1(P,\kappa)$  has no vertices, and 
it is~a~disjoint union of~circles.
\end{definition}

In the papers \cite{M90, VM99M, VM99S2} the authors constructed examples of hyperelliptic $3$-manifolds
in five of eight Thurston's geometries: $\mathbb R^3$, $\mathbb H^3$, $\mathbb S^3$, $\mathbb H^2\times\mathbb R$, 
and $\mathbb S^2\times\mathbb R$. In each case $M$ is obtained as $X/\pi$, where $X$ is a geometry and $\pi$ is a 
discrete group of isometries acting freely on~$X$. 
These examples were build using a right-angled $3$-polytope $P$ equipped with a~Hamiltonian cycle, 
a~Hamiltonian theta-subgraph, or a~Hamiltonian $K_4$-subgraph.  

In this section we will enumerate all hyperelliptic $3$-manifolds $N(P,\lambda)$ corresponding to
affine colorings of rank $r$ such that the hyperelliptic involution belongs to the group $\mathbb Z_2^r=H_0'$ 
canonically acting on $N(P,\lambda)$. In turns out that in the case of a right-angled polytope $P$ 
and an~affinely independent coloring $\lambda$ these are exactly manifolds built by  A.D.~Mednykh and A.Yu.~Vesnin.
In general case these manifolds correspond to proper Hamiltonian cycles, theta- and $K_4$-subgraphs 
in the complexes $\mathcal{C}(P,\kappa)$ defined by colorings $\kappa$ of simple $3$-polytopes.
\begin{construction}[An affine coloring induced by a Hamiltonian subgraph]\label{con:acH}
Let $\kappa$ be a~coloring  of a simple polytope $P$.
Given a~proper Hamiltonian cycle, theta-, or $K_4$-subgraph $\Gamma\subset\mathcal{C}^1(P,\kappa)$ 
one can define an affine coloring  $\Lambda_\Gamma$ {\it induced by} $\Gamma$ 
and a special hyperelliptic manifold $N(P,\kappa,\Gamma)=N(P,\lambda_\Gamma)$ as follows. 

Consider a~facet $D$ of~$\Gamma$ such that $D$ is a~union of more than one facets of~$\mathcal{C}(P,\kappa)$. 
Such a~facet exists~if $\Gamma\ne \mathcal{C}^1(P,\kappa)$. The facet $D$  is a~disk bounded by a simple cycle 
of~$\mathcal{C}^1(P,\kappa)$ and 
containing no vertices of~$\mathcal{C}^1(P,\kappa)$ in its interior. 
Consider the~adjacency graph $G_D$ of~the~facets of~$\mathcal{C}(P,\kappa)$  lying in $D$. Its vertices are facets and 
its edges correspond to~$1$-faces of $\mathcal{C}(P,\kappa)$ lying in two facets. 
The graph $G_D$ is connected. If its edge $E$ corresponds to an edge $e$ of 
$\mathcal{C}(P,\kappa)$, then $e$ has vertices on $\partial D$ and $E$ is a bridge. 
If $E$ corresponds to the circle of $\mathcal{C}(P,\kappa)$, then $E$ is~also a~bridge. Thus, 
$G_D$ is a~tree and its vertices can be colored in two colors such that adjacent vertices have different colors.  
Hence, the facets of $\Gamma$ define a~coloring $c$ of $P$ constant on them, 
and the tree corresponding to each facet defines the~$0/1$-coloring $\chi$ in Construction \ref{con:shm}. 
We obtain an~affine coloring $\lambda_\Gamma=\lambda(c,\chi)$
and a~special hyperelliptic manifold
$N(P,\kappa,\Gamma)=N(P,\lambda_{\Gamma})$ of rank $r$, where $r=2$ for a~Hamiltonian cycle, $r=3$
for a~Hamiltonian theta-subgraph, and $r=4$ for a~Hamiltonian $K_4$-subgraph.  Moreover, 
$\mathcal{C}^1(P,(\lambda_\Gamma)_{\tau})=\Gamma$.

Similarly, a~proper Hamiltonian empty set $\Gamma=\varnothing$ induces an affine coloring $\lambda_\Gamma$
and defines a~special hyperelliptic manifold $N(P,\kappa,\Gamma)=N(P,\lambda_\Gamma)$ of rank $r=1$. 
Namely, if the~complex $\mathcal{C}(P,\kappa)$ has no vertices, then $\mathcal{C}^1(P,\kappa)$ is a~disjoint union 
of~circles and each circle divides the~sphere $\partial P$ into 
two disks. Then the~adjacency graph of~facets of~$\mathcal{C}(P,\kappa)$ is a~tree and we can define 
the~$0/1$-coloring $\chi$ and the~constant coloring $c$ in~Construction \ref{con:shm}.
\end{construction}
\begin{remark}
It is not true that if the manifolds $N(P,\kappa,\Gamma)$ and $N(Q,\kappa',\Gamma')$ are weakly equivariantly
homeomorphic, then there is an equivalence $\mathcal{C}(P,\kappa)\to \mathcal{C}(Q,\kappa')$ such that
$\Gamma\to \Gamma'$. Two combinatorially different Hamiltonian subgraphs in $\mathcal{C}(P,\kappa)$
may induce the~same affine coloring $\lambda(c,\chi)$. In Fig. \ref{P8Ham} there is a polytope $P$
with three Hamiltonian cycles inducing the~same affine coloring of rank $2$ in four colors. Two of these
cycles can be moved to each other by~a~combinatorial equivalence of~$P$, but
the~third can not.
\end{remark}
\begin{definition} \label{def:graph}
A {\it matching} of a graph $G$ is a disjoint set of edges. A matching is {\it perfect}, if it contains
all the vertices of $G$. Perfect matching is also called a {\it $1$-factor}. A~{\it $1$-factorization} is~a~partition
of~the~set of~edges of~$G$ into disjoint $1$-factors. A {\it perfect pair} from a $1$-factorization is a pair of $1$-factors 
whose union is~a~Hamiltonian cycle. A {\it perfect $1$-factorization} of a graph is a $1$-factorization 
having the property that every pair of $1$-factors is a perfect pair.

Any Hamiltonian cycle $\Gamma$ in a~$3$-valent graph $G$
defines the following $1$-factorisation of $G$.  Each edge of $G$ not lying 
in $\Gamma$ connects two different vertices of $G$ and any vertex belongs 
to~a~unique edge of this type. We obtain a~$1$-factor. Then there are even
number of vertices and edges in $\Gamma$ and it is partitioned into two 
additional $1$-factors. 

We will call a Hamiltonian cycle in a $3$-valent graph {\it $k$-Hamiltonian}, if 
the corresponding $1$-factorization has exactly $k$ perfect pairs.
\end{definition}
\begin{theorem}\label{th:hypHam}
Let $\lambda$ be an affine  coloring of rank $r$ of a simple $3$-polytope $P$.
Then $N(P,\lambda)$ is~a~hyperelliptic manifold with a~hyperelliptic involution lying in~the~group 
$\mathbb Z_2^r=H_0'$ of~orientation preserving involutions canonically acting on~$N(P,\lambda)$ 
if and only if $1\leqslant r\leqslant 4$ and  $\lambda$ is~induced~by 
\begin{enumerate}
\item a~Hamiltonian empty set in $\mathcal{C}^1(P,\lambda)$ for $r=1$;
\item a~Hamiltonian cycle in $\mathcal{C}^1(P,\lambda)$ for $r=2$;
\item a~Hamiltonian theta-subgraph  in $\mathcal{C}^1(P,\lambda)$ for $r=3$;
\item a~Hamiltonian $K_4$-subgraph in $\mathcal{C}^1(P,\lambda)$ for $r=4$.
\end{enumerate}
Hyperelliptic involutions in $\mathbb Z_2^r$ bijectively correspond to the~Hamiltonian subgraphs of the above type
inducing the~coloring $\lambda$.  Moreover,
\begin{enumerate}
\item for $r=1$ there is a~unique hyperelliptic involution;
\item for $r=2$ there can be $1$, $2$ or $3$ such involutions. If the Hamiltonian cycle is a circle,
then there is a unique hyperelliptic involution. For the~Hamiltonian edge-cycle each involution
corresponds to~a~perfect pair of~$1$-factors. In particular,
there are $k\geqslant 2$ hyperelliptic involutions if and only if $\mathcal{C}^1(P,\lambda)$  
is~a~connected $3$-valent graph and $\lambda$ is~induced by~a~$k$-Hamiltonian cycle.
\item for $r=3$ and 
\begin{enumerate}
\item $I(\lambda)=4$ there can be $1$, $2$, $3$, $4$ or $6$ hyperelliptic involutions;
\item $I(\lambda)=5$ there can be $1$, $2$ or $3$ such involutions;
\item $I(\lambda)=6$  there is a~unique hyperelliptic involution;
\end{enumerate}
\item for $r=4$ and
\begin{enumerate}
\item $I(\lambda)=5$ there can be $1$, $2$ or $6$ hyperelliptic involutions;
\item $I(\lambda)=6$ there can be $1$ or $2$ such involutions;
\item $I(\lambda)\in\{7,8\}$  there is a~unique hyperelliptic involution;
\end{enumerate}
\end{enumerate}
\end{theorem}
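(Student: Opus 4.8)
First I would reduce everything to the classification of special hyperelliptic manifolds. By Corollary \ref{cor:3hem} every hyperelliptic involution $\tau\in\mathbb Z_2^r$ acting on $N(P,\lambda)$ is special, so the hypothesis ``$N(P,\lambda)$ is hyperelliptic with an involution in $\mathbb Z_2^r$'' is equivalent to ``$N(P,\lambda)$ is a special hyperelliptic manifold of rank $r$''; Theorem \ref{th:shmclass} then already gives $1\leqslant r\leqslant n+1=4$ and determines the possible images $I(\lambda)$. Fix such an involution $\tau$. By definition $\mathcal{C}(P,\lambda_\tau)\simeq\mathcal{C}(3,r)$, so by Theorem \ref{th:NP3sp} and Lemma \ref{lem:c1pc} the graph $\Gamma_\tau:=\mathcal{C}^1(P,\lambda_\tau)$ is the empty set, a circle, a theta-graph, or $K_4$ according as $r=1,2,3,4$. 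Since $\lambda_\tau=\widehat\Pi\circ\lambda$ merely glues together facets of $\mathcal{C}(P,\lambda)$ whose colours differ by $\tau$, every facet of $\mathcal{C}(P,\lambda_\tau)$ is a union of facets of $\mathcal{C}(P,\lambda)$, hence $\Gamma_\tau\subset\mathcal{C}^1(P,\lambda)$; and by the last clause of Theorem \ref{th:shmclass} it contains every vertex of $\mathcal{C}(P,\lambda)$. So $\Gamma_\tau$ is a Hamiltonian subgraph of the type dictated by $r$.

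Next I would check that $\lambda$ is induced by $\Gamma_\tau$ in the sense of Construction \ref{con:acH}: one takes $c=\lambda_\tau$ and the colouring $\chi$ furnished by Proposition \ref{prop:conhi}. The one point to verify is that for each facet $D$ of $\Gamma_\tau$ which is a union of several facets of $\mathcal{C}(P,\lambda)$, the adjacency graph $G_D$ of those facets is a tree whose proper $2$-colouring is $\chi$. Hamiltonicity of $\Gamma_\tau$ forces the interior of $D$ to contain no vertex of $\mathcal{C}(P,\lambda)$, so every $1$-face of $\mathcal{C}(P,\lambda)$ lying in $D$ is either a circle or a simple arc with both endpoints on $\partial D$, and in either case it separates $D$; hence the corresponding edge of $G_D$ is a bridge. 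A connected graph all of whose edges are bridges is a tree, and since adjacent facets of $\mathcal{C}(P,\lambda)$ necessarily carry different colours, inside $D$ they alternate between the two values of $\widehat\Pi^{-1}$ of the colour of $D$, which is precisely the $2$-colouring. (The alternative $\Gamma_\tau=\mathcal{C}^1(P,\lambda)$ cannot happen here: it would force $\mathcal{C}(P,\lambda)\simeq\mathcal{C}(3,r)$, whence $|I(\lambda)|$ is at most the number of facets of $\mathcal{C}(3,r)$, namely $r$, contradicting $\mathrm{rk}\,\lambda=r$.) Conversely, if $\lambda$ is induced by a Hamiltonian subgraph $\Gamma$ of the stated type, then $N(P,\lambda)$ is the special hyperelliptic manifold associated to $\Gamma$ by Construction \ref{con:acH}, so it is hyperelliptic with an involution $\tau\in\mathbb Z_2^r$ (the main direction), and $\mathcal{C}^1(P,\lambda_\tau)=\Gamma$. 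The assignments $\tau\mapsto\Gamma_\tau$ and $\Gamma\mapsto(\text{main direction})$ are mutually inverse: injectivity of $\tau\mapsto\Gamma_\tau$ holds since $\tau$ is recovered from $\lambda$ and $\Gamma$ as the common difference of the (at most two) $\lambda$-values occurring on the facets of $\mathcal{C}(P,\lambda)$ inside a facet of $\Gamma$, and at least one facet of $\Gamma$ is not a single facet of $\mathcal{C}(P,\lambda)$ by the previous parenthetical remark. This yields the claimed bijection.

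Finally I would prove the numerical assertions. For each of the three cases of Theorem \ref{th:shmclass}, Proposition \ref{prop:clhi} already lists the candidate special involutions --- the $\tfrac{r(r+1)}{2}$ ``edges'' of the boolean $r$-simplex when $I(\lambda)$ is such a simplex, the three main directions of $\Pi^2$ when $I(\lambda)=\Delta^{r-3}*\Pi^2$, and the single main direction when $I(\lambda)=\Delta^{r-k-1}*\Pi^k$ with $k\geqslant 3$ --- a candidate $\tau$ being an actual hyperelliptic involution exactly when $\mathcal{C}(P,\lambda_\tau)\simeq\mathcal{C}(3,r)$. What remains is to decide which cardinalities of the set of realized candidates can occur. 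For $r=1$ there is only one candidate. For $r=2$ I would recast the question in terms of $1$-factorizations: when $\mathcal{C}^1(P,\lambda)$ has vertices it is, by Theorem \ref{th:CPc3}, a connected bridgeless $3$-valent graph at each vertex of which the three incident edges realize the three distinct nonzero ``colour differences''; these differences partition the edges into three $1$-factors, a nonzero $w$ gives a hyperelliptic involution exactly when the union of the two $1$-factors distinct from $w$ is connected --- i.e.\ is a Hamiltonian cycle, i.e.\ forms a perfect pair --- and that cycle induces the same $1$-factorization, so the number of hyperelliptic involutions equals the number of perfect pairs and $\lambda$ is induced by a $k$-Hamiltonian cycle; the remaining subcases, where $\mathcal{C}^1(P,\lambda)$ has no vertices, are treated by a separate direct argument. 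For $r=3$ and $r=4$ what is left is a finite case analysis: using the symmetry group acting on the list of candidate directions together with the constraint that every realized direction must split $\mathcal{C}(P,\lambda)$ into a copy of $\mathcal{C}(3,r)$, one rules out the forbidden cardinalities (for instance $5$ when $r=3$ and $|I(\lambda)|=4$) and constructs polytopes attaining each permitted value, the examples being supplied by Sections \ref{Sec:RHS} and \ref{sec:3Ham}. I expect this last finite analysis --- determining exactly which subsets of the candidate directions are simultaneously realizable --- to be the main obstacle; everything else is a fairly direct assembly of the results quoted above.
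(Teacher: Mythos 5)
Your overall architecture matches the paper's: the paper also derives this theorem from (i) a lemma identifying hyperelliptic involutions $\tau$ with Hamiltonian subgraphs $\mathcal{C}^1(P,\lambda_\tau)$ of the appropriate type (proved exactly as you do, from Theorem \ref{th:NP3sp} together with Corollary \ref{cor:3hem} and Corollary \ref{cor:C01inv}), and (ii) a more technical enumeration theorem (Theorem \ref{th:hyperell}) organized, as in your proposal, by the three shapes of $I(\lambda)$ coming from Theorem \ref{th:shmclass} and Proposition \ref{prop:clhi}. Your verification that $\lambda$ is induced by $\Gamma_\tau$ via the tree structure of the adjacency graph inside each facet of $\Gamma_\tau$ is the same argument as in Construction \ref{con:acH}, and your $1$-factorization treatment of the case $r=2$ is the intended one.

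The genuine gap is the part you yourself flag as ``the main obstacle'': the numerical assertions in items (3) and (4). These are not a routine consequence of the candidate list from Proposition \ref{prop:clhi}; they require determining exactly which subsets of the $\tfrac{r(r+1)}{2}$ (resp.\ $3$) candidate directions can be simultaneously realized. For instance, to exclude the value $5$ when $r=3$ and $|I(\lambda)|=4$ one must show that if five of the six partitions yield complexes equivalent to $\mathcal{C}(3,3)$ then so does the sixth; and to exclude $3$, $4$, $5$ when $r=4$ and $|I(\lambda)|=5$ one must show that as soon as two involutions $\boldsymbol{p}_1+\boldsymbol{p}_2$ and $\boldsymbol{p}_i+\boldsymbol{p}_j$ ($i,j\geqslant 3$) are both hyperelliptic, the configuration is rigid enough to force either exactly $2$ or all $6$. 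In the paper this occupies the bulk of the proof of Theorem \ref{th:hyperell}: a case-by-case analysis of which of the sets $G(\boldsymbol{p}_i)$, $G(\boldsymbol{p}_i,\boldsymbol{p}_j)$ are disks and how they intersect, leading to the explicit list of complexes in Fig.~\ref{rkckp1} (together with the analysis in Figs.~\ref{G34} and \ref{G345} and, for $|I(\lambda)|=5,6$, the reduction moves of Fig.~\ref{2invr34}), plus explicit realizing examples. Your appeal to ``the symmetry group acting on the list of candidate directions'' does not by itself produce these exclusions, because the constraint is not symmetric in the candidates once some $G(\boldsymbol{p}_i)$ fails to be a disk; the actual argument tracks how the failure of one set to be a disk propagates to the other candidates. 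Until that analysis is supplied, the stated lists of attainable cardinalities are unproved.
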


We will obtain this result as a~corollary of the following lemma and a~more technical theorem.

\begin{lemma}
Let $\lambda$ be an affine  coloring of rank $r$ of a simple $3$-polytope $P$ and $\tau\in \mathbb Z_2^r$.
Then $\mathcal{C}(P,\lambda_\tau)\simeq \mathcal{C}(3,r)$ (that is, $\tau$ is~a~hyperelliptic involution) if and only if 
one of~the~following conditions hold:
\begin{enumerate}
\item $r=1$ and $\mathcal{C}^1(P,\lambda_\tau)$ is~a~Hamiltonian empty set in~$\mathcal{C}^1(P,\lambda)$;
\item $r=2$ and $\mathcal{C}^1(P,\lambda_\tau)$ is~a~Hamiltonian cycle in~$\mathcal{C}^1(P,\lambda)$;
\item $r=3$ and $\mathcal{C}^1(P,\lambda_\tau)$ is~a~Hamiltonian theta-subgraph in~$\mathcal{C}^1(P,\lambda)$;
\item $r=4$ and $\mathcal{C}^1(P,\lambda_\tau)$ is~a~Hamiltonian $K_4$-subgraph in~$\mathcal{C}^1(P,\lambda)$.
\end{enumerate}
In all these cases $\lambda$ is induced by the corresponding Hamiltonian subgraph. 
\end{lemma}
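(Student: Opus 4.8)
The plan is to reduce the statement to Theorem~\ref{th:NP3sp} by analysing how the complex $\mathcal{C}(P,\lambda_\tau)$ sits inside $\mathcal{C}(P,\lambda)$, where $\lambda_\tau=\widehat{\Pi}\circ\lambda$ with $\widehat{\Pi}\colon\mathbb{Z}_2^r\to\mathbb{Z}_2^r/\langle\tau\rangle\simeq\mathbb{Z}_2^{r-1}$ the canonical affine surjection. First I would record the elementary observations. The coloring $\lambda_\tau$ is affine of rank $r-1$, since the image of $\lambda$ stays affinely spanning under the surjection $\widehat{\Pi}$. Because $\lambda_i=\lambda_j$ forces $(\lambda_\tau)_i=(\lambda_\tau)_j$, each facet of $\mathcal{C}(P,\lambda_\tau)$ is a union of facets of $\mathcal{C}(P,\lambda)$; hence $\mathcal{C}^1(P,\lambda_\tau)\subseteq\mathcal{C}^1(P,\lambda)$, as any $1$-face of $\mathcal{C}(P,\lambda_\tau)$ is a component of the intersection of two of its facets, so it lies in the union of the pairwise intersections of the facets of $\mathcal{C}(P,\lambda)$. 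Moreover a vertex of $\mathcal{C}(P,\lambda_\tau)$ is a point lying in three distinct facets of $\mathcal{C}(P,\lambda)$, hence --- since by Lemma~\ref{lem:Mc} a point of $\partial P$ lies in at most three facets of $\mathcal{C}(P,\lambda)$, and lying in three forces it to be a $0$-face --- a vertex of $\mathcal{C}(P,\lambda)$. Thus $\mathcal{C}^1(P,\lambda_\tau)$ is a genuine subgraph of the spherical graph $\mathcal{C}^1(P,\lambda)$, its edges and circles being simple edge-paths and cycles of $\mathcal{C}^1(P,\lambda)$; and by Corollary~\ref{cor:C01inv} it contains every vertex of $\mathcal{C}(P,\lambda)$, so --- whatever its homeomorphism type --- it is automatically Hamiltonian.

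Next I would feed this into Theorem~\ref{th:NP3sp}. By Lemma~\ref{lem:affhl}, $\tau$ is hyperelliptic iff $N(P,\lambda_\tau)\simeq S^3$, and since $\rk\lambda_\tau=r-1$, Theorem~\ref{th:NP3sp} says this holds iff $1\leqslant r\leqslant 4$ and $\mathcal{C}(P,\lambda_\tau)\simeq\mathcal{C}(3,r)$, i.e.\ (Lemma~\ref{lem:c1pc}) iff $\mathcal{C}^1(P,\lambda_\tau)$ is empty for $r=1$, a circle for $r=2$, a theta-graph for $r=3$, or $K_4$ for $r=4$. By the first paragraph such a $\mathcal{C}^1(P,\lambda_\tau)$ is precisely a Hamiltonian empty set, cycle, theta-subgraph, or $K_4$-subgraph of $\mathcal{C}^1(P,\lambda)$ --- this is the ``only if'' direction. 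Conversely, if the hypothesis of one of the items (1)--(4) holds, the facets of $\mathcal{C}(P,\lambda_\tau)$ are the closures of the connected components of $\partial P\setminus\mathcal{C}^1(P,\lambda_\tau)$ --- there are $1$, $2$, $3$, $4$ of them respectively, which for $r\geqslant 3$ are disks by Lemma~\ref{lem:sgc}, any two of the four disks in the $K_4$ case sharing an edge; so $\mathcal{C}(P,\lambda_\tau)\simeq\mathcal{C}(3,r)$ by Lemma~\ref{lem:c1pc} and $\tau$ is hyperelliptic. (The Hamiltonian clause in (1)--(4) costs nothing: it is forced by Corollary~\ref{cor:C01inv}.)

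Finally I would check that $\lambda$ is induced by $\Gamma:=\mathcal{C}^1(P,\lambda_\tau)$ in the sense of Construction~\ref{con:acH} with $\kappa=\lambda$. The facets of $\Gamma$ are the facets of $\mathcal{C}(P,\lambda_\tau)$, and inside such a facet $D$ every facet of $\mathcal{C}(P,\lambda)$ carries a $\lambda$-value lying in one $2$-element coset $\{\boldsymbol{a}_D,\boldsymbol{b}_D\}$ of $\langle\tau\rangle$; as adjacent facets of $\mathcal{C}(P,\lambda)$ carry different $\lambda$-values, these values alternate, and since the adjacency graph of the facets of $\mathcal{C}(P,\lambda)$ inside $D$ is a tree (each of its edges corresponds to a $1$-face of $\mathcal{C}(P,\lambda)$ with both endpoints on $\partial D$, or to a circle, hence is a bridge), this alternation is exactly a choice of the $0/1$-coloring $\chi$, while $\lambda_\tau$ is the coloring $c$ of Construction~\ref{con:shm}; thus $\lambda=\lambda(c,\chi)=\lambda_\Gamma$. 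Alternatively one can quote Corollary~\ref{cor:3hem} (every hyperelliptic involution here is special) together with Proposition~\ref{prop:conhi}. I expect the main difficulty to be the bookkeeping of the first paragraph --- matching the cells of $\mathcal{C}(P,\lambda_\tau)$ with subsets of $\mathcal{C}^1(P,\lambda)$ (vertices to vertices, edges to edge-paths, circles to cycles) --- which rests on the local normal form of Lemma~\ref{lem:Mc} and the fact that at a vertex of a simple $3$-polytope exactly three facets meet.
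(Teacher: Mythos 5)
Your proof is correct and takes essentially the same route as the paper, whose entire argument is the citation of Theorem~\ref{th:NP3sp} and Corollary~\ref{cor:C01inv}; you have simply made explicit the reduction via Lemma~\ref{lem:affhl}, the identification of $\mathcal{C}^1(P,\lambda_\tau)$ as a subgraph of $\mathcal{C}^1(P,\lambda)$, and the verification of the ``induced by'' clause along the lines of Proposition~\ref{prop:conhi} and Construction~\ref{con:acH}. No gaps.
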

\begin{proof}
The lemma follows from Theorem \ref{th:NP3sp} and Corollary \ref{cor:C01inv}.
\end{proof}

\begin{theorem}\label{th:hyperell}
Let $\lambda$ be an affine  coloring of rank $r$ of a simple $3$-polytope $P$.
Then $N(P,\lambda)$ is a hyperelliptic manifold with a hyperelliptic involution lying in 
the~group $\mathbb Z_2^r=H_0'$ of orientation preserving involutions canonically acting on~$N(P,\lambda)$ 
if and only if $1\leqslant r\leqslant 4$ and one of~the~ following conditions holds:
\begin{enumerate}
\item $I(\lambda)=\{\boldsymbol{p}_1, \dots, \boldsymbol{p}_{r+1}\}$ is a boolean $r$-simplex, $1\leqslant r\leqslant 4$,
and at least for one vector $\tau=\boldsymbol{p}_i+\boldsymbol{p}_j$, $i\ne j$, the complex $\mathcal{C}(P,\lambda_\tau)$
is equivalent to $\mathcal{C}(3,r)$. Each hyperelliptic involution $\tau\in \mathbb Z_2^r$ has this form and there are at
most $\frac{r(r+1)}{2}$ such involutions. 
More precisely,
an~involution $\tau\in\mathbb Z_2^r$ is hyperelliptic if and only if $\tau=\boldsymbol{p}_i+\boldsymbol{p}_j$, $i\ne j$, 
and for
\begin{itemize}
\item $r=1$ it is equal to $1\in\mathbb Z_2$. This is always a~unique hyperelliptic involution.
\item $r=2$ the~set $G(\boldsymbol{p}_k)$, $\{i,j,k\}=\{1,2,3\}$, is a disk. There can be $0$,
$1$, $2$, or $3$ such involutions.
\item $r=3$ each set $G(\boldsymbol{p}_i, \boldsymbol{p}_j)$, $G(\boldsymbol{p}_k)$, $G(\boldsymbol{p}_l)$,
$\{i,j,k,l\}=\{1,2,3,4\}$, is a~disk. There can be $0$, $1$, $2$, $3$, $4$ or $6$ such involutions.
\item $r=4$ each set $G(\boldsymbol{p}_i, \boldsymbol{p}_j)$, $G(\boldsymbol{p}_k)$, $G(\boldsymbol{p}_l)$, 
$G(\boldsymbol{p}_s)$, $\{i,j,k,l,s\}=\{1,2,3,4,5\}$, is a~disk and any two of these disks intersect. There can be 
$0$, $1$, $2$ or $6$ hyperelliptic involutions. 
\end{itemize}
The classification of~complexes with more than one hyperelliptic involution and the corresponding manifolds $N(P,\lambda)$ 
is presented in~Fig.~\ref{rkckp1}.
\begin{figure}[h]
\begin{center}
\includegraphics[width=\textwidth]{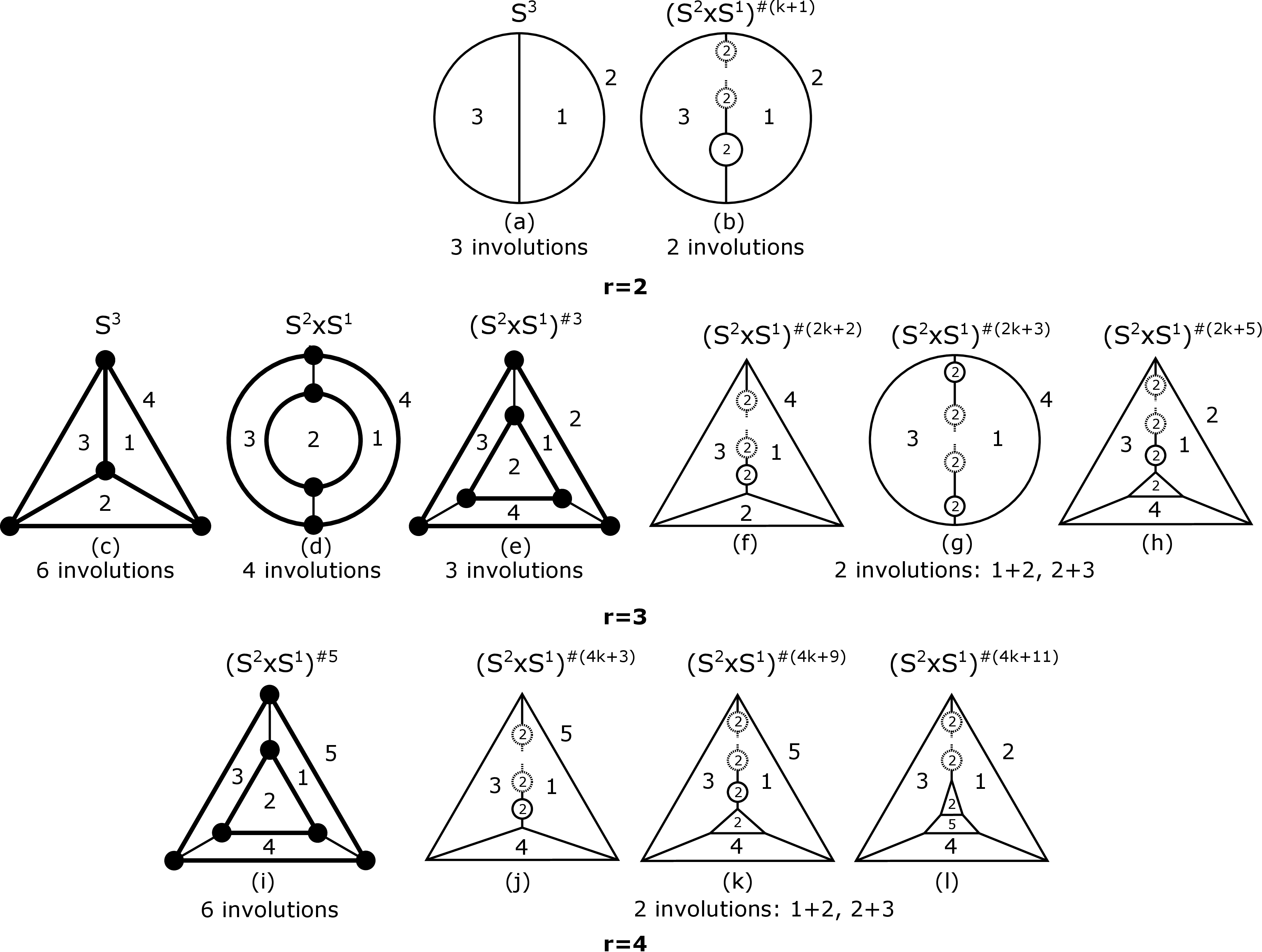}
\end{center}
\caption{All complexes with more than one hyperelliptic involution for~the~case when $I(\lambda)$
is a boolean simplex.  On the top we write the homeomorphism type of~$N(P,\lambda)$, where $k\geqslant 0$ 
is~the~number of dashed circles}\label{rkckp1}
\end{figure}
\item $I(\lambda)=\Pi^2*\Delta^{r-3}$, $2\leqslant r\leqslant 4$, where 
$\Pi^2=\{\boldsymbol{q}_1,\boldsymbol{q}_2,\boldsymbol{q}_3,\boldsymbol{q}_4\}\simeq\mathbb Z^2$
is~a~boolean $2$-plane and $\Delta^{r-3}=\{\boldsymbol{p}_1,\dots, \boldsymbol{p}_{r-2}\}$ 
is~a~boolean simplex, and at least for one vector $\tau=\boldsymbol{q}_i+\boldsymbol{q}_j$, $i\ne j$, the complex 
$\mathcal{C}(P,\lambda_\tau)$ is equivalent to $\mathcal{C}(3,r)$. 
Each hyperelliptic involution $\tau\in \mathbb Z_2^r$ has this form and there are at most three such involutions. 
More precisely,  an~involution $\tau\in\mathbb Z_2^r$ is hyperelliptic if and only if 
$\tau=\boldsymbol{q}_i+\boldsymbol{q}_j=\boldsymbol{q}_k+\boldsymbol{q}_l$
for some partition  $\{1,2,3,4\}=\{i,j\}\sqcup\{k,l\}$, and one of the following conditions holds
\begin{itemize}
\item $r=2$ and $G(\boldsymbol{q}_i,\boldsymbol{q}_j)$ is a~disk 
(then $G(\boldsymbol{q}_k,\boldsymbol{q}_l)$ is also a~disk bounded by the same 
Hamiltonian cycle $\Gamma$ from $\mathcal{C}^1(P,\lambda)$). 
There can be $0$, $1$, $2$, or $3$ such involutions.
Moreover, there are $k\geqslant 2$ hyperelliptic involutions if and 
only if $\mathcal{C}^1(P,\lambda)$ is a~connected $3$-valent graph and $\Gamma$ is~a~$k$-Hamiltonian cycle in~it.
For $k=3$ this implies that  $\mathcal{C}(P,\lambda)$ is equivalent to~the~boundary complex of~a~simple $3$-polytope $Q$.
\item $r=3$ and each set $G(\boldsymbol{q}_i, \boldsymbol{q}_j)$, 
$G(\boldsymbol{q}_k,\boldsymbol{q}_l)$ and $G(\boldsymbol{p}_1)$
is a~disk. There can be $0$, $1$, $2$ or $3$ such involutions.
Moreover, if there are $2$ hyperelliptic involutions, then $G(\boldsymbol{p}_1)$ is~a~quadrangle, a~triangle,
or~a~bigon, and the complex $\mathcal{C}(P,\lambda)$ can be reduced to~a~complex $\mathcal{C}(P,\lambda')$ 
for~an~affine coloring $\lambda'$ of rank $2$ either
\begin{itemize}
\item with $2$ or $3$ hyperelliptic involutions by reductions (a)-(d), or (f) in Fig.~\ref{2invr34}, or
\item with $2$ hyperelliptic involutions by reduction (e). 
\end{itemize}
If there are $3$ hyperelliptic involutions, then $G(\boldsymbol{p}_1)$ is a~triangle and $\mathcal{C}(P,\lambda)$ can be reduced 
to~$\mathcal{C}(P,\lambda')$ of rank $2$ with $3$ hyperelliptic involutions by reduction (e). 
\item $r=4$ and each set $G(\boldsymbol{q}_i, \boldsymbol{q}_j)$, $G(\boldsymbol{q}_k,\boldsymbol{q}_l)$, 
$G(\boldsymbol{p}_1)$ and $G(\boldsymbol{p}_2)$
is a~disk and any two of these disks intersect. There can be 
$0$, $1$ or $2$ hyperelliptic involutions. Moreover, if there are $2$ hyperelliptic involutions, then $G(\boldsymbol{p}_1, \boldsymbol{p}_2)$ is~a~quadrangle, a~triangle,
or~a~$2$-gon, and the complex $\mathcal{C}(P,\lambda)$ can be reduced to a complex $\mathcal{C}(P,\lambda')$ 
for~an~affine coloring $\lambda'$ of rank $2$ with $2$ or $3$ hyperelliptic involutions by reductions 
(a)-(f) in Fig.~\ref{2invr34}. 
\end{itemize}
\begin{figure}[h]
\begin{center}
\includegraphics[width=0.6\textwidth]{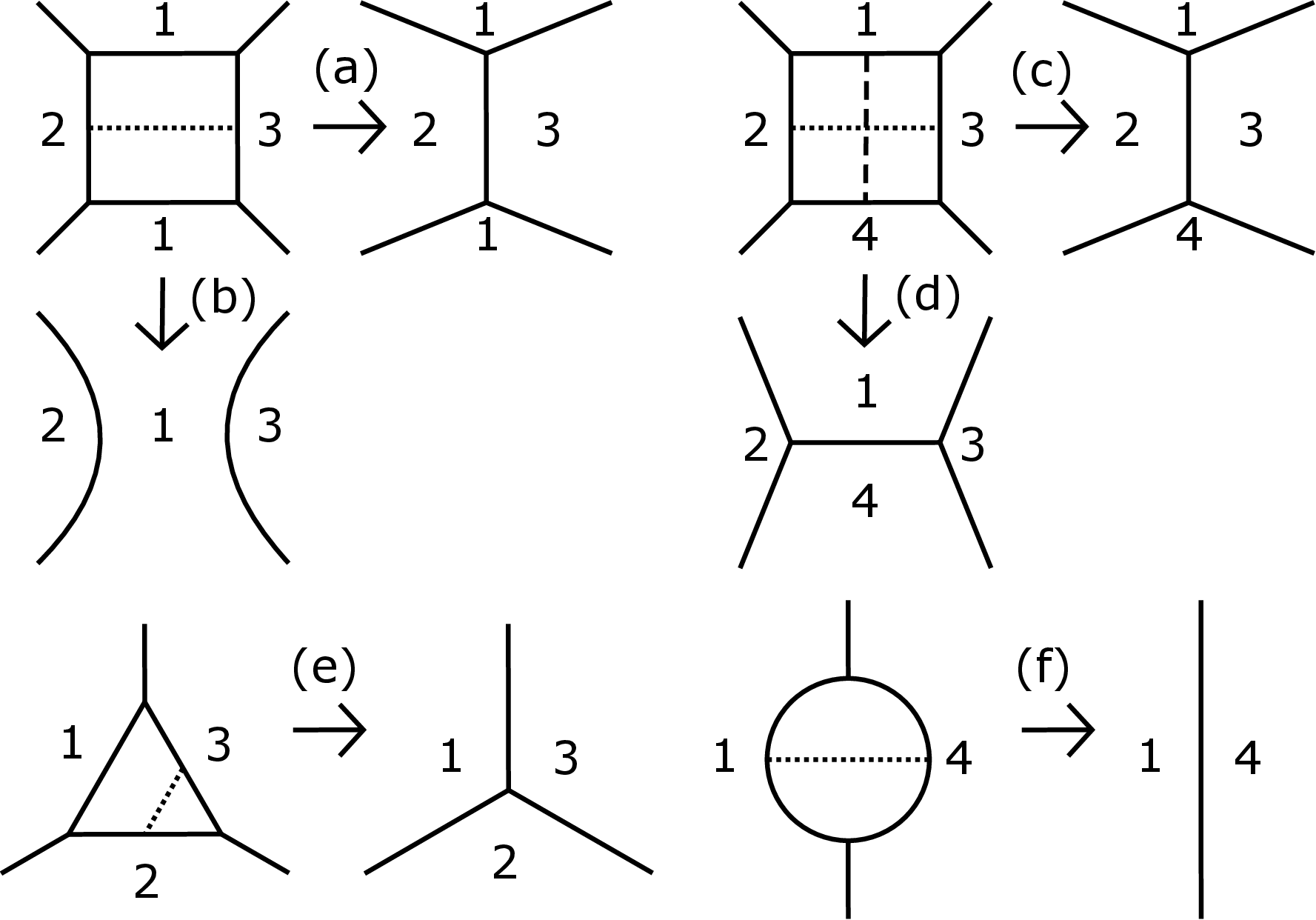}
\end{center}
\caption{Reductions for complexes with $2$ and $3$ hyperelliptic involutions for $r=3$ and $|I(\lambda)|=5$.
By dotted and dashed lines we mark possible edges for~the~case $r=4$ and $|I(\lambda)|=6$}\label{2invr34}
\end{figure}
\item $I(\lambda)=\Pi^k*\Delta^{r-k-1}$, $r\geqslant k\geqslant 3$, and for the main direction $\tau=\boldsymbol{l}$
of $\Pi^k$ the complex $\mathcal{C}(P,\lambda_\tau)$ is equivalent to $\mathcal{C}(3,r)$.  
In this case the main direction is a unique hyperelliptic  involution in $\mathbb Z_2^r$.
\end{enumerate}
\end{theorem}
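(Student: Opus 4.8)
The plan is to reduce everything to the already-established special case and then carry out a combinatorial case analysis on the complex $\mathcal C(P,\lambda)$. First I would observe that, since $P$ is a simple $3$-polytope, $N(P,\lambda)$ is automatically a closed orientable $3$-manifold by Corollary~\ref{cor:manor}, so an involution $\tau\in\mathbb Z_2^r=H_0'$ is hyperelliptic exactly when $N(P,\lambda)/\langle\tau\rangle=N(P,\lambda_\tau)$ is homeomorphic to $S^3$; by Theorem~\ref{th:NP3sp} this is equivalent to $\mathcal C(P,\lambda_\tau)\simeq\mathcal C(3,r)$, i.e.\ to $\tau$ being \emph{special}. (The ``only if'' half, that every hyperelliptic involution is special, is Corollary~\ref{cor:3hem}.) Consequently $N(P,\lambda)$ admits a hyperelliptic involution in $H_0'$ iff it is a special hyperelliptic manifold of rank $r$ with $n=3$, and Theorem~\ref{th:shmclass} then immediately gives $1\le r\le 4$ together with the trichotomy on $I(\lambda)$: a boolean $r$-simplex, $\Delta^{r-3}*\Pi^2$, or $\Delta^{r-k-1}*\Pi^k$ with $k\ge 3$. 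In dimension $3$ the emptiness conditions $\bigcap_{\lambda_j\in\Pi^2}G(\lambda_j)=\varnothing$ appearing in Theorem~\ref{th:shmclass} hold automatically, since such an intersection point would lie in four distinct facets of the simple $3$-polytope $P$; so no extra hypotheses survive beyond the conditions on $\mathcal C(P,\lambda_\tau)$.

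Next I would pin down, in each of the three cases, which directions $\tau$ can possibly be hyperelliptic. By Theorem~\ref{th:NP3sp}, if $\mathcal C(P,\lambda_\tau)\simeq\mathcal C(3,r)$ then the image of $\lambda_\tau$ must be a boolean $r$-simplex, i.e.\ the projection $\widehat\Pi\colon\mathbb Z_2^r\to\mathbb Z_2^r/\langle\tau\rangle$ must carry $I(\lambda)$ onto a boolean $r$-simplex. Lemma~\ref{lem:DPkl} lists exactly these directions: the $\tfrac{r(r+1)}{2}$ ``edge'' directions $\boldsymbol p_i+\boldsymbol p_j$ when $I(\lambda)$ is a boolean $r$-simplex ($k=1$), the three main directions of $\Pi^2$ when $I(\lambda)=\Delta^{r-3}*\Pi^2$ ($k=2$), and only the main direction $\boldsymbol l$ of $\Pi^k$ when $k\ge 3$. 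This already yields the stated upper bounds $\tfrac{r(r+1)}{2}$, three, and one, and settles case (3) completely: there the unique candidate $\tau=\boldsymbol l$ is hyperelliptic iff $\mathcal C(P,\lambda_\tau)\simeq\mathcal C(3,r)$. For a surviving candidate $\tau$ I would then translate ``$\mathcal C(P,\lambda_\tau)\simeq\mathcal C(3,r)$'' into the disk conditions of the statement. The facets of $\mathcal C(P,\lambda_\tau)$ are the connected components of the sets $G(S)$ as $S$ runs over the fibres of $\widehat\Pi|_{I(\lambda)}$, and these fibres are one pair $\{\boldsymbol p_i,\boldsymbol p_j\}$ plus $r-1$ singletons in case (1), and the two diagonal pairs of $\Pi^2$ plus the $r-2$ vertices of $\Delta^{r-3}$ in case (2). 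Since the $G(S)$ cover $\partial P=S^2$ and there are at most four of them, Lemma~\ref{lem:c1pc} shows that $\mathcal C(P,\lambda_\tau)\simeq\mathcal C(3,r)$ is equivalent to each $G(S)$ being a disk (connectedness of any one of $\le r$ pieces of a $2$-sphere glued along simple arcs propagating to the rest), together, for $r=4$, with the requirement that any two of the four disks meet; unpacking this per fibre gives precisely the bulleted conditions ``$G(\boldsymbol p_k)$ is a disk'', ``$G(\boldsymbol q_i,\boldsymbol q_j),G(\boldsymbol p_1),\dots$ are disks and pairwise intersect'', etc. The statement that every vertex of $\mathcal C(P,\lambda)$ lies on the $1$-skeleton of $\mathcal C(P,\lambda_\tau)$ is Corollary~\ref{cor:C01inv}, which (via the Lemma preceding the theorem) also identifies $\lambda$ as induced by a Hamiltonian empty set, cycle, theta- or $K_4$-subgraph according as $r=1,2,3,4$, exactly as in Construction~\ref{con:acH}.

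The remaining — and most laborious — step is the enumeration of how many of the candidate conditions can hold at once, and the classification of the extremal complexes. For $r=1$ the single candidate $\tau=1$ always works because $\lambda_\tau$ is constant. For case (1) I would argue case by case in $r$: each ``$G(\cdot)$ is a disk'' is one candidate condition on the sphere decomposition $\partial P=\bigcup G(\boldsymbol p_i)$, and using Theorem~\ref{th:CPc3} (so that $\mathcal C^1(P,\lambda)$ is a disjoint union of simple closed curves and bridgeless $3$-valent graphs) together with an Euler-characteristic/planarity count one checks which patterns of simultaneously satisfied conditions actually occur, obtaining the lists $\{0,1,2,3\}$ for $r=2$, $\{0,1,2,3,4,6\}$ for $r=3$, $\{0,1,2,6\}$ for $r=4$, and drawing the complexes with $\ge 2$ involutions (Figure~\ref{rkckp1}). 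For case (2) with $r=2$, the two diagonal sets $G(\boldsymbol q_i,\boldsymbol q_j)$ bound a common Hamiltonian cycle $\Gamma=\mathcal C^1(P,\lambda_\tau)$ in $\mathcal C^1(P,\lambda)$; when the latter is a connected bridgeless $3$-valent graph the three candidates are exactly the perfect pairs of the $1$-factorization associated to $\Gamma$ (Definition~\ref{def:graph}), so there are $k$ hyperelliptic involutions iff $\Gamma$ is $k$-Hamiltonian, while a disconnected $\mathcal C^1(P,\lambda)$ or a circle component leaves at most one. For case (2) with $r=3,4$, I would use a reduction lemma collapsing the extra disk $G(\boldsymbol p_1)$ (resp.\ $G(\boldsymbol p_1,\boldsymbol p_2)$) to a point — legitimate precisely when it is a quadrangle, triangle or bigon, which I would show is forced whenever $\ge 2$ involutions exist — thereby transporting the count to the $r=2$ situation; verifying that each such reduction only alters $N(P,\lambda)$ by connected sums with copies of $S^2\times S^1$, and listing the admissible reductions, is the content of Figure~\ref{2invr34}. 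I expect this last combinatorial bookkeeping, and in particular the rigidity claim that the collapsed face is a small polygon when two involutions coexist, to be the main obstacle; everything before it is a fairly direct assembly of Theorems~\ref{th:NP3sp}, \ref{th:shmclass} and Lemmas~\ref{lem:DPkl}, \ref{lem:c1pc}.
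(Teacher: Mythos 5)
Your proposal follows essentially the same route as the paper's proof: reduction of ``hyperelliptic'' to ``special'' via Theorem~\ref{th:NP3sp} and Corollary~\ref{cor:3hem}, the trichotomy on $I(\lambda)$ and the candidate directions from Theorem~\ref{th:shmclass} (i.e.\ Propositions~\ref{prop:conhi} and~\ref{prop:clhi}, resting on Lemma~\ref{lem:DPkl}), the translation of $\mathcal{C}(P,\lambda_\tau)\simeq\mathcal{C}(3,r)$ into the disk conditions via Lemma~\ref{lem:c1pc}, Corollary~\ref{cor:C01inv} for the Hamiltonian-subgraph interpretation, and the same collapse-to-rank-$2$ strategy for case (2) with $r=3,4$. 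The one part you only outline rather than execute is the explicit case analysis that produces the counts (e.g.\ excluding $5$ for $r=3$), the classification in Fig.~\ref{rkckp1} and the reductions of Fig.~\ref{2invr34}, together with the rigidity claim that the collapsed face is a bigon, triangle or quadrangle; this is where the bulk of the paper's argument lies (it proceeds by directly analysing which $G(\cdot)$ are disks and by a colour-pair count on the vertices of $\partial G(\boldsymbol{p}_1,\dots,\boldsymbol{p}_{r-2})$ rather than by an Euler-characteristic count), but the claims you would need to verify are exactly the ones the paper verifies.
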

\begin{proof}
The proof essentially follows from Propositions \ref{prop:conhi} and \ref{prop:clhi}, Lemma \ref{lem:c1pc}, 
Theorem \ref{th:NP3sp}, and Corollary \ref{cor:3hem}.

We need to prove only statements concerning the enumeration
of special hyperelliptic involutions in $\mathbb Z_2^r$ and the classification 
of~complexes with more than one such involutions.

Let $I(\lambda)=\{\boldsymbol{p}_1, \dots, \boldsymbol{p}_{r+1}\}$ be a boolean $r$-simplex, $1\leqslant r\leqslant 4$.
The case $r=1$ is trivial.

Let $r=2$. If all the facets $G(\boldsymbol{p}_1)$, $G(\boldsymbol{p}_2)$ and $G(\boldsymbol{p}_3)$ are disks,
then $\mathcal{C}^1(P,\lambda)$ is a theta-graph (Fig.~\ref{rkckp1}(a)) by Lemma \ref{lem:c1pc}.
If two facets $G(\boldsymbol{p}_i)$ and $G(\boldsymbol{p}_j)$ are disks and the third facet $G(\boldsymbol{p}_k)$
is not, then we have the complex draw in Fig.~\ref{rkckp1}(b).

Let $r=3$. Assume that $G(\boldsymbol{p}_1,\boldsymbol{p}_2)$, $G(\boldsymbol{p}_3)$, $G(\boldsymbol{p}_4)$
are disks, that is the involution $\boldsymbol{p}_1+\boldsymbol{p}_2$ is hyperelliptic. 
The involution $\boldsymbol{p}_3+\boldsymbol{p}_4$ is hyperelliptic if and only if both
$G(\boldsymbol{p}_1)$ and $G(\boldsymbol{p}_2)$ are also disks. We obtain 
two complexed drawn in Fig.~\ref{rkckp1}(c) and (d). They have $6$ and $4$ hyperelliptic involutions respectively. 
Now assume that one of these sets is not a disk, say $G(\boldsymbol{p}_2)$. 
Then there are at most $3$ hyperelliptic involutions and all of them have the form $\boldsymbol{p}_2+\boldsymbol{p}_i$.
If either $G(\boldsymbol{p}_1)$ is not a disk, or it is a disk and does not intersect the~disk
$G(\boldsymbol{p}_3,\boldsymbol{p}_4)$,  then $\boldsymbol{p}_1+\boldsymbol{p}_2$ is 
a~unique hyperelliptic involution. Thus, we can assume that  $G(\boldsymbol{p}_1)$ is a disk 
and it intersects the~disk $G(\boldsymbol{p}_3,\boldsymbol{p}_4)$. Then their intersection
consists of $k+2\geqslant 2$ disjoint segments,  $G(\boldsymbol{p}_2)$ 
a~disjoint union of $k+2$ disks, and the~combinatorics of the complex $\mathcal{C}(P,\lambda)$
depends on~the~position of the edge $G(\boldsymbol{p}_3)\cap G(\boldsymbol{p}_4)$
in the disk $G(\boldsymbol{p}_3,\boldsymbol{p}_4)$ in relation to these $k+2$ disk, see Fig. \ref{G34}(a). 
If $G(\boldsymbol{p}_i,\boldsymbol{p}_2)$ is a~disk for $i=3$ or $i=4$, then 
$G(\boldsymbol{p}_i)$ intersects each connected component of $G(\boldsymbol{p}_2)$. In particular, 
if this holds for both $i=3$ and $i=4$, we obtain the complex in Fig. \ref{G34}(b) and in Fig. \ref{rkckp1}(e).
If this holds only for one index, say $i=3$, then we obtain complexes in Fig. \ref{G34}(c)-(f).
The~complexes (c), (d), and (e) correspond to~the~complexes in~Fig. \ref{rkckp1}(h), (f), and (g), and 
for the~complex (f) the set $G(\boldsymbol{p}_2,\boldsymbol{p}_3)$ is~a~cylinder.

\begin{figure}[h]
\begin{center}
\includegraphics[width=\textwidth]{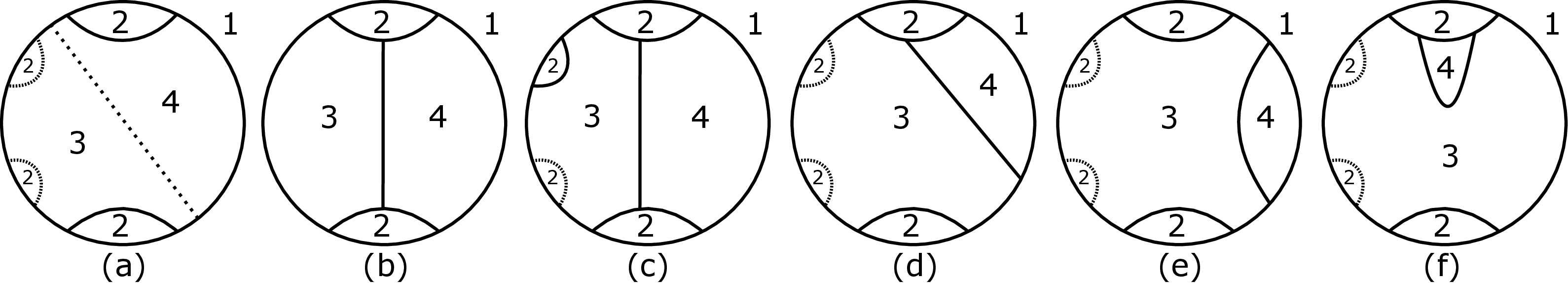}
\end{center}
\caption{A complex $\mathcal{C}(P,\lambda)$ when $G(\boldsymbol{p}_1)$ and $G(\boldsymbol{p}_3,\boldsymbol{p}_4)$ 
are intersecting disks, and $G(\boldsymbol{p}_2)$ is not a~disk}\label{G34}
\end{figure}

Let $r=4$. Assume that $G(\boldsymbol{p}_1,\boldsymbol{p}_2)$, $G(\boldsymbol{p}_3)$, $G(\boldsymbol{p}_4)$,
$G(\boldsymbol{p}_5)$ are pairwise intersecting disks, that is the involution $\boldsymbol{p}_1+\boldsymbol{p}_2$ is hyperelliptic. 
If some of the involutions $\boldsymbol{p}_3+\boldsymbol{p}_4$, $\boldsymbol{p}_3+\boldsymbol{p}_5$,
$\boldsymbol{p}_4+\boldsymbol{p}_5$ is hyperelliptic, then both $G(\boldsymbol{p}_1)$ and $G(\boldsymbol{p}_2)$
are disks, and they are glued to the disk $G(\boldsymbol{p}_1, \boldsymbol{p}_2)$ along the~common 
edge $G(\boldsymbol{p}_1)\cap G(\boldsymbol{p}_2)$. If the ends of this edge belong to the~same disk
$G(\boldsymbol{p}_i)$, $i=3$, $4$, $5$, then we obtain the~complex in~Fig. \ref{rkckp1}(j) with $k=0$ dashed
circles. It has $2$ hyperelliptic involutions. If the ends of $G(\boldsymbol{p}_1)\cap G(\boldsymbol{p}_2)$ belong to different
disks $G(\boldsymbol{p}_i)$ and $G(\boldsymbol{p}_j)$, then we obtain the~complex in Fig. \ref{rkckp1}(i) with $6$
hyperelliptic involutions. Now assume that one of the~sets $G(\boldsymbol{p}_1)$ and $G(\boldsymbol{p}_2)$ 
is not a disk, say $G(\boldsymbol{p}_2)$. Then there are at most $4$ hyperelliptic involutions and all of them have the form 
$\boldsymbol{p}_2+\boldsymbol{p}_i$. If either $G(\boldsymbol{p}_1)$ is not a disk, or it is a disk and does not intersect the~disk
$G(\boldsymbol{p}_3,\boldsymbol{p}_4,\boldsymbol{p}_5)$,  then $\boldsymbol{p}_1+\boldsymbol{p}_2$ is 
a~unique hyperelliptic involution. Thus, we can assume that  $G(\boldsymbol{p}_1)$ is a disk 
and it intersects the~disk $G(\boldsymbol{p}_3,\boldsymbol{p}_4,\boldsymbol{p}_5)$. Then their intersection
consists of $k+2\geqslant 2$ disjoint segments,  $G(\boldsymbol{p}_2)$ 
a~disjoint union of $k+2$ disks, and the~combinatorics of the complex $\mathcal{C}(P,\lambda)$
depends on~the~positions of~the~ends of~the~edges $G(\boldsymbol{p}_3)\cap G(\boldsymbol{p}_4)$, 
$G(\boldsymbol{p}_4)\cap G(\boldsymbol{p}_5)$, and $G(\boldsymbol{p}_5)\cap G(\boldsymbol{p}_3)$
on the circle
$\partial G(\boldsymbol{p}_3,\boldsymbol{p}_4, \boldsymbol{p}_5)$ in relation to these $k+2$ disks, see Fig. \ref{G345}(a). 
If $G(\boldsymbol{p}_i,\boldsymbol{p}_2)$ is a~disk for some $i=3,4,5$, then 
$G(\boldsymbol{p}_i)$ intersects each connected component of $G(\boldsymbol{p}_2)$. In particular, this
can not hold for all $i\in\{3,4,5\}$. If this holds for two values of $i$, say $i=3$ and $4$, then we obtain the complex 
in~Fig.~\ref{G345}(b) without dashed arcs. 
Now assume that only one set $G(\boldsymbol{p}_i,\boldsymbol{p}_2)$ is a~disk, say for $i=3$. 
We obtain complexes in~Fig.~\ref{G345}(b)-(g). In the~complexes (b), (d), and (g) the~set $G(\boldsymbol{p}_5)$ 
does not intersect $G(\boldsymbol{p}_1)$, hence they have a unique hyperelliptic involution $\boldsymbol{p}_1+\boldsymbol{p}_2$. 
The complexes (c), (e), and (f) have two hyperelliptic involutions and correspond to complexes (l), (k), and (j) 
in Fig.~\ref{rkckp1} (the latter with $k\geqslant 1$ dashed circles).
\begin{figure}[h]
\begin{center}
\includegraphics[width=0.6\textwidth]{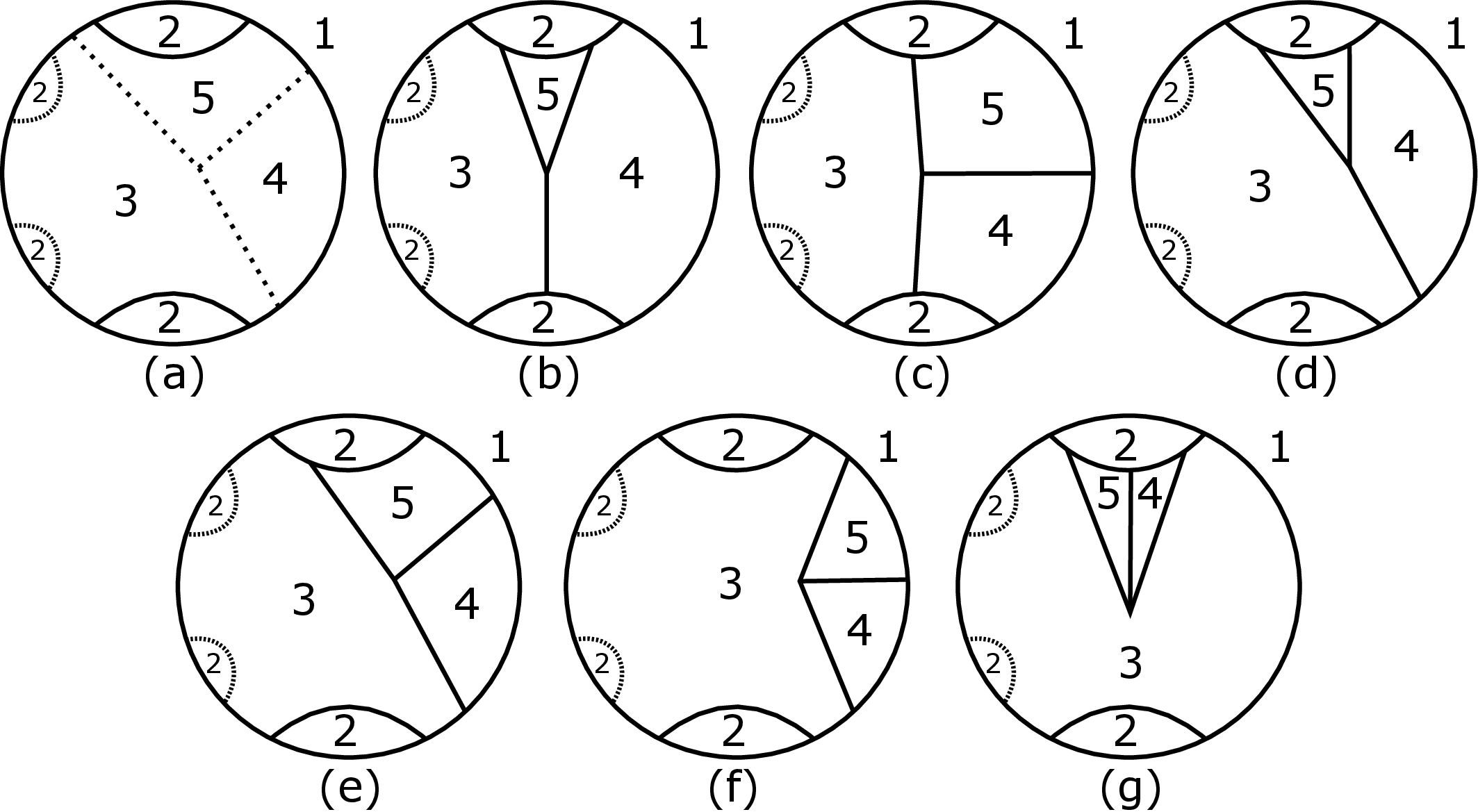}
\end{center}
\caption{A complex $\mathcal{C}(P,\lambda)$ when $G(\boldsymbol{p}_1)$ and 
$G(\boldsymbol{p}_3,\boldsymbol{p}_4, \boldsymbol{p}_4)$ are intersecting disks, and $G(\boldsymbol{p}_2)$ is not a~disk}\label{G345}
\end{figure}

The homeomorphism type of manifolds $N(P,\lambda)$ corresponding to complexes
in Fig. \ref{rkckp1} follow directly from Lemma \ref{lem:NPL2e}.

If  $I(\lambda)=\{\boldsymbol{q}_1,\boldsymbol{q}_2,\boldsymbol{q}_3, \boldsymbol{q}_4\}\simeq \Pi^2$, then special
hyperelliptic involutions are exactly sums $\boldsymbol{q}_i+\boldsymbol{q}_j=\boldsymbol{q}_k+\boldsymbol{q}_l$ 
corresponding to partitions $\{1,2,3,4\}=\{i,j\}\sqcup\{k,l\}$ such that 
$G(\boldsymbol{q}_i,\boldsymbol{q}_j)$ is a~disk (as~well as~its~complement $G(\boldsymbol{q}_k,\boldsymbol{q}_l)$). 
The boundary of this disk is a Hamiltonian cycle in~$\mathcal{C}^1(P,\lambda)$.
There can be one, two or three such partitions 
corresponding to a~Hamiltonian cycle as~it~is shown in~Fig.~\ref{Ham5pr}, \ref{Hamcube}, and \ref{Hamdod}. 
\begin{figure}[h]
\begin{center}
\includegraphics[width=0.6\textwidth]{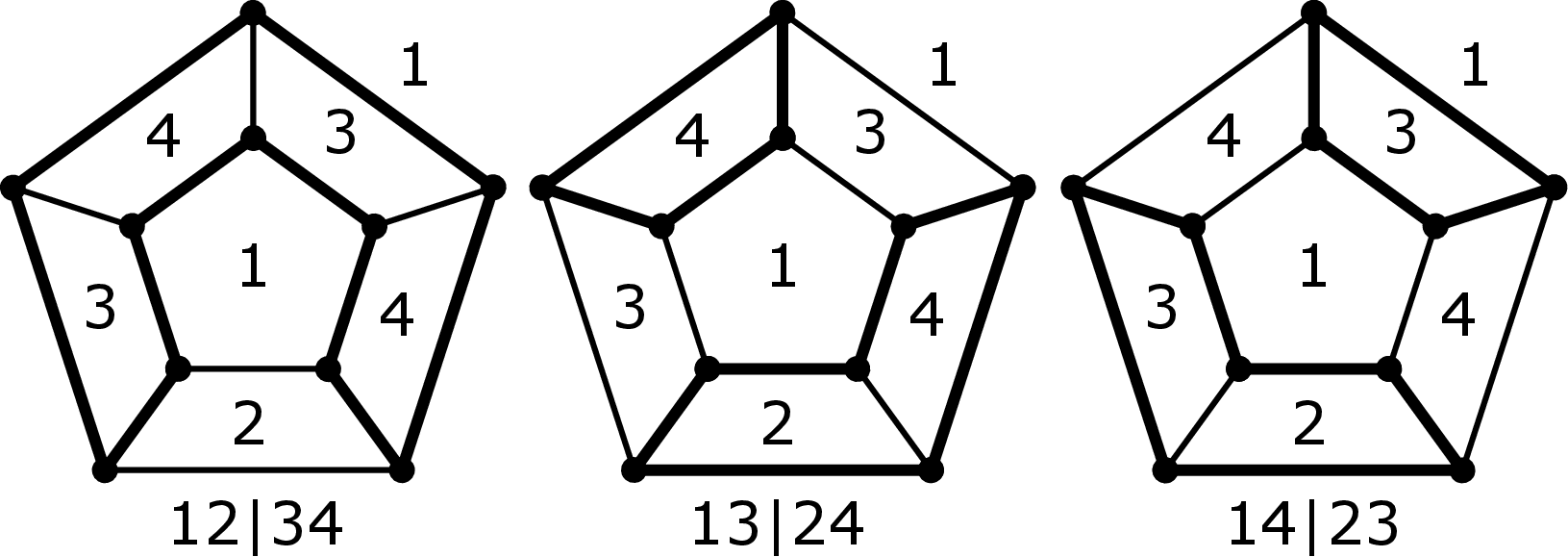}
\end{center}
\caption{The Hamiltonian cycle on the $5$-prism}\label{Ham5pr}
\end{figure}
\begin{lemma}
If $I(\lambda)\simeq \Pi^2$ and there are at least two hyperelliptic involutions in $\mathbb Z_2^2$, 
then $\mathcal{C}^1(P,\lambda)$ has no circles.
\end{lemma}
\begin{proof}
Indeed, each circle $\gamma$ is a boundary component of two facets $G_1$ and $G_2$ of different colors 
$\boldsymbol{q}_i$ and $\boldsymbol{q}_j$. 
At least for one partition $\{i,k\}\sqcup\{j,l\}=\{1,2,3,4\}$ 
the~sets $G(\boldsymbol{q}_i,\boldsymbol{q}_k)$ and $G(\boldsymbol{q}_j,\boldsymbol{q}_l)$
are disks.  Hence, their common boundary is $\gamma$. 
Since each disk consists of facets of two colors, 
each of the facets $G_1$ and $G_2$ has more than one boundary components and each component
different from $\gamma$ leads to the facet of the color $\boldsymbol{q}_k$ for $G_1$ and $\boldsymbol{q}_l$ for $G_2$.
But both sets $G(\boldsymbol{q}_k,\boldsymbol{q}_j)$ and $G(\boldsymbol{q}_k,\boldsymbol{q}_l)$
are disconnected, and they can not be disks. A contradiction.
\end{proof}
The group $\mathbb Z_2^2$ contains three hyperelliptic involutions if and only if for each of the~three partitions
$\{i,j\}\sqcup\{k,l\}=\{1,2,3,4\}$ the sets $G(\boldsymbol{q}_i,\boldsymbol{q}_j)$ are disks. 
This holds if and only if the~boundary of any of~these disks is~a~$3$-Hamiltonian cycle in $\mathcal{C}^1(P,\lambda)$.
By Lemma~\ref{lem:3HamQ} $\mathcal{C}(P,\lambda)\simeq\partial Q$ for a~simple $3$-polytope $Q$,
since  $\mathcal{C}^1(P,\lambda)$ is not a theta-graph for $|I(\lambda)|=4$. 

Assume that $I(\lambda)=\{\boldsymbol{q}_1,\boldsymbol{q}_2,\boldsymbol{q}_3, \boldsymbol{q}_4\}*
\{\boldsymbol{p}_1,\dots, \boldsymbol{p}_{r-2}\}$, $r\geqslant 3$. An~involution $\tau$
is hyperelliptic if and only if $\tau=\boldsymbol{q}_i+\boldsymbol{q}_j=\boldsymbol{q}_k+\boldsymbol{q}_l$
and $\mathcal{C}(P,\lambda_\tau)\simeq \mathcal{C}(3,r)$. Assume that there are at least two such involutions.
For each of them the~sets $G(\boldsymbol{q}_i,\boldsymbol{q}_j)$, $G(\boldsymbol{q}_k,\boldsymbol{q}_l)$, and 
$G(\boldsymbol{p}_1,\dots, \boldsymbol{p}_{r-2})$ are disks, and these disks are facets of~a~theta-graph $\Theta_{i,j}$.
Without loss of generality assume that hyperelliptic involutions correspond to partitions 
$\{1,2\}\sqcup\{3,4\}$ and $\{1,3\}\sqcup\{2,4\}$. 
Consider the vertices of $\mathcal{C}(P,\lambda)$ lying on the boundary of the disk  
$G(\boldsymbol{p}_1,\dots, \boldsymbol{p}_{r-2})$ and corresponding to~edges lying outside this disk. Each edge 
is~an~intersection of two facets of $\mathcal{C}(P,\lambda)$ of~different colors. Let 
us assign this pair of colors to the corresponding vertex. Then the two vertices corresponding to~the~vertices 
of~$\Theta_{1,2}$ have colors $(a,b)$, $a\in\{1,2\}$, $b\in \{3,4\}$, and all the other vertices -- $(1,2)$ and $(3,4)$.
Each vertex of types $(1,2)$ and $(3,4)$ necessarily
corresponds to~a~vertex of~$\Theta_{1,3}$. Therefore, there are at most two such vertices,
and $G(\boldsymbol{p}_1,\dots, \boldsymbol{p}_{r-2})$ is a~quadrangle, a~triangle, or a bigon 
(for $r=4$ we do not take into account the vertices of $G(\boldsymbol{p}_1)\cap G(\boldsymbol{p}_2)$). 
If there are two vertices, then either they both correspond to one type, say $(1,2)$,
and we obtain the configuration in~Fig. \ref{2invr34}(a),(b), or they correspond to two types and 
up to a~renumbering  of colors we obtain the~configuration in~Fig.~\ref{2invr34}(c),(d).
In the first case we can change the colors at~all~the~facets of~$P$ corresponding 
to~$G(\boldsymbol{p}_1,\dots, \boldsymbol{p}_{r-2})$ to $2$ (or to $3$)
to obtain the reduction (a), or to $1$ to obtain the reduction (b). In the second case we can change
the colors to $2$ (or $3$) to obtain the reduction (c), or to $1$ (or $4$) to obtain (d). In all these cases
each of the two Hamiltonian theta-graphs or $K_4$-graphs is reduced to~a~Hamiltonian cycle.
Moreover, in both cases the third partition $\{1,4\}\sqcup\{2,3\}$ does not give a~Hamiltonian theta-graph 
(or a~$K_4$-graph), while for the reduced complex $\mathcal{C}(P,\lambda')$ it can give.
For $r=4$ the edge $G(\boldsymbol{p}_1)\cap G(\boldsymbol{p}_2)$ in the~first case  should have 
one vertex lying on the boundary of a facet of color $2$ and the other -- of color $3$, and in the second
case these vertices can lie either on the boundaries of facets of colors $1$ and $4$, or $2$ and $3$.

If there is only one vertex of types $(1,2)$ or $(3,4)$, then up to a renumbering of colors we obtain  
the~configuration  in Fig.~\ref{2invr34}(e). Changing the colors to $1$ (or $2$, or $3$) we obtain the reduction 
(e).   For $r=3$ the reduced complex has the same number of Hamiltonian subgraphs corresponding to
the partitions of colors. For $r=4$ the~vertices of~the~edge $G(\boldsymbol{p}_1)\cap G(\boldsymbol{p}_2)$
should lie on the boundaries of facets of colors $2$ and $3$, and the third partition can not give 
the~Hamiltonian $K_4$-graph, while for the reduced complex  it can give.

If there are no vertices of types $(1,2)$ and $(3,4)$, then up to~a~renumbering of colors we obtain  
the~configuration  in~Fig. \ref{2invr34}~(f). In this case both for the~complex and for the~reduced complex 
the~third partition does not give the~Hamiltonian theta-graph ($K_4$-graph).  
For $r=4$ the~vertices of~the~edge $G(\boldsymbol{p}_1)\cap G(\boldsymbol{p}_2)$
should lie on the boundaries of facets of colors $1$ and $4$.

If  $I(\lambda)=\Pi^k*\Delta^{r-k-1}$, $r\geqslant k\geqslant 3$, then by Proposition \ref{prop:clhi}
the~main direction is a~unique hyperelliptic involution. This finishes the proof.
\end{proof}


\begin{example}
Example \ref{ex:cnindep} implies that for a simple $3$-polytope $P$ hyperelliptic manifolds $N(P,\lambda)$ 
of rank $r$ with affinely independent colorings $\lambda$ and a~hyperelliptic involution  $\tau\in\mathbb Z_2^r$ 
correspond to Hamiltonian cycles, Hamiltonian theta-subgraphs and Hamiltonian $K_4$-subgraphs of $P$ for $r=2$, 
$3$, and $4$ respectively. Indeed, in this example we showed how 
a~manifold~$N(P,\lambda)$ gives a~Hamiltonian subgraph, and  
Construction \ref{con:acH} gives the~manifold from a~subgraph.

For compact right-angled $3$-polytopes in one of the geometries 
$\mathbb R^3$, $\mathbb H^3$, $\mathbb S^3$, $\mathbb H^2\times\mathbb R$, and $\mathbb S^2\times\mathbb R$,
these are exactly examples built in \cite{M90} and \cite{VM99S2}. 
The same manifolds arise for the pairs $(P,\lambda)$ with $\mathcal{C}(P,\lambda)$  
equivalent to boundaries of right-angled polytopes. On the other hand, if $\mathcal{C}(P,\lambda)$
is not equivalent to a boundary of a  right-angled polytope, then our manifolds are not reduced to the examples 
from \cite{M90} and \cite{VM99S2}. 
\end{example}

\section{Rational homology spheres $N(P,\lambda)$ over $3$-polytopes}\label{Sec:RHS}
In this section we will classify all rational homology $3$-spheres $N(P,\Lambda)$ over simple $3$-polytopes~$P$.
\begin{definition}
We call a topological space $X$ a {\it rational homology $n$-sphere} ($n$-$RHS$), if $X$ is a closed topological $n$-manifold
and $H_k(X,\mathbb Q)=H_k(S^n,\mathbb Q)$ for all $k$.
\end{definition}  
We will use the following result,
which was first proved for small covers and $\mathbb Q$ coefficients in \cite{ST12, T12}. Let us identify the subsets
$\omega\subset[m]=\{1,\dots,m\}$ with vectors $\boldsymbol{x}\in\mathbb Z_2^m$ by~the~rule $\omega=\{i\colon x_i=1\}$.
For a vector-coloring $\Lambda$ of rank $(r+1)$ denote by ${\rm row}\,\Lambda$ the subspace in $\mathbb Z_2^m$ 
generated by the row vectors of the matrix $\Lambda$. Equivalently, 
$$
{\rm row}\,\Lambda=\{(x_1,\dots,x_m)\in\mathbb Z_2^m\colon\exists\boldsymbol{c}\in(\mathbb Z_2^{r+1})^*\colon x_i=
\boldsymbol{c}\Lambda_i, i=1,\dots,m\}.
$$
Remind that $P_{\omega}=\bigcup_{i\in\omega}F_i$.
\begin{theorem}\cite[Theorem 4.5]{CP17}\label{HNPLth}
Let $\Lambda$ be a vector-coloring of rank $(r+1)$ of a simple $n$-polytope $P$ and $R$ be
a commutative ring in which $2$ is a unit. Then there is an $R$-linear isomorphism
$$
H^k(N(P,\Lambda), R)\simeq\bigoplus_{\omega\in{\rm row}\,\Lambda}\widetilde{H}^{k-1}(P_{\omega}, R)
$$
\end{theorem}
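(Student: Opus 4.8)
The plan is to reduce the statement to the real analogue of Hochster's formula combined with a transfer argument. First, since $2$ is a unit in $R$ the order $|H(\Lambda)|=2^{m-r-1}$ is invertible in $R$, so the transfer argument of Theorem \ref{th:transfer} (which works in cohomology over any ring in which $|H(\Lambda)|$ is invertible), applied to the simplicial action of $H(\Lambda)$ on $\mathbb R\mathcal{Z}_P$ with respect to the triangulation coming from the barycentric subdivision of $P$, yields an $R$-linear isomorphism
$$
H^*(N(P,\Lambda),R)\simeq H^*(\mathbb R\mathcal{Z}_P,R)^{H(\Lambda)}.
$$
Thus it suffices to compute $H^*(\mathbb R\mathcal{Z}_P,R)$ together with the $\mathbb Z_2^m$-action and then pass to $H(\Lambda)$-invariants.

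Second -- and this is the heart of the argument -- I would establish the equivariant decomposition
$$
H^k(\mathbb R\mathcal{Z}_P,R)\simeq\bigoplus_{\omega\in\mathbb Z_2^m}\widetilde H^{k-1}(P_\omega,R),
$$
in which the summand indexed by $\omega=(x_1,\dots,x_m)$ is the isotypic component on which $\boldsymbol a\in\mathbb Z_2^m$ acts by the scalar $(-1)^{\langle\boldsymbol a,\omega\rangle}$ (with the convention $\widetilde H^{-1}(P_\varnothing)=\widetilde H^{-1}(\varnothing)=R$, which supplies $H^0$). Realising $\mathbb R\mathcal{Z}_P$ as the polyhedral product $(D^1,S^0)^K$ with $K=\partial P^*$, its $\mathbb Z_2^m$-equivariant cellular cochain complex splits as a direct sum, over $\omega\in\mathbb Z_2^m$, of subcomplexes, the $\omega$-summand being, up to a shift by one, the simplicial cochain complex of the full subcomplex $K_\omega\subset K$, on which $\mathbb Z_2^m$ acts through the character $(-1)^{\langle\boldsymbol a,\omega\rangle}$. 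Passing to cohomology and using the standard homotopy equivalence $|K_\omega|\simeq P_\omega$ (both being deformation retracts of the subcomplex of the barycentric subdivision of $\partial P$ spanned by the faces of $P$ contained in some $F_i$, $i\in\omega$, compare the remark on $K_\omega$ in the introduction) gives the displayed formula with the asserted $\mathbb Z_2^m$-action. Equivalently one may invoke the known stable splitting of polyhedral products; either route records the required representation on each graded piece.

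Finally, take $H(\Lambda)$-invariants term by term, using that the character idempotents $e_\omega=2^{-m}\sum_{\boldsymbol a}(-1)^{\langle\boldsymbol a,\omega\rangle}\boldsymbol a$ are defined over $R$. An element $h\in H(\Lambda)=\Ker\Lambda$ acts on the $\omega$-summand by $(-1)^{\langle h,\omega\rangle}$, so that summand survives in $H^*(\mathbb R\mathcal{Z}_P,R)^{H(\Lambda)}$ precisely when $\langle h,\omega\rangle=0$ for every $h\in\Ker\Lambda$, i.e.\ when $\omega\in(\Ker\Lambda)^\perp$. By linear algebra over $\mathbb Z_2$, $(\Ker\Lambda)^\perp$ consists exactly of the $\omega$ of the form $x_i=\boldsymbol c\Lambda_i$ for some $\boldsymbol c\in(\mathbb Z_2^{r+1})^*$, which is the set ${\rm row}\,\Lambda$ defined above. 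Combining the three steps,
$$
H^k(N(P,\Lambda),R)\simeq\Bigl(\bigoplus_{\omega\in\mathbb Z_2^m}\widetilde H^{k-1}(P_\omega,R)\Bigr)^{H(\Lambda)}\simeq\bigoplus_{\omega\in{\rm row}\,\Lambda}\widetilde H^{k-1}(P_\omega,R),
$$
as required. The main obstacle is the second step: proving the equivariant real Hochster decomposition with the correct character on each summand -- in particular the homotopy identification of $K_\omega$ with $P_\omega$; once this structural input is in hand the remaining bookkeeping is formal, the hypothesis on $R$ serving only to make $2$, hence $|H(\Lambda)|$, invertible so that transfer and the character idempotents are available.
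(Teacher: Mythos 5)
Your proposal is correct in outline, but note that the paper does not prove this statement at all: it is quoted verbatim from \cite[Theorem 4.5]{CP17}, so there is no internal proof to compare against. Your three-step route (transfer to reduce to $H(\Lambda)$-invariants of $H^*(\mathbb R\mathcal Z_P,R)$; the character-graded real Hochster decomposition of $H^*(\mathbb R\mathcal Z_P,R)$ over $\omega\in\mathbb Z_2^m$; identification of the trivial isotypic components with $(\mathrm{Ker}\,\Lambda)^\perp={\rm row}\,\Lambda$) is essentially the standard argument and is close in spirit to the one in \cite{CP17}, which likewise rests on splitting the ($\mathbb Z_2^m$-equivariant) cochain complex of the polyhedral product into $\omega$-graded pieces and descending to the quotient. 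Two small points deserve attention. First, Theorem \ref{th:transfer} as stated in the paper is a homological statement over a field of characteristic $0$ or prime to $|G|$; you need its cohomological analogue over a ring $R$ in which $|H(\Lambda)|=2^{m-r-1}$ is invertible, which is standard but should be invoked explicitly rather than read off from the quoted statement. Second, the cochain-level splitting of $C^*\bigl((D^1,S^0)^K\bigr)$ into the $\omega$-summands uses the basis change sending the duals of the two $0$-cells of $D^1$ to their sum and difference, whose change-of-basis matrix has determinant $\pm 2$; this is only invertible because $2$ is a unit in $R$, so the hypothesis on $R$ is used twice (for the idempotents/transfer and for the splitting itself), not only once as your closing sentence suggests. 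The identification $|K_\omega|\simeq P_\omega$ you rely on is exactly the one the paper records in the remark following the theorem, with reference to \cite[proof of Proposition 3.2.11]{BP15}. With these caveats the bookkeeping is as you describe and the argument goes through.
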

\begin{remark}
Originally, the  theorem is formulated for a simplicial complexes $K$ and its full subcomplexes $K_{\omega}$, but
for a simple polytope $P$ and a simplicial complex $K=\partial P^*$ there is~a~homotopy equivalence 
$K_{\omega}\simeq P_{\omega}$, see \cite[The proof of Proposition 3.2.11]{BP15}.
\end{remark}
\begin{remark}
Multiplicative structure in Theorem \ref{HNPLth} was described in \cite{CP20}. 
\end{remark}
The universal coefficients formula and the Poincare duality imply
\begin{lemma}
A $3$-manifold $M$ is a rational homology $3$-sphere if and only if it is  closed, orientable, and $H^1(M,\mathbb Q)=0$.
\end{lemma}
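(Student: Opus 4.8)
The plan is to deduce the lemma from the universal coefficients theorem and Poincar\'e duality, keeping in mind that the definition of a rational homology $3$-sphere forces connectedness (through $H_0(X,\mathbb Q)=H_0(S^3,\mathbb Q)=\mathbb Q$), and that in our situation $M=N(P,\lambda)$ is automatically connected, so I will treat $M$ as connected throughout.

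First I would handle the forward direction. If $M$ is a rational homology $3$-sphere it is closed by definition, and $H_3(M,\mathbb Q)=H_3(S^3,\mathbb Q)=\mathbb Q$; since a closed connected topological $3$-manifold satisfies $H_3(M,\mathbb Q)\cong\mathbb Q$ when it is orientable and $H_3(M,\mathbb Q)=0$ otherwise, it follows that $M$ is orientable. Finally, the universal coefficients theorem gives $H^1(M,\mathbb Q)\cong\Hom_{\mathbb Q}(H_1(M,\mathbb Q),\mathbb Q)=0$, because $H_1(M,\mathbb Q)=H_1(S^3,\mathbb Q)=0$.

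For the converse I would assume $M$ closed, connected, orientable, with $H^1(M,\mathbb Q)=0$, and check $H_k(M,\mathbb Q)\cong H_k(S^3,\mathbb Q)$ for all $k$. Connectedness gives $H_0(M,\mathbb Q)=\mathbb Q$; closedness and orientability give $H_3(M,\mathbb Q)=\mathbb Q$ and $H_k(M,\mathbb Q)=0$ for $k>3$; over the field $\mathbb Q$ the group $H^1(M,\mathbb Q)$ is the dual of $H_1(M,\mathbb Q)$, so $H^1(M,\mathbb Q)=0$ forces $H_1(M,\mathbb Q)=0$; and Poincar\'e duality for the closed orientable $3$-manifold $M$ gives $H_2(M,\mathbb Q)\cong H^1(M,\mathbb Q)=0$. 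Hence all rational homology groups of $M$ coincide with those of $S^3$, so $M$ is a rational homology $3$-sphere.

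I do not expect a genuine obstacle here: the argument is routine, and the only points needing a word of care are that Poincar\'e duality is being invoked in the topological category (which is legitimate, since every topological manifold is a homology manifold) and that one really does need $M$ connected --- the statement fails otherwise, as $S^3\sqcup S^3$ already shows.
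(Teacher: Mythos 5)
Your proof is correct and follows exactly the route the paper indicates: the paper gives no written argument beyond the remark that the lemma follows from the universal coefficients formula and Poincar\'e duality, and your proposal simply fills in those routine steps (with a sensible extra caveat about connectedness).
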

Let is remind that a closed orientable manifold $N(P,\Lambda)$ is defined by a~an affine coloring $\lambda$
of rank $r$, where for some change of coordinates in $\mathbb Z_2^{r+1}$ we have $\Lambda_i=(1,\lambda_i)$. 
\begin{proposition}\label{pr:rhs}
Let $\lambda$ be an affine coloring of rank $r$ of a simple $3$-polytope $P$.
The space $N(P,\lambda)$ is a rational homology $3$-sphere if and only if 
one of~the~following equivalent conditions holds:
\begin{enumerate}
\item $\bigcup\limits_{i\colon \lambda_i\in \pi }F_i$ is~a~disk for any affine hyperplane $\pi\subset\mathbb Z_2^r$;
\item  $\bigcup\limits_{i\colon \lambda_i\in \pi }F_i$ is~a~disk for any affine hyperplane $\pi\subset\mathbb Z_2^r$
passing through some  pint $\boldsymbol{p}\in\mathbb Z_2^r$.
\end{enumerate}
\end{proposition}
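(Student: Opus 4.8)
The plan is to apply Theorem \ref{HNPLth} with $R=\mathbb Q$ together with the preceding lemma characterizing rational homology $3$-spheres among closed orientable $3$-manifolds by the vanishing of $H^1(M,\mathbb Q)$. By Corollary \ref{cor:manor} the space $N(P,\lambda)$ is always a closed orientable $3$-manifold, so the only thing to check is that $H^1(N(P,\lambda),\mathbb Q)=0$. By Theorem \ref{HNPLth},
$$
H^1(N(P,\lambda),\mathbb Q)\simeq\bigoplus_{\omega\in{\rm row}\,\Lambda}\widetilde H^0(P_\omega,\mathbb Q),
$$
where $\Lambda_i=(1,\lambda_i)\in\mathbb Z_2^{r+1}$. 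So the condition is that $P_\omega$ is connected (or empty, or all of $\partial P$) for every $\omega\in{\rm row}\,\Lambda$. The first step is to translate the indexing set ${\rm row}\,\Lambda$ into affine-geometric language: a vector $\boldsymbol c\in(\mathbb Z_2^{r+1})^*$ gives the subset $\omega=\{i:\boldsymbol c\Lambda_i=1\}=\{i:\lambda_i\in\pi_{\boldsymbol c}\}$, where, writing $\boldsymbol c=(c_0,\boldsymbol c')$ and using $\Lambda_i=(1,\lambda_i)$, the set $\pi_{\boldsymbol c}=\{\boldsymbol x\in\mathbb Z_2^r:\boldsymbol c'\boldsymbol x=c_0+1\}$ is either an affine hyperplane in $\mathbb Z_2^r$ (when $\boldsymbol c'\ne 0$), or empty, or all of $\mathbb Z_2^r$ (when $\boldsymbol c'=0$). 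In the last two degenerate cases $P_\omega$ is $\varnothing$ or $\partial P$, contributing nothing; moreover $P_{[m]}=\partial P$ is a $2$-sphere, hence connected. Thus $H^1(N(P,\lambda),\mathbb Q)=0$ if and only if $P_\omega=\bigcup_{i:\lambda_i\in\pi}F_i$ is connected for every affine hyperplane $\pi\subset\mathbb Z_2^r$. Since $P_\omega$ is always a (possibly disconnected) surface with boundary sitting inside the $2$-sphere $\partial P$, and it is glued from facets, connectedness of $P_\omega$ is equivalent to $P_\omega$ being a disk once one also knows it is not all of $\partial P$; here one notes that if $\pi$ is a proper affine hyperplane then its complement is a nonempty coset, so there is some facet not in $P_\omega$, and a connected proper subsurface of $S^2$ that is a union of facets of a subdivision is a disk. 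This proves the equivalence with condition (1).

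The second step is to show (1) $\Leftrightarrow$ (2), i.e.\ that it suffices to test hyperplanes through a single point $\boldsymbol p\in\mathbb Z_2^r$. One direction is trivial. For the converse, I would use the canonical $\mathbb Z_2^r=H_0'$-action on $N(P,\lambda)$: translation by $\boldsymbol v\in\mathbb Z_2^r$ carries the affine coloring $\lambda$ to $\lambda+\boldsymbol v$, which defines a weakly equivariantly homeomorphic space (the pair $(P,\lambda)$ and $(P,\lambda+\boldsymbol v)$ are equivalent via the affine isomorphism $\boldsymbol x\mapsto\boldsymbol x+\boldsymbol v$), hence $N(P,\lambda)\simeq N(P,\lambda+\boldsymbol v)$. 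An affine hyperplane $\pi$ not through $\boldsymbol p$ is $\pi'+\boldsymbol v$ for a suitable $\boldsymbol v$ with $\pi'$ through $\boldsymbol p$, and $\{i:\lambda_i\in\pi\}=\{i:(\lambda_i-\boldsymbol v)\in\pi'\}$, so the disk condition for $\pi$ with respect to $\lambda$ is the disk condition for $\pi'$ through $\boldsymbol p$ with respect to $\lambda-\boldsymbol v$; applying the hypothesis to the translated coloring (to which the proposition's data transfers by the homeomorphism) gives the claim. Alternatively, and more directly, one observes that $H^1(N(P,\lambda),\mathbb Q)$ does not change under these translations and runs the ${\rm row}\,\Lambda$ computation after translating, so that every hyperplane appears as a ``hyperplane through $\boldsymbol p$'' for some translate.

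The main obstacle I expect is the surface-topology step asserting that a connected union of facets forming a proper subsurface of the $2$-sphere $\partial P$ (with the cell structure coming from the complex $\mathcal C(P,\kappa)$, or simply from $\partial P$ itself) is a disk — equivalently ruling out higher-genus pieces or pieces with several boundary components. This is where one genuinely uses that $\partial P\cong S^2$: any connected compact subsurface-with-boundary of $S^2$ that is a proper subset is planar, and a union of closed facets meeting along full edges is in fact a regular neighborhood of a connected subcomplex, hence has the homotopy type of a wedge of circles; but a subsurface of $S^2$ has free $H_1$ of rank one less than the number of boundary circles and genus $0$, so "disk" is equivalent to "connected and not all of $S^2$" precisely when $H_1$ vanishes — and one must check $H_1(P_\omega)=0$, not just connectedness. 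Here I would invoke Alexander duality in $S^2$: $\widetilde H_1(P_\omega)\cong\widetilde H^0(S^2\setminus P_\omega)$, and $S^2\setminus P_\omega$ deformation retracts onto the union of the open facets not in $\omega$, whose connectedness is the ``complementary'' instance of the same hypothesis applied to the complementary coset; since $\mathbb Z_2^r$ is partitioned by $\pi$ and its complement, and the complement of an affine hyperplane is again an affine hyperplane's worth of data only after a translation, one closes the loop using the translation-invariance from the second step. This is the one place where genuine care is needed; the rest is bookkeeping with Theorem \ref{HNPLth}.
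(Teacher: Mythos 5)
Your overall route is the paper's route: apply Theorem \ref{HNPLth} with $R=\mathbb Q$, note that $N(P,\lambda)$ is closed and orientable by Corollary \ref{cor:manor}, translate $\mathrm{row}\,\Lambda$ for $\Lambda_i=(1,\lambda_i)$ into affine functions on $\mathbb Z_2^r$ (with the two constant functions giving $\varnothing$ and $\partial P$), and then upgrade ``connected'' to ``disk'' by observing that $P_\pi$ and $P_{\pi^c}$ are complementary in $S^2\simeq\partial P$, where $\pi^c$ is the parallel affine hyperplane; both connected forces both to be disks. You also correctly flag that connectedness alone is not the same as being a disk and that the complementary coset is what kills $H_1$. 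All of this matches the paper's proof.

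The one step that would fail as written is your translation argument for $(2)\Rightarrow(1)$. Applying hypothesis (2) ``to the translated coloring $\lambda-\boldsymbol v$'' is circular: the disk condition for $(\lambda-\boldsymbol v,\pi')$ is, by definition of the index set, literally the disk condition for $(\lambda,\pi'+\boldsymbol v)=(\lambda,\pi)$, which is what you are trying to prove; the weakly equivariant homeomorphism $N(P,\lambda)\simeq N(P,\lambda+\boldsymbol v)$ does not transfer the combinatorial hypothesis (2), which is a statement about a fixed base point $\boldsymbol p$ and a fixed coloring. Moreover the translation is unnecessary, and your parenthetical remark that the complement of an affine hyperplane is ``an affine hyperplane's worth of data only after a translation'' is not right: in $\mathbb Z_2^r$ the complement of the affine hyperplane $\{\boldsymbol c'\boldsymbol x=a\}$ is exactly the parallel affine hyperplane $\{\boldsymbol c'\boldsymbol x=a+1\}$, with no change of coloring. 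Since any affine hyperplane not containing $\boldsymbol p$ is the complement of one containing $\boldsymbol p$, and the closure of the complement of a disk in $S^2$ is a disk, the complementation observation you already make is the entire content of $(1)\Leftrightarrow(2)$ and of the passage from connectedness to disks; this is exactly how the paper argues. Drop the translation step and keep the complementation step, and the proof is complete.
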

\begin{remark}
It will be shown in \cite{E24b} that this proposition  also holds for $n=4$.
\end{remark}
\begin{remark}
Proposition \ref{pr:rhs} is a refinement of a description of rational homology $3$-spheres over right-angled polytopes
in $\mathbb S^3$, $\mathbb R^3$ and $\mathbb H^3$ used in \cite[Corollary 7.9]{FKR23} 
to build an infinite family of arithmetic hyperbolic rational homology $3$-spheres 
that are totally geodesic boundaries of compact hyperbolic $4$-manifolds, and in \cite[Proposition 3.1]{FKS21} 
to detect the  Hantzsche-Wendt manifold among  manifolds defined by linearly independent colorings of the $3$-cube.
(It is equivalent to the connectivity of the full subcomplex $K_{\omega}$ of the boundary $K=\partial P^*$ 
of the dual polytope $P^*$ for each subset $\omega=\{i\colon \lambda_i\in \pi\}$ corresponding to an affine hyperplane $\pi$.)
\end{remark}  
\begin{proof}
Linear functions $\boldsymbol{c}\in (\mathbb Z_2^{r+1})^*$ correspond to affine functions on $\mathbb Z_2^r$. 
Then $H^1(N(P,\lambda), \mathbb Q)=0$ if and only if
for any affine function  $\boldsymbol{c}$ we have $\widetilde{H}^0(P_{\omega}, \mathbb Q)=0$
for $\omega$ corresponding to~the~vector $(\boldsymbol{c}(\lambda_1),\dots, \boldsymbol{c}(\lambda_m))$.
There are two constant affine functions. For~$\boldsymbol{0}$ we have $P_{\omega}=\varnothing$, and
for~$\boldsymbol{1}$ we have $P_{\omega}=\partial P\simeq S^2$. All the other affine functions $\boldsymbol{c}$
correspond to affine hyperplanes $\boldsymbol{c}(\boldsymbol{x})=0$. 
For each affine hyperplane the set $P_{\omega}$ should be connected.
This set is~a~disjoint union of spheres with holes, and the complementary hyperplane corresponds 
to the complementary set. Both sets are connected if and only if they are disks, which is equivalent 
to the fact that one of them is a disk. Since for any affine hyperplane in $\mathbb Z_2^r$ 
the~point $\boldsymbol{p}$ either lies in~this~plane or in~the~complementary hyperplane, items (1) and (2) are equivalent.
\end{proof}
\begin{proposition}\label{prop:rhs}
If a $3$-manifold $N(P,\lambda)$ is a $3$-$RHS$, then
\begin{itemize}
\item either $\mathcal{C}(P,\lambda)\simeq\mathcal{C}(3,r+1)$, $0\leqslant r\leqslant 2$ (in this case $N(P,\Lambda)\simeq S^3$), 
\item or $\mathcal{C}(P,\lambda)\simeq\mathcal{C}(Q,\lambda')$ for~an~affinely independent coloring 
$\lambda'$ of a simple $3$-polytope $Q$ (in~this~case $N(P,\lambda)\simeq S^3$ if and only if $Q=\Delta^3$ and 
$r=3$, which is~equivalent to~the~fact that  $\mathcal{C}(P,\lambda)\simeq\mathcal{C}(3,r+1)$ and $r=3$).
\end{itemize} 
\end{proposition}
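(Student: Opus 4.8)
The plan is to start from the criterion of Proposition \ref{pr:rhs}, namely that $N(P,\lambda)$ is a $3$-$RHS$ if and only if $\bigcup_{\lambda_i\in\pi}F_i$ is a disk for every affine hyperplane $\pi\subset\mathbb Z_2^r$, and to analyze the structure of the complex $\mathcal{C}(P,\lambda)$ under this hypothesis. First I would recall that by Corollary \ref{cor:manor} the space $N(P,\lambda)$ is always a closed orientable $3$-manifold, so the only issue is vanishing of $H^1$. The key structural dichotomy is the one already exploited in the proof of Theorem \ref{th:NP3sp}: analyze the facets of $\mathcal{C}(P,\lambda)$. If some facet $G_j$ is a sphere with at least two holes, then by Corollary \ref{mcor1} the Knezer--Milnor decomposition of $N(P,\lambda)$ contains a summand $S^1\times S^2$, so $H^1(N(P,\lambda),\mathbb Q)\ne 0$ and $N(P,\lambda)$ is not a $3$-$RHS$ --- contradiction. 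Similarly, if some facet is the whole sphere then $\mathcal{C}^1(P,\lambda)$ is empty and $\mathcal{C}(P,\lambda)\simeq\mathcal{C}(3,1)$, which is the first alternative with $r=0$ (here $N(P,\lambda)=S^3$ by Construction \ref{con:spn}, forcing $r=0$ since the image of $\lambda$ spans $\mathbb Z_2^r$ affinely and a single point means $r=0$).

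So we may assume every facet of $\mathcal{C}(P,\lambda)$ is a disk. Next, if the intersection of two facets $G_i$ and $G_j$ is a common boundary circle, then $\mathcal{C}^1(P,\lambda)$ is a single circle and $\mathcal{C}(P,\lambda)\simeq\mathcal{C}(3,2)$, the first alternative with $r=1$. Otherwise, if some two disk facets meet in a disjoint union of at least two edges, then by Corollary \ref{mcor2} the Knezer--Milnor decomposition again contains an $S^1\times S^2$ summand, contradicting the $3$-$RHS$ property. Thus we are reduced to the case where every facet of $\mathcal{C}(P,\lambda)$ is a disk and any two facets meet in at most one edge. As in the proof of Theorem \ref{th:NP3sp}, a facet bounded by a cycle with only two vertices forces $\mathcal{C}^1(P,\lambda)$ to be a theta-graph, i.e.\ $\mathcal{C}(P,\lambda)\simeq\mathcal{C}(3,3)$, the first alternative with $r=2$. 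In the remaining case every facet has at least three vertices on its boundary, so $\mathcal{C}^1(P,\lambda)$ is a simple planar $3$-connected graph; by the Steinitz theorem it is the graph of a simple $3$-polytope $Q$, and the induced coloring is affinely independent, giving $\mathcal{C}(P,\lambda)\simeq\mathcal{C}(Q,\lambda')$ --- the second alternative. In this last subcase $N(P,\lambda)=N(Q,\lambda')=\mathbb R\mathcal{Z}_Q/K$ is covered by $\mathbb R\mathcal{Z}_Q$, and the argument at the end of the proof of Theorem \ref{th:NP3sp} (using that $\mathbb R\mathcal{Z}_Q$ contains an $S^1\times S^2$ summand when $Q$ has a $3$-belt, and is aspherical when $Q\ne\Delta^3$ is flag) shows $N(P,\lambda)\simeq S^3$ forces $Q=\Delta^3$; then $\lambda'$ is the standard coloring of $\Delta^3$ in $4$ colors, so $r=3$ and $\mathcal{C}(P,\lambda)\simeq\mathcal{C}(3,4)$, while conversely $\mathbb R\mathcal{Z}_{\Delta^3}\simeq S^3$.

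I expect the main obstacle to be bookkeeping rather than conceptual: one has to make sure the facet-by-facet case analysis genuinely produces the connected-sum summands (i.e.\ that the relevant separating curves exist and that the ranks $r'$, $r''$ in Lemmas \ref{ml1} and \ref{lem:NPL2e} lie in the ranges that force a nontrivial $S^1\times S^2$ term), and to confirm that the $3$-$RHS$ hypothesis is genuinely stronger than ``being a manifold'' only through the vanishing of $H^1$, so that the entire obstruction reduces to detecting $S^1\times S^2$ summands. All of this is already established in the earlier lemmas and in the proof of Theorem \ref{th:NP3sp}, so the proof is essentially an assembly of those ingredients; the $3$-$RHS$ condition never needs to be used in its full ``all affine hyperplanes'' strength, only to exclude $S^1\times S^2$ summands.
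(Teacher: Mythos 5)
Your proposal is correct and follows essentially the same route as the paper's proof: the paper likewise invokes Corollaries \ref{mcor1} and \ref{mcor2} to rule out facets that are spheres with at least two holes and pairs of facets meeting in two or more edges, and then applies the Steinitz theorem to conclude the dichotomy, with the sphere characterization in the second alternative coming from Theorem \ref{th:NP3sp}. (The paper additionally sketches a second, more elementary argument that bypasses the Kneser--Milnor decomposition by exhibiting explicit affine hyperplanes $\pi$ for which $\bigcup_{\lambda_i\in\pi}F_i$ is disconnected, but your assembly of the earlier lemmas is exactly its primary proof.)
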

\begin{proof}
Indeed, Corollaries \ref{mcor1} and \ref{mcor2} imply that if $N(P,\lambda)$ is a $3$-$RHS$, 
then each facet of $\mathcal{C}(P,\lambda)$ is a disk and any two such disks either do not
intersect or intersect by~a~circle or~an~edge. Then by~the~Steinitz theorem 
either $\mathcal{C}(P,\lambda)\simeq\mathcal{C}(3,r+1)$ for $0\leqslant r\leqslant 2$,
or $\mathcal{C}(P,\lambda)\simeq\partial Q$ for~a~simple $3$-polytope $Q$ with an~induced 
affinely independent coloring $\lambda'$.

On the other hand, Proposition \ref{prop:rhs} can be proved directly using Proposition \ref{pr:rhs}. 
Namely, for $r\leqslant 1$ it is clear. For $r\geqslant 2$ if a facet $G_i$ of $\mathcal{C}(P,\lambda)$ 
is a sphere with at least $2$ holes, then we can take an affine hyperplane in $\mathbb Z_2^r$
containing $\lambda_i$ but not $\lambda_j$ and $\lambda_k$ 
for~facets $G_j$ and $G_k$ lying in different holes to obtain a contradiction. 
If each facet of $\mathcal{C}(P,\lambda)$ is a disk 
and an intersection of two different facets $G_i$ and $G_j$ is a disjoint set of at least two edges, then one of these edges
intersects two additional facets $G_k$ and $G_l$. Then we can take an~affine hyperplane containing $\lambda_i$
and $\lambda_j$ but not $\lambda_k$ and $\lambda_l$ to obtain a contradiction.
\end{proof}
\begin{corollary}\label{RHSH'}
Let $\lambda$ be an~affine coloring of rank $r$ of a~simple $3$-polytope $P$.
If a~$3$-manifold $N(P,\lambda)$ is a~$3$-$RHS$, then for any subgroup $H'\subset\mathbb Z_2^r=H_0'$
the~space $N(P,\lambda)/H'=N(P,\lambda')$ is also a~$3$-$RHS$.
\end{corollary}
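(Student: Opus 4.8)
The plan is to deduce this from the combinatorial criterion of Proposition \ref{pr:rhs}, observing that condition (1) there is inherited by affine quotients. First I would invoke Corollary \ref{cor:affH'} (equivalently Corollary \ref{cor:affH'P}): since $H'\subset\mathbb Z_2^r=H_0'$, the space $N(P,\lambda)/H'$ is again a closed orientable pseudomanifold of the form $N(P,\lambda')$, where $\lambda'=\widehat\Pi\circ\lambda$ for the affine surjection $\widehat\Pi\colon\mathbb Z_2^r\to\mathbb Z_2^r/H'$, and $\lambda'$ is an affine coloring of $P$ (its image affinely spans $\mathbb Z_2^r/H'$ because $\lambda$ does and $\widehat\Pi$ is an affine surjection). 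Because $P$ is a simple $3$-polytope, Corollary \ref{cor:manor} shows that $N(P,\lambda')$ is in fact a closed orientable $3$-manifold, so the notion ``$3$-$RHS$'' is meaningful for it, and it suffices to check condition (1) of Proposition \ref{pr:rhs} for $\lambda'$.

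The key point is that affine hyperplanes pull back to affine hyperplanes under the surjection $\widehat\Pi$. Let $\pi'\subset\mathbb Z_2^r/H'$ be an affine hyperplane; writing $\pi'=\{\boldsymbol x\colon\boldsymbol c\boldsymbol x=\varepsilon\}$ for a nonzero functional $\boldsymbol c$ and $\varepsilon\in\mathbb Z_2$, the set $\pi=\widehat\Pi^{-1}(\pi')$ is cut out by the affine equation obtained by composing $\boldsymbol c$ with $\widehat\Pi$, whose linear part is nonzero precisely because $\widehat\Pi$ is onto; hence $\pi$ is an affine hyperplane in $\mathbb Z_2^r$. Moreover $\lambda'_i=\widehat\Pi(\lambda_i)\in\pi'$ if and only if $\lambda_i\in\pi$, so the index sets coincide and therefore $\bigcup_{i\colon\lambda'_i\in\pi'}F_i=\bigcup_{i\colon\lambda_i\in\pi}F_i$. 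Since $N(P,\lambda)$ is a $3$-$RHS$, the right-hand side is a disk by Proposition \ref{pr:rhs}(1), hence so is the left-hand side; as $\pi'$ was arbitrary, Proposition \ref{pr:rhs} gives that $N(P,\lambda')=N(P,\lambda)/H'$ is a $3$-$RHS$.

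Alternatively one could argue by transfer: the action of $H'$ on $N(P,\lambda)$ is simplicial for the barycentric subdivision of $P$, so Theorem \ref{th:transfer} gives $H_*(N(P,\lambda)/H',\mathbb Q)\simeq H_*(N(P,\lambda),\mathbb Q)^{H'}$; if $N(P,\lambda)$ is a $3$-$RHS$ then $H_1=H_2=0$ forces the invariants to vanish in degrees $1$ and $2$, the top homology is fixed because $H'\subset H_0'$ acts by orientation-preserving homeomorphisms, and $H_0$ is fixed automatically, so the quotient has the rational homology of $S^3$; together with Corollary \ref{cor:manor} this again yields the claim. I do not expect a real obstacle here: the only points needing care are that $\widehat\Pi$ must be taken surjective so that $\pi$ has codimension exactly one, and that one must appeal to Corollary \ref{cor:manor} to know $N(P,\lambda')$ is an honest manifold rather than merely a pseudomanifold.
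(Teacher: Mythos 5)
Your proposal is correct and follows essentially the same route as the paper: the paper's proof likewise observes that affine hyperplanes in $\mathbb Z_2^r/H'$ correspond to affine hyperplanes in $\mathbb Z_2^r$ parallel to $H'$, that $\lambda_i'\in\pi'$ iff $\lambda_i\in\pi$, and hence that the relevant unions of facets coincide, so the criterion of Proposition \ref{pr:rhs} transfers. Your alternative argument via Theorem \ref{th:transfer} is also exactly the one the paper notes in the remark following the corollary.
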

\begin{proof}

Indeed, affine hyperplanes $\pi'$ in $\mathbb Z_2^r/H'$ bijectively 
correspond to affine hyperplanes $\pi$ in~$\mathbb Z_2^r$ parallel to $H'$.
Then $\lambda_i'=\lambda_i+H\subset \pi'$ if and only if $\lambda_i\in\pi$. Moreover, 
$\bigcup\limits_{i\colon \lambda'_i\in \pi'}F_i=\bigcup\limits_{i\colon \lambda_i\in \pi }F_i$.
\end{proof}
\begin{remark}
Corollary \ref{RHSH'} also directly follows from Theorem \ref{th:transfer}. 
\end{remark}
\begin{example}
For $r=0$ we have $N(P,\lambda)\simeq S^3$ and the condition of Proposition \ref{pr:rhs} is trivial.

For $r=1$ Proposition \ref{pr:rhs} implies that $N(P,\lambda)$ is~a~$3$-$RHS$ if~and~only~if 
$\mathcal{C}(P,\lambda)\simeq \mathcal{C}(3,2)$. In this case $N(P,\lambda)\simeq S^3$.

For $r=2$ Propositions \ref{pr:rhs} and \ref{prop:rhs} imply that $N(P,\lambda)$ 
is~a~$3$-$RHS$ if~and~only~if either 
$\mathcal{C}(P,\lambda)\simeq \mathcal{C}(3,3)$ (in this case $N(P,\lambda)\simeq S^3$) or 
$\mathcal{C}(P,\lambda)\simeq\partial Q$ for a~simple $3$-polytope $Q$ 
with the induced affinely independent coloring $\lambda'$, and 
$\bigcup\limits_{i\colon \lambda'_i\in\pi}F_i'$  is~a~disk for any line in $\mathbb Z_2^2$.
There are six lines and each pair of parallel lines corresponds to~a~partition
of $\mathbb Z_2^2$ into two pairs of points such that for each pair the~union of~facets 
of~$Q$ of the corresponding colors is a disk. Moreover, each vertex of~$Q$ lies on~the~boundary 
of~each disk. Thus, taking into account item (2) of Theorem \ref{th:hyperell} we obtain the following result.
\begin{proposition}\label{RHSr2}
Let $\lambda$ be an~affine coloring of rank $2$ of a simple $3$-polytope $P$. Then $N(P,\lambda)$
is~a~$3$-$RHS$ if and only if one of the following equivalent conditions hold:
\begin{enumerate}
\item ether $\mathcal{C}(P,\lambda)\simeq \mathcal{C}(3,3)$ or 
$\mathcal{C}(P,\lambda)\simeq \partial Q$, where $Q$ is a simple $3$-polytope, and $\lambda$ is 
induced by~a~$3$-Hamiltonian cycle on it.
\item each nonzero involution in $\mathbb Z_2^2$ is hyperelliptic.
\end{enumerate}
\end{proposition}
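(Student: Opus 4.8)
The plan is to read the statement off the $r=2$ instance of Proposition~\ref{pr:rhs} and feed it through the classification of hyperelliptic involutions in Theorem~\ref{th:hyperell}; the whole argument splits according to whether $|I(\lambda)|=3$ or $|I(\lambda)|=4$, which are the only options since $\lambda$ has rank $2$. The first thing I would record is the elementary fact about $\mathbb{Z}_2^2$: it has exactly $6$ affine lines, namely its $6$ two-element subsets, and these group into $3$ ``parallel pairs'', one for each of the $3$ nonzero vectors of $\mathbb{Z}_2^2$; a parallel pair with direction $\tau$ is the same data as a partition $\mathbb{Z}_2^2=\{\boldsymbol{q}_i,\boldsymbol{q}_j\}\sqcup\{\boldsymbol{q}_k,\boldsymbol{q}_l\}$ with $\tau=\boldsymbol{q}_i+\boldsymbol{q}_j=\boldsymbol{q}_k+\boldsymbol{q}_l$. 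By Proposition~\ref{pr:rhs}, $N(P,\lambda)$ is a $3$-$RHS$ if and only if $G(\pi)$ is a disk for each of these six lines $\pi$.

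I would then treat the case $|I(\lambda)|=3$ first. Writing $I(\lambda)=\{\boldsymbol{p}_1,\boldsymbol{p}_2,\boldsymbol{p}_3\}$ and letting $\boldsymbol{p}_4$ be the missing point of $\mathbb{Z}_2^2$, the line $\pi=\{\boldsymbol{p}_k,\boldsymbol{p}_4\}$ satisfies $G(\pi)=G(\boldsymbol{p}_k)$; so being a $3$-$RHS$ forces each $G(\boldsymbol{p}_k)$ to be a disk, hence $\mathcal{C}(P,\lambda)$ has exactly three facets, all disks, so by Lemma~\ref{lem:c1pc} $\mathcal{C}^1(P,\lambda)$ is a theta-graph and $\mathcal{C}(P,\lambda)\simeq\mathcal{C}(3,3)$; in this case $N(P,\lambda)\simeq S^3$ by Theorem~\ref{th:NP3sp}. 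Conversely $\mathcal{C}(P,\lambda)\simeq\mathcal{C}(3,3)$ gives $N(P,\lambda)\simeq S^3$, a $3$-$RHS$. As for condition~(2): $I(\lambda)$ being a boolean $2$-simplex, the three differences $\boldsymbol{p}_i+\boldsymbol{p}_j$ are exactly the three nonzero elements of $\mathbb{Z}_2^2$, and by the $r=2$ part of Theorem~\ref{th:hyperell}(1) each $\boldsymbol{p}_i+\boldsymbol{p}_j$ is hyperelliptic if and only if the third facet $G(\boldsymbol{p}_k)$ is a disk; so ``all three facets are disks'' $=$ ``$\mathcal{C}(P,\lambda)\simeq\mathcal{C}(3,3)$'' $=$ condition~(2) in this case, and this matches condition~(1).

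For the case $|I(\lambda)|=4$, i.e.\ $I(\lambda)=\mathbb{Z}_2^2=\Pi^2$, I would argue as follows. Here the six affine lines are precisely the six pairs $\{\boldsymbol{q}_i,\boldsymbol{q}_j\}$, with $G(\{\boldsymbol{q}_i,\boldsymbol{q}_j\})=G(\boldsymbol{q}_i,\boldsymbol{q}_j)$, so Proposition~\ref{pr:rhs} says: $N(P,\lambda)$ is a $3$-$RHS$ if and only if $G(\boldsymbol{q}_i,\boldsymbol{q}_j)$ is a disk for all six pairs. Reorganizing along the three parallel pairs, this is the same as saying that for each partition $\{i,j\}\sqcup\{k,l\}$ both $G(\boldsymbol{q}_i,\boldsymbol{q}_j)$ and $G(\boldsymbol{q}_k,\boldsymbol{q}_l)$ are disks, which by the $r=2$ bullet of Theorem~\ref{th:hyperell}(2) is exactly the condition that the involution $\tau=\boldsymbol{q}_i+\boldsymbol{q}_j$ be hyperelliptic. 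Hence in this case $N(P,\lambda)$ is a $3$-$RHS$ if and only if all three nonzero involutions of $\mathbb{Z}_2^2$ are hyperelliptic, which is condition~(2); and then the ``$k=3$'' clause of Theorem~\ref{th:hyperell}(2) says precisely that $\mathcal{C}(P,\lambda)\simeq\partial Q$ for a simple $3$-polytope $Q$ with $\lambda$ induced by a $3$-Hamiltonian cycle, which is condition~(1). For the converse directions, if condition~(1) holds with $\mathcal{C}(P,\lambda)\simeq\partial Q$ and $\lambda$ induced by a $3$-Hamiltonian cycle, then Theorem~\ref{th:hyperell}(2) yields three hyperelliptic involutions, hence all six $G(\boldsymbol{q}_i,\boldsymbol{q}_j)$ are disks, and Proposition~\ref{pr:rhs} gives that $N(P,\lambda)$ is a $3$-$RHS$; and if condition~(2) holds, then $N(P,\lambda)$ has three hyperelliptic involutions and Theorem~\ref{th:hyperell} (item~(3) being vacuous for $r=2$, item~(1) forcing $\mathcal{C}(3,3)$, item~(2) with $k=3$ forcing the $\partial Q$ alternative) puts us onto condition~(1). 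Combining the two cases gives $3$-$RHS$ $\Leftrightarrow$ (1) $\Leftrightarrow$ (2).

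I expect the only genuine subtlety, as opposed to bookkeeping, to be the contrast between the two cases: when $|I(\lambda)|=4$ one must resist the temptation to demand that each single color class $G(\boldsymbol{q}_i)$ be a disk — only the pairwise unions $G(\boldsymbol{q}_i,\boldsymbol{q}_j)$ have to be disks — whereas when $|I(\lambda)|=3$ the extra, unused fourth point of $\mathbb{Z}_2^2$ is exactly what makes an affine line isolate a single color, which is why that case degenerates to $\mathcal{C}(3,3)$. The rest is a matter of quoting the correct $r=2$ items of Theorem~\ref{th:hyperell} and of checking that the three parallel pairs of lines in $\mathbb{Z}_2^2$ are in bijection with the three nonzero involutions.
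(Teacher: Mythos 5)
Your proof is correct and uses essentially the same ingredients as the paper: the six affine lines of $\mathbb Z_2^2$ grouped into three parallel pairs, Proposition \ref{pr:rhs} to translate the $3$-$RHS$ condition into disk conditions on the $G(\pi)$, and the $r=2$ items of Theorem \ref{th:hyperell} to identify those conditions with hyperellipticity of the three nonzero involutions. The only cosmetic difference is that you split on $|I(\lambda)|\in\{3,4\}$ where the paper routes the same dichotomy through Proposition \ref{prop:rhs}; your handling of the $|I(\lambda)|=3$ case via the line through the missing fourth point is exactly the right observation.
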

In Fig. \ref{Hamspx}, \ref{Ham3pr}, and \ref{Hamdod} we show that the simplex $\Delta^3$, the $3$-prism $\Delta\times I$ 
and the dodecahedron admit a~$3$-Hamiltonian cycle. Examples of such polytopes are also 
shown in~Fig.~\ref{r4dodp11}.

\begin{figure}[h]
\begin{center}
\includegraphics[width=0.6\textwidth]{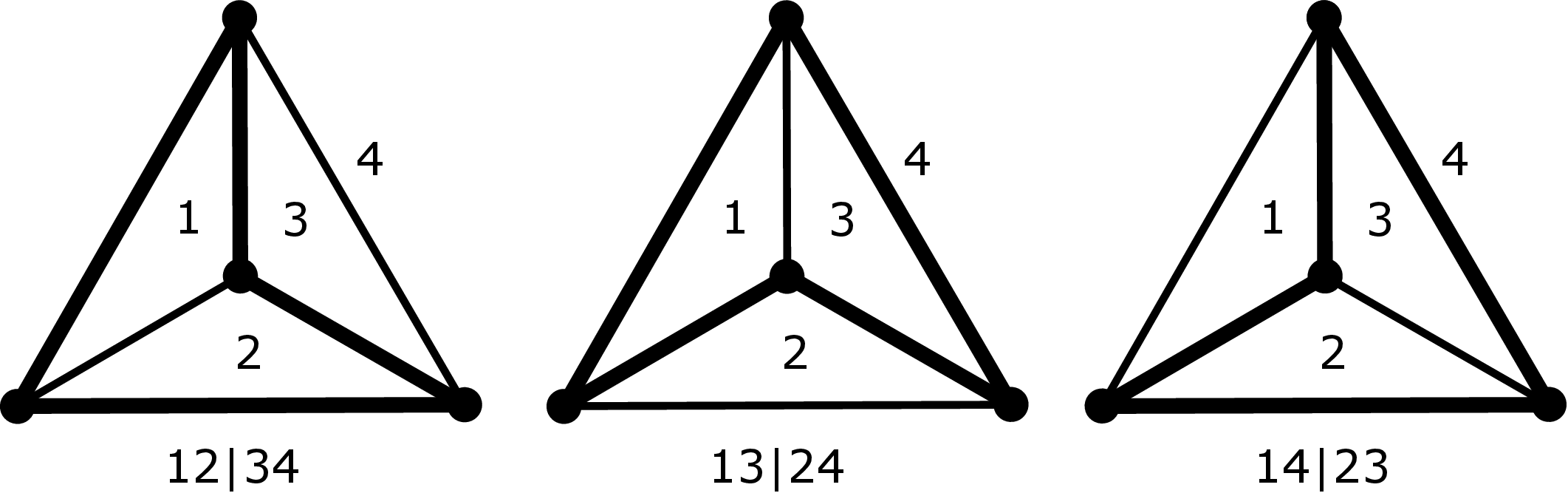}
\end{center}
\caption{Three consistent Hamiltonian cycles on the simplex}\label{Hamspx}
\end{figure}
\begin{figure}[h]
\begin{center}
\includegraphics[width=0.6\textwidth]{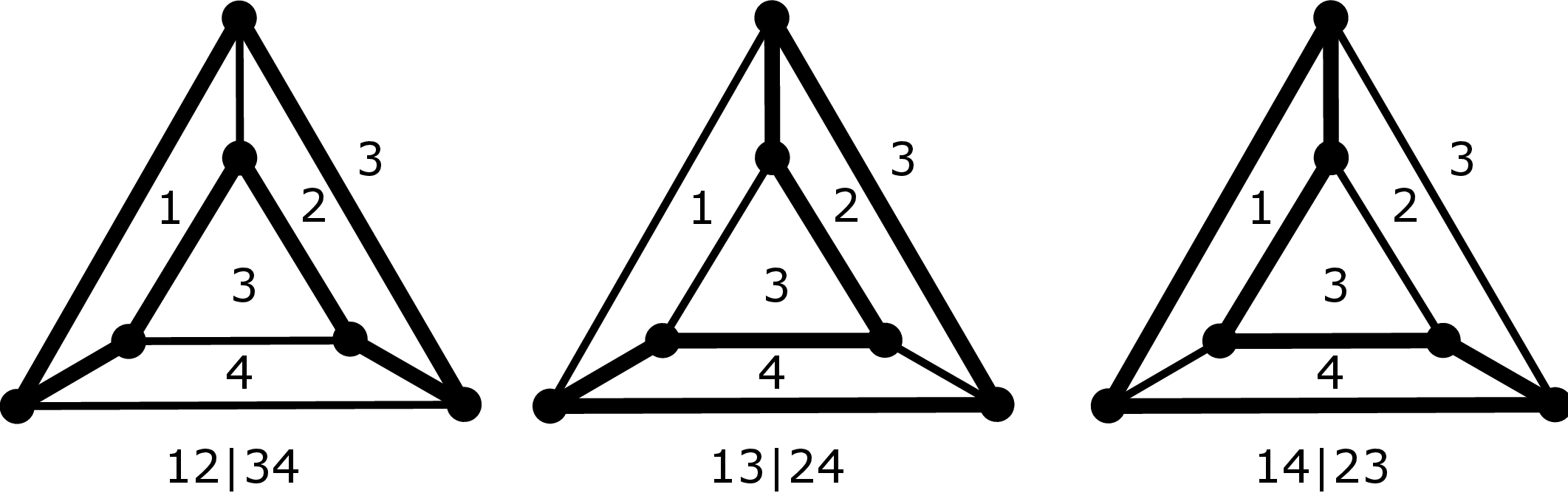}
\end{center}
\caption{Three consistent Hamiltonian cycles on the $3$-prism}\label{Ham3pr}
\end{figure}
\begin{figure}[h]
\begin{center}
\includegraphics[width=0.6\textwidth]{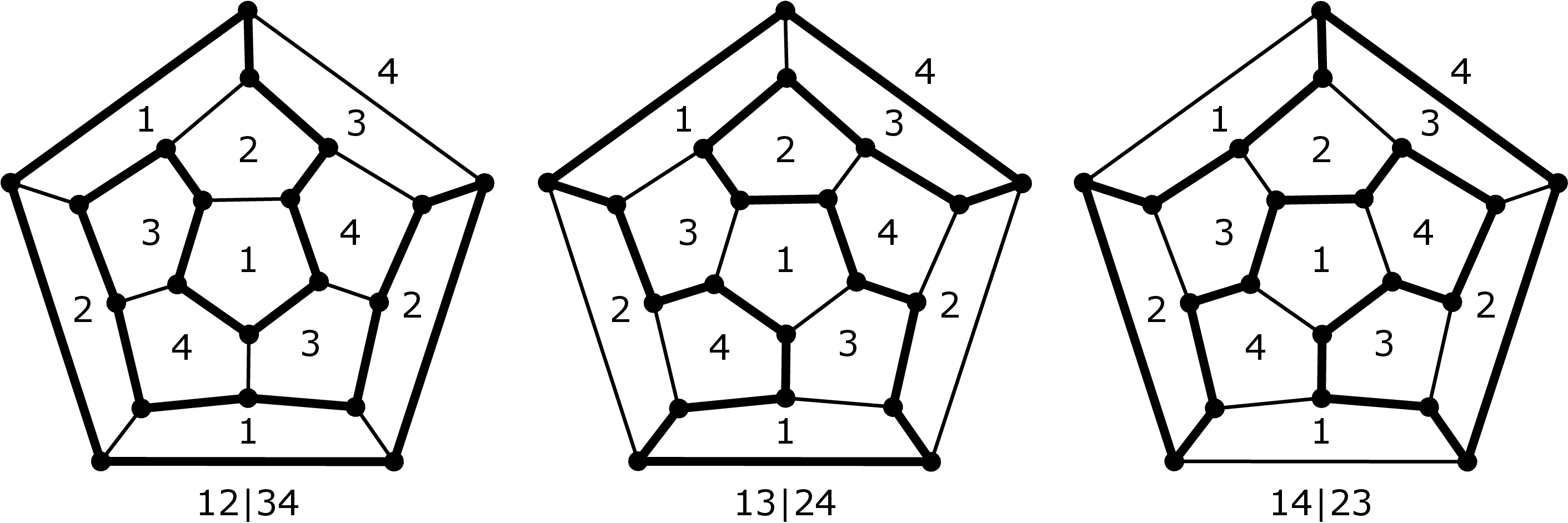}
\end{center}
\caption{Three consistent Hamiltonian cycles on the dodecahedron}\label{Hamdod}
\end{figure}

On the other hand, not any simple $3$-polytope admits a~$3$-Hamiltonian cycle. For example, the cube up to 
symmetries has only one Hamiltonian cycle drawn in Fig. \ref{Hamcube} on the left. If we draw the facets of the cube in four
colors using the Hamiltonian cycle  and group colors into pairs in three different possible ways, then we see that 
two partitions give Hamiltonian cycles and one partition gives two disjoint cycles. Thus, the $3$-cube does not admit
a~small cover that is a $3$-$RHS$.

\begin{figure}[h]
\begin{center}
\includegraphics[width=0.6\textwidth]{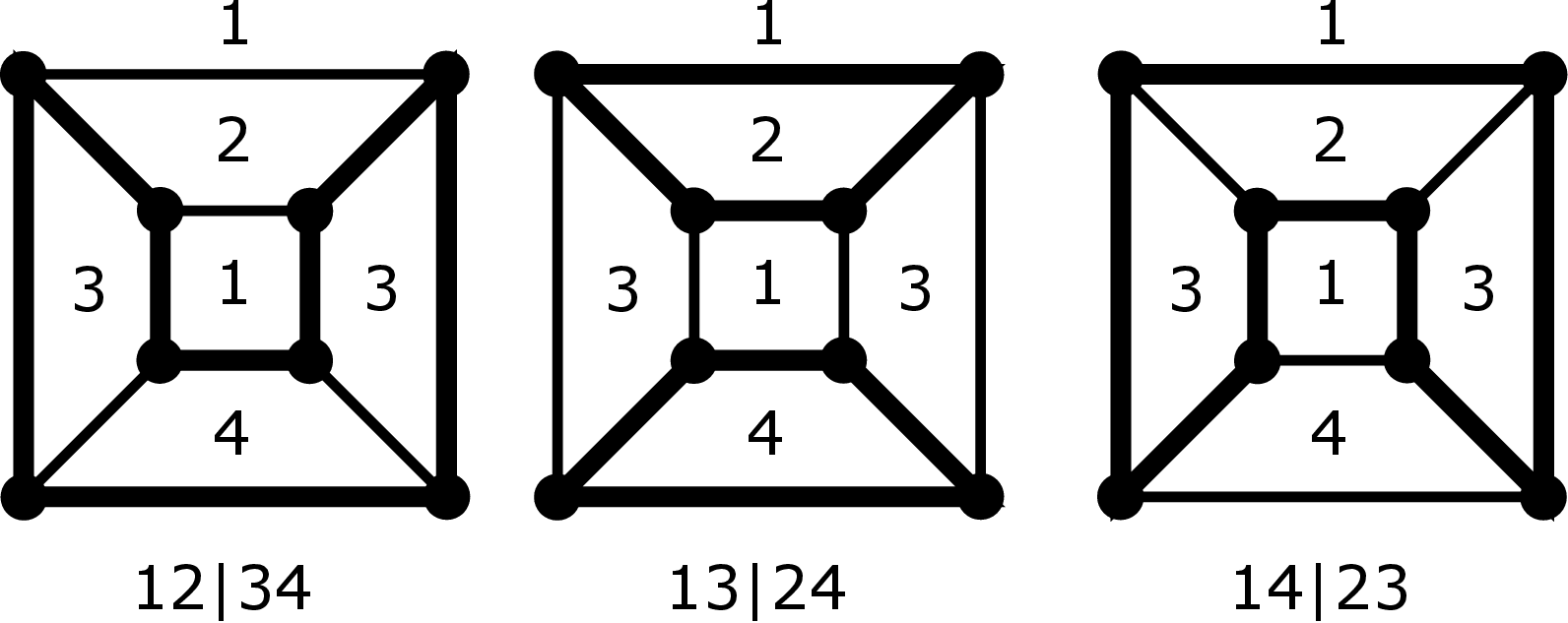}
\end{center}
\caption{The Hamiltonian cycle of the cube}\label{Hamcube}
\end{figure}

More details on simple $3$-polytopes admitting a~$3$-Hamiltonian cycle see in Section \ref{sec:3Ham}.

For $r=3$ Proposition \ref{pr:rhs} (applied for the point $\boldsymbol{p}=\boldsymbol{0}$) 
and Proposition \ref{prop:rhs} imply that $N(P,\lambda)$ is a $3$-$RHS$ if and only if 
either $\mathcal{C}(P,\lambda)\simeq \mathcal{C}(3,4)$ (in this case $N(P,\lambda)\simeq S^3$) or 
$\mathcal{C}(P,\lambda)\simeq\partial Q$ for~a~simple $3$-polytope $Q$ with the~induced affinely 
independent coloring $\lambda'$ such that
$\bigcup\limits_{i\colon \boldsymbol{a}(\lambda'_i)=0}F_i'$ is a disk for any vector 
$\boldsymbol{a}\in(\mathbb Z_2^3)^*\setminus\{\boldsymbol{0}\}$. 
For short we will identify the point $(x_1,x_2,x_3)\in\mathbb Z_2^3$ with the~number $4x_1+2x_2+x_3$ having
the~corresponding binary expression. The vectors $\boldsymbol{a}\in(\mathbb Z_2^3)^*\setminus\{\boldsymbol{0}\}$
correspond to partitions of $\mathbb Z_2^3$ into two parallel hyperplanes consisting of four points:
$$
\begin{tabular}{c|c|c}
(0,0,1)&0,2,4,6&1,3,5,7\\
(0,1,0)&0,1,4,5&2,3,6,7\\
(0,1,1)&0,3,4,7&1,2,5,6\\
(1,0,0)&0,1,2,3&4,5,6,7\\
(1,0,1)&0,2,5,7&1,3,4,6\\
(1,1,0)&0,1,6,7&2,3,4,5\\
(1,1,1)&0,3,5,6&1,2,4,7
\end{tabular} 
$$
An example of the cube with an affinely independent coloring of rank $3$ producing a~$3$-$RHS$
is~shown in~Fig. \ref{CubeR4}. It can be proved that up to a symmetry this 
is~a~unique affine coloring of~the~cube with these properties.

\begin{figure}[h]
\begin{center}
\includegraphics[width=\textwidth]{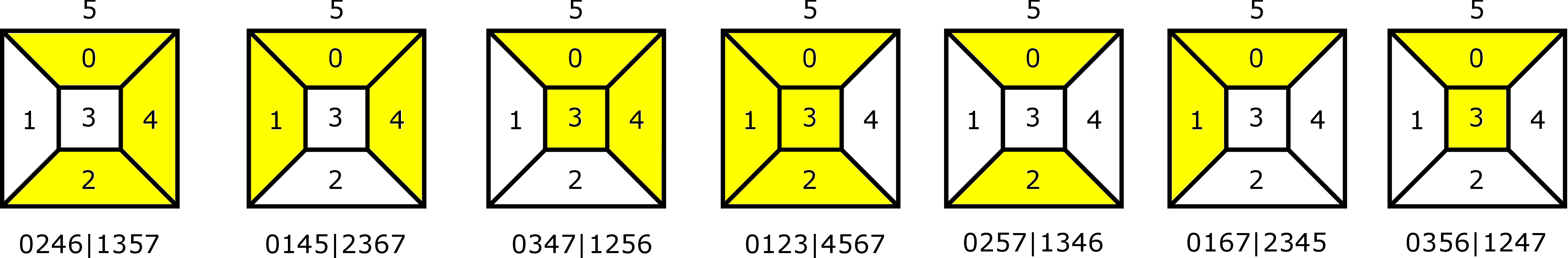}
\end{center}
\caption{The cube with an affine coloring of rank $3$ producing a~$3$-$RHS$}\label{CubeR4}
\end{figure}

An example of the $5$-prism with an affinely independent coloring of rank $3$ producing a~$3$-$RHS$
is~shown in~Fig. \ref{5prismR3}. 

\begin{figure}[h]
\begin{center}
\includegraphics[width=\textwidth]{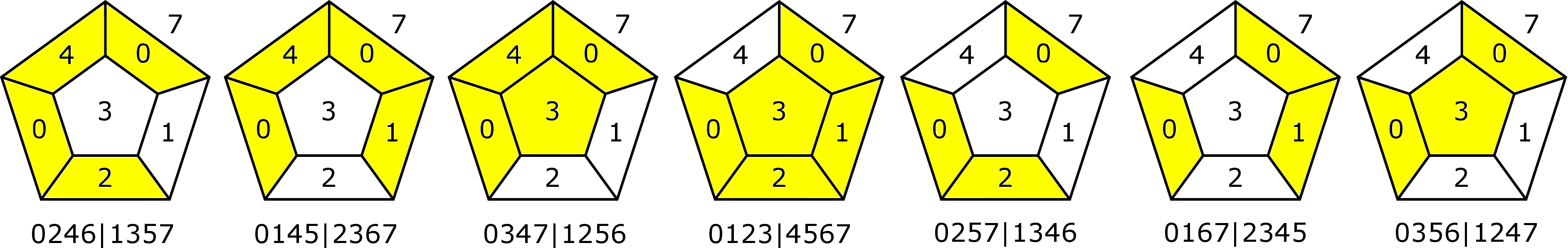}
\end{center}
\caption{The $5$-prism with an affine coloring of rank $3$ producing a~$3$-$RHS$}\label{5prismR3}
\end{figure}

An example of of~the~dodecahedron with an affinely independent coloring of rank $3$ producing 
a~$3$-$RHS$ is~shown in~Fig. \ref{r4dod}. In~Fig.~\ref{r4dodp11} we show its affine 
colorings of~rank $2$ corresponding to factorisations by $1$-dimensional subgroups in $\mathbb Z_2^3$.

\begin{figure}[h]
\begin{center}
\includegraphics[width=\textwidth]{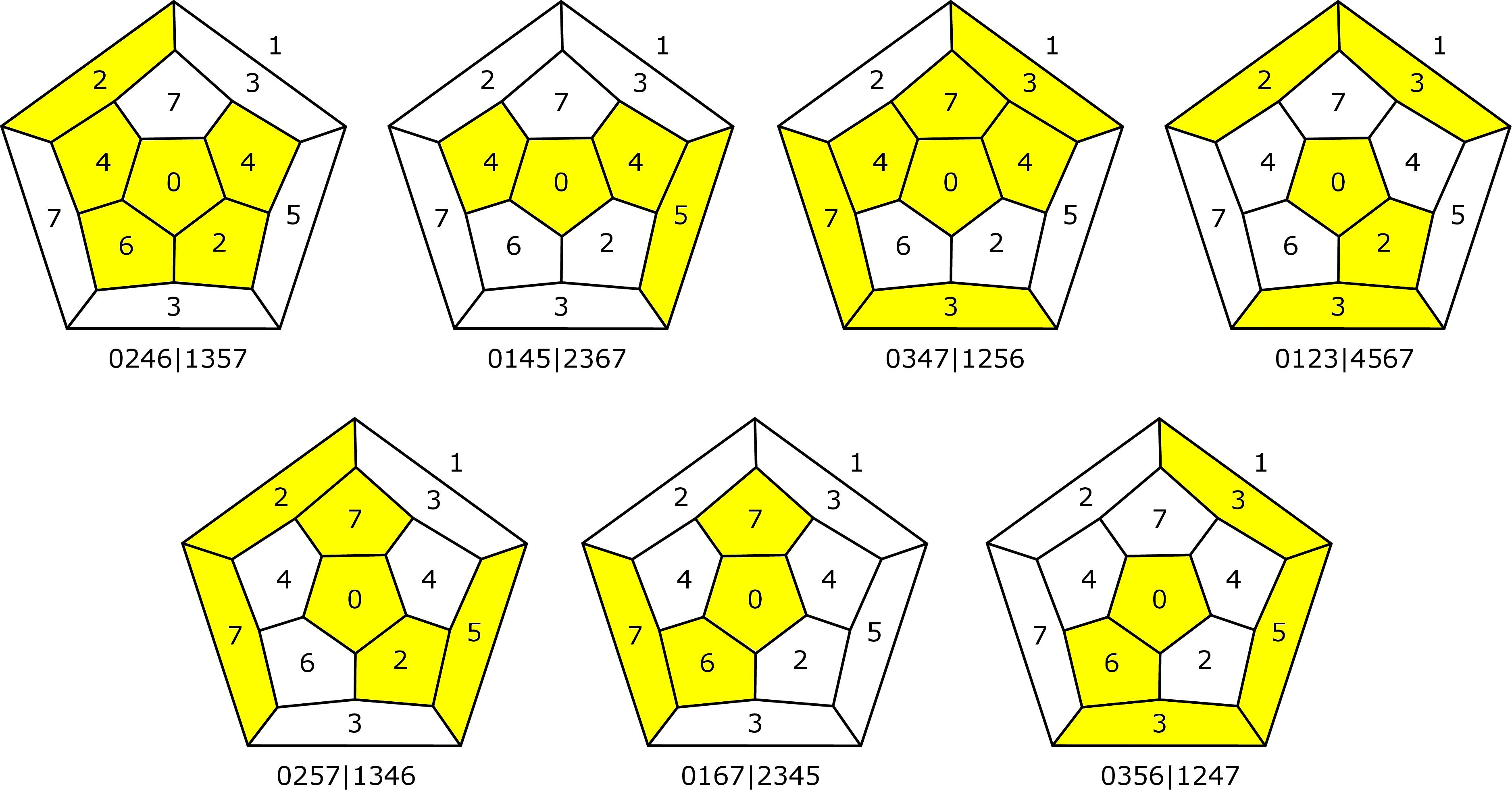}
\end{center}
\caption{The dodecahedron with an $8$-coloring producing a~$3$-$RHS$}\label{r4dod}
\end{figure}
\begin{figure}[h]
\begin{center}
\includegraphics[width=0.9\textwidth]{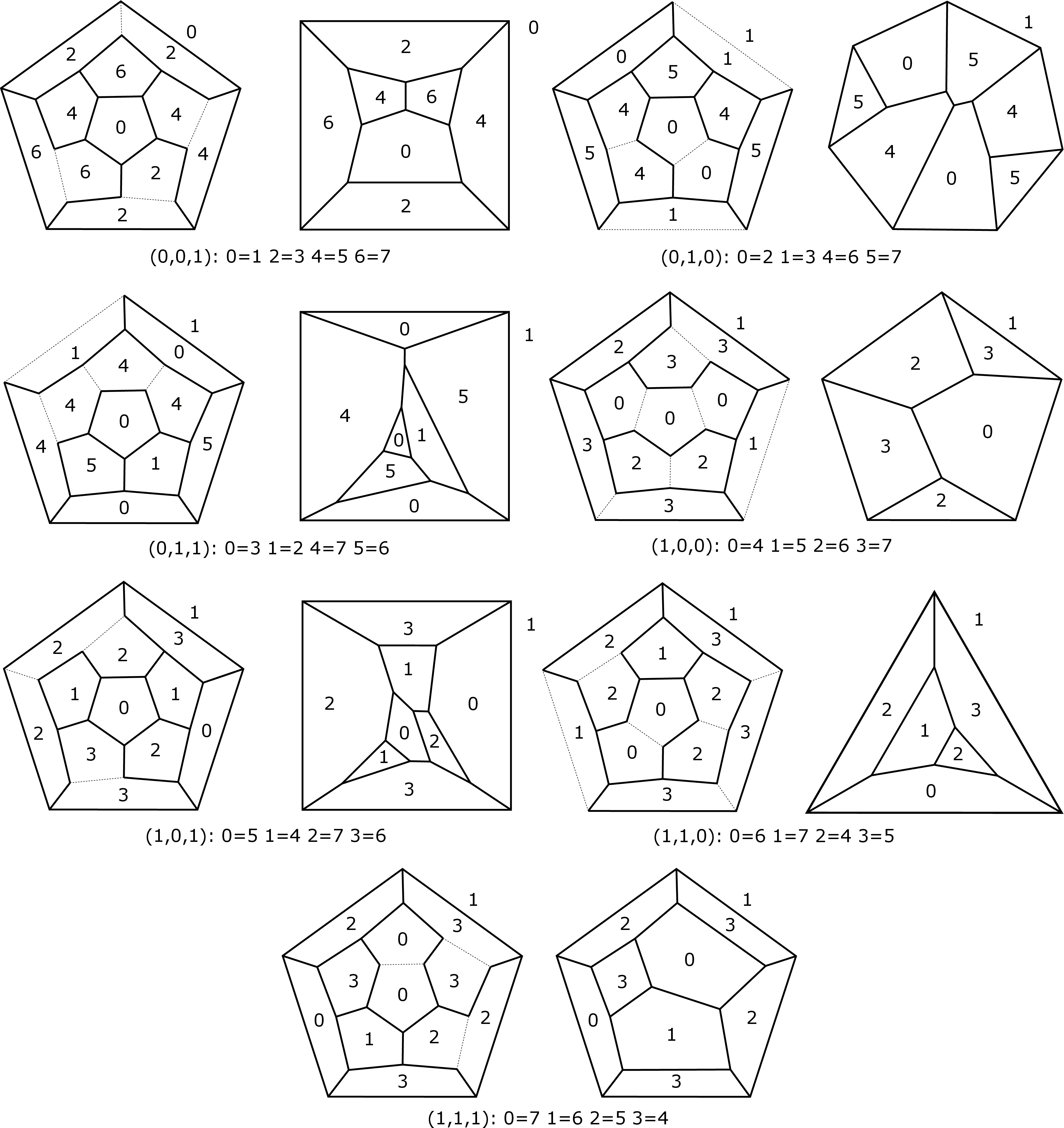}
\end{center}
\caption{The dodecahedron with $4$-colorings arising after factorisation of~the~$8$-coloring from 
Fig.~\ref{r4dod} by~$1$-dimensional subgroups in $\mathbb Z_2^3$.
Each subgroup is generated by a vector $\boldsymbol{x}\in\mathbb Z_2^3$ and gives the identification
$\lambda_i=\lambda_j$ if $\lambda_i+\boldsymbol{x}=\lambda_j$.}\label{r4dodp11}
\end{figure}
\end{example}

\begin{example}\label{ex:RHSGeom}
The simplex in~Fig.~\ref{Hamspx}, the~$3$-prism in~ Fig.~\ref{Ham3pr}, 
the~cube~in~Fig.~\ref{CubeR4}, the $5$-prism in~Fig.~\ref{5prismR3}, and the~dodecahedron in Fig.~\ref{Hamdod} 
and \ref{r4dod} give examples of manifolds that are $3$-$RHS$ and admit geometric structures
modelled on $\mathbb S^3$, $\mathbb S^2\times\mathbb R$, $\mathbb R^3$, $\mathbb H^2\times \mathbb R$, and 
$\mathbb H^3$ respectively. 
\end{example}

\section{Simple $3$-polytopes with $3$ consistent Hamiltonian cycles}\label{sec:3Ham}
\subsection{General facts}
In this section we will discuss simple $3$-polytopes $P$ admitting 
a~$3$-Hamiltonian cycle. Such a~cycle 
corresponds to~{\it $3$~consistent Hamiltonian cycles}, that is $3$ Hamiltonian cycles 
such that each edge of $P$ belongs to exactly two of them. 
This is exactly a~{\it Hamiltonian double cover} in terminology of the paper \cite{F06}. 
The graphs of such polytopes are {\it strongly Hamiltonian} 
in terminology of \cite{K63}, that is they are regular (all the vertices have equal degrees) and 
{\it perfectly $1$-factorable} (see Definition \ref{def:graph}). 
Each of the three consistent Hamiltonian cycles is a $3$-Hamiltonian cycle and defines the other two. 
In our paper three consistent Hamiltonian cycles arise in~the~classification~of 
\begin{enumerate}
\item hyperelliptic $3$-manifolds $N(P,\lambda)$ in Theorem \ref{th:hyperell}. They 
correspond to hyperelliptic manifolds $N(P,\lambda)$ with $\lambda$ of rank $2$ and $|I(\lambda)|=4$ 
having exactly three hyperelliptic involutions in $\mathbb Z_2^2$. 
\item rational homology $3$-spheres in Propositions \ref{pr:rhs} and  \ref{RHSr2}.
They correspond to rational homology $3$-spheres $N(P,\lambda)$ with $\lambda$ of rank $2$ and $|I(\lambda)|=4$ .
\end{enumerate}
\subsection{Polytopes without $3$ consistent Hamiltonian cycles}
In Section \ref{Sec:RHS} we showed that the simplex $\Delta^3$, the $3$-prism $\Delta\times I$ 
and the dodecahedron admit three consistent Hamiltonian cycles, and the cube $I^3$ does not admit.
It is not difficult to show that a situation similar to~the~case of~the~cube 
arises for all the $k$-prisms with $k\geqslant 5$. Namely, for $k$ odd
up~to~combinatorial symmetries  there is a unique Hamiltonian cycle shown in Fig. \ref{Hamkpr}. 
It exists for any~$k$. For $k$ even there is also the second Hamiltonian cycle shown in Fig. \ref{Ham2kpr}. 
Thus, $k$-prisms do not admit  small covers that are $3$-$RHS$ for $k\geqslant 4$.
\begin{figure}[h]
\begin{center}
\includegraphics[width=0.6\textwidth]{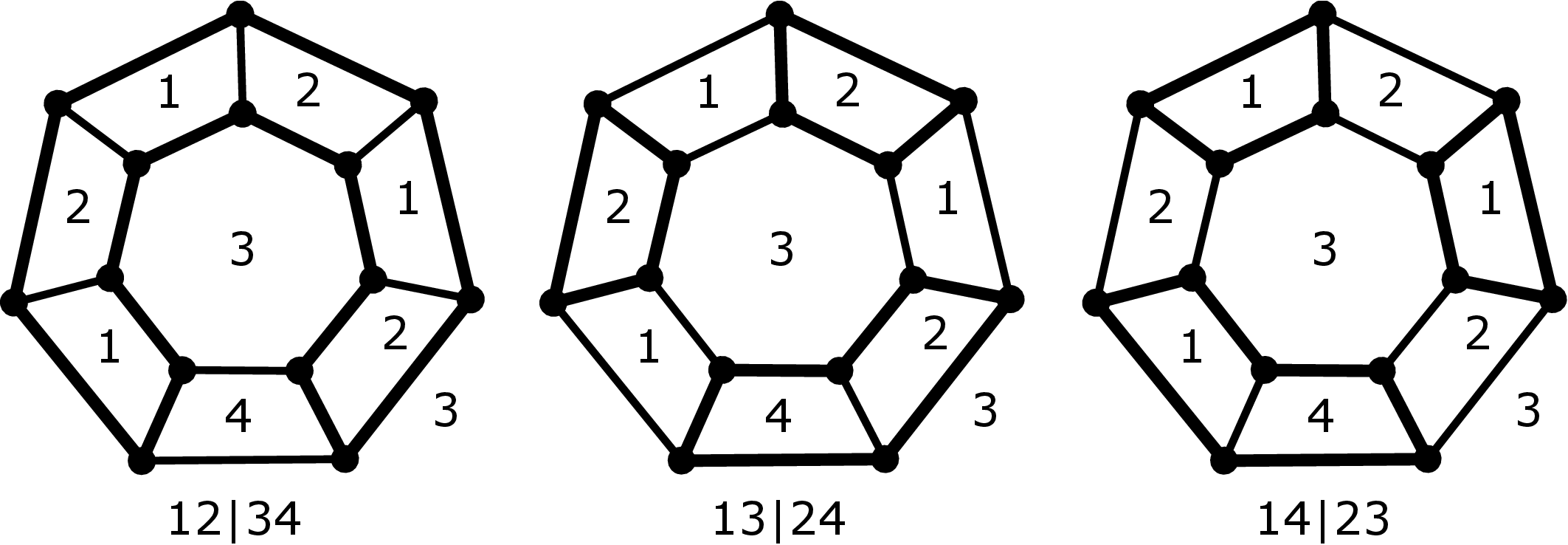}
\end{center}
\caption{A Hamiltonian cycle on the $k$-prism}\label{Hamkpr}
\end{figure}
\begin{figure}[h]
\begin{center}
\includegraphics[width=0.6\textwidth]{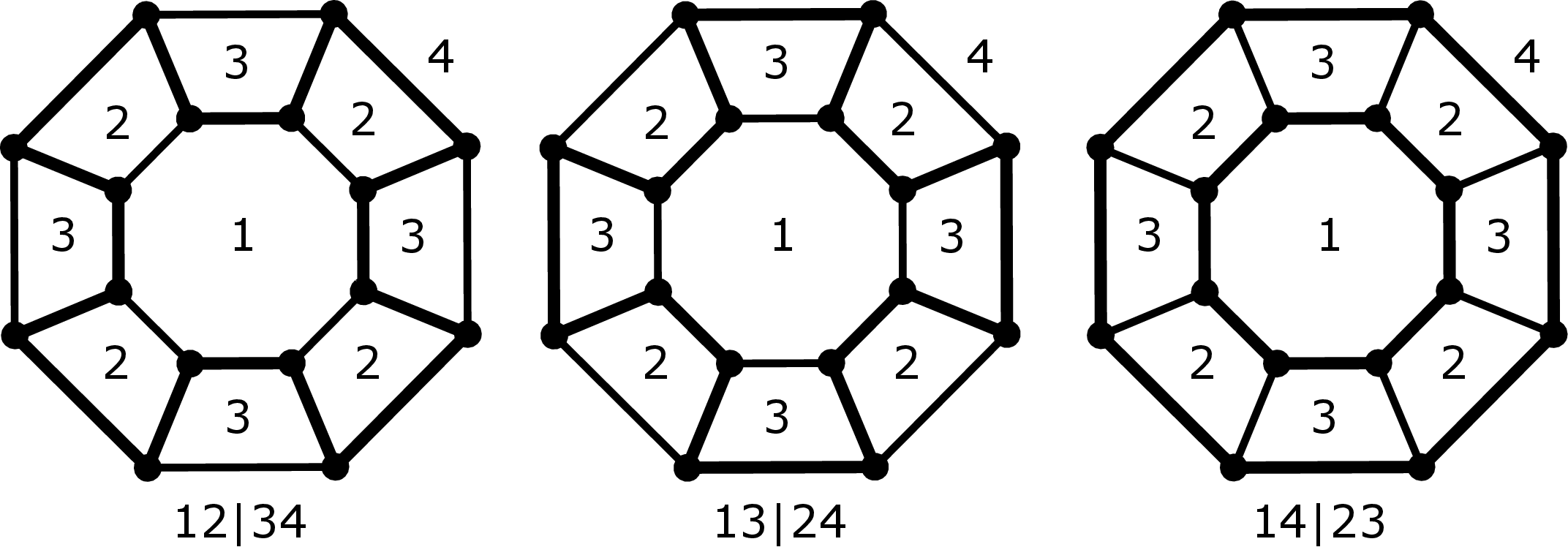}
\end{center}
\caption{A Hamiltonian cycle on the $2k$-prism}\label{Ham2kpr}
\end{figure}

Moreover, there is the following result generalizing the case of $(2k)$-prisms.
\begin{definition}
A graph $G$ is called {\it bipartite} if its vertices can be divided into two disjoint sets such that any edge
connects vertices from different sets.
\end{definition}
Any $(2k)$-prism has a bipartite graph. It is easy to see that if a simple $3$-polytope $P$ has a bipartite graph, then
any its facet has an even number of edges. The converse is also true. 
\begin{lemma}
A simple $3$-polytope $P$ has a bipartite graph if and only if any its facet has an~even number of edges.
\end{lemma}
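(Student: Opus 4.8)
The plan is to prove the two directions separately, using the well-known characterization of bipartite graphs in terms of cycle parity. First I would recall that a graph is bipartite if and only if every cycle in it has even length (equivalently, it contains no odd cycle). Since $P$ is a simple $3$-polytope, its graph $G = G(P)$ is planar and $3$-connected by the Steinitz theorem (Theorem \ref{th:St}), so by Whitney's theorem its embedding in $S^2$ is combinatorially unique and the faces of $G\subset S^2$ are exactly the facets of $P$. Thus the boundary cycle of each facet is a simple cycle in $G$, and conversely every face cycle is available as a building block.

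For the forward direction, suppose $G$ is bipartite. Then every cycle in $G$ is even; in particular the boundary cycle of each facet $F$ is even, i.e.\ $F$ has an even number of edges. This is immediate and requires no further work.

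For the converse, suppose every facet of $P$ has an even number of edges; I want to show $G$ is bipartite, i.e.\ every cycle in $G$ is even. The key tool is that in a planar graph embedded in $S^2$, any simple cycle $\gamma$ separates the sphere into two disks, and $\gamma$ is the symmetric difference (as an edge set, i.e.\ the sum in $\mathbb Z_2$-homology of the $1$-chains) of the boundary cycles of the facets lying on one side of it. More precisely, if $D$ is one of the two closed disks bounded by $\gamma$ and $F_{i_1},\dots,F_{i_s}$ are the facets contained in $D$, then $\gamma = \partial F_{i_1} + \dots + \partial F_{i_s}$ in the $\mathbb Z_2$-vector space of edge $1$-chains, because every edge interior to $D$ lies on exactly two of these facets and hence cancels, while every edge of $\gamma$ lies on exactly one. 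Taking lengths modulo $2$: $|\gamma| \equiv |\partial F_{i_1}| + \dots + |\partial F_{i_s}| \equiv 0 \pmod 2$, since each $|\partial F_{i_j}|$ is even by hypothesis. Hence every simple cycle of $G$ is even, so $G$ has no odd cycle, so $G$ is bipartite. (A general cycle, not necessarily simple, is an $\mathbb Z_2$-sum of simple cycles, so its length is also even.)

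I do not expect any serious obstacle here; the only point that needs care is the claim that $\gamma$ equals the $\mathbb Z_2$-sum of the facet boundaries on one side, which is a standard fact about planar graphs (it is the statement that the face boundaries generate the cycle space of a planar graph, with a single relation coming from the whole sphere), and it can be justified directly from the edge-counting argument above using that $G$ is $3$-valent and that each interior edge of $D$ borders exactly two faces of $D$. One should also note in passing the degenerate cases ($G = K_4 = \partial\Delta^3$ has triangular faces, so it is not bipartite, consistent with the statement), and that the argument does not use $3$-valence in an essential way beyond ensuring the face structure is that of a simple polytope. This completes the proof.
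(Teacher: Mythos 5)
Your proof is correct, and it takes a genuinely different route from the paper. The paper proves the nontrivial direction by importing the known fact that a simple $3$-polytope has all facets with an even number of edges if and only if its facets admit a proper $3$-coloring, and then bipartitions the vertices according to whether the three colors occur clockwise or counterclockwise around each vertex. You instead argue directly via the cycle space of a plane graph: every simple cycle is the $\mathbb Z_2$-sum of the boundary cycles of the faces on one side, interior edges cancel in pairs, so the parity of the cycle length equals the sum of the parities of the face lengths; hence even facets force all cycles to be even, which is equivalent to bipartiteness. Your edge-counting justification of the key identity is sound for the graph of a simple $3$-polytope (each face is a disk bounded by a simple cycle and each edge lies on exactly two faces), and the easy direction is immediate in both treatments. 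What your approach buys is self-containedness and greater generality -- as you note, it works for any $2$-connected plane graph and does not use $3$-valence; what the paper's approach buys is brevity and a tie-in with the facet-coloring machinery that pervades the rest of the article.
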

\begin{proof}
One of the ways to prove the lemma  is to use the fact that any facet of a simple $3$-polytope $P$
has an even number of edges if and only if the facets of $P$ can be colored in $3$ colors
such that any two adjacent facets have different colors (see the proof in \cite{I01,J01}). 
Then the~vertices where the colors $1$, $2$, and $3$ follow each other clockwise and counterclockwise form  
the desired partition of the vertex set of the graph.
\end{proof}
\begin{theorem}\cite[Theorem 3]{K62}\label{thbip} 
If $G$ is a plane $3$-valent bipartite graph, then $G$ cannot possibly have a Hamiltonian double cover. 
\end{theorem}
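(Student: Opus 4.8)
The plan is to reformulate the hypothesis combinatorially, extract a parity obstruction from bipartiteness alone, and then use planarity to dispose of the one residual case.

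First, observe that in a cubic graph a Hamiltonian double cover consists of \emph{exactly three} Hamiltonian cycles $C_1,C_2,C_3$ with each edge lying in exactly two of them: at a vertex each cycle passes through using two of the three incident edges, while each incident edge lies in two cycles, so $2\cdot(\#\text{cycles})=3\cdot 2$, i.e.\ there are three cycles. Putting $M_i=E\setminus C_i$, this is the same as a proper $3$-edge-colouring $E=M_1\sqcup M_2\sqcup M_3$ (each $M_i$ a perfect matching) such that each Kempe $2$-factor $M_i\cup M_j$ is a \emph{single} spanning cycle; equivalently $[C_1]+[C_2]+[C_3]=0$ in the mod-$2$ cycle space with every $C_i$ Hamiltonian.

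Second, the parity step, which uses only that $G$ is bipartite and cubic. Let $A,B$ be the colour classes, $n=|A|=|B|=|V|/2$, and identify $A$ with $B$ along the matching $M_3$. Then $M_1,M_2$ become permutations $\alpha,\beta$ of $A\cong\mathbb Z_n$, and tracing the colour-alternating cycles shows that $M_1\cup M_3$, $M_2\cup M_3$, $M_1\cup M_2$ are single $2n$-cycles exactly when $\alpha$, $\beta$, $\beta^{-1}\alpha$ are $n$-cycles in $S_n$. But $\operatorname{sgn}(\beta^{-1}\alpha)=\operatorname{sgn}(\alpha)\operatorname{sgn}(\beta)=\big((-1)^{n-1}\big)^2=1$, whereas an $n$-cycle has sign $(-1)^{n-1}$; hence $n$ is odd, i.e.\ $|V|\equiv 2\pmod 4$. (Nothing planar is used here, and indeed $K_{3,3}$, where $n=3$, does admit a Hamiltonian double cover, so this cannot be the whole argument.)

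Finally, planarity must be brought in to contradict $|V|\equiv 2\pmod 4$. Since $G$ is a $2$-connected plane cubic bipartite graph, every face has even degree, so Grinberg's identity gives $\sum_\phi(\deg\phi-2)\,x_i(\phi)=0$ for each $i$, where $x_i(\phi)=\pm1$ records on which side of the Jordan curve $C_i$ the face $\phi$ lies. Because $[C_1]+[C_2]+[C_3]=0$, crossing any edge flips exactly two of $x_1,x_2,x_3$, so $x_1x_2x_3$ is constant over faces and the faces split into four ``type'' classes; summing the three Grinberg identities shows the total weight $\sum_\phi(\deg\phi-2)=2|E|-2|F|=4(n-1)$ is distributed evenly, $n-1$ per class. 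The intended finish is to sharpen this by applying Euler's formula separately to the two disks cut out by each $C_i$ — whose interiors contain only the $M_i$-chords — getting relations $N_0+N_i=1+a_i$ between class sizes and the numbers $a_i$ of $M_i$-chords lying inside $C_i$, then pinning the $a_i$ down modulo $2$ via Tait's four-colouring of the faces associated to the $3$-edge-colouring (and the four regions $R_i=\{$faces coloured $0$ or $i\}$ it produces), and colliding this with $n-1$ being even. I expect this last coupling to be the main obstacle: taken individually each Grinberg or Euler relation only re-derives the standard face/edge counts, so the argument must genuinely exploit that all three $C_i$ arise from one face-colouring (equivalently $[C_1]+[C_2]+[C_3]=0$). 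It is precisely at this point that ``bipartite'' and ``plane'' are used together, and dropping either hypothesis allows a Hamiltonian double cover to exist.
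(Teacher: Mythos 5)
Your reduction of a Hamiltonian double cover to a perfect $1$-factorisation $M_1\sqcup M_2\sqcup M_3$, and your parity step, are both correct: identifying the two colour classes along $M_3$ turns $M_1,M_2$ into permutations $\alpha,\beta$ of an $n$-element set, the three $2$-factors are Hamiltonian precisely when $\alpha$, $\beta$ and $\beta^{-1}\alpha$ are all $n$-cycles, and comparing $\mathrm{sgn}(\beta^{-1}\alpha)=\mathrm{sgn}(\alpha)\,\mathrm{sgn}(\beta)=1$ with the sign $(-1)^{n-1}$ of an $n$-cycle forces $n$ to be odd. But this is only a necessary condition extracted from bipartiteness, and the third step --- the only place where planarity enters --- is not an argument: it is a description of an intended argument, written in the conditional, and it never derives a contradiction. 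Concretely, the Grinberg identities cannot do the job in the form you use them: in a plane cubic bipartite graph every face degree is even, so every coefficient $\deg\phi-2$ is even, both sides of each Grinberg identity equal $2(n-1)\equiv 0\pmod 4$ when $n$ is odd, and your computation that the four Tait colour classes each carry weight $n-1$ is consistent with everything (for the cube one even has $n-1=3$ distributed as one quadrilateral per class plus the outer face, except that the cube is already excluded by the parity step --- which shows these counts alone carry no obstruction). The ``coupling'' of the three identities through the face $4$-colouring that you say must be exploited is precisely the missing content of Kotzig's theorem, and you give no candidate invariant, no inequality, and no parity that actually fails. As it stands the proposal proves only the (true but much weaker) statement that a cubic bipartite graph with a Hamiltonian double cover has $|V|\equiv 2\pmod 4$.

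For comparison, the paper does not reprove this theorem either; it cites Kotzig [K62] and outlines Fetter's short proof via two lemmas: a connected plane cubic graph with a Hamiltonian double cover is a theta-graph or the graph of a simple $3$-polytope (Lemma \ref{lem:3HamQ}), and if such a polytope has a quadrilateral face one may delete a pair of opposite edges of it and obtain a smaller plane cubic graph still carrying a Hamiltonian double cover (Lemma \ref{lem:4gon}). Since a plane cubic bipartite graph always has a quadrilateral face by Euler's formula, and the reduction preserves planarity, $3$-valence and bipartiteness, one descends to a minimal case and obtains a contradiction. That is a genuinely different strategy --- an induction on $|V|$ rather than a global parity/Grinberg computation --- and, unlike your step three, it closes. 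If you want to salvage your approach you must either supply the actual planar obstruction or switch to such a reduction.
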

\begin{corollary}
If a simple $3$-polytope $P$ has three consistent Hamiltonian cycles, then $P$ has a facet with an odd number of edges.
\end{corollary}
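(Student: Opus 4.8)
The plan is to obtain this corollary as an immediate consequence of Theorem~\ref{thbip}, of the Lemma characterizing simple $3$-polytopes with bipartite graphs, and of the identification of ``three consistent Hamiltonian cycles'' with a Hamiltonian double cover recalled at the beginning of this section.

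First I would fix notation: let $G$ be the graph of $P$. Since $P$ is a simple $3$-polytope, every vertex of $P$ lies on exactly three facets, hence on exactly three edges, so $G$ is $3$-valent; and by the Steinitz theorem (Theorem~\ref{th:St}) $G$ is planar, so it is realized as a plane graph (the $1$-skeleton of $\partial P$ on $S^2$). By definition a system of three consistent Hamiltonian cycles of $P$ consists of three Hamiltonian cycles of $G$ such that every edge of $P$ belongs to exactly two of them, which is precisely a Hamiltonian double cover of the plane $3$-valent graph $G$.

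Then I would argue by contradiction. Assume $P$ admits three consistent Hamiltonian cycles, but suppose that every facet of $P$ has an even number of edges. By the Lemma preceding Theorem~\ref{thbip}, the graph $G$ is then bipartite. Thus $G$ is a plane $3$-valent bipartite graph possessing a Hamiltonian double cover, which contradicts Theorem~\ref{thbip}. Hence $P$ must have at least one facet with an odd number of edges, as claimed.

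I do not anticipate a real obstacle: the argument is a direct chaining of results already established in the excerpt. The only points needing a (trivial) verification are that the graph of a simple $3$-polytope is plane and $3$-valent, and that the combinatorial datum ``three consistent Hamiltonian cycles'' matches the hypothesis ``Hamiltonian double cover'' of the cited theorem — both of which are immediate from the definitions.
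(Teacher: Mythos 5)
Your proof is correct and follows exactly the route the paper intends: the identification of three consistent Hamiltonian cycles with a Hamiltonian double cover, the lemma that all-even facets force a bipartite graph, and Theorem~\ref{thbip} combine by contraposition to give the corollary, which the paper states without further argument. No gaps.
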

A short proof of Theorem \ref{thbip} was given in \cite[Theorem 12]{F06}. Is is based on two facts.
\begin{lemma}\cite[Remark 10]{F06}\label{lem:3HamQ}
Let $G$ be a connected $3$-valent planar graph. If it admits three consistent Hamiltonian cycles, then either $G$ is 
a~theta-graph or a~graph of a simple $3$-polytope.
\end{lemma}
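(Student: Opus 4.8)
The plan is to prove that, unless $G$ is the theta-graph, it must be a simple, $3$-connected, planar $3$-valent graph, and then invoke the Steinitz theorem (Theorem~\ref{th:St}) to realize it as the graph of a (necessarily simple) $3$-polytope. Write $C_1$, $C_2$, $C_3$ for the three consistent Hamiltonian cycles, so that every edge of $G$ lies in exactly two of them. First some preliminary reductions: by Lemma~\ref{lem:even} the graph $G$ has an even number of vertices, in particular at least two; a loop lies on no cycle but every edge lies on two of the $C_i$, so $G$ has no loops; and a graph on at least three vertices containing a Hamiltonian cycle is $2$-connected, so $G$ is $2$-connected once it has more than two vertices. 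If $G$ has exactly two vertices, then $3$-valence together with the absence of loops forces $G$ to be the theta-graph, and we are done; so from now on assume $G$ has at least four vertices.

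The crucial ingredient is a \emph{small-cut principle}: $G$ has no edge set $S$ with $|S|\le 2$ whose removal disconnects $G$. Indeed, take such an $S$ minimal, say $G-S=A\sqcup B$ with $A,B$ nonempty vertex sets, so that every edge of $S$ joins $A$ to $B$. Each Hamiltonian cycle $C_i$ crosses between $A$ and $B$ a positive even number of times; this number is at most $|S|\le 2$, hence it equals $2$, so $|S|=2$ and $C_i$ uses both edges of $S$. Then both edges of $S$ lie in all three cycles, contradicting that each edge lies in exactly two of them. The same principle already forces $G$ to be simple: if two vertices $u,v$ were joined by exactly two edges, the third edge at $u$ and the third edge at $v$ would be distinct and would form an edge cut of size at most two separating $\{u,v\}$ (which has a third vertex in its complement) from the rest of $G$; since a multiple edge of multiplicity at least three would make $G$ the theta-graph, $G$ is simple.

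It remains to upgrade ``no edge $2$-cut'' to ``no vertex $2$-cut''. Suppose $\{u,v\}$ is a separating pair with $G-\{u,v\}=A\sqcup B$ and no edges between $A$ and $B$. Using $2$-connectivity, $A$ sends at least one edge to $u$ and at least one to $v$, and likewise $B$; counting the six edge-ends at $u$ and $v$ then leaves two cases. If $uv\in E(G)$, then $A$ is joined to $\{u,v\}$ by exactly two edges, an edge $2$-cut — impossible by the principle. If $uv\notin E(G)$, the boundaries of $A$ and of $B$ each consist of exactly three edges; analysing how the three Hamiltonian cycles distribute over the (at most three) boundary edges of $A$, and using that each edge is used by exactly two of the $C_i$, one finds that exactly one cycle $C_k$ uses both edges from $v$ into $A$, and by the symmetric count on $B$ the same $C_k$ uses both edges from $u$ into $B$. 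Deleting the vertex $v$ from $C_k$ then produces a Hamiltonian path of $G-v$ whose two endpoints lie in $A$; but $G-v$ has only a single edge between $A$ and its complement, so this path cannot leave $A$, contradicting the fact that it must visit $u\notin A$. Hence $\kappa(G)\ge 3$, and since $G$ is $3$-valent, $\kappa(G)=3$. Thus $G$ is simple, $3$-connected and planar, and the Steinitz theorem finishes the proof.

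I expect the vertex-$2$-cut step to be the main obstacle: the edge-cut case is pure bookkeeping with crossing parities, whereas there one must combine that bookkeeping with a careful connectivity argument about the path obtained by deleting a vertex of the distinguished cycle $C_k$, while tracking which of the (possibly three) boundary edges each Hamiltonian cycle uses.
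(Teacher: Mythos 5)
Your proof is correct, and it reaches $3$-connectivity by a genuinely different route than the paper. The paper's argument stays inside the planar picture: after disposing of loops and multiple edges it checks the two face-theoretic conditions that characterize $3$-connectedness of a simple spherical graph (every face bounded by a simple cycle, and any two face boundaries meeting in at most an edge), each failure producing an edge cut of size $1$ or $2$ that all three Hamiltonian cycles would have to use in full --- the same parity/counting principle as your ``small-cut principle.'' You instead prove $\kappa(G)\geqslant 3$ directly from the definition, via an analysis of vertex $2$-cuts; this forces you to handle the non-adjacent separating pair $\{u,v\}$, where the edge-cut principle alone is not enough and you need the additional argument that the distinguished cycle $C_k$ (the one using both $v$--$A$ edges) yields, after deleting $v$, a Hamiltonian path of $G-v$ trapped in $A$ by the single remaining boundary edge $e_u$ --- that step is correct and is the real extra content of your approach. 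What the paper's route buys is brevity: planarity plus the face criterion lets it skip the vertex-cut casework entirely. What your route buys is that planarity is used only at the very end, in the appeal to the Steinitz theorem (Theorem \ref{th:St}); everything before that is a purely combinatorial statement about $3$-valent graphs with a Hamiltonian double cover, so your intermediate conclusion (simple and $3$-connected) holds without any planarity hypothesis.
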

\begin{proof}
Indeed, $G$ can not have loops. If $G$ has two edges connecting the same vertices, then one of the Hamiltonian cycles consists
of these two edges. Then $G$ has no other vertices and $G$ is the theta-graph. Thus, we can assume that the graph 
$G$ is simple. If the boundary cycle of some its facet is not simple, then there is a bridge which 
belongs to all the three Hamiltonian cycles. A contradiction.  
If the boundary cycles of two facets have in common two disjoint edges, 
then the~deletion of these edges makes the graph disconnected. Hence, all the three Hamiltonian cycles contain these edges,
which is a contradiction. Then the graph $G$ is simple and $3$-connected and by the Steinitz theorem it 
corresponds to a boundary of a simple $3$-polytope. 
\end{proof}
\begin{lemma}\cite[Remark 11]{F06}\label{lem:4gon}
If a simple $3$-polytope $P$ admits $3$ consistent Hamiltonian
cycles and $P$ has a quadrangular facet, then there is a pair of opposite edges of this facet such that the deletion 
of them produces the theta-graph or a graph of another simple $3$-polytope $Q$ with $3$ consistent Hamiltonian cycles.
\end{lemma}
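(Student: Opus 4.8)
The plan is to encode the three consistent Hamiltonian cycles $H_1,H_2,H_3$ by their \emph{omitted edges}, localise everything to the star of the quadrangular facet, pick the pair of opposite edges by a pigeonhole argument, delete it, and then check that the three cycles descend. First I would set up notation. Let $G$ be the graph of $P$; at every vertex $v$ each $H_i$ omits exactly one of the three incident edges, and since every edge lies in exactly two of the cycles, the three omitted edges at $v$ are distinct, hence are precisely the three edges at $v$. Write $F$ for the quadrangular facet, $e_1,e_2,e_3,e_4$ its edges in cyclic order, $u_1,\dots,u_4$ its vertices with $e_i=u_iu_{i+1}$ (indices mod $4$), and $g_i$ the third edge of $G$ at $u_i$.

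Next I would choose the pair to delete. Each of $e_1,e_2,e_3,e_4$ is omitted by exactly one cycle, so by pigeonhole two of them are omitted by the same $H_j$; adjacent edges of $F$ share a vertex and no $H_j$ omits two edges at one vertex, so these two are an opposite pair. Relabel so that $e_1,e_3$ are omitted by $H_1$; then $e_1,e_3\in H_2\cap H_3$, and at each $u_i$ the two edges among $\{e_1,e_2,e_3,e_4,g_1,\dots,g_4\}$ omitted by $H_2$ and $H_3$ are forced. If $e_2$ and $e_4$ were omitted by the same cycle, then at every $u_i$ the edge $g_i$ would be omitted by the remaining cycle, so that cycle would contain the whole $4$-cycle $\partial F$; being a simple cycle it would then equal $\partial F$, forcing $|V(P)|=4$, impossible for a simple $3$-polytope with a quadrangular facet. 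Hence, relabelling $H_2,H_3$ if needed, $e_2$ is omitted by $H_2$ and $e_4$ by $H_3$; reading off the omissions at each $u_i$ then determines completely how $H_1,H_2,H_3$ traverse the star of $F$ (one checks $H_1$ uses $g_1,e_4,g_4$ and $g_2,e_2,g_3$; $H_2$ uses the path $g_2,e_1,e_4,e_3,g_3$; $H_3$ uses the path $g_1,e_1,e_2,e_3,g_4$).

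Then I would delete and descend. Let $G'$ be obtained from $G$ by removing $e_1$ and $e_3$ and suppressing the resulting $2$-valent vertices $u_1,\dots,u_4$; this fuses $g_1,e_4,g_4$ into one edge $\hat e$ and $g_2,e_2,g_3$ into one edge $\hat f$, keeps $G'$ planar and $3$-valent, and merges $F$ with its two neighbours across $e_1,e_3$ into a single face. Define $H_1'=H_1$ (which already descends, using $\hat e$ and $\hat f$), let $H_2'$ be $H_2$ with its path $g_2,e_1,e_4,e_3,g_3$ replaced by $\hat f$, and $H_3'$ be $H_3$ with its path $g_1,e_1,e_2,e_3,g_4$ replaced by $\hat e$. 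Each $H_i'$ is a simple cycle through every vertex of $G'$, since the only vertices dropped are $u_1,\dots,u_4$, which are exactly the suppressed ones. A short edge count gives $\hat e\in H_1'\cap H_3'$ only, $\hat f\in H_1'\cap H_2'$ only, and every edge away from $F$ retains its membership in exactly two cycles, so $H_1',H_2',H_3'$ are three consistent Hamiltonian cycles on $G'$. By Lemma \ref{lem:3HamQ}, $G'$ is the theta-graph or the graph of a simple $3$-polytope $Q$ (automatically carrying these three consistent cycles), which is the claim.

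I expect the main obstacle to be the bookkeeping in the middle step: pinning down, purely from the omitted-edge pattern, exactly how the three cycles run through the eight edges $e_1,\dots,e_4,g_1,\dots,g_4$, and making sure the ``$e_2,e_4$ omitted by the same cycle'' configuration is genuinely excluded. Once that is done the descent is a routine verification; potential small-polytope coincidences among the $g_i$, or a loop created by a suppression, are harmless, since a $3$-valent planar graph carrying three consistent Hamiltonian cycles has no loops (as in the proof of Lemma \ref{lem:3HamQ}).
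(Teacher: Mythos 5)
The paper itself gives no proof of Lemma \ref{lem:4gon}: it is quoted from \cite[Remark 11]{F06} alongside Lemma \ref{lem:3HamQ}, so there is no in-paper argument to compare yours against. Your proof is correct and self-contained. The omitted-edge encoding, the pigeonhole selection of the pair $\{e_1,e_3\}$ omitted by a single cycle, the exclusion of the configuration where $e_2,e_4$ are omitted by one common cycle (which would force a Hamiltonian cycle equal to $\partial F$, hence $|V(P)|=4$), the resulting local traversal pattern, and the descent of $H_1,H_2,H_3$ to $G'$ with the edge-multiplicity count for $\hat e,\hat f$ all check out; the conclusion then follows from Lemma \ref{lem:3HamQ} exactly as you say.

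One point deserves more than the passing remark that ``coincidences among the $g_i$ are harmless.'' Your loop argument covers the case where $g_1$ and $g_4$ (or $g_2$ and $g_3$) share their far endpoint. But the genuinely dangerous coincidence is $g_1=g_3$ (an edge $u_1u_3$) or $g_2=g_4$ (an edge $u_2u_4$): then the two suppression chains $g_1,e_4,g_4$ and $g_2,e_2,g_3$ would concatenate into a single chain, $\hat e$ and $\hat f$ would not exist as separate edges, and the descent as written would break. This case must be excluded explicitly, and it can be: a diagonal of the quadrangular facet $F$ cannot be an edge of $P$, since an edge of a polytope joining two vertices of a facet lies in that facet and is therefore an edge of that facet, whereas $u_1u_3$ is not an edge of the quadrangle $F$ (combinatorially one can argue instead via $3$-connectivity that two faces would otherwise share two edges). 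You should also note, as you implicitly do, that no $g_i$ can end at another $u_j$, since that would create a multiple edge with some $e_k$. With that sentence added, the argument is complete; the remaining degenerate outcomes (parallel $\hat e,\hat f$, or $G'$ on two vertices) are precisely the theta-graph alternative allowed by the statement and handled by Lemma \ref{lem:3HamQ}, and the parity obstruction of Lemma \ref{lem:even} rules out the loop cases as you observe.
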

\subsection{Reductions}
The reduction from Lemma \ref{lem:4gon} can be generalized as follows.
If a simple $3$-polytope $P$ has $3$ consistent Hamiltonian
cycles and a triangular facet, then this facet can be shrinked to a point to produce either the theta-graph or a graph of another 
simple $3$-polytope $Q$ with three induced consistent Hamiltonian cycles. More generally, if $P$ has a~$3$-belt,
that is a triple of facets $(F_i, F_j,F_k)$ such that any two of them are adjacent and $F_i\cap F_j\cap F_k=\varnothing$,
then $P$ can be cut along the triangle with vertices at midpoints of $F_i\cap F_j$, $F_j\cap F_k$ and $F_k\cap F_i$,
and each arising triangle can be shrinked to a point to produce  two simple $3$-polytopes $Q_1$ and $Q_2$ such that
$P$ is a connected sum of $Q_1$ and $Q_2$ at vertices. Then $P$ has $3$ consistent Hamiltonian cycles if and only if
$Q_1$ and $Q_2$ both have this property. 

If $P$ has a $4$-belt,  that is a cyclic sequence of facets $(F_i,F_j,F_k,F_l)$ such that the facets are adjacent if and only if
they follow each other, then combinatorially $P$ 
can be similarly cut along this belt to two simple polytopes $Q_1$ and $Q_2$ such that
$P$ is a connected sum of $Q_1$ and $Q_2$ along quadrangles (details see in~\cite{E22M}). 
It turns out that there can be $Q_1$ and $Q_2$ both
admitting no $3$-Hamiltonian cycles such that $P$ admits. The example is given by 
the connected sum of  two $5$-prisms along quadrangles such that the prisms are ``twisted'': base facets of one prism
correspond to side facets of the other. We proved above that $5$-prisms does not admit $3$ consistent Hamiltonian cycles,
while the resulting polytope admits, as it is shown on Fig. \ref{P8Ham}. 
\begin{figure}[h]
\begin{center}
\includegraphics[width=0.7\textwidth]{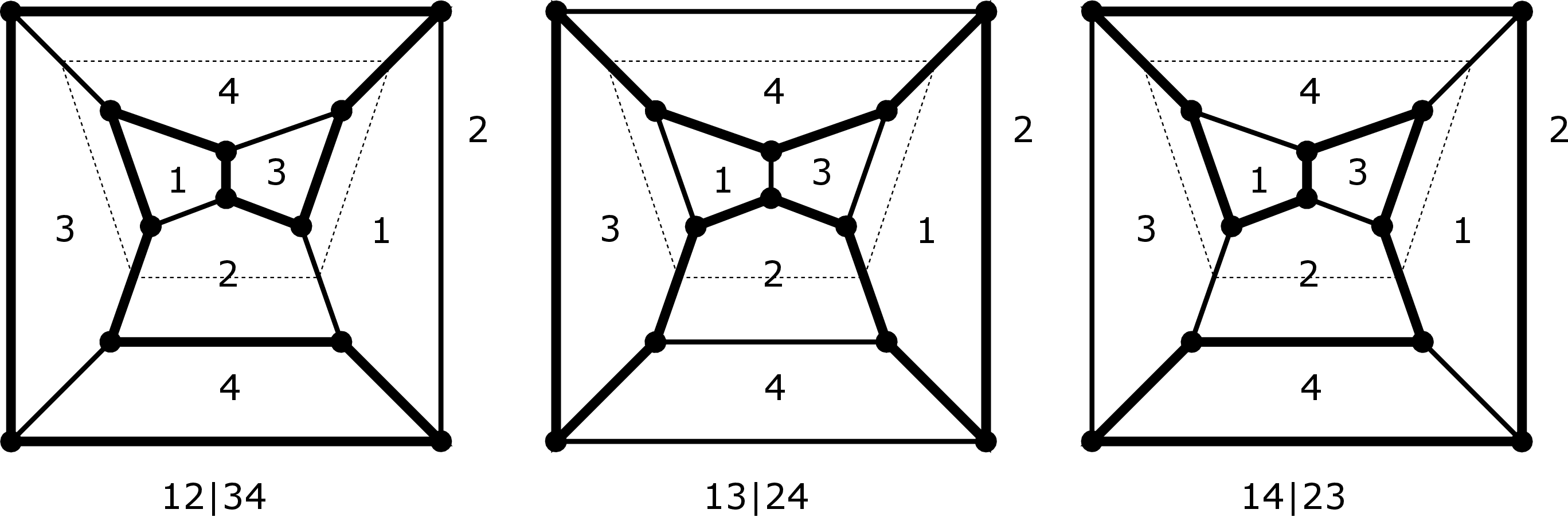}
\end{center}
\caption{Three consistent Hamiltonian cycles on the connected sum of two $5$-prisms along quadrangles}\label{P8Ham}
\end{figure}

\begin{problem}
To find a set of reductions and a set of initial polytopes such that any simple $3$-polytope $P$
with a $3$-Hamiltonian cycle can be reduced to an initial polytope by a sequence of~these reductions
in such a way that all intermediate polytopes also have a $3$-Hamiltonian~cycle.
\end{problem}

\subsection{Fullerenes} 
{\it Fullerenes} are simple $3$-polytopes with all facets pentagons and hexagons. They model spherical carbon molecules.
As was shown by F.~Kardo\v{s} in \cite{K14} any fullerene admits a Hamiltonian cycle 
(it is not valid for all simple $3$-polytopes, see \cite{T46, G68}). The simplest fullerene is the dodecahedron. 
As we have shown above it admits $3$ consistent Hamiltonian cycles. 
The next fullerene is the {\it $6$-barrel} shown in Fig.~\ref{6barrelHam}. 
It is also known as a {\it L\"obell polytope} $L(6)$ (see \cite{V87}). 
Using the fact that locally near any $6$-gon a Hamiltonian cycle has one of~the~types shown in~Fig.~\ref{6gonHam} 
\begin{figure}[h]
\begin{center}
\includegraphics[width=0.6\textwidth]{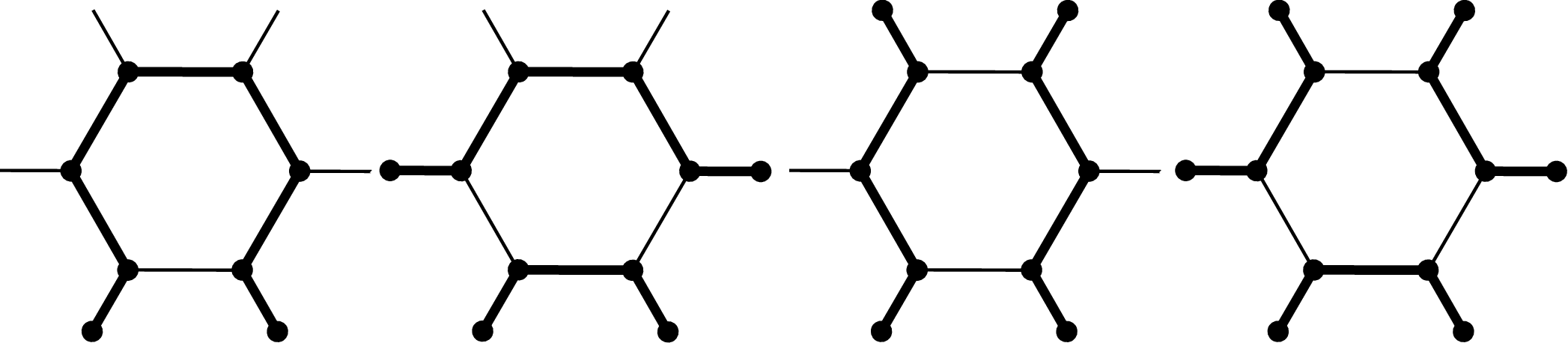}
\end{center}
\caption{Local forms of a Hamiltonian cycle near a $6$-gon}\label{6gonHam}
\end{figure}
it is easy to see that up to combinatorial symmetries the $6$-barrel has only 
four Hamiltonian cycles  shown in Fig.~\ref{6barrelHam}. 
\begin{figure}[h]
\begin{center}
\includegraphics[width=\textwidth]{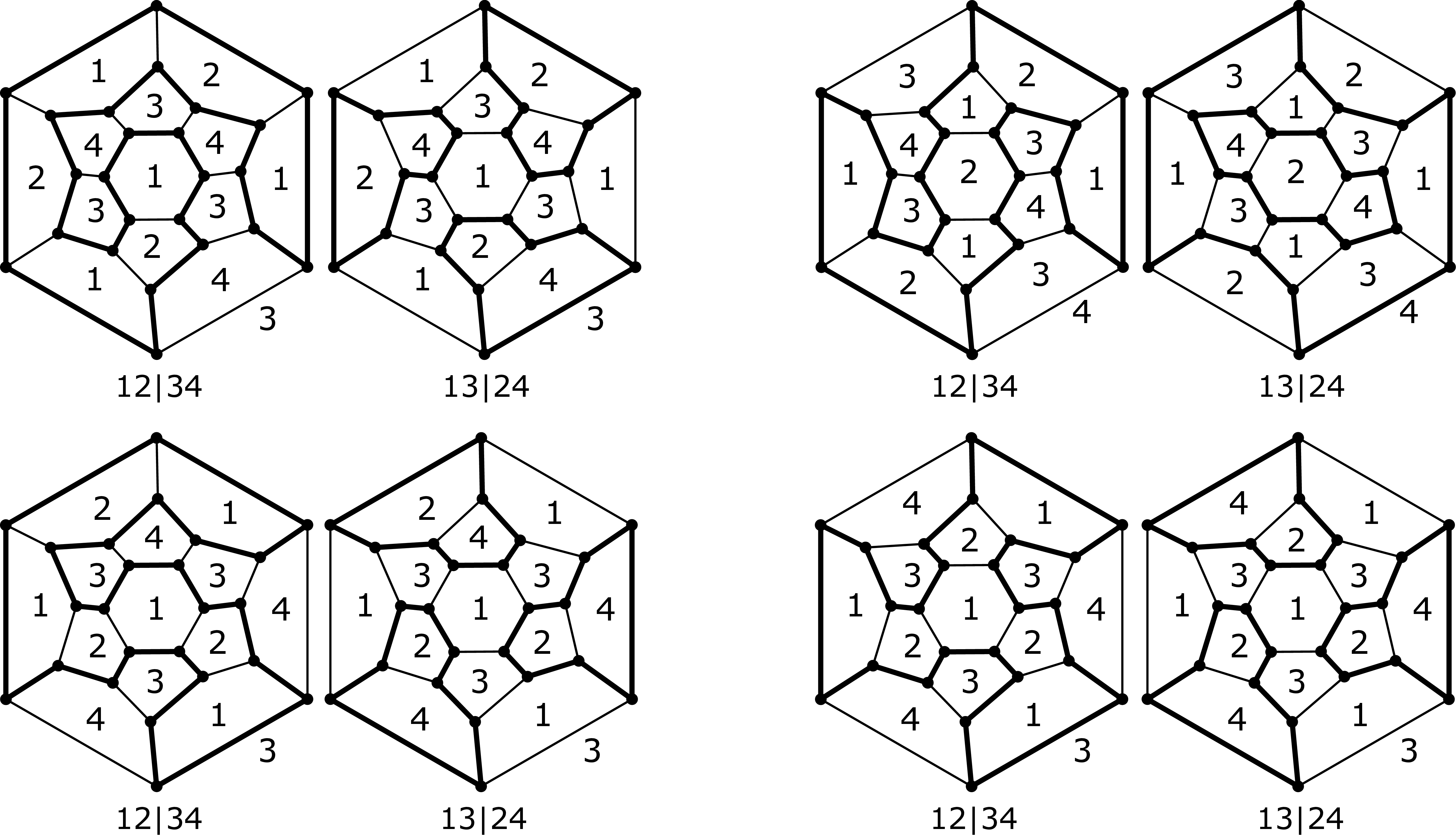}
\end{center}
\caption{Hamiltonian cycles on the $6$-barrel}\label{6barrelHam}
\end{figure}
Each of these cycles can not be included to~the~triple of~consistent Hamiltonian cycles. Thus, the $6$-barrel does not admit 
$3$ consistent Hamiltonian cycles.

\section{Acknowledgements}
The author is grateful to Victor Buchstaber for bringing him to science, for energy and permanent attention.

The author is grateful to Dmitry Gugnin for the introduction to~the~theory of~actions of~finite groups
on manifolds and for fruitful discussions. These discussions lead to the formulation and proof of Theorem \ref{th:ZHM}
and Example \ref{ex:spnG},  and Proposition \ref{prop:spn+r} and Example \ref{ex:spn+k+1G}. The author
is~also grateful to~Vladimir Shastin for the~idea to~consider $3$-manifolds $N(P,\Lambda)$ 
that are~rational homology $3$-spheres, to Alexei Koretskii for building an example of a $4$-dimensional
hyperelliptic small cover, and to Leonardo Ferrari for useful comments on the text.

\end{document}